\newtheorem{theorem}{Theorem}[section]
\newtheorem{lemma}{Lemma}[section]
\newtheorem{proposition}{Proposition}[section]
\theoremstyle{definition}
\newtheorem{remark}{Remark}[section]
\numberwithin{equation}{section}
\newcommand\blfootnote[1]{\begingroup\renewcommand\thefootnote{}\footnote{#1}\addtocounter{footnote}{-1}\endgroup}
\begin{document}

\title{
{\bf\Large
Positive solutions for super-sublinear indefinite problems:
high multiplicity results via coincidence degree}}

\author{
\vspace{1mm}
\\
{\bf\large Alberto Boscaggin}
\vspace{1mm}\\
{\it\small Department of Mathematics, University of Torino}\\
{\it\small via Carlo Alberto 10}, {\it\small 10123 Torino, Italy}\\
{\it\small e-mail: alberto.boscaggin@unito.it}\vspace{1mm}\\
\vspace{1mm}\\
{\bf\large Guglielmo Feltrin}
\vspace{1mm}\\
{\it\small SISSA - International School for Advanced Studies}\\
{\it\small via Bonomea 265}, {\it\small 34136 Trieste, Italy}\\
{\it\small e-mail: guglielmo.feltrin@sissa.it}\vspace{1mm}\\
\vspace{1mm}\\
{\bf\large Fabio Zanolin}
\vspace{1mm}\\
{\it\small Department of Mathematics and Computer Science, University of Udine}\\
{\it\small via delle Scienze 206},
{\it\small 33100 Udine, Italy}\\
{\it\small e-mail: fabio.zanolin@uniud.it}\vspace{1mm}}

\date{}

\maketitle

\vspace{-2mm}

\begin{abstract}
\noindent
We study the periodic boundary value problem associated with the second order nonlinear equation
\begin{equation*}
u'' + \bigr{(} \lambda a^{+}(t) - \mu a^{-}(t) \bigr{)} g(u) = 0,
\end{equation*}
where $g(u)$ has superlinear growth at zero and sublinear growth at infinity.
For $\lambda, \mu$ positive and large, we prove the existence of $3^{m}-1$ positive
$T$-periodic solutions when the weight function $a(t)$ has $m$ positive humps
separated by $m$ negative ones (in a $T$-periodicity interval). As a byproduct of our approach we also provide
abundance of positive subharmonic solutions and symbolic dynamics.
The proof is based on coincidence degree theory for locally compact operators
on open unbounded sets and also applies to Neumann and Dirichlet boundary conditions.
Finally, we deal with radially symmetric positive solutions for the Neumann and the Dirichlet problems
associated with elliptic PDEs.
\blfootnote{\textit{2010 Mathematics Subject Classification:} 34B15, 34B18, 34C25, 34C28, 47H11.}
\blfootnote{\textit{Keywords:} boundary value problems, positive solutions, indefinite weight, super-sublinear nonlinearity, multiplicity results, symbolic dynamics, coincidence degree.}
\end{abstract}

\section{Introduction and statement of the main result}\label{section-1}

In this paper, we present some multiplicity results for \textit{positive} solutions to boundary value problems associated
with nonlinear differential equations of the type
\begin{equation}\label{eq-intro}
u'' + q(t) g(u) = 0,
\end{equation}
where $q(t)$ is a sign-changing weight function and $g(s)$ is a function with superlinear growth at zero, sublinear growth at infinity
and positive on $\mathopen{]}0,+\infty\mathclose{[}$.
Due to these assumptions, we refer to \eqref{eq-intro} as a \textit{super-sublinear indefinite} problem.
The terminology ``indefinite'', meaning that $q(t)$ is of non-constant sign, was probably introduced in \cite{AtEvOn-74} dealing with a linear eigenvalue problem and,
starting with \cite{HeKa-80}, it has become very popular also in nonlinear differential problems
(especially when $g(s)$ is a superlinear function, for instance as $g(s) \sim s^{p}$ with $p>1$,
so that \eqref{eq-intro} is said to be superlinear indefinite, see \cite{BaBoVe-15, BeCaDoNi-94, CaDaPa-02, GoReLoGo-00}).

We now describe our setting in more detail.
Denoting by $\mathbb{R}^{+} := \mathopen{[}0,+\infty\mathclose{[}$ the set of non-negative real numbers,
we assume that $g \colon \mathbb{R}^{+} \to \mathbb{R}^{+}$ is a continuous function satisfying the sign hypothesis
\begin{equation*}
g(0) = 0, \qquad g(s) > 0 \quad \text{for } \; s > 0,
\leqno{(g_{*})}
\end{equation*}
as well as the conditions of superlinear growth at zero
\begin{equation*}
\lim_{s\to 0^{+}} \dfrac{g(s)}{s} = 0
\leqno{(g_{0})}
\end{equation*}
and sublinear growth at infinity
\begin{equation*}
\lim_{s\to +\infty} \dfrac{g(s)}{s} = 0.
\leqno{(g_{\infty})}
\end{equation*}
Concerning the weight function $q(t)$, we find convenient to write it as
\begin{equation*}
q(t) = a_{\lambda,\mu}(t) := \lambda a^{+}(t) - \mu a^{-}(t),
\end{equation*}
where $a \in L^{1}(\mathopen{[}0,T\mathclose{]})$ is a sign-changing function, that is
\begin{equation*}
\int_{0}^{T} a^{+}(t)~\!dt \neq 0 \neq \int_{0}^{T} a^{-}(t)~\!dt,
\end{equation*}
and $\lambda,\mu > 0$ are real parameters. Summing up, we deal with the equation
\begin{equation}\label{eq-main}
u'' + \bigl{(} \lambda a^{+}(t) - \mu a^{-}(t) \bigr{)} g(u) = 0
\end{equation}
and we investigate multiplicity of positive solutions (in the Carath\'{e}odory sense, see \cite{Ha-80}) to \eqref{eq-main} in dependence of the parameters $\lambda,\mu > 0$.

Results in this direction have already appeared in the literature. When \eqref{eq-main} is considered together with Dirichlet
boundary conditions $u(0) = u(T) = 0$, for instance, it is well known that two positive solutions exist if $\lambda > 0$ is large
enough and for any value $\mu > 0$. This is a classical result, on a line of research initiated by
Rabinowitz in \cite{Ra-7374} (dealing with the Dirichlet problem associated with a super-sublinear elliptic PDE on a bounded
domain, see also \cite{Am-72} for previous related results) and later developed by many authors. Actually, typical versions of this theorem do not take into account an
indefinite weight function (that is, they are stated for $a^{-}\equiv 0$ in \eqref{eq-main}), but nowadays standard tools (such as
critical point theory, fixed point theorems for operators on cones, dynamical systems techniques) permit to successfully handle
also this more general situation. We refer to \cite{BoZa-13} for the precise statement in the indefinite setting as well as to the
introductions in \cite{BoFeZa-15,BoZa-12} for a more complete presentation and bibliography on the subject.

As far as Neumann boundary conditions $u'(0) = u'(T) = 0$ or $T$-periodic boundary conditions
$u(T) - u(0) = u'(T)-u'(0) = 0$ are taken into account, the problem becomes slightly more subtle.
Indeed, on one hand, the indefinite character of the problems plays a crucial role, since no positive Neumann/periodic solutions
to \eqref{eq-main} can exist if $a^{-} \equiv 0$ or if $a^{+} \equiv 0$, as it is easily seen by integrating the equation on $\mathopen{[}0,T\mathclose{]}$.
On the other hand, some restrictions on the ranges of the parameters $\lambda,\mu > 0$ also appear.
Precisely, as already observed in previous papers \cite{BaPoTe-88,BoZa-12}, whenever $g'(s) > 0$ for any $s > 0$,
a necessary condition for the existence of positive Neumann/periodic solutions to \eqref{eq-main} turns out to be
\begin{equation*}
\int_{0}^{T} a_{\lambda,\mu}(t)~\!dt < 0,
\end{equation*}
which equivalently reads as
\begin{equation}\label{mudiesis}
\mu > \mu^{\#}(\lambda) := \lambda \, \dfrac{\int_{0}^{T} a^{+}(t)~\!dt}{\int_{0}^{T} a^{-}(t)~\!dt}.
\end{equation}
Hence, contrarily to the Dirichlet problem, the existence of positive solutions cannot be ensured for any $\mu > 0$.
However, under slightly more restrictive assumptions than $(g_{0})$ and $(g_{\infty})$
(like, for instance, $g(s) \sim s^{\alpha}$ with $\alpha > 1$ at zero and $g(s) \sim s^{\beta}$ with $0 < \beta < 1$ at infinity),
the existence of two positive Neumann/periodic solutions
to \eqref{eq-main} is still guaranteed for $\lambda > 0$ large enough and $\mu$ satisfying \eqref{mudiesis}.
This was shown in \cite{BoZa-12} using critical point theory and in \cite{BoFeZa-15} using a topological degree argument
(this last proof working for the damped equation $u'' + cu' + a_{\lambda,\mu}(t) g(u) = 0$, as well).
In both the approaches condition \eqref{mudiesis} plays the role of pushing the nonlinearity
below the principal eigenvalue $k_{0} = 0$ of the Neumann/periodic problem both at zero and at infinity
(notice that this is not needed if Dirichlet boundary conditions are taken into account, since the first eigenvalue is strictly positive).

The above recalled results seem to be optimal from the point of view of the multiplicity of solutions, in the sense that
no more than two positive solutions can be expected for a general weight.
In this regard, sharp existence results of exactly two solutions (at least for the Dirichlet problem
and with a positive constant weight) are described and surveyed in \cite{Li-82, OuSh-98, OuSh-99}
(more specifically, see \cite[Theorem~6.19]{OuSh-99}).

The aim of the present paper is to show that, on the other hand, many positive solutions for the Dirichlet/Neumann/periodic boundary value problems associated
with \eqref{eq-main} can be obtained by playing
with the nodal behavior of the weight function: roughly speaking, we will require it to have $m$ positive humps, together with a large negative part
(that is, $\mu \gg 0$).

\bigskip

We now focus on the $T$-periodic boundary value problem associated with \eqref{eq-main}
and we proceed to state our main result more precisely, as follows.

Let $a \colon \mathbb{R} \to \mathbb{R}$ be a $T$-periodic locally integrable function and suppose that
\begin{itemize}
\item [$(a_{*})$]
\textit{there exist $2m + 1$ points (with $m \geq 1$)
\begin{equation*}
\sigma_{1} < \tau_{1} < \ldots < \sigma_{i} < \tau_{i} < \ldots < \sigma_{m} < \tau_{m} < \sigma_{m+1}, \quad \text{with } \; \sigma_{m+1}-\sigma_{1} = T,
\end{equation*}
such that, for $i=1,\ldots,m$, $a(t) \succ 0$ on $\mathopen{[}\sigma_{i},\tau_{i}\mathclose{]}$ and $a(t) \prec 0$ on $\mathopen{[}\tau_{i},\sigma_{i+1}\mathclose{]}$,}
\end{itemize}
where, following a standard notation, $w(t) \succ 0$ on a given interval means that
$w(t)\geq 0$ almost everywhere with $w\not\equiv 0$ on that interval; moreover, $w(t) \prec 0$ stands for $-w(t) \succ 0$.
Without loss of generality, due to the $T$-periodicity of the function $a(t)$, in the sequel we assume that $\sigma_{1} = 0$ and $\sigma_{m+1} = T$.
We also set, for $i=1,\ldots,m$,
\begin{equation}\label{Ipm}
I^{+}_{i} := \mathopen{[}\sigma_{i},\tau_{i}\mathclose{]} \quad \text{ and } \quad I^{-}_{i} := \mathopen{[}\tau_{i},\sigma_{i+1}\mathclose{]}.
\end{equation}
We look for solutions $u(t)$ of \eqref{eq-main} (in the Carath\'{e}odory sense) which are globally defined on $\mathbb{R}$ with $u(t) = u(t+T) > 0$ for all $t\in {\mathbb{R}}$.
Such solutions will be referred to as \textit{positive $T$-periodic}.
Then, the following result holds true.

\begin{theorem}\label{main-theorem}
Let $g \colon \mathbb{R}^{+} \to \mathbb{R}^{+}$ be a continuous function satisfying $(g_{*})$, $(g_{0})$ and $(g_{\infty})$.
Let $a \colon \mathbb{R} \to \mathbb{R}$ be a $T$-periodic locally integrable function satisfying $(a_{*})$.
Then there exists $\lambda^{*} > 0$ such that for each $\lambda > \lambda^{*}$ there exists $\mu^{*}(\lambda) > 0$ such that
for each $\mu > \mu^{*}(\lambda)$ equation \eqref{eq-main} has at least $3^{m}-1$ positive $T$-periodic solutions.

More precisely, fixed an arbitrary constant $\rho > 0$ there exists $\lambda^{*} = \lambda^{*}(\rho) > 0$ such that
for each $\lambda > \lambda^{*}$ there exist two constants $r,R$ with $0 < r < \rho < R$ and $\mu^{*}(\lambda) =
\mu^{*}(\lambda,r,R)>0$ such that for any $\mu > \mu^{*}(\lambda)$ and
any finite string $\mathcal{S} = (\mathcal{S}_{1},\ldots,\mathcal{S}_{m}) \in \{0,1,2\}^{m}$, with $\mathcal{S} \neq (0,\ldots,0)$,
there exists a positive $T$-periodic solution $u(t)$ of \eqref{eq-main} such that
\begin{itemize}
\item $\max_{t \in I^{+}_{i}} u(t) < r$, if $\mathcal{S}_{i} = 0$;
\item $r < \max_{t \in I^{+}_{i}} u(t) < \rho$, if $\mathcal{S}_{i} = 1$;
\item $\rho < \max_{t \in I^{+}_{i}} u(t) < R$, if $\mathcal{S}_{i} = 2$.
\end{itemize}
\end{theorem}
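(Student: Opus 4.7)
The plan is to apply Mawhin's coincidence degree theory for locally compact operators in the space $\mathcal{C}_T$ of continuous $T$-periodic functions, extending to $m$ positive humps the single-hump scheme developed in \cite{BoFeZa-15}. I would recast the problem as a coincidence equation $Lu = N_{\lambda,\mu} u$, with $Lu = -u''$ on the domain of $T$-periodic $W^{2,1}$-functions and $N_{\lambda,\mu}$ the Nemytskii operator induced by $(\lambda a^{+} - \mu a^{-})\tilde g(u)$, where $\tilde g$ is a convenient extension of $g$ to $\mathbb{R}$ (say $\tilde g(s) = 0$ for $s \leq 0$), so that the nontrivial fixed points detected by degree coincide with genuine positive solutions of \eqref{eq-main} via a maximum principle argument based on $(a_{*})$.

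The heart of the argument is the construction, for every admissible string $\mathcal{S} = (\mathcal{S}_1,\dots,\mathcal{S}_m) \in \{0,1,2\}^m \setminus \{(0,\dots,0)\}$, of an open set
\[
\Omega_{\mathcal{S}} = \bigl\{\, u \in \mathcal{C}_T : \max_{t \in I_i^{+}} u(t) \in J_{\mathcal{S}_i} \text{ for every } i = 1,\dots,m \,\bigr\},
\]
where $J_0, J_1, J_2$ are the three open windows determined by $0 < r < \rho < R$. Having fixed $\rho$, I would use $(g_0)$ together with a superlinear-type estimate on each positive interval $I_i^{+}$ (which is where the largeness of $\lambda$ enters, fixing $\lambda^{*}(\rho)$) to produce $r \in (0,\rho)$ ruling out any solution with $\max_{I_i^{+}} u = r$; analogously, $(g_\infty)$ yields $R > \rho$ ruling out $\max_{I_i^{+}} u = R$; a transversality/orientation argument rules out the level $\rho$ itself. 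Combined with the effect of a large $\mu$ on the negative intervals (which forces a solution either to decay towards zero on $I_i^{-}$ or to blow up above $R$), these estimates yield $Lu \neq N_{\lambda,\mu}u$ on $\partial\Omega_{\mathcal{S}}$.

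It then remains to compute the coincidence degree of $L - N_{\lambda,\mu}$ on each $\Omega_{\mathcal{S}}$. The key point is that for $\mu \gg 0$ the negative intervals essentially decouple the positive humps: via additivity, excision and a suitable homotopy one obtains a product formula of the form
\[
|D_L(L-N_{\lambda,\mu}, \Omega_{\mathcal{S}})| = \prod_{i=1}^m |d_{\mathcal{S}_i}|,
\]
where the one-hump contributions $d_0, d_1, d_2$ are all nonzero, this being exactly the computation already available in the $m=1$ case from \cite{BoFeZa-15}, in which the super-sublinear structure of $g$ produces both a small and a large positive solution per hump. Since the sets $\Omega_{\mathcal{S}}$ are pairwise disjoint by construction, each contains at least one solution of \eqref{eq-main}, delivering the required $3^m - 1$ distinct positive $T$-periodic solutions; the string $(0,\dots,0)$ is excluded precisely because it corresponds to the trivial solution $u \equiv 0$.

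The hard part is making the decoupling on the negative intervals quantitative and proving the product structure of the degree. One needs the a priori bounds across $I_i^{-}$ and the admissible homotopies to be uniform in the relevant parameters, which is exactly what forces the dependence $\mu^{*} = \mu^{*}(\lambda, r, R)$ appearing in the statement; the most delicate estimate is showing that, for $\mu$ large, the boundary values $u(\tau_i)$ and $u(\sigma_{i+1})$ cannot conspire to push the profile on $I_i^{+}$ out of its prescribed window $J_{\mathcal{S}_i}$, thereby legitimising the factorisation of the full $T$-periodic coincidence degree into a product of single-hump degrees.
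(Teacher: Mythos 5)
Your functional-analytic setup (coincidence degree for the non-invertible periodic operator, extension of $g$ by $-s$ or $0$ for $s\le 0$ plus maximum principle, the window sets $\Omega_{\mathcal{S}}$ indexed by strings, the role of Lemma-type estimates fixing $\lambda^{*}(\rho)$, $r$, $R$ and the convexity/concavity bounds across $I^{-}_{i}$) matches the paper's strategy. The gap is in the degree computation itself: you assert a product formula $|D_{L}(L-N_{\lambda,\mu},\Omega_{\mathcal{S}})|=\prod_{i}|d_{\mathcal{S}_i}|$ obtained ``via additivity, excision and a suitable homotopy'', but none of these properties produces a product. Additivity and excision decompose a degree over a \emph{disjoint union} of open sets as a \emph{sum}; a multiplicative factorisation over the humps would require exhibiting the periodic problem as a genuine product map on a product space, which it is not: the periodic boundary conditions and the one-dimensional kernel of $L$ couple all the humps, and there is no well-defined ``single-hump degree'' $d_{0},d_{1},d_{2}$ to multiply (the $m=1$ computation in \cite{BoFeZa-15} is a degree of the full periodic problem on certain sets, not a local factor). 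So the central step of your argument is unsupported as stated.

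What the paper does instead, and what your proposal is missing, is a two-stage computation. First it introduces the auxiliary sets $\Omega^{\mathcal{I},\mathcal{J}}_{(r,\rho,R)}$ in which only \emph{upper} bounds ($r$, $\rho$ or $R$) are imposed on $\max_{I^{+}_{i}}|u|$; on these, the degree can be computed directly: it is $0$ when $\mathcal{I}\neq\emptyset$, via a nonexistence homotopy $Lu=N_{\lambda,\mu}u+\alpha v$ with $v$ supported on $\bigcup_{i\in\mathcal{I}}I^{+}_{i}$ and $\alpha$ large (Lemma~\ref{lem-deg0}), and it is $1$ when $\mathcal{I}=\emptyset$, via Mawhin's reduction to the one-dimensional Brouwer degree of $-f^{\#}_{\lambda,\mu}$ (Lemma~\ref{lem-deg1}, which is where $\mu>\mu^{\#}(\lambda)$ enters). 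Second, since each $\Omega^{\mathcal{I},\mathcal{J}}$ is the disjoint union of the window sets $\Lambda^{\mathcal{I}',\mathcal{J}'}$ with $\mathcal{I}'\subseteq\mathcal{I}\cup\mathcal{J}$, $\mathcal{J}'\subseteq\mathcal{J}$, a purely combinatorial inclusion--exclusion (Appendix~\ref{appendix-A}) inverts the additivity relations and yields $D_{L}(L-N_{\lambda,\mu},\Lambda^{\mathcal{I},\mathcal{J}})=(-1)^{\#\mathcal{I}}$. Numerically this agrees with your product $d_{0}=d_{2}=1$, $d_{1}=-1$, but the sum-then-invert mechanism is the actual proof; to repair your argument you would need to replace the claimed factorisation by these two steps (or supply an independent proof of multiplicativity, which does not follow from the standard axioms of the coincidence degree). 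A smaller point: the exclusion of the level $\rho$ is not a transversality argument but an integral estimate (Lemma~\ref{lem-rho}) showing that for $\lambda>\lambda^{*}(\rho)$ no nonnegative solution on $I^{+}_{i}$ can attain maximum exactly $\rho$.
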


\begin{remark}\label{rem-1.1}
As already anticipated, the same multiplicity result holds true for the Neumann as well as for the Dirichlet problems associated with
\eqref{eq-main} on the interval $\mathopen{[}0,T\mathclose{]}$. Dealing with these boundary value problems, the weight function
$a(t)$ is allowed to be negative on a right neighborhood of $0$ and/or positive on a left neighborhood of $T$.
Indeed, what is crucial to obtain $3^{m} - 1$ positive solutions
is the fact that there are $m$ positive humps of the weight function which are separated by negative ones.
Accordingly, if we study the Neumann or the Dirichlet problems on $\mathopen{[}0,T\mathclose{]}$ it will be sufficient to suppose that there are
$m-1$ intervals where $a(t)\prec 0$ separating $m$ intervals where $a(t)\succ 0$.
On the other hand, the nature of periodic boundary conditions requires that the positive humps of the
weight coefficient are separated by negative humps on $\mathopen{[}0,T\mathclose{]}/\{0,T\} \simeq \mathbb{R}/T\mathbb{Z}\simeq S^{1}$.
This is the reason for which condition $(a_{*})$ for the periodic problem is conventionally expressed
assuming that, in an interval of length $T$, the weight function starts positive and ends negative.
For a more detailed discussion, see Section~\ref{section-7.2}.
$\hfill\lhd$
\end{remark}

Let us now make some comments about Theorem~\ref{main-theorem}, trying at first to explain its meaning in an informal way.
The existence of $3^{m}-1$ positive solutions comes from the possibility of prescribing, for a positive $T$-periodic solution of \eqref{eq-main},
the behavior in each interval of positivity of the weight function $a(t)$ among three possible ones: either the solution
is ``very small'' on $I^{+}_{i}$ (if $\mathcal{S}_{i} = 0$), or it is ``small'' (if $\mathcal{S}_{i} = 1$)
or it is ``large'' (if $\mathcal{S}_{i} = 2$). This is related to the fact that, as discussed at the beginning of this introduction,
three non-negative solutions for the Dirichlet problem associated with $u'' + \lambda a^{+}(t)g(u) = 0$
on $I^{+}_{i}$ are always available, when $g(s)$ is super-sublinear, for $\lambda > 0$ large enough:
the trivial one, and two positive solutions given by Rabinowitz's theorem (cf.~\cite{Ra-7374}).
This point of view can be made completely rigorous by showing that
the solutions constructed in Theorem~\ref{main-theorem} converge, for $\mu \to +\infty$, to solutions of the Dirichlet problem associated with
$u'' + \lambda a^{+}(t)g(u) = 0$ on each $I^{+}_{i}$ and to zero on $\bigcup_{i}I^{-}_{i}$
(see the second part of Section~\ref{section-5} for a detailed discussion).
With this is mind, one can interpret Theorem~\ref{main-theorem} as a singular perturbation result
from the limit case $\mu = + \infty$. Indeed, by taking into account all the possibilities for the non-negative solutions of
the Dirichlet problem associated with $u'' + \lambda a^{+}(t)g(u) = 0$ on each $I^{+}_{i}$, one finds $3^{m}$ limit profiles for positive solutions
to \eqref{eq-main}. Among them, $3^{m}-1$ are non-trivial and give rise, for $\mu \gg 0$, to $3^{m}-1$ positive $T$-periodic solutions to \eqref{eq-main}, while the
trivial limit profile still persists as the trivial solution to \eqref{eq-main} for any $\mu > 0$.
Figure~\ref{fig-01} below illustrates an example of existence of eight positive solutions for the Dirichlet problem when the weight function
possesses two positive humps separated by a negative one.

\begin{figure}[h!]
\centering
\begin{tikzpicture} [scale=1]
\node at (0,5.1) {};
\node at (-3,3) {\includegraphics[width=0.41\textwidth]{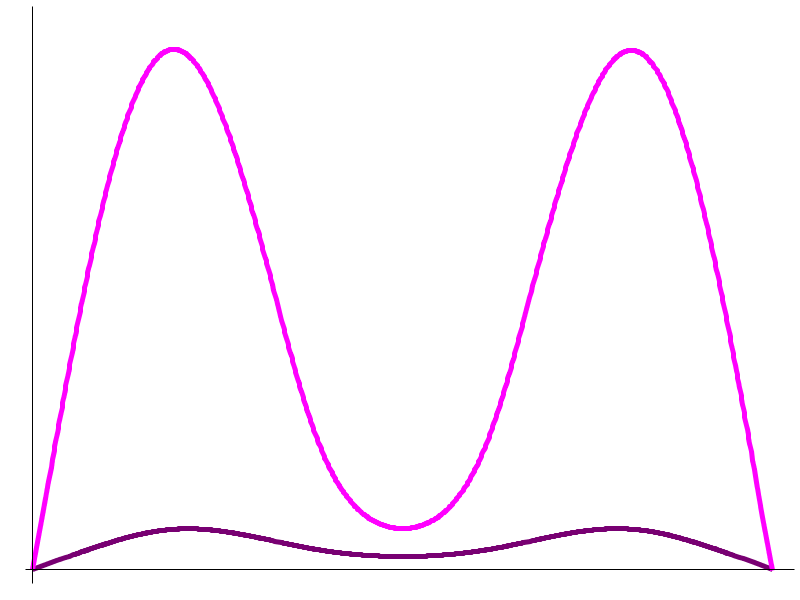}};
\node at (3,3) {\includegraphics[width=0.41\textwidth]{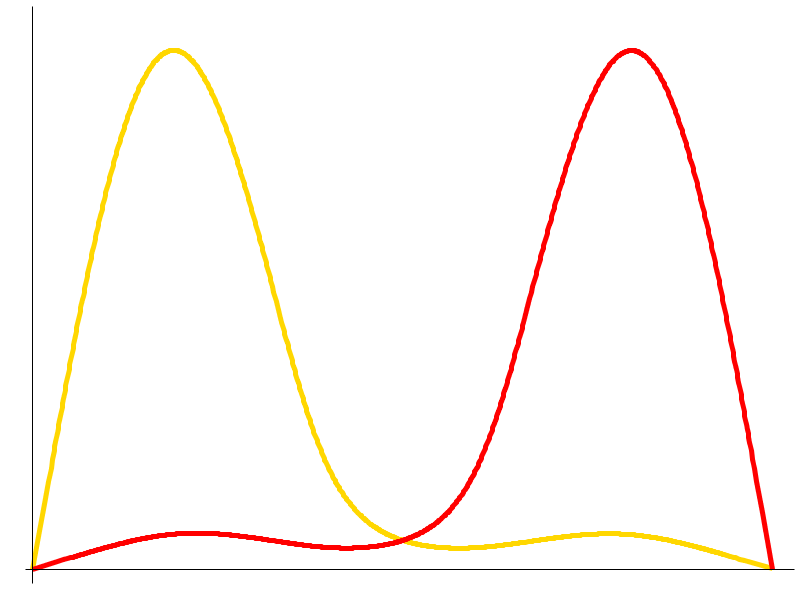}};
\node at (-3,-1.4) {\includegraphics[width=0.41\textwidth]{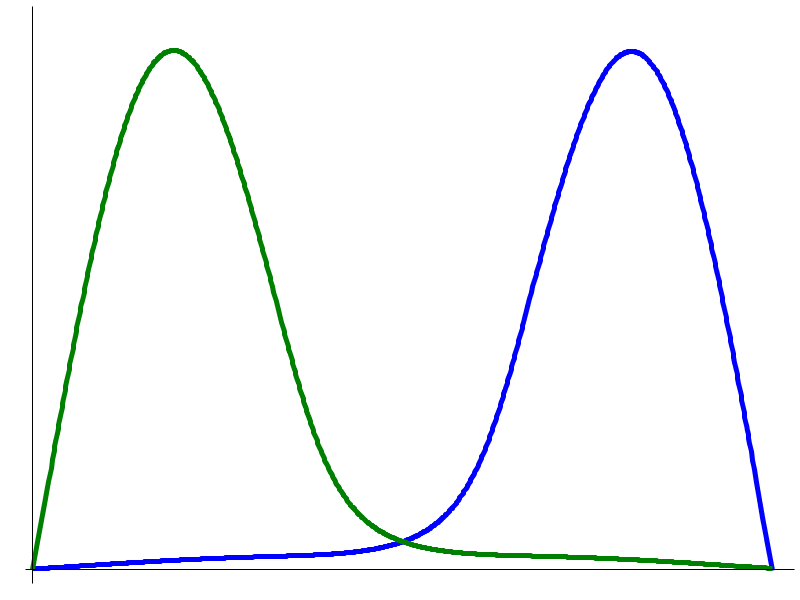}};
\node at (3,-1.4) {\includegraphics[width=0.41\textwidth]{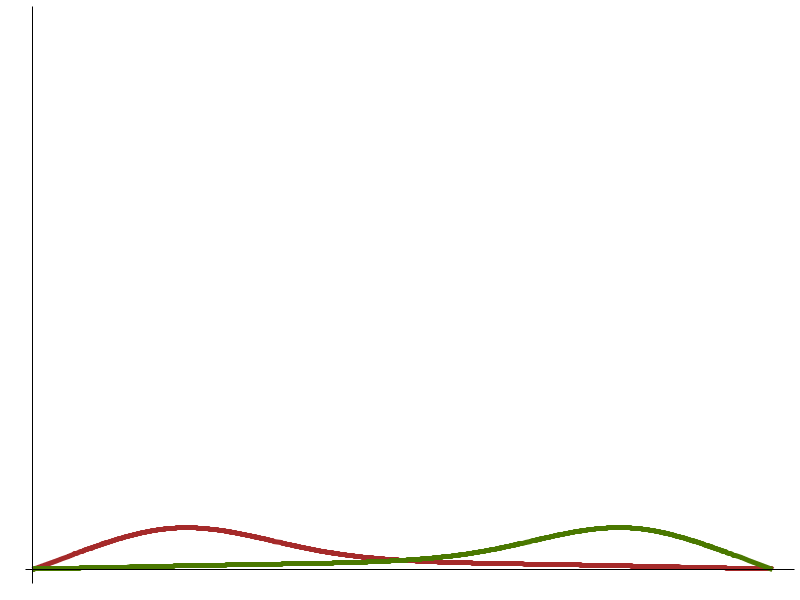}};
\end{tikzpicture}
\caption{\small{The figure shows an example of $8 = 3^{2}-1$ positive solutions to the Dirichlet problem
for the super-sublinear nonlinearity $g(s) = s^{2}/(1+s^{2})$.
For this simulation we have chosen the interval $\mathopen{[}0,T\mathclose{]}$ with $T=3\pi$ and the weight function
$a_{\lambda,\mu}(t) := \lambda \, \sin^{+}(t) - \mu \, \sin^{-}(t)$, so that
$m=2$ is the number of positive humps separated by a negative one. Evidence of multiple
positive solutions (agreeing with Theorem~\ref{th-7.1}) is obtained for $\lambda=3$ and $\mu=10$.
The subfigures (to be read in the natural order left-right and top-bottom)
show pairs of solutions according to the following codes:
$(2,2)$ and $(1,1)$, $(2,1)$ and $(1,2)$, $(2,0)$ and $(0,2)$, $(1,0)$ and $(0,1)$.}}
\label{fig-01}
\end{figure}

What may appear as a relevant aspect of our result is the fact that a minimal set of assumptions
on the nonlinearity $g(s)$ is required. Indeed, only positivity, continuity and the hypotheses
on the limits $g(s)/s$ for $s \to 0^{+}$ and $s \to +\infty$ are required. In particular,
no supplementary power-type growth conditions at zero or at infinity are needed. In the recent paper \cite{BoFeZa-15}
we obtained the existence of at least two positive $T$-periodic solutions under the sharp condition \eqref{mudiesis}
on the coefficient $\mu$; on the other hand, in the same paper some extra (although mild) assumptions on $g(s)$ were imposed.
It is interesting to observe that increasing the value of $\mu$ yields both abundance of solutions and no-extra assumptions on $g(s)$.

The possibility of finding multiple positive solutions of indefinite nonlinear problems
by playing with the nodal behavior of the weight function was at first suggested in a paper by G{\'o}mez-Re{\~n}asco and L{\'o}pez-G{\'o}mez
\cite{GoReLoGo-00}, in analogy with the celebrated papers by Dancer \cite{Da-88,Da-90} providing multiplicity of solutions to elliptic BVPs by playing with the shape of the domain.
In particular, it was then proved in \cite{GaHaZa-03, GaHaZa-04} that the Dirichlet boundary value problem associated with the superlinear indefinite equation
\begin{equation*}
u'' + \bigl{(} a^{+}(t) - \mu a^{-}(t) \bigr{)} u^{p} = 0, \quad \text{with } \; p > 1,
\end{equation*}
has $2^{m}-1$ positive solutions (with $m$ again being the number of negative humps of $a(t)$) when $\mu$ is large.
This result has later been extended in various directions, so as to cover also the case of an elliptic
PDE (cf.~\cite{BoGoHa-05, GiGo-09}), more general nonlinearities (cf.~\cite{FeZa-jde2015, GiGo-09Ed}) as well as Neumann/periodic boundary conditions
(cf.~\cite{BaBoVe-15,Bo-11, FeZa-ade2015, FeZa-pp2015}).
The fact that in the superlinear case less solutions, with respect to Theorem~\ref{main-theorem}, are available
is not surprising, since in general no more than one positive solution can be expected for the Dirichlet problem
associated with $u'' + a^{+}(t)u^{p} = 0$ on the interval $I^{+}_{i}$ (the uniqueness is simple to check at least for
$a^{+}\equiv 1$). On the other hand, the parameter $\lambda$ in front of the
positive part of the weight function is not necessary to ensure existence: indeed, the superlinear growth at infinity
plays here the same role as the largeness of $\lambda$ in the super-sublinear case.
Referring to Theorem~\ref{main-theorem}, we can thus say that the $2^{m}-1$ solutions associated
with strings $\mathcal{S}$ with $\mathcal{S}_{i} \in \{0,1\}$ correspond to the solutions already available for superlinear problems,
while all the other ones (that is, $\mathcal{S}_{i} = 2$ for at least an index $i$) are typical of super-sublinear nonlinearities.

An important feature of Theorem~\ref{main-theorem} is that all the constants appearing in the statement (precisely $\lambda^{*}$, $r$, $R$ and $\mu^{*}(\lambda)$)
can be explicitly estimated (depending on $g(s)$, $a(t)$, as well as on the arbitrary choice of $\rho$).
In particular, it turns out
that, whenever Theorem~\ref{main-theorem} is applied to an interval of the form
$\mathopen{[}0,kT\mathclose{]}$, with $k \geq 1$ an integer number, these constants can be chosen independently on $k$.
This implies that, for any fixed $\lambda>\lambda^{*}$ and for any $\mu>\mu^{*}(\lambda)$, equation \eqref{eq-main}
has positive $T$-periodic solutions as well as positive $kT$-periodic solutions for any $k\geq2$.
Such solutions can of course be coded similarly as the $T$-periodic ones, by prescribing their behavior on the intervals
\begin{equation*}
I^{+}_{i,\ell} := I^{+}_{i} + \ell \, T, \quad \text{for } \; i=1,\ldots,m \; \text{ and } \; \ell \in \mathbb{Z},
\end{equation*}
according to a non-null bi-infinite $km$-periodic string $\mathcal{S}$
in the alphabet $\mathscr{A}:= \{0,1,2\}$ of $3$ symbols (see Theorem~\ref{main-theorem-sub}).
This information can be used to prove that many of these positive $kT$-periodic solutions
have $kT$ as minimal period, namely they are \textit{subharmonic solutions} of order $k$
(see Theorem~\ref{main-theorem-sub2}, where a lower bound based on the combinatorial concept of \textit{Lyndon words} is given).
Finally, using an approximation argument of Krasnosel'ski\u{\i}-Mawhin type (cf.~\cite{Kr-68, Ma-96}) for $k\to\infty$,
it is possible to construct globally defined bounded
positive solutions to \eqref{eq-main}, whose behavior on each $I^{+}_{i,\ell}$ can be prescribed a priori with a nontrivial bi-infinite
string $\mathcal{S} \in \mathscr{A}^{\mathbb{Z}}$ and thus exhibiting \textit{chaotic-like dynamics} (see Theorem~\ref{th-6.4}).
In this way we can improve the main result in \cite{BoZa-12b}, where arguments from topological horseshoes theory were
used to construct a symbolic dynamics on two symbols ($1$ and $2$, according to the notation of the present paper).

For the proof of Theorem~\ref{main-theorem} and its variants,
we use a topological degree approach looking for solutions to an operator equation of the form
\begin{equation}\label{eq-1.6}
Lu = N_{\lambda,\mu} u, \quad u \in \text{\rm dom}\,L,
\end{equation}
where $L$ is the differential operator $u \mapsto - u''$ subject to the boundary conditions
and $N_{\lambda,\mu}$ is the Nemytskii operator induced by a suitable extension, defined for all $s\in \mathbb{R}$,
of $a_{\lambda,\mu}(t) g(s)$.
Once we have chosen an appropriate pair of spaces $X, Z$ such that $L \colon \text{\rm dom}\,L \,(\subseteq X) \to Z$
and $N_{\lambda,\mu} \colon X \to Z$, we transform equation \eqref{eq-1.6} into an
equivalent fixed point problem of the form
\begin{equation*}
u = \Phi_{\lambda,\mu} u, \quad u \in X,
\end{equation*}
with $\Phi_{\lambda,\mu}$ a completely continuous operator acting on $X$. In the case of the Dirichlet problem,
the linear operator $L$ is invertible and thus $\Phi_{\lambda,\mu} = L^{-1}N_{\lambda,\mu}$,
while for periodic and Neumann boundary conditions we follow the approach introduced by J.~Mawhin in \cite{Ma-69}
for the definition of the coincidence degree. The crucial steps in the proofs consist in defining some special open
and unbounded sets $\Lambda^{\mathcal{I},\mathcal{J}}\subseteq X$ and in computing $\text{\rm deg}_{LS}(Id - \Phi_{\lambda,\mu},\Lambda^{\mathcal{I},\mathcal{J}},0)$,
where ``$\text{\rm deg}_{LS}$'' denotes the
Leray-Schauder degree for locally compact operators (cf.~\cite{Gr-72, Nu-85, Nu-93}).
In the definition of these open sets,
$\mathcal{I}$ and $\mathcal{J}$ are prescribed disjoint sets of indices contained in $\{1,\ldots,m\}$ and
$u\in \Lambda^{\mathcal{I},\mathcal{J}}$ provided that $u(t)$ is ``very small'', ``small'' or ``large'' on the intervals $I^{+}_{i}$
when $i\notin \mathcal{I}\cup \mathcal{J}$, $i\in \mathcal{I}$, or $i\in \mathcal{J}$, respectively.
Moreover, by construction, $0\notin \Lambda^{\mathcal{I},\mathcal{J}}$ when $\mathcal{I}\cup \mathcal{J}\neq \emptyset$.
For $\lambda > \lambda^{*}$ and $\mu > \mu^{*}(\lambda)$, we prove that the degree is defined and
\begin{equation}\label{eq-1.7}
\text{\rm deg}_{LS}(Id - \Phi_{\lambda,\mu},\Lambda^{\mathcal{I},\mathcal{J}},0) \neq 0.
\end{equation}
Condition \eqref{eq-1.7} together with a maximum principle argument implies the existence of a \textit{non-negative} solution $u(t)$ of
\eqref{eq-main} satisfying the boundary conditions and, moreover, such that $u\in \Lambda^{\mathcal{I},\mathcal{J}}$.
This non-negative solution is either positive or the trivial one.
Considering all the possible choices of pairwise disjoint sets $\mathcal{I}, \mathcal{J} \subseteq \{1,\ldots m\}$
with $\mathcal{I}\cup \mathcal{J}\neq \emptyset$, we thus obtain the desired $3^{m} -1$ positive solutions.

\medskip

The plan of the paper is the following. In Section~\ref{section-2} we introduce the functional analytic setting
to deal with the operator equation \eqref{eq-1.6}. We shall focus our attention mainly to the case of the periodic
boundary value problem (so that the operator $L$ is not invertible) exploiting the
framework and the properties of Mawhin's coincidence degree. Although coincidence degree theory has been already
well developed in some classical textbooks (see \cite{GaMa-77, Ma-79, Ma-93}), we recall some main properties for the reader's convenience.
In particular, due to our choice of considering equation \eqref{eq-1.6}
on open and unbounded sets, we present the theory from the slightly more general point of view
of locally compact operators.
In Section~\ref{section-3} we define the open and unbounded sets $\Lambda^{\mathcal{I},\mathcal{J}}$ and describe the general strategy for the proof of
the degree formula \eqref{eq-1.7}. In more detail, we first introduce some auxiliary open and unbounded sets $\Omega^{\mathcal{I},\mathcal{J}}$
and we then present two lemmas (Lemma~\ref{lem-deg0} and Lemma~\ref{lem-deg1}) for the computation of
\begin{equation}\label{eq-1.8}
\text{\rm deg}_{LS}(Id - \Phi_{\lambda,\mu},\Omega^{\mathcal{I},\mathcal{J}},0).
\end{equation}
The obtention of \eqref{eq-1.7} from the evaluation of the degrees in \eqref{eq-1.8}
is justified in the appendix using a purely combinatorial argument.
Next, in Section~\ref{section-4} we actually show, by means of some careful estimates on the solutions
of some parameterized equations related to \eqref{eq-main}, that the above lemmas and the general strategy
can be applied for $\lambda$ and $\mu$ large, thus concluding the proof of Theorem~\ref{main-theorem}.
In Section~\ref{section-5} we present some general properties of (not necessarily periodic) positive
solutions of \eqref{eq-main} defined on the whole real line and we discuss the limit behavior for $\mu\to+\infty$.
Section~\ref{section-6} is devoted to the study of positive subharmonic solutions and of
positive solutions with a chaotic-like behavior. In a dynamical system perspective, we also prove the presence of
a Bernoulli shift as a factor within the set of positive bounded solutions of \eqref{eq-main}.
The paper ends with Section~\ref{section-7}, where we discuss variants and extensions of Theorem~\ref{main-theorem}
and we also present an application to radially symmetric solutions for some elliptic PDEs.

\section{Abstract setting}\label{section-2}

Dealing with boundary value problems, it is often convenient to choose spaces of functions
defined on compact domains. Therefore, for the $T$-periodic problem, as usual, we
shall restrict ourselves to functions $u(t)$ defined on $\mathopen{[}0,T\mathclose{]}$ and such that
\begin{equation}\label{pbc}
u(0) = u(T), \quad u'(0) = u'(T).
\end{equation}
In the sequel, solutions of a given second order differential equation satisfying
the boundary condition \eqref{pbc} will be referred to as \textit{$T$-periodic solutions}.

\medskip

Let $X:=\mathcal{C}(\mathopen{[}0,T\mathclose{]})$ be the Banach space of continuous functions $u \colon \mathopen{[}0,T\mathclose{]} \to \mathbb{R}$,
endowed with the norm
\begin{equation*}
\|u\|_{\infty} := \max_{t\in \mathopen{[}0,T\mathclose{]}} |u(t)|,
\end{equation*}
and let $Z:=L^{1}(\mathopen{[}0,T\mathclose{]})$ be the Banach space of integrable functions $v \colon \mathopen{[}0,T\mathclose{]} \to \mathbb{R}$,
endowed with the norm
\begin{equation*}
\|v\|_{L^{1}}:= \int_{0}^{T} |v(t)|~\!dt.
\end{equation*}
As well known, the differential operator
\begin{equation*}
L \colon u \mapsto - u'',
\end{equation*}
defined on
\begin{equation*}
\text{\rm dom}\,L := \bigl{\{}u\in W^{2,1}(\mathopen{[}0,T\mathclose{]}) \colon u(0) = u(T), \; u'(0) = u'(T) \bigr{\}} \subseteq X,
\end{equation*}
is a linear Fredholm map of index zero with range
\begin{equation*}
\text{\rm Im}\,L = \biggl{\{}v\in Z \colon \int_{0}^{T} v(t)~\!dt = 0 \biggr\}.
\end{equation*}
Moreover, we can define the projectors
\begin{equation*}
P \colon X \to \ker L \cong {\mathbb{R}}, \qquad Q \colon Z \to \text{\rm coker}\,L \cong Z/\text{\rm Im}\,L \cong \mathbb{R},
\end{equation*}
as the average operators
\begin{equation*}
Pu = Qu := \dfrac{1}{T}\int_{0}^{T} u(t)~\!dt.
\end{equation*}
Finally, let
\begin{equation*}
K_{P} \colon \text{\rm Im}\,L \to \text{\rm dom}\,L \cap \ker P
\end{equation*}
be the right inverse of $L$, that is the operator that to any function $v\in L^{1}(\mathopen{[}0,T\mathclose{]})$ with $\int_{0}^{T} v(t)~\!dt =0$
associates the unique $T$-periodic solution $u$ of
\begin{equation*}
u'' + v(t) =0, \quad \text{ with }\; \int_{0}^{T} u(t)~\!dt = 0.
\end{equation*}

Next, we introduce the $L^{1}$-Carath\'{e}odory function
\begin{equation*}
f_{\lambda,\mu}(t,s) :=
\begin{cases}
\, -s, & \text{if } s \leq 0;\\
\, \bigl{(}\lambda a^{+}(t) - \mu a^{-}(t)\bigr{)} g(s), & \text{if } s \geq 0;
\end{cases}
\end{equation*}
where $a \colon \mathbb{R} \to \mathbb{R}$ is a locally integrable $T$-periodic function, $g \colon {\mathbb{R}}^{+}\to {\mathbb{R}}^{+}$
is a continuous function with $g(0) = 0$ and $\lambda,\mu > 0$ are fixed parameters.
Let us denote by $N_{\lambda,\mu} \colon X \to Z$ the Nemytskii operator induced by the function $f_{\lambda,\mu}$, that is
\begin{equation*}
(N_{\lambda,\mu} u)(t):= f_{\lambda,\mu}(t,u(t)), \quad t\in \mathopen{[}0,T\mathclose{]}.
\end{equation*}
By coincidence degree theory, the operator equation
\begin{equation*}
Lu = N_{\lambda,\mu}u,\quad u\in \text{\rm dom}\,L,
\end{equation*}
is equivalent to the fixed point problem
\begin{equation*}
u = \Phi_{\lambda,\mu}u:= Pu + QN_{\lambda,\mu}u + K_{P}(Id-Q)N_{\lambda,\mu}u, \quad u \in X.
\end{equation*}
Notice that the term $QN_{\lambda,\mu}u$ in the above formula should be more correctly written as $JQN_{\lambda,\mu}u$, where $J$ is a
linear (orientation-preserving) isomorphism from $\text{\rm coker}\,L$ to $\ker L$. However, in our situation, both $\text{\rm coker}\,L$, as well as $\ker L$, can
be identified with $\mathbb{R}$, so that we can take as $J$ the identity on $\mathbb{R}$.
It is standard to verify that $\Phi_{\lambda,\mu} \colon X \to X$ is a completely continuous operator and thus we
say that $N_{\lambda,\mu}$ is \textit{$L$-completely continuous}.

If $\mathcal{O}\subseteq X$ is an open and \textit{bounded} set such that
\begin{equation*}
Lu \neq N_{\lambda,\mu}u, \quad \forall \, u\in \partial{\mathcal{O}}\cap \text{\rm dom}\,L,
\end{equation*}
the \textit{coincidence degree} $D_{L}(L-N_{\lambda,\mu},{\mathcal{O}})$ (\textit{of $L$ and $N_{\lambda,\mu}$ in ${\mathcal{O}}$})
is defined as
\begin{equation*}
D_{L}(L-N_{\lambda,\mu},\mathcal{O}):= \text{\rm deg}_{LS}(Id - \Phi_{\lambda,\mu},{\mathcal{O}},0).
\end{equation*}

In order to introduce the coincidence degree on open (possibly unbounded) sets,
we just follow the standard approach used to define the Leray-Schauder degree
for locally compact maps defined on open sets, which is classical in the theory of fixed point index
(cf.~\cite{Gr-72,Ma-99,Nu-85,Nu-93}). In more detail,
let $\Omega\subseteq X$ be an open set and suppose that the solution set
\begin{equation*}
\text{\rm Fix}\,(\Phi_{\lambda,\mu},\Omega):= \bigl{\{}u\in {\Omega} \colon u =
\Phi_{\lambda,\mu}u\bigr{\}} = \bigl{\{}u\in {\Omega}\cap \text{\rm dom}\,L \colon Lu = N_{\lambda,\mu}u\bigr{\}}
\end{equation*}
is compact.
Then, the Leray-Schauder degree $\text{\rm deg}_{LS}(Id - \Phi_{\lambda,\mu},\Omega,0)$ is defined as
\begin{equation*}
\text{\rm deg}_{LS}(Id - \Phi_{\lambda,\mu},\Omega,0):=\text{\rm deg}_{LS}(Id - \Phi_{\lambda,\mu},\mathcal{V},0),
\end{equation*}
where $\mathcal{V}$ is an open bounded set with
\begin{equation}\label{eq-2.3}
\text{\rm Fix}\,(\Phi_{\lambda,\mu},\Omega) \subseteq \mathcal{V} \subseteq \overline{\mathcal{V}} \subseteq \Omega.
\end{equation}
It is possible to check that the definition is independent of the choice of $\mathcal{V}$.
Accordingly, we define the \textit{coincidence degree} $D_{L}(L-N_{\lambda,\mu},\Omega)$
(\textit{of $L$ and $N_{\lambda,\mu}$ in $\Omega$}) as
\begin{equation*}
D_{L}(L-N_{\lambda,\mu},\Omega):= D_{L}(L-N_{\lambda,\mu},{\mathcal{V}}) =\text{\rm deg}_{LS}(Id - \Phi_{\lambda,\mu},\mathcal{V},0),
\end{equation*}
with $\mathcal{V}$ as above.
In the special case of an open and \textit{bounded} set $\Omega$ such that
\begin{equation}\label{eq-2.4}
Lu \neq N_{\lambda,\mu}u, \quad \forall \, u\in \partial\Omega\cap \text{\rm dom}\,L,
\end{equation}
it is easy to verify that the above definition reduces to the classical one. Indeed, if \eqref{eq-2.4} holds with $\Omega$ open and bounded, then,
by the excision property of the Leray-Schauder degree, we have
$\text{\rm deg}_{LS}(Id - \Phi_{\lambda,\mu},\mathcal{V},0) = \text{\rm deg}_{LS}(Id - \Phi_{\lambda,\mu},\Omega,0)$
for each open bounded set $\mathcal{V}$ satisfying \eqref{eq-2.3}.

Combining the properties of coincidence degree with the theory of fixed point index for locally compact operators,
it is possible to derive the following versions of the main properties of the degree.

\begin{itemize}
\item \textit{Additivity. }
Let $\Omega_{1}$, $\Omega_{2}$ be open and disjoint subsets of $\Omega$ such that $\text{\rm Fix}\,(\Phi_{\lambda,\mu},\Omega)\subseteq \Omega_{1}\cup\Omega_{2}$.
Then
\begin{equation*}
D_{L}(L-N_{\lambda,\mu},\Omega) = D_{L}(L-N_{\lambda,\mu},\Omega_{1})+ D_{L}(L-N_{\lambda,\mu},\Omega_{2}).
\end{equation*}

\item \textit{Excision. }
Let $\Omega_{0}$ be an open subset of $\Omega$ such that $\text{\rm Fix}\,(\Phi_{\lambda,\mu},\Omega)\subseteq \Omega_{0}$.
Then
\begin{equation*}
D_{L}(L-N_{\lambda,\mu},\Omega)=D_{L}(L-N_{\lambda,\mu},\Omega_{0}).
\end{equation*}

\item \textit{Existence theorem. }
If $D_{L}(L-N_{\lambda,\mu},\Omega)\neq0$, then $\text{\rm Fix}\,(\Phi_{\lambda,\mu},\Omega)\neq\emptyset$,
hence there exists $u\in {\Omega}\cap \text{\rm dom}\,L$ such that $Lu = N_{\lambda,\mu}u$.

\item \textit{Homotopic invariance. }
Let $H\colon\mathopen{[}0,1\mathclose{]}\times \Omega \to X$, $H_{\vartheta}(u) := H(\vartheta,u)$, be a continuous homotopy such that
\begin{equation*}
\mathcal{S}:=\bigcup_{\vartheta\in\mathopen{[}0,1\mathclose{]}} \bigl{\{}u\in \Omega\cap \text{\rm dom}\,L \colon Lu=H_{\vartheta}u\bigr{\}}
\end{equation*}
is a compact set and there exists an open neighborhood $\mathcal{W}$ of $\mathcal{S}$ such that $\overline{\mathcal{W}}\subseteq \Omega$ and
$(K_{P}(Id-Q)H)|_{\mathopen{[}0,1\mathclose{]}\times\overline{\mathcal{W}}}$ is a compact map.
Then the map $\vartheta\mapsto D_{L}(L-H_{\vartheta},\Omega)$ is constant on $\mathopen{[}0,1\mathclose{]}$.

\end{itemize}

For more details, proofs and applications, we refer to \cite{GaMa-77,Ma-79,Ma-93} and the references therein.

In the sequel we will apply this general setting in the following manner. We consider a $L$-completely continuous operator $\mathcal{N}$
and an open (not necessarily bounded) set $\mathcal{A}$ such that
the solution set $\{ u\in \overline{\mathcal{A}}\cap \text{\rm dom}\,L\colon Lu = \mathcal{N} u \}$
is compact and disjoint from $\partial\mathcal{A}$. Therefore $D_L(L-\mathcal{N},\mathcal{A})$ is well defined.
We will proceed analogously when dealing with homotopies.

We notice that, by the existence theorem, if $D_{L}(L-N_{\lambda,\mu},\Omega)\neq0$ for some open set $\Omega \subseteq X$,
then equation
\begin{equation}\label{eq-flm}
u'' + f_{\lambda,\mu}(t,u) = 0
\end{equation}
has at least one solution in $\Omega$ satisfying the boundary condition \eqref{pbc}. If we denote by $u(t)$ such a solution,
we have that $u(t)$ can be extended by $T$-periodicity to a $T$-periodic solution of \eqref{eq-flm} defined on the whole real line.
Moreover, a standard application of the maximum principle ensures that $u(t) \geq 0$ for all $t\in \mathbb{R}$.
Finally, if $g(s)/s$ is bounded in a right neighborhood of $s=0$ (a situation which always occurs if we assume $(g_{0})$),
then either $u\equiv 0$ or $u(t) > 0$ for all $t\in \mathbb{R}$.

\begin{remark}\label{rem-2.1}
As already observed in the introduction, our main attention is devoted to the study of the periodic problem for $L u = - u''$,
while, for Neumann and Dirichlet boundary conditions, as well as for other operators, we only underline which modifications are needed.

If we study the Neumann problem, we just modify the domain of $L$ as
\begin{equation*}
\text{\rm dom}\,L := \bigl{\{} u\in W^{2,1}(\mathopen{[}0,T\mathclose{]}) \colon u'(0) = u'(T) = 0 \bigr{\}} \subseteq X
\end{equation*}
and all the rest is basically the same with elementary modifications. Obviously,
the right inverse of $L$ now is the operator which associates to any function $v\in L^{1}(\mathopen{[}0,T\mathclose{]})$
satisfying $\int_{0}^{T} v(t)~\!dt = 0$ the unique solution of $u'' + v(t) = 0$ with
$u'(0) = u'(T) = 0$ and $\int_{0}^{T} u(t)~\!dt = 0$.

In the case of the Dirichlet problem, the domain of $L$ is
\begin{equation*}
\text{\rm dom}\,L := W^{2,1}_{0}(\mathopen{[}0,T\mathclose{]}) = \bigl{\{}u\in W^{2,1}(\mathopen{[}0,T\mathclose{]}) \colon u(0) = u(T) = 0 \bigr{\}} \subseteq X,
\end{equation*}
but now the differential operator $L$ is invertible (indeed it can be expressed by means of the Green's function), so that
$\Phi_{\lambda,\mu} = L^{-1} N_{\lambda,\mu}$. In this situation, coincidence degree theory reduces to the
classical Leray-Schauder one for locally compact operators.

Finally, we observe that the above framework remains substantially unchanged for other classes of
linear differential operators. In the periodic case, exactly the same considerations as above are valid if we take the operator
\begin{equation*}
L \colon u \mapsto - u'' - cu',
\end{equation*}
where $c \in \mathbb{R}$ is an arbitrary but fixed constant
(recall that the maximum principle is still valid in this setting, see \cite[\S~6]{FeZa-ade2015}). This, in principle, allows us
to insert a dissipation term in equation \eqref{eq-main} (see Section~\ref{section-7.1} for a more detailed discussion).

Concerning the Neumann and the Dirichlet problems, we can easily deal with
self-adjoint differential operators of the form
\begin{equation*}
L \colon u \mapsto -(p(t)u')',
\end{equation*}
with $p(t) > 0$ for all $t\in \mathopen{[}0,T\mathclose{]}$. We do not insist further on these aspects; however,
we will see later a special example of $p(t)$ which naturally
arises in the study of radially symmetric solutions of elliptic PDEs (see Section~\ref{section-7.3}).
$\hfill\lhd$
\end{remark}

\section{Proof of Theorem~\ref{main-theorem}: an outline}\label{section-3}

The proof of Theorem~\ref{main-theorem} and its variants is based on the abstract setting
described in the previous section but it also requires some careful estimates on the
solutions of \eqref{eq-main} and of some related equations. In this section we
first introduce some special open sets of the Banach space $X$ where the coincidence degree
will be computed and next we present the main steps which are required for these computations.
In this manner we can skip for a moment all the technical estimates (which are developed in Section~\ref{section-4})
and focus ourselves on the general strategy of the proof.

From now on, all the assumptions on $a(t)$ and $g(s)$ in Theorem~\ref{main-theorem} will be implicitly assumed.

\subsection{General strategy}\label{section-3.1}

Let us fix an arbitrary constant $\rho > 0$. Depending on $\rho$, we determine a value
$\lambda^{*} = \lambda^{*}(\rho) > 0$ such that, for $\lambda > \lambda^{*}$, any solution to
\begin{equation*}
u'' + \lambda a^{+}(t)g(u) = 0,
\end{equation*}
with $\max_{t\in I^{+}_{i}} u(t) = \rho$, must vanish on $I^{+}_{i}$ (whatever the index $i=1\ldots,m$). This
fact is expressed in a more formal way in Lemma~\ref{lem-rho} (where we also consider a more general equation).
From now on, both $\rho$ and $\lambda > \lambda^{*}$ are fixed.

Next, given any constants $r,R$ with $0 < r < \rho < R$ and for any pair of
subsets of indices $\mathcal{I},\mathcal{J} \subseteq \{1,\ldots,m\}$ (possibly empty) with
$\mathcal{I} \cap \mathcal{J} = \emptyset$, we define the open and unbounded set
\begin{equation}\label{eq-Omega}
\Omega^{\mathcal{I},\mathcal{J}}_{(r,\rho,R)} :=
\left\{ u \in X \colon
\begin{array}{l}
\max_{I^{+}_{i}}|u|<r, \; i\in\{1,\ldots,m\}\setminus(\mathcal{I}\cup\mathcal{J})
\\
\max_{I^{+}_{i}}|u|<\rho, \; i\in\mathcal{I}
\\
\max_{I^{+}_{i}}|u|<R, \; i\in\mathcal{J}
\end{array} \right\}.
\end{equation}
Then, in Section~\ref{section-4.2} we determine two specific constants $r$, $R$ with $0 < r < \rho < R$ such that, for any choice of $\mathcal{I},\mathcal{J}$
as above, the coincidence degree
\begin{equation*}
D_{L}\bigl{(}L-N_{\lambda,\mu},\Omega^{\mathcal{I},\mathcal{J}}_{(r,\rho,R)} \bigr{)}
\end{equation*}
is defined, provided that $\mu$ is sufficiently large (say $\mu > \mu^{*}(\lambda,r,R)$).
Along this process, in Section~\ref{section-4.3} and Section~\ref{section-4.4} we also prove Theorem~\ref{deg-Omega} below.

\begin{theorem}\label{deg-Omega}
In the above setting, it holds that
\begin{equation}\label{eq-3.1}
D_{L}\bigl{(}L-N_{\lambda,\mu},\Omega^{\mathcal{I},\mathcal{J}}_{(r,\rho,R)} \bigr{)} =
\begin{cases}
\, 0, & \text{if } \;\mathcal{I} \neq \emptyset; \\
\, 1, & \text{if } \;\mathcal{I} = \emptyset.
\end{cases}
\end{equation}
\end{theorem}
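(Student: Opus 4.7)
The plan is to apply the homotopy invariance property of the coincidence degree, splitting into two cases according to whether $\mathcal{I}$ is empty or not; these two cases will be handled by two separate admissibility lemmas (the Lemmas~\ref{lem-deg0} and~\ref{lem-deg1} advertised in the introduction to Section~\ref{section-3}), each producing one of the two values in \eqref{eq-3.1}. In both cases, the critical ingredient will be a priori estimates ensuring that the chosen parametric family of problems has no solution on $\partial\Omega^{\mathcal{I},\mathcal{J}}$ and that the associated fixed-point set is compact in $X$; these estimates are precisely what will be established via the careful analysis of Section~\ref{section-4}.

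For the case $\mathcal{I}=\emptyset$, I would build a convex homotopy from $N_{\lambda,\mu}$ to the coercive linear map $u\mapsto -u$, for example
\[
L u = \vartheta\, N_{\lambda,\mu} u \,-\, (1-\vartheta)\,u, \quad \vartheta\in[0,1].
\]
At $\vartheta=0$ the $T$-periodic problem reduces to $-u''+u=0$, whose only solution is $u\equiv 0$; since the zero function satisfies strictly every max-constraint defining $\Omega^{\emptyset,\mathcal{J}}$, it belongs to the open set, and a standard computation exploiting the fact that $L+I$ is an invertible self-adjoint Fredholm operator of index zero gives $D_{L}(L+I,\Omega^{\emptyset,\mathcal{J}})=1$. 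Provided the homotopy is admissible on $\partial\Omega^{\emptyset,\mathcal{J}}$ for every $\vartheta\in[0,1]$ (the content of Lemma~\ref{lem-deg1}), homotopy invariance then transports this value to $\vartheta=1$, yielding the second line of \eqref{eq-3.1}.

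For the case $\mathcal{I}\neq\emptyset$, I would pick any index $i_{0}\in\mathcal{I}$ and use a positive forcing term supported on the hump $I^{+}_{i_{0}}$ to drive solutions out of the set. Concretely, fix a function $w\in L^{1}(\mathopen{[}0,T\mathclose{]})$ with $w\succ 0$ on $I^{+}_{i_{0}}$ and $w\equiv 0$ elsewhere, and consider
\[
L u = N_{\lambda,\mu} u + \vartheta\,\alpha\, w, \quad \vartheta\in[0,1],
\]
for a constant $\alpha>0$ to be chosen large enough. The assertion of Lemma~\ref{lem-deg0} is that for such an $\alpha$, no solution of this parametric problem can satisfy $\max_{I^{+}_{i_{0}}}|u|\leq\rho$; hence at $\vartheta=1$ the fixed-point set inside $\overline{\Omega^{\mathcal{I},\mathcal{J}}}$ is empty, while along the whole homotopy it remains disjoint from $\partial\Omega^{\mathcal{I},\mathcal{J}}$. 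Homotopy invariance and the existence property then force the degree at $\vartheta=0$ to vanish, giving the first line of \eqref{eq-3.1}.

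The main obstacle in both cases is admissibility of the homotopies on the \emph{unbounded} boundary $\partial\Omega^{\mathcal{I},\mathcal{J}}$: since the sets $\Omega^{\mathcal{I},\mathcal{J}}$ impose no constraint whatsoever on $u|_{I^{-}_{i}}$, they are unbounded in $X$ and compactness of the fixed-point set is not automatic. The needed a priori control on the negative humps is extracted from assumption $(g_{\infty})$ combined with the size condition $\mu>\mu^{*}(\lambda,r,R)$, which together prevent solutions from blowing up on $\bigcup_{i}I^{-}_{i}$; this balance is what underlies the choice of constants in Section~\ref{section-4.2}. A second delicate point in the case $\mathcal{I}\neq\emptyset$ is to choose $\alpha$ uniformly in $\vartheta$ large enough to overpower $N_{\lambda,\mu}$ on $I^{+}_{i_{0}}$, yet without letting solutions on the remaining humps cross the levels $r$ or $R$; this is where the choice of $\lambda>\lambda^{*}$ provided by Lemma~\ref{lem-rho} becomes essential, as it prevents solutions from ever reaching the intermediate threshold $\rho$ along the homotopy on any hump $I^{+}_{i}$.
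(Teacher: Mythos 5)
Your treatment of the case $\mathcal{I}\neq\emptyset$ is essentially the paper's argument: a nonnegative forcing term supported on positivity humps indexed by $\mathcal{I}$ (the paper takes $v$ to be the indicator of $\bigcup_{i\in\mathcal{I}}I^{+}_{i}$, but the contradiction for $\alpha$ large is extracted from a single hump, so your one-hump choice is equivalent), with Lemma~\ref{lem-rho} keeping solutions away from the level $\rho$ uniformly in $\alpha\geq 0$. That half is correct.

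The case $\mathcal{I}=\emptyset$ has a genuine gap. You homotope to $Lu=-u$ along the convex path $Lu=\vartheta N_{\lambda,\mu}u-(1-\vartheta)u$ and defer admissibility to ``the estimates of Section~\ref{section-4}''. But those estimates (Lemmas~\ref{lem-rR}, \ref{lem-rR-back}, \ref{lem-forward}, \ref{lem-back}) are proved for the family $u''+\vartheta\bigl(\lambda a^{+}(t)-\mu a^{-}(t)\bigr)g(u)=0$ and rest on two structural facts that the extra term $-(1-\vartheta)u$ destroys: concavity of nonnegative solutions on each $I^{+}_{i}$ (for your equation $u''=-\vartheta\lambda a^{+}g(u)+(1-\vartheta)u$ need not be $\leq 0$ there), and $\vartheta$-homogeneity, i.e.\ both the gain on $I^{-}_{i}$ and the loss on $I^{+}_{i}$ scale like $\vartheta$, so their ratio --- which is what forces a solution touching the level $r$ or $R$ to become monotone around the whole period --- is independent of $\vartheta$. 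With your homotopy the derivative bound in Lemma~\ref{lem-rR} acquires an extra contribution of order $(1-\vartheta)\,|I^{+}_{i}|\,d$ that does not become small when $d=r$ is small or $d=R$ is large, so the choice of $r,R$ through the smallness of $\zeta$ in \eqref{cond-rR} no longer controls anything, and admissibility for intermediate $\vartheta$ is unproven.

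The paper avoids this by using Mawhin's reduction homotopy $u=Pu+QN_{\lambda,\mu}u+\vartheta K_{P}(Id-Q)N_{\lambda,\mu}u$: for $\vartheta\in\mathopen{]}0,1\mathclose{]}$ this is equivalent to $u''+\vartheta f_{\lambda,\mu}(t,u)=0$, which preserves all of the structure above, and at $\vartheta=0$ it collapses to the one-dimensional equation $f^{\#}_{\lambda,\mu}(s)=0$ on $\ker L\cong\mathbb{R}$, whose Brouwer degree equals $1$ precisely because $\mu>\mu^{\#}(\lambda)$ forces $f^{\#}_{\lambda,\mu}(s)s<0$ for $s\neq 0$. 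The fact that condition \eqref{mudiesis} plays no role at the endpoint of your homotopy is itself a warning sign: for the periodic problem that necessary condition must enter somewhere, and it does so exactly at the $\vartheta=0$ reduction. Your direct route is appropriate only for the Dirichlet problem, where $L$ is invertible and the paper indeed homotopes straight to the identity (Remark~\ref{rem-3.1}).
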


Then, having fixed $\rho$, $\lambda$, $r$, $R$, $\mu$ as above,
we further introduce the open and unbounded sets
\begin{equation}\label{eq-Lambda}
\Lambda^{\mathcal{I},\mathcal{J}}_{(r,\rho,R)} :=
\left\{ u \in X \colon
\begin{array}{l}
 \max_{I^{+}_{i}}|u|<r, \; i\in\{1,\ldots,m\}\setminus(\mathcal{I}\cup\mathcal{J})
\\
r<\max_{I^{+}_{i}}|u|<\rho, \; i\in\mathcal{I}
\\
\rho<\max_{I^{+}_{i}}|u|<R, \; i\in\mathcal{J}
\end{array} \right\}.
\end{equation}
From Theorem~\ref{deg-Omega} and a combinatorial argument (see Appendix~\ref{appendix-A}), we can prove the following.

\begin{theorem}\label{deg-Lambda}
In the above setting, it holds that
\begin{equation}\label{eq-3.2}
D_{L}\bigl{(}L-N_{\lambda,\mu},\Lambda^{\mathcal{I},\mathcal{J}}_{(r,\rho,R)}\bigr{)} =
(-1)^{\# \mathcal{I}}.
\end{equation}
\end{theorem}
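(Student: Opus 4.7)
The plan is to derive \eqref{eq-3.2} from \eqref{eq-3.1} via the additivity property of coincidence degree followed by a combinatorial inversion. The essential observation is that $\Omega^{\mathcal{I},\mathcal{J}}_{(r,\rho,R)}$ and $\Lambda^{\mathcal{I},\mathcal{J}}_{(r,\rho,R)}$ differ only in that the $\Omega$-sets impose upper-bound constraints while the $\Lambda$-sets impose two-sided constraints, so each $\Omega$-set decomposes as a disjoint union of finer $\Lambda$-sets, up to the threshold surfaces where $\max_{I^+_i}|u|$ equals $r$ or $\rho$ for some $i$.

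More precisely, for $u \in \Omega^{\mathcal{I},\mathcal{J}}_{(r,\rho,R)}$, for each $i \in \mathcal{I}$ exactly one of $\max_{I^+_i}|u| < r$ or $r < \max_{I^+_i}|u| < \rho$ holds (the threshold case excluded a posteriori), for each $i \in \mathcal{J}$ exactly one of $\max_{I^+_i}|u| < r$, $r < \max_{I^+_i}|u| < \rho$, or $\rho < \max_{I^+_i}|u| < R$ holds, while for $i \notin \mathcal{I} \cup \mathcal{J}$ the constraint $\max_{I^+_i}|u| < r$ is automatic. Collecting these cases yields, modulo the threshold surfaces,
\begin{equation*}
\Omega^{\mathcal{I},\mathcal{J}}_{(r,\rho,R)} \setminus \mathcal{B}_{r,\rho} = \bigsqcup_{(\mathcal{I}',\mathcal{J}') \in \mathcal{P}(\mathcal{I},\mathcal{J})} \Lambda^{\mathcal{I}',\mathcal{J}'}_{(r,\rho,R)},
\end{equation*}
where $\mathcal{P}(\mathcal{I},\mathcal{J})$ is the family of disjoint pairs with $\mathcal{J}' \subseteq \mathcal{J}$ and $\mathcal{I}' \cup \mathcal{J}' \subseteq \mathcal{I} \cup \mathcal{J}$. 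The non-degeneracy input required from Section~\ref{section-4} is that no solution $u$ of $Lu = N_{\lambda,\mu}u$ satisfies $\max_{I^+_i}|u| \in \{r,\rho\}$ for any $i$: the level $\rho$ is ruled out by the defining property of $\lambda^*$ (Lemma~\ref{lem-rho}), and the level $r$ is one of the a priori estimates by which the degrees on both the $\Omega$- and the $\Lambda$-sets are simultaneously made well defined for $\mu > \mu^*(\lambda,r,R)$.

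Granted this non-degeneracy, the additivity property from Section~\ref{section-2} yields
\begin{equation*}
D_L(L-N_{\lambda,\mu}, \Omega^{\mathcal{I},\mathcal{J}}_{(r,\rho,R)}) = \sum_{(\mathcal{I}',\mathcal{J}') \in \mathcal{P}(\mathcal{I},\mathcal{J})} D_L(L-N_{\lambda,\mu}, \Lambda^{\mathcal{I}',\mathcal{J}'}_{(r,\rho,R)}).
\end{equation*}
Writing $e(\mathcal{I},\mathcal{J})$ for the $\Lambda$-degree and testing the candidate $e(\mathcal{I},\mathcal{J}) = (-1)^{\#\mathcal{I}}$, the right-hand sum factorizes index-by-index: each $i \in \mathcal{I}$ contributes $1 + (-1) = 0$ (two options weighted by $(-1)^0$ and $(-1)^1$), each $i \in \mathcal{J}$ contributes $1 + (-1) + 1 = 1$ (three options), and each remaining index contributes $1$. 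Hence the sum equals $0^{\#\mathcal{I}}$ (with $0^0 = 1$), matching the value of $D_L(L-N_{\lambda,\mu}, \Omega^{\mathcal{I},\mathcal{J}})$ given by Theorem~\ref{deg-Omega}. Since the system is triangular with respect to inclusion, this uniquely determines $e(\mathcal{I},\mathcal{J}) = (-1)^{\#\mathcal{I}}$, which is \eqref{eq-3.2}.

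The main obstacle is the non-degeneracy claim — that no fixed point of $\Phi_{\lambda,\mu}$ lies on a threshold surface $\max_{I^+_i}|u| \in \{r,\rho\}$ — since it is what makes the degrees on all $\Omega$- and $\Lambda$-sets simultaneously well defined and legitimizes the additive decomposition. This is the content of the careful a priori estimates of Section~\ref{section-4.2}; by contrast, the combinatorial inversion above (the content of Appendix~\ref{appendix-A}) is elementary once those estimates are in place.
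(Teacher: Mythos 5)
Your proposal is correct and follows essentially the same route as the paper's second argument in Appendix~\ref{appendix-A}: the disjoint decomposition of $\Omega^{\mathcal{I},\mathcal{J}}_{(r,\rho,R)}$ into the sets $\Lambda^{\mathcal{I}',\mathcal{J}'}_{(r,\rho,R)}$ (legitimate on the solution set because the a priori estimates exclude the levels $r$, $\rho$, $R$), additivity of the coincidence degree, and the parity cancellation giving $0^{\#\mathcal{I}}$. Your closing step --- verifying that the candidate $(-1)^{\#\mathcal{I}}$ solves the unitriangular linear system and invoking uniqueness --- is a compact repackaging of the paper's induction on $\#\mathcal{I}+(m+1)\#\mathcal{J}$.
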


As a consequence of the existence property for the coincidence degree, we thus obtain the existence of a
$T$-periodic solution of \eqref{eq-flm} in each of these $3^{m}$ sets $\Lambda^{\mathcal{I},\mathcal{J}}_{(r,\rho,R)}$
(taking into account all the possible cases for $\mathcal{I},\mathcal{J}$).
Notice that $\Lambda^{\emptyset,\emptyset}(r,\rho,R)$ contains the trivial solution.
In all the other $3^{m}-1$ sets, the solution must be nontrivial and hence, by the maximum principle argument recalled in the previous section,
a \textit{positive} solution of \eqref{eq-main}.
In this manner we can conclude that, for each choice of $\mathcal{I},\mathcal{J}$
with $\mathcal{I} \cup \mathcal{J} \neq \emptyset$, there exists at least one positive $T$-periodic solution $u(t)$ of
\eqref{eq-main} such that
\begin{itemize}
\item $0 < \max_{t\in I^{+}_{i}} u(t) < r$, for $i \notin \mathcal{I} \cup \mathcal{J}$;
\item $r < \max_{t\in I^{+}_{i}} u(t) < \rho$, for all $i \in \mathcal{I}$;
\item $\rho < \max_{t\in I^{+}_{i}} u(t) < R$, for all $i \in \mathcal{J}$.
\end{itemize}
Finally, in order to achieve the conclusion of Theorem~\ref{main-theorem}, we just observe that,
given any finite string $\mathcal{S} = (\mathcal{S}_{1},\ldots,\mathcal{S}_{m}) \in \{0,1,2\}^{m}$, with $\mathcal{S} \neq (0,\ldots,0)$,
we can associate to $\mathcal{S}$ the sets
\begin{equation*}
{\mathcal{I}}:= \bigl{\{} i\in \{1,\ldots,m\} \colon \mathcal{S}_{i} =1 \bigr{\}},\quad
{\mathcal{J}}:= \bigl{\{} i\in \{1,\ldots,m\} \colon \mathcal{S}_{i} =2 \bigr{\}},
\end{equation*}
so that $\mathcal{S}_{i} = 0$ when $i\notin {\mathcal{I}} \cup {\mathcal{J}}$.
This completes the proof of Theorem~\ref{main-theorem}.
\qed

\subsection{Degree lemmas}\label{section-3.2}

For the proof of Theorem~\ref{deg-Omega}, we need to compute the topological degrees
in formula \eqref{eq-3.1}. To this end, we will use the following results.

\begin{lemma}\label{lem-deg0}
Let $\mathcal{I} \neq \emptyset$.
Assume that there exists $v \in L^{1}(\mathopen{[}0,T\mathclose{]})$, with $v(t) \succ 0$ on $\mathopen{[}0,T\mathclose{]}$
and $v \equiv 0$ on $\bigcup_{i} I^{-}_{i}$, such that the following properties hold.
\begin{itemize}
\item[$(H_{1})$]
If $\alpha \geq 0$, then any $T$-periodic solution $u(t)$ of
\begin{equation}\label{eq-lem-deg0}
u'' + \bigl{(} \lambda a^{+}(t) - \mu a^{-}(t) \bigr{)} g(u) + \alpha v(t) = 0,
\end{equation}
with $0 \leq u(t) \leq R$ for all $t\in \mathopen{[}0,T\mathclose{]}$,
satisfies
\begin{itemize}
\item[$\bullet$]
$\max_{t\in I^{+}_{i}} u(t)\neq r$, if $i \notin \mathcal{I} \cup \mathcal{J}$;
\item[$\bullet$]
$\max_{t\in I^{+}_{i}} u(t) \neq \rho$, if $i \in \mathcal{I}$;
\item[$\bullet$]
$\max_{t\in I^{+}_{i}} u(t) \neq R$, if $i \in \mathcal{J}$.
\end{itemize}
\item[$(H_{2})$]
There exists $\alpha^{*} \geq 0$ such that equation \eqref{eq-lem-deg0}, with $\alpha=\alpha^{*}$, does not possess any
non-negative $T$-periodic solution $u(t)$ with
\begin{equation*}
u(t) \leq \rho, \quad \forall \, t \in \bigcup_{i \in \mathcal{I}} I^{+}_{i}.
\end{equation*}
\end{itemize}
Then it holds that
\begin{equation*}
D_{L}\bigl{(}L-N_{\lambda,\mu},\Omega^{\mathcal{I},\mathcal{J}}_{(r,\rho,R)}\bigr{)} = 0.
\end{equation*}
\end{lemma}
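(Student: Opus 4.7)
The plan is to exploit the homotopy invariance of the coincidence degree along the one-parameter family
\[
H_{\alpha} u := N_{\lambda,\mu} u + \alpha v, \qquad \alpha \in [0, \alpha^{*}],
\]
so that fixed points of the associated Mawhin operator correspond exactly to the $T$-periodic solutions of \eqref{eq-lem-deg0}. The strategy is to show that at $\alpha = \alpha^{*}$ hypothesis $(H_{2})$ forces the solution set inside $\overline{\Omega^{\mathcal{I},\mathcal{J}}_{(r,\rho,R)}}$ to be empty, giving zero degree there, and then to transport this information back to $\alpha = 0$, using $(H_{1})$ for admissibility along the deformation.

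The crucial preliminary is a global a priori bound: every $T$-periodic solution $u$ of \eqref{eq-lem-deg0} lying in $\overline{\Omega^{\mathcal{I},\mathcal{J}}_{(r,\rho,R)}}$, with $\alpha \in [0, \alpha^{*}]$, satisfies $0 \leq u(t) \leq R$ for all $t \in [0,T]$. Non-negativity follows from a standard maximum principle argument: since $f_{\lambda,\mu}(t,s) = -s$ for $s \leq 0$ and $\alpha v \geq 0$, one gets $u'' < 0$ a.e.\ on any interval where $u<0$, contradicting the existence of an interior minimum (which is granted by $T$-periodicity). For the upper bound the essential ingredient is the support condition $v \equiv 0$ on $\bigcup_{i} I^{-}_{i}$: on each $I^{-}_{i}$ the equation reduces to $u'' = \mu a^{-}(t) g(u) \geq 0$, so $u$ is convex there, whence
\[
\max_{I^{-}_{i}} u \,=\, \max\{u(\tau_{i}), u(\sigma_{i+1})\} \,\leq\, R,
\]
since the endpoints lie in the adjacent intervals $I^{+}_{i}$ and $I^{+}_{i+1}$ (indices read modulo $m$ via $T$-periodicity), where the definition of $\Omega^{\mathcal{I},\mathcal{J}}_{(r,\rho,R)}$ yields $|u| \leq R$. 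Combined with the direct bound on each $I^{+}_{i}$, this gives $u \leq R$ on all of $[0,T]$.

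With this bound in hand, $(H_{1})$ forbids any solution from lying on $\partial \Omega^{\mathcal{I},\mathcal{J}}_{(r,\rho,R)}$ for every $\alpha \in [0,\alpha^{*}]$, while the uniform sup-norm bound together with the resulting $L^{1}$-bound on $u''$ provides, via Arzelà–Ascoli, compactness of the aggregate solution set, so that the homotopy is admissible in the locally compact framework recalled in Section~\ref{section-2}. At $\alpha = \alpha^{*}$, any putative solution $u \in \overline{\Omega^{\mathcal{I},\mathcal{J}}_{(r,\rho,R)}}$ would be non-negative and would satisfy $u \leq \rho$ on $\bigcup_{i \in \mathcal{I}} I^{+}_{i}$, in direct contradiction with $(H_{2})$; hence the fixed-point set is empty there and $D_{L}(L - H_{\alpha^{*}}, \Omega^{\mathcal{I},\mathcal{J}}_{(r,\rho,R)}) = 0$. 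Homotopy invariance then delivers the desired equality at $\alpha = 0$. The delicate step, and the main obstacle, is precisely the passage from the bound on the $I^{+}_{i}$'s to a bound on all of $[0,T]$: the open set $\Omega^{\mathcal{I},\mathcal{J}}_{(r,\rho,R)}$ constrains $|u|$ only on the positive intervals, and propagating that control relies in an essential way on the support condition on $v$ together with the sign of $a^{-} g(u)$, which produces convexity on each negative interval; once this is secured the rest is a routine instance of the degree framework of Section~\ref{section-2}.
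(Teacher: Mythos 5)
Your proposal is correct and follows essentially the same route as the paper: the same homotopy $\alpha \mapsto N_{\lambda,\mu}+\alpha v$ on $[0,\alpha^{*}]$, the same use of the maximum principle and of convexity on the $I^{-}_{i}$ (via the support condition on $v$) to propagate the bound $0\leq u\leq R$ from the positive intervals to all of $[0,T]$, admissibility via $(H_{1})$, and the contradiction with $(H_{2})$ at $\alpha=\alpha^{*}$. The only cosmetic difference is that you compute the degree at $\alpha^{*}$ directly as zero from the empty solution set and transport it back, whereas the paper argues by contradiction from a nonzero degree at $\alpha=0$; these are equivalent.
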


\begin{proof}
We adapt to our situation an argument from \cite[Lemma~2.1]{BoFeZa-15}.
We first write the equation
\begin{equation}\label{eq-lem3.1}
u'' + f_{\lambda,\mu}(t,u) + \alpha v(t) = 0
\end{equation}
as a coincidence equation in the space $X$
\begin{equation*}
Lu = N_{\lambda,\mu} u + \alpha v, \quad u \in \text{\rm dom}\,L,
\end{equation*}
and we check that the coincidence degree
$D_{L}\bigl{(}L-N_{\lambda,\mu}-\alpha v,\Omega^{\mathcal{I},\mathcal{J}}_{(r,\rho,R)}\bigr{)}$ is
well-defined for any $\alpha \geq 0$. To this end, for $\alpha \geq 0$, we consider the solution set
\begin{equation*}
\mathcal{R}_{\alpha}
:= \bigl{\{} u \in \text{\rm cl}\bigl{(}\Omega^{\mathcal{I},\mathcal{J}}_{(r,\rho,R)} \bigr{)} \cap \text{\rm dom}\,L \colon
Lu = N_{\lambda,\mu} u + \alpha v \bigr{\}}.
\end{equation*}
We have that $u \in \mathcal{R}_{\alpha}$ if and only if $u(t)$ is a $T$-periodic solution of
\eqref{eq-lem3.1} with $|u(t)| \leq r$ for all $t \in I^{+}_{i}$ if $i \notin \mathcal{I} \cup \mathcal{J}$,
$|u(t)| \leq \rho$ for all $t \in I^{+}_{i}$ if ${i} \in \mathcal{I}$, and $|u(t)| \leq R$ for all $t \in I^{+}_{i}$ if $i \in \mathcal{J}$.
By a maximum principle argument, we find $u(t) \geq 0$ for any $t$.
Moreover, taking into account that $v(t) \succ 0$ on $\mathopen{[}0,T\mathclose{]}$ and $v \equiv 0$ on $\bigcup_{i} I^{-}_{i}$,
we have that $u(t)$ is concave in each $I^{+}_{i}$ and convex in each $I^{-}_{i}$. As a consequence, $u(t) \leq R$ for any $t$.
Hence, $\mathcal{R}_{\alpha} \subseteq B[0,R]:=\{u\in X\colon\|u\|_{\infty}\leq R\}$ and the complete continuity of
$\Phi_{\lambda,\mu}$ implies that $\mathcal{R}_{\alpha}$ is compact.
Furthermore, condition $(H_{1})$ guarantees that
$\max_{I^{+}_{i}} u < r$ if $i \notin \mathcal{I} \cup \mathcal{J}$,
$\max_{I^{+}_{i}} u < \rho$ if $i \in \mathcal{I}$, and
$\max_{I^{+}_{i}} u < R$ if $i \in \mathcal{J}$.
Thus, $\mathcal{R}_{\alpha} \subseteq \Omega^{\mathcal{I},\mathcal{J}}_{(r,\rho,R)}$.
In this way we conclude that the coincidence degree $D_{L}\bigl{(}L-N_{\lambda,\mu}-\alpha v,\Omega^{\mathcal{I},\mathcal{J}}_{(r,\rho,R)}\bigr{)}$
is well-defined for any $\alpha \geq 0$.

Now, using $\alpha$ as homotopy parameter and using the homotopic invariance of the degree
(with the same argument as above, we can see that $\bigcup_{\alpha \in \mathopen{[}0,\alpha^{*}\mathclose{]}}\mathcal{R}_{\alpha}$ is
a compact subset of $\Omega^{\mathcal{I},\mathcal{J}}_{(r,\rho,R)}$), we have that
\begin{equation*}
D_{L}\bigl{(}L-N_{\lambda,\mu},\Omega^{\mathcal{I},\mathcal{J}}_{(r,\rho,R)}\bigr{)}
= D_{L}\bigl{(}L-N_{\lambda,\mu}-\alpha^{*} v,\Omega^{\mathcal{I},\mathcal{J}}_{(r,\rho,R)}\bigr{)}.
\end{equation*}
If, by contradiction, this degree is non-null, then there exists at least one $T$-periodic solution
$u \in \Omega^{\mathcal{I},\mathcal{J}}_{(r,\rho,R)}$ of \eqref{eq-lem3.1} with $\alpha = \alpha^{*}$.
Again by the maximum principle, we then have a non-negative $T$-periodic solution of \eqref{eq-lem-deg0}
with $\alpha = \alpha^{*}$ and, since $u \in \Omega^{\mathcal{I},\mathcal{J}}_{(r,\rho,R)}$
with $\mathcal{I} \neq \emptyset$, it holds that $\max_{I^{+}_{i}} u \leq \rho$ if $i \in \mathcal{I}$.
This contradicts assumption $(H_{2})$ and the proof is complete.
\end{proof}

The next result uses a duality theorem by Mawhin which relates the coincidence degree
with the (finite dimensional) Brouwer degree, denoted here as ``$\text{\rm deg}_{B}$''.
We recall also the definition of $\mu^{\#}(\lambda)$ given in \eqref{mudiesis}.

\begin{lemma}\label{lem-deg1}
Let $\mathcal{I} = \emptyset$ and assume that the following property holds.
\begin{itemize}
\item[$(H_{3})$]
If $\vartheta\in \mathopen{]}0,1\mathclose{]}$, then any $T$-periodic solution $u(t)$ of
\begin{equation}\label{eq-lem-deg1}
u'' + \vartheta \bigl{(} \lambda a^{+}(t) - \mu a^{-}(t) \bigr{)} g(u) = 0,
\end{equation}
with $0 \leq u(t) \leq R$ for all $t\in \mathopen{[}0,T\mathclose{]}$, satisfies
\begin{itemize}
\item[$\bullet$]
$\max_{t \in I^{+}_{i}} u(t) \neq r$, if $i \notin \mathcal{J}$;
\item[$\bullet$]
$\max_{t\in I^{+}_{i}} u(t) \neq R$, if $i \in \mathcal{J}$.
\end{itemize}
\end{itemize}
Then, for any $\lambda > 0$ and $\mu > \mu^{\#}(\lambda)$, it holds that
\begin{equation*}
D_{L}\bigl{(}L-N_{\lambda,\mu},\Omega^{\emptyset,\mathcal{J}}_{(r,\rho,R)} \bigr{)} = 1.
\end{equation*}
\end{lemma}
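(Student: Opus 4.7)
The plan is to reduce the computation to a one-dimensional Brouwer degree on $\ker L\cong \mathbb{R}$ by invoking the duality/reduction theorem of Mawhin (see~\cite{GaMa-77,Ma-79}). The essential ingredients are hypothesis $(H_{3})$, which provides admissibility along the homotopy $Lu=\vartheta N_{\lambda,\mu}u$ for $\vartheta\in\mathopen{(}0,1\mathclose{]}$, and the assumption $\mu>\mu^{\#}(\lambda)$, which produces the sign control needed at the degenerate endpoint $\vartheta=0$.

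My first step would be to consider the one-parameter family of equations $Lu=\vartheta N_{\lambda,\mu}u$, $\vartheta\in\mathopen{[}0,1\mathclose{]}$. Reproducing the concavity/convexity argument from Lemma~\ref{lem-deg0} (non-negative solutions are concave on each $I^{+}_{i}$, convex on each $I^{-}_{i}$, and hence bounded by $R$ in sup-norm), hypothesis $(H_{3})$ guarantees that for each $\vartheta\in\mathopen{(}0,1\mathclose{]}$ the set of $T$-periodic solutions is compact and disjoint from $\partial\Omega^{\emptyset,\mathcal{J}}_{(r,\rho,R)}$. The difficulty at $\vartheta=0$ — where $Lu=0$ admits the entire line of constants as solutions — is exactly what Mawhin's reduction is designed to handle.

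The second step is to identify the trace $\Omega^{\emptyset,\mathcal{J}}_{(r,\rho,R)}\cap \ker L$. Evaluating the defining conditions on a constant $u\equiv c$ yields a symmetric interval $\mathopen{]}-\gamma,\gamma\mathclose{[}$, with $\gamma=r$ when $\mathcal{J}\neq\{1,\ldots,m\}$ and $\gamma=R$ otherwise. A direct computation gives
\begin{equation*}
(QN_{\lambda,\mu})(c) = \frac{1}{T}\int_{0}^{T} f_{\lambda,\mu}(t,c)\,dt =
\begin{cases} -c, & c\leq 0,\\ g(c)\,\bar{a}_{\lambda,\mu}, & c\geq 0, \end{cases}
\end{equation*}
where $\bar{a}_{\lambda,\mu}:=\frac{1}{T}\int_{0}^{T} a_{\lambda,\mu}(t)\,dt$. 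The hypothesis $\mu>\mu^{\#}(\lambda)$ is \emph{exactly} the condition $\bar{a}_{\lambda,\mu}<0$, so $(QN_{\lambda,\mu})(-\gamma)=\gamma>0$ and $(QN_{\lambda,\mu})(\gamma)=g(\gamma)\,\bar{a}_{\lambda,\mu}<0$, both nonzero. This is precisely the admissibility condition on $\partial(\Omega\cap\ker L)$ required by Mawhin's reduction.

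Finally, the reduction formula identifies $D_{L}(L-N_{\lambda,\mu},\Omega^{\emptyset,\mathcal{J}}_{(r,\rho,R)})$ with the one-dimensional Brouwer degree of the scalar map $(QN_{\lambda,\mu})|_{\ker L}$ on $\mathopen{]}-\gamma,\gamma\mathclose{[}$ at the origin (with the sign fixed by the choice $J=\mathrm{Id}$). Since this scalar map takes values of opposite signs at the two endpoints and vanishes only at $c=0$, its Brouwer degree is $\pm 1$, and the sign convention produces the claimed value $+1$. The main subtlety I would expect in fleshing out the argument is the careful bookkeeping of this sign in Mawhin's reduction formula against the orientation choices fixed in Section~\ref{section-2}; for this reason the paper appeals to the established duality theorem rather than re-deriving it from scratch.
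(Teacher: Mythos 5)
Your proposal is correct and follows essentially the same route as the paper: the homotopy $Lu=\vartheta N_{\lambda,\mu}u$, the maximum principle together with concavity/convexity and $(H_{3})$ to get admissibility for $\vartheta\in\mathopen{]}0,1\mathclose{]}$, and Mawhin's reduction to the Brouwer degree of the averaged map $f^{\#}_{\lambda,\mu}$ on $\mathopen{]}-d,d\mathclose{[}$ (with $d=r$ or $d=R$), where $\mu>\mu^{\#}(\lambda)$ forces $f^{\#}_{\lambda,\mu}(s)s<0$ for $s\neq0$ and hence degree $1$. The only cosmetic difference is that the paper phrases the $\vartheta=0$ analysis as showing $u\equiv0$ is the unique fixed point of $\Psi_{0}$ in the closure, while you phrase it via the sign change of $QN_{\lambda,\mu}$ at the endpoints $\pm\gamma$; these amount to the same verification.
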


\begin{proof}
We argue similarly as in \cite[Lemma~2.2]{BoFeZa-15},
using a reduction property for the coincidence degree from Mawhin's continuation
theorem (see \cite[Theorem~2.4]{Ma-93} as well as \cite{Ma-69},
where the result was previously given in the context of the periodic problem for ODEs).
We consider the parameterized equation
\begin{equation*}
u = \Psi_{\vartheta}(u) := Pu + Q N_{\lambda,\mu} u + \vartheta K_{P} (Id-Q) N_{\lambda,\mu} u,
\quad u \in X, \; \vartheta \in \mathopen{[}0,1\mathclose{]}.
\end{equation*}
Let also
\begin{equation*}
\mathcal{S} := \bigcup_{\vartheta \in \mathopen{[}0,1\mathclose{]}}
\Bigl{\{} u \in \text{\rm cl}\,\bigl{(}\Omega^{\emptyset,\mathcal{J}}_{(r,\rho,R)}\bigr{)} \colon u = \Psi_{\vartheta}(u) \Bigr{\}}.
\end{equation*}

Suppose that $0 < \vartheta \leq 1$. In this situation, $u = \Psi_{\vartheta}(u)$ if and only if
\begin{equation*}
Lu = \vartheta N_{\lambda,\mu} u, \quad u \in \text{\rm dom}\,L,
\end{equation*}
or, equivalently, $u(t)$ is a $T$-periodic solution of
\begin{equation*}
u'' + \vartheta f_{\lambda,\mu}(t,u) = 0.
\end{equation*}
If $u \in \text{\rm cl}\,\bigl{(}\Omega^{\emptyset,\mathcal{J}}_{(r,\rho,R)}\bigr{)}$, we know that
$\max_{I^{+}_{i}}|u| \leq r$ if $i \notin \mathcal{J}$ and
$\max_{I^{+}_{i}} |u| \leq R$ if $i \in \mathcal{J}$.
Hence, by a maximum principle, $u(t)$ is a non-negative $T$-periodic solution of \eqref{eq-lem-deg1}
and, by a convexity argument, $u(t) \leq R$ for any $t$. Moreover, by $(H_{3})$, $\max_{I^{+}_{i}} u < r$
if $i \notin \mathcal{J}$ and $\max_{I^{+}_{i}} u < R$ if $i \in \mathcal{J}$.

On the other hand, if $\vartheta = 0$, $u$ is a solution of $u = \Psi_{0}(u)$
if and only if $u = Pu + Q N_{\lambda,\mu} u$, that is,
$u \in \ker L$ and $Q N_{\lambda,\mu} u = 0$.
Since $\ker L \cong \mathbb{R}$ and
\begin{equation*}
QN_{\lambda,\mu}u = \dfrac{1}{T}\int_{0}^{T} f_{\lambda,\mu}(t,s)~\!dt, \quad \text{ for } u \equiv \text{constant} = s \in \mathbb{R},
\end{equation*}
we conclude that $u \equiv s \in \mathbb{R}$ is a solution
of $u = \Psi_{0}(u)$ with $u \in \text{\rm cl}\,\bigl{(}\Omega^{\emptyset,\mathcal{J}}_{(r,\rho,R)}\bigr{)}$
if and only if $|s| \leq r$ if $\mathcal{J} \neq \{1,\ldots,m\}$ and $|s| \leq R$ if $\mathcal{J} = \{1,\ldots,m\}$
and, moreover, $f_{\lambda,\mu}^{\#}(s) = 0$, where we have set
\begin{equation*}
f_{\lambda,\mu}^{\#}(s) := \dfrac{1}{T}\int_{0}^{T} f_{\lambda,\mu}(t,s)~\!dt =
\begin{cases}
\, -s, & \text{if } s \leq 0; \\
\, \biggl{(}\dfrac{1}{T} \displaystyle \int_{0}^{T} a_{\lambda,\mu}(t)~\!dt \biggr{)} g(s), & \text{if } s \geq 0.
\end{cases}
\end{equation*}
If $\mu > \mu^{\#}(\lambda)$, we have that $f^{\#}_{\lambda,\mu}$ satisfies $f^{\#}_{\lambda,\mu}(s)s < 0$ for $s \neq 0$. Hence $u\equiv0$.

We conclude that the set $\mathcal{S}$ is compact and contained in $\Omega^{\emptyset,\mathcal{J}}_{(r,\rho,R)}$.
By the homotopic invariance of the coincidence degree, we have that
\begin{equation*}
\begin{aligned}
D_{L}\bigl{(}L-N_{\lambda,\mu},\Omega^{\emptyset,\mathcal{J}}_{(r,\rho,R)} \bigr{)} & =
\text{\rm deg}_{LS}\bigl{(}Id-\Psi_{1},\Omega^{\emptyset,\mathcal{J}}_{(r,\rho,R)},0 \bigr{)} \\
& = \text{\rm deg}_{LS}\bigl{(}Id-\Psi_{0},\Omega^{\emptyset,\mathcal{J}}_{(r,\rho,R)},0 \bigr{)} \\
& = \text{\rm deg}_{B} \bigl{(}-QN_{\lambda,\mu}|_{\ker L},\Omega^{\emptyset,\mathcal{J}}_{(r,\rho,R)}\cap \ker L,0 \bigr{)} \\
& = \text{\rm deg}_{B} \bigl{(}-f^{\#}_{\lambda,\mu}|_{\ker L},\mathopen{]}-d,d\mathclose{[},0 \bigr{)} = 1,
\end{aligned}
\end{equation*}
where $d= r$ or $d =R$ according to whether $\mathcal{J} \neq \{1,\ldots,m\}$
or $\mathcal{J} = \{1,\ldots,m\}$. This concludes the proof.
\end{proof}

\begin{remark}\label{rem-3.1}
When dealing with other differential operators $L$ or with Neumann and Dirichlet boundary conditions,
some changes are required.

First of all we notice that Lemma~\ref{lem-deg0} and Lemma~\ref{lem-deg1}
hold exactly the same for the $T$-periodic problem and the differential
operator $u \mapsto - u'' - c u'$. The same is true for Neumann boundary conditions: we have only to assume for
equation \eqref{eq-lem-deg0} and \eqref{eq-lem-deg1} that $u(t)$ is a solution satisfying $u'(0) = u'(T) = 0$.
For these cases, no relevant changes are needed in the proofs.

Concerning the Dirichlet problem the following modifications are in order. First, in all the degree formulas
the terms $D_{L}(L - N_{\lambda,\mu},\cdot)$ have to be replaced by $\text{\rm deg}_{LS}{(}Id-L^{-1} N_{\lambda,\mu},\cdot,0)$.
Secondly, in equations \eqref{eq-lem-deg0} and \eqref{eq-lem-deg1}
we have to suppose that $u(t)$ is a solution satisfying $u(0) = u(T) = 0$. Finally, we strongly simplify the argument
in the proof of Lemma~\ref{lem-deg1} since, when $\vartheta = 0$, we directly reduce to
the trivial equation $u = 0$. Therefore the homotopic invariance of the Leray-Schauder degree
(with respect to the parameter $\vartheta\in \mathopen{[}0,1\mathclose{]}$) yields
\begin{equation*}
D_{L}(L - N_{\lambda,\mu},\Omega^{\emptyset,\mathcal{J}}_{(r,\rho,R)})
= \text{\rm deg}_{LS}{(}Id,\Omega^{\emptyset,\mathcal{J}}_{(r,\rho,R)},0) = 1,
\end{equation*}
because $0 \in \Omega^{\emptyset,\mathcal{J}}_{(r,\rho,R)}$. In this case the condition $\mu > \mu^{\#}(\lambda)$
is not required in Lemma~\ref{lem-deg1}. However, the largeness of $\mu$
will be in any case needed later in subsequent technical estimates.
$\hfill\lhd$
\end{remark}

\section{Proof of Theorem~\ref{main-theorem}: the details}\label{section-4}

In view of the general strategy for the proof described in Section~\ref{section-3}, we are going to prove
that the assumptions $(H_{1})$, $(H_{2})$ of Lemma~\ref{lem-deg0} and $(H_{3})$ of Lemma~\ref{lem-deg1}
are satisfied for suitable choices of $r, \rho, R$ and $\lambda, \mu$ large enough. These proofs are given in the
second part of this section (see Section~\ref{section-4.3} and Section~\ref{section-4.4}).
Lemma~\ref{lem-deg0} and Lemma~\ref{lem-deg1} involve the study of the solutions of
\eqref{eq-lem-deg0} and \eqref{eq-lem-deg1}, respectively. These equations, although different, present
common features and, for this reason, we premise some technical estimates on the solutions
which will help and simplify our subsequent proofs.

\medskip

Keeping in mind that all the assumptions on $a(t)$ and $g(s)$ in Theorem~\ref{main-theorem} are assumed,
we introduce now the following notation.
For any constant $d > 0$, we set
\begin{equation}\label{zetagamma}
\zeta(d) := \max_{\frac{d}{2}\leq s \leq d} \dfrac{g(s)}{s}, \qquad
\gamma(d) := \min_{\frac{d}{2}\leq s \leq d} \dfrac{g(s)}{s}.
\end{equation}
Moreover, we also define
\begin{equation*}
g^{*}(d) := \max_{0 \leq s \leq d} g(s), \qquad
g_{*}(d,D) := \min_{d \leq s \leq D} g(s),
\end{equation*}
where $D > d$ is another arbitrary constant. Furthermore, recalling $(a_{*})$ and the positions in \eqref{Ipm},
for all $i=1,\ldots, m$, we set
\begin{equation*}
\|a\|_{\pm,i} := \int_{I^{\pm}_{i}} a^{\pm}(t)~\!dt
\end{equation*}
and
\begin{equation*}
A_{i}(t) := \int_{\tau_{i}}^{t} a^{-}(\xi)~\!d\xi, \quad t\in I^{-}_{i}, \qquad\quad \|A_{i}\| :=\int_{I^{-}_{i}} A_{i}(t)~\!dt.
\end{equation*}

\subsection{Technical estimates}\label{section-4.1}

We present now some preliminary technical lemmas. We stress the fact that all the results in this subsection concern the
properties of solutions of given equations without any reference to the boundary conditions.

\begin{lemma}\label{lem-rho}
For any $\rho > 0$, there exists $\lambda^{*} = \lambda^{*}(\rho) > 0$ such that, for any $\lambda > \lambda^{*}$, $\alpha \geq 0$
and $i \in \{1,\ldots,m\}$,
there are no non-negative solutions $u(t)$ to
\begin{equation}\label{eq-lem-rho}
u'' + \lambda a^{+}(t)g(u) + \alpha = 0,
\end{equation}
with $u(t)$ defined for all $t \in I^{+}_{i}$, and
such that $\max_{t\in I^{+}_{i}} u(t) = \rho$.
\end{lemma}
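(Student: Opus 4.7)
The plan is to argue by contradiction: assume the lemma fails, so that for some $\rho > 0$ there exist sequences $\lambda_n \to +\infty$, $\alpha_n \geq 0$, an index $i$ (fixed after passing to a subsequence, since $i$ ranges over the finite set $\{1,\ldots,m\}$), and non-negative solutions $u_n$ of \eqref{eq-lem-rho} on $I^{+}_{i}$ satisfying $\max_{I^{+}_{i}} u_n = \rho$, attained at some $t_n^{\star} \in I^{+}_{i}$. The goal is to derive a contradiction when $\lambda_n$ is sufficiently large.

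The first observation is that $u_n''(t) = -\lambda_n a^{+}(t) g(u_n(t)) - \alpha_n \leq 0$ for a.e.\ $t \in I^{+}_{i}$, so $u_n$ is concave on $I^{+}_{i}$. Combined with $u_n(\sigma_i), u_n(\tau_i) \geq 0$ and $u_n(t_n^{\star}) = \rho$, the chord estimate yields
\begin{equation*}
u_n(t) \geq \rho\,\min\!\Bigl\{\dfrac{t-\sigma_i}{t_n^{\star}-\sigma_i},\ \dfrac{\tau_i - t}{\tau_i - t_n^{\star}}\Bigr\}, \qquad t \in I^{+}_{i},
\end{equation*}
so the super-level set $\{u_n \geq \rho/2\}$ contains an interval $J_n \subseteq I^{+}_{i}$ of length at least $(\tau_i - \sigma_i)/2$.

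Next I extract a global integral bound by testing the ODE against the non-negative weight $\phi(t) := (t-\sigma_i)(\tau_i - t)$, which vanishes at the endpoints of $I^{+}_{i}$, has $\phi'' \equiv -2$, and satisfies $\phi'(\sigma_i) > 0 > \phi'(\tau_i)$. Two integrations by parts, together with $u_n(\sigma_i), u_n(\tau_i) \geq 0$ and $\alpha_n \geq 0$, give
\begin{equation*}
\lambda_n \int_{\sigma_i}^{\tau_i} a^{+}(s)\,g(u_n(s))\,\phi(s)\, ds \leq 2\int_{\sigma_i}^{\tau_i} u_n(s)\,ds \leq 2\rho\,(\tau_i - \sigma_i).
\end{equation*}
Since $g(u_n) \geq g_{*}(\rho/2,\rho) > 0$ on $J_n$, it follows that $\lambda_n \int_{J_n} a^{+}(s)\,\phi(s)\,ds \leq C$, with $C = C(\rho, g, I^{+}_{i})$ independent of both $\lambda_n$ and $\alpha_n$.

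Passing to a subsequence with $t_n^{\star} \to t^{\infty}$ and with $J_n$ converging in the Hausdorff sense to a limit interval $J^{\infty} \subseteq I^{+}_{i}$ of length at least $(\tau_i - \sigma_i)/2$, Fatou's lemma applied to the non-negative integrand $a^{+}\phi$ forces $\int_{J^{\infty}} a^{+}(s)\phi(s)\,ds = 0$; since $\phi > 0$ on $(\sigma_i,\tau_i)$, this yields $a^{+} \equiv 0$ a.e.\ on $J^{\infty} \cap (\sigma_i,\tau_i)$. Because $a \succ 0$ on $I^{+}_{i}$, the complement $I^{+}_{i} \setminus J^{\infty}$ (of length at most $(\tau_i - \sigma_i)/2$) must carry the whole mass of $a^{+}$; on this complement, the equation combined with $u_n \leq \rho$ and $\lambda_n \to +\infty$ forces $u_n \to 0$, which conflicts with the continuity of $u_n$ at $\partial J_n$ where $u_n = \rho/2$. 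Making this last conflict quantitative --- via a careful gradient estimate at $\partial J_n$ that uses $u_n \geq 0$ at $\partial I^{+}_{i}$, the sign of $u_n''$, and an auxiliary one-sided identity obtained by testing the equation against $(t-\sigma_i)$ on $[\sigma_i, t_n^{\star}]$ (and symmetrically $(\tau_i - t)$ on $[t_n^{\star}, \tau_i]$) --- yields the desired bound $\lambda \leq \lambda^{*}(\rho)$. The main obstacle is precisely this last quantitative step: the global integral estimate by itself only shows that $a^{+}$ concentrates away from $J_n$ as $\lambda_n \to +\infty$, and one must additionally exploit the full non-negativity of $u_n$ throughout $I^{+}_{i}$ to rule out such a limiting configuration.
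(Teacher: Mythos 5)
Your opening moves are sound: the concavity of $u_n$ on $I^{+}_{i}$, the chord estimate, and the test--function inequality $\lambda_n \int_{I^{+}_{i}} a^{+}\, g(u_n)\,\phi \leq 2\rho\,|I^{+}_{i}|$ are all correct (and the last one is a perfectly good substitute for the derivative bound the paper uses). The gap is in how you exploit it. You lower-bound $g(u_n)$ only on the superlevel set $J_n = \{u_n \geq \rho/2\}$, and your whole endgame rests on the resulting conclusion that $a^{+}$ must concentrate on $I^{+}_{i}\setminus J^{\infty}$ being absurd. But hypothesis $(a_{*})$ only requires $a \succ 0$ on $I^{+}_{i}$, so $a^{+}$ is genuinely allowed to be supported in an arbitrarily small neighbourhood of $\sigma_i$ and/or $\tau_i$, entirely outside $J_n$ for every $n$; in that case your key inequality $\lambda_n\int_{J_n}a^{+}\phi \leq C$ carries no information at all ($\int_{J_n}a^{+}\phi$ can be identically zero). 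Moreover the intermediate claim that ``the equation forces $u_n \to 0$ on the complement'' is false: by concavity and $u_n(\sigma_i),u_n(\tau_i)\geq 0$ one has $u_n(t) \geq \rho\,\min\{t-\sigma_i,\tau_i-t\}/|I^{+}_{i}|$ on all of $I^{+}_{i}$, so $u_n$ is bounded \emph{below} away from zero on every compact subset of the open interval, uniformly in $n$. The ``careful gradient estimate at $\partial J_n$'' that you invoke to close the argument is never carried out, and you acknowledge as much; as written, the proof does not reach a contradiction.

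The repair is short and is essentially what the paper does. Do not restrict to the superlevel set: the concavity bound just quoted gives $u_n(t) \geq \varepsilon\rho/|I^{+}_{i}|$, hence $g(u_n(t)) \geq \eta_{\varepsilon,\rho} := \min\{g(s) : \varepsilon\rho/\max_i|I^{+}_{i}| \leq s \leq \rho\} > 0$, on the whole shrunk interval $[\sigma_i+\varepsilon,\tau_i-\varepsilon]$, where $\varepsilon>0$ is fixed in advance so small that $\nu_\varepsilon := \min_i\int_{\sigma_i+\varepsilon}^{\tau_i-\varepsilon}a^{+} > 0$ (possible since $\int_{I^{+}_{i}}a^{+}>0$). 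On that interval $\phi \geq \varepsilon^2$, so your own test--function inequality yields $\lambda_n\,\varepsilon^{2}\,\eta_{\varepsilon,\rho}\,\nu_\varepsilon \leq 2\rho\,|I^{+}_{i}|$, an explicit upper bound on $\lambda_n$ --- no sequences, subsequences, or Hausdorff limits needed. The paper obtains the same bound by a direct pointwise estimate $|u'(t)|\leq \rho/\varepsilon$ on the shrunk interval followed by one integration of the equation, but the mechanism (a uniform positive lower bound for $g(u)$ on a fixed inner interval that still carries positive mass of $a^{+}$) is the one your argument is missing.
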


\begin{proof}
We fix $\varepsilon > 0$ such that, for each $i\in\{1,\ldots,m\}$,
$\varepsilon < (\tau_{i}- \sigma_{i})/2$
and, moreover,
$\int_{\sigma_{i}+\varepsilon}^{\tau_{i}-\varepsilon} a^{+}(t)~\!dt > 0$.
In this manner, the quantity
\begin{equation*}
\nu_{\varepsilon} := \min_{i=1,\ldots,m} \int_{\sigma_{i}+\varepsilon}^{\tau_{i}-\varepsilon} a^{+}(t)~\!dt
\end{equation*}
is well defined and positive.

Let $\rho>0$ be fixed and consider $\alpha \geq 0$ and $i \in \{1,\ldots,m\}$.
Suppose that $u(t)$ is a non-negative solution of \eqref{eq-lem-rho} defined on $I^{+}_{i}$ and such that
\begin{equation*}
\max_{t\in I^{+}_{i}} u(t) = \rho.
\end{equation*}
We claim that
\begin{equation*}
|u'(t)| \leq \dfrac{u(t)}{\varepsilon}, \quad \forall \, t\in \mathopen{[}\sigma_{i}+\varepsilon,\tau_{i}-\varepsilon\mathclose{]}.
\end{equation*}
Indeed, if $t\in \mathopen{[}\sigma_{i}+\varepsilon,\tau_{i}-\varepsilon\mathclose{]}$ is such that $u'(t)=0$, the result is trivially true.
If $u'(t)>0$, we have
\begin{equation*}
u(t) \geq u(t) - u(\sigma_{i}) = \int_{\sigma_{i}}^{t} u'(\xi)~\!d\xi
\geq u'(t) (t-\sigma_{i}) \geq u'(t)\varepsilon.
\end{equation*}
Analogously, if $u'(t)<0$, we have
\begin{equation*}
u(t) \geq u(t) - u(\tau_{i}) = - \int_{t}^{\tau_{i}} u'(\xi)~\!d\xi
\geq -u'(t) (\tau_{i}-t) \geq - u'(t)\varepsilon.
\end{equation*}
The claim is thus proved. As a consequence,
\begin{equation}\label{eq-4.1}
|u'(t)| \leq \dfrac{\rho}{\varepsilon}, \quad \forall \, t\in \mathopen{[}\sigma_{i}+\varepsilon,\tau_{i}-\varepsilon\mathclose{]}.
\end{equation}
On the other hand, the concavity of $u(t)$ on $I^{+}_{i}$ ensures that
\begin{equation}\label{eq-4.2}
u(t)\geq \dfrac{\rho}{|I^{+}_{i}|}\min\{t-\sigma_{i},\tau_{i}-t\}, \quad \forall \, t\in I^{+}_{i}.
\end{equation}
We introduce now the positive constant
\begin{equation*}
\eta_{\varepsilon,\rho} := \min \Biggl{\{} g(s) \colon \dfrac{\varepsilon \rho}{\displaystyle{\max_{i=1,\ldots,m}}|I^{+}_{i}|} \leq s \leq \rho \Biggr{\}}.
\end{equation*}
Integrating equation \eqref{eq-lem-rho} on $\mathopen{[}\sigma_{i}+\varepsilon,\tau_{i}-\varepsilon\mathclose{]}$
and using \eqref{eq-4.1} and \eqref{eq-4.2}, we obtain
\begin{equation*}
\begin{aligned}
\lambda \eta_{\varepsilon,\rho} \int_{\sigma_{i}+\varepsilon}^{\tau_{i}-\varepsilon} a^{+}(t)~\!dt
   &\leq \lambda \int_{\sigma_{i}+\varepsilon}^{\tau_{i}-\varepsilon} a^{+}(t) g(u(t))~\!dt =
   \int_{\sigma_{i}+\varepsilon}^{\tau_{i}-\varepsilon} (- u''(t) - \alpha)~\!dt
\\ &= u'(\sigma_{i}+\varepsilon) - u'(\tau_{i}-\varepsilon) - \alpha \, (\tau_{i}-\sigma_{i}-2\varepsilon)
\leq \dfrac{2\rho}{\varepsilon}.
\end{aligned}
\end{equation*}
Now, we set
\begin{equation*}
\lambda^{*} = \lambda^{*}(\rho):= \dfrac{2 \rho}{\varepsilon \nu_{\varepsilon} \eta_{\varepsilon,\rho}}.
\end{equation*}
Arguing by contradiction, from the last inequality we immediately conclude that there are no non-negative solutions $u(t)$ of \eqref{eq-lem-rho} with
$\max_{t\in I^{+}_{i}} u(t) = \rho$, if $\lambda > \lambda^{*}$.
\end{proof}

\begin{lemma}\label{lem-rR}
Let $\lambda, \mu >0$. Let $d>0$ be such that
\begin{equation}\label{cond-d}
\zeta(d) < \dfrac{1}{2 \lambda \, \displaystyle{\max_{i=1,\ldots,m}}(|I^{+}_{i}|+|I^{-}_{i}|)\|a\|_{+,i}}.
\end{equation}
Suppose that $u(t)$ is a non-negative solution of
\begin{equation*}
u'' + \vartheta \bigl{(}\lambda a^{+}(t) - \mu a^{-}(t) \bigr{)} g(u) = 0, \quad \vartheta\in\mathopen{]}0,1\mathclose{]},
\end{equation*}
defined on $I^{+}_{i} \cup I^{-}_{i}$ for some $i\in \{1,\ldots,m\}$ and such that
\begin{equation*}
\max_{t\in I^{+}_{i}} u(t) = d \quad \text{ and } \quad u'(\sigma_{i}) \geq 0.
\end{equation*}
Then it holds that
\begin{equation*}
u(\sigma_{i+1}) \geq d \, \biggl{[} 1 + \dfrac{\vartheta}{2} \Bigl{(} \mu \gamma(d) \|A_{i}\| - 1\Bigr{)} \biggr{]}
\end{equation*}
and
\begin{equation*}
u'(\sigma_{i+1}) \geq \vartheta d \biggl{(}\mu \dfrac{\gamma(d)}{2} \|a\|_{-,i} - \lambda \|a\|_{+,i} \zeta(d) \biggr{)}.
\end{equation*}
\end{lemma}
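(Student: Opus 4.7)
My plan hinges on the sign structure of the equation. On $I^+_i$ we have $a^-\equiv 0$, so $u''=-\vartheta\lambda a^+(t)g(u)\leq 0$ and $u$ is concave; on $I^-_i$ we have $a^+\equiv 0$, so $u''=\vartheta\mu a^-(t)g(u)\geq 0$ and $u$ is convex. Combined with $u'(\sigma_i)\geq 0$, concavity lets me pick $t^*\in[\sigma_i,\tau_i]$ with $u(t^*)=d$ and $u'(t^*)\geq 0$: take $t^*$ to be the first time $u$ attains $d$ on $I^+_i$; then either $t^*$ is interior (and $u'(t^*)=0$) or $t^*=\tau_i$ (and $u'(t^*)=u'(\tau_i)\geq 0$). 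I then propagate $u$ and $u'$ from $t^*$ to $\sigma_{i+1}$ via the Taylor-like identities
\begin{align*}
u(\sigma_{i+1})&=d+u'(t^*)(\sigma_{i+1}-t^*)+\int_{t^*}^{\sigma_{i+1}}u''(\xi)(\sigma_{i+1}-\xi)\,d\xi,\\
u'(\sigma_{i+1})&=u'(t^*)+\int_{t^*}^{\sigma_{i+1}}u''(\xi)\,d\xi,
\end{align*}
splitting each integral according to the disjoint supports of $a^+$ and $a^-$.

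The pivotal intermediate claim is $u(t)\geq d/2$ for every $t\in[t^*,\sigma_{i+1}]$. I argue by contradiction: let $t_1\in(t^*,\sigma_{i+1}]$ be the first point with $u(t_1)=d/2$. A brief shape analysis gives $u\leq d$ on $[t^*,t_1]$: on $[t^*,\tau_i]$ by concavity with maximum at $t^*$, and on $[\tau_i,t_1]$ because a convex function starting at $u(\tau_i)\leq d$ and reaching $u(t_1)=d/2<d$ satisfies $u\leq\max\{u(\tau_i),u(t_1)\}\leq d$ throughout. Hence $u\in[d/2,d]$ on $[t^*,t_1]$, $g(u)\leq\zeta(d)\,d$ there, and dropping the non-negative $I^-_i$-contribution in the first Taylor identity yields
\[
u(t_1)-d\geq -\vartheta\lambda\zeta(d)\,d\,(|I^+_i|+|I^-_i|)\|a\|_{+,i}>-\vartheta d/2\geq -d/2
\]
by the standing hypothesis \eqref{cond-d}, contradicting $u(t_1)=d/2$.

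With the claim in hand—and, in the main subcase $u'(\tau_i)<0$, the same convexity argument further giving $u\leq d$ on all of $I^-_i$, so that $u\in[d/2,d]$ on the whole of $[t^*,\sigma_{i+1}]$—I get $g(u)\leq\zeta(d)d$ on the $I^+_i$-piece and $g(u)\geq\gamma(d)u\geq\gamma(d)d/2$ on the $I^-_i$-piece. Substituting into the two Taylor identities and recognising the Fubini identities $\int_{\tau_i}^{\sigma_{i+1}}a^-(\xi)(\sigma_{i+1}-\xi)\,d\xi=\|A_i\|$ and $\int_{\tau_i}^{\sigma_{i+1}}a^-(\xi)\,d\xi=\|a\|_{-,i}$, I obtain
\[
u(\sigma_{i+1})\geq d-\vartheta\lambda\zeta(d)d(|I^+_i|+|I^-_i|)\|a\|_{+,i}+\vartheta\mu\gamma(d)\tfrac{d}{2}\|A_i\|,
\]
and a second application of \eqref{cond-d} bounds the negative term by $\vartheta d/2$, rewriting the right-hand side exactly as $d[1+\tfrac{\vartheta}{2}(\mu\gamma(d)\|A_i\|-1)]$. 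The estimate on $u'(\sigma_{i+1})$ is entirely analogous, with $\|a\|_{-,i}$ replacing $\|A_i\|$.

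The main obstacle is the shape analysis behind the $u\geq d/2$ claim—specifically ruling out convex overshoots of $d$ on $I^-_i$ via the two-point convex-combination bound—together with the boundary subcase $u'(\tau_i)\geq 0$: there $t^*=\tau_i$ and $u\geq d$ on all of $I^-_i$, so the lower bound is automatic but one must verify the final inequalities by a direct argument exploiting $u'(t^*)\geq 0$ and the non-negativity of the $I^-_i$-integral. Everything else in the proof is careful bookkeeping of inequalities and one application of Fubini.
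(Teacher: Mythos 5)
Your proposal follows the same route as the paper's proof: the $\zeta(d)$-based derivative bound on $I^{+}_{i}$, a continuation (first-crossing) argument giving $u\geq d/2$, and then an integration of the convex piece on $I^{-}_{i}$ using $\gamma(d)$; your first-crossing contradiction and the Fubini identity $\int_{\tau_i}^{\sigma_{i+1}}a^{-}(\xi)(\sigma_{i+1}-\xi)\,d\xi=\|A_i\|$ are correct and reproduce the paper's estimates. The genuine gap is the step where you pass from $u\geq d/2$ to $g(u)\geq\gamma(d)u\geq\gamma(d)\,d/2$ on the $I^{-}_{i}$-piece. Since $\gamma(d)$ only controls $g(s)/s$ for $s\in[d/2,d]$, and $g$ is merely continuous and positive (it may drop arbitrarily close to $0$ just above $d$), you really do need $u\leq d$ on the support of $a^{-}$ in $I^{-}_{i}$. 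Your justification of this, namely that in the subcase $u'(\tau_i)<0$ ``the same convexity argument'' gives $u\leq d$ on all of $I^{-}_{i}$, is false: the two-point bound $u\leq\max\{u(\tau_i),u(t_1)\}$ worked on $[\tau_i,t_1]$ only because you knew the value at the right endpoint $t_1$; on all of $I^{-}_{i}$ the right endpoint is $\sigma_{i+1}$, and $u(\sigma_{i+1})\leq d$ is exactly what you cannot assume --- the conclusion you are trying to prove forces $u(\sigma_{i+1})>d$ as soon as $\mu\gamma(d)\|A_i\|>1$. A convex function with negative initial slope can dip and then climb far above $d$ before $\sigma_{i+1}$. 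The complementary subcase $u'(\tau_i)\geq0$ is in the same trouble: there $u\geq d$ on $I^{-}_{i}$, and the ``direct argument'' you defer to would have to extract the $\mu$-dependent growth from values of $g$ at points above $d$, where $\gamma(d)$ gives no information; all one gets is $u'(\sigma_{i+1})\geq u'(\tau_i)\geq0$ and $u(\sigma_{i+1})\geq d$, which is not the stated conclusion.

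To be fair, you have put your finger on a point where the paper's own proof is silent: it invokes the bound $g(u(\xi))\geq\tfrac{d}{2}\gamma(d)$ on $I^{-}_{i}$ knowing only $u>\tfrac{d}{2}$ there, which by the definition of $\gamma(d)$ is justified only where $u\leq d$ as well. So identifying the issue was the right instinct, but the proposed repair does not work, and no purely local convexity argument can supply $u\leq d$ on $I^{-}_{i}$. A correct repair needs extra input: in every application of the lemma one has an a priori bound $u\leq D$ (with $D=R$), and the argument goes through with $\gamma(d)\tfrac{d}{2}$ replaced by $g_{*}(d/2,D)$ on the portion of $I^{-}_{i}$ where $u$ may exceed $d$, at the cost of changing the constant in the conclusion (which is harmless for the way the lemma is used). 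As written, your argument does not establish the lemma.
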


\begin{proof}
The proof is split into two parts. In the first one we provide some estimates for $u(\tau_{i})$ and $u'(\tau_{i})$,
while in the second part we obtain the desired inequality on $u(\sigma_{i+1})$ and $u'(\sigma_{i+1})$.

Let $\hat{t}_{i}\in I^{+}_{i}$ be such that
\begin{equation*}
\max_{t\in I^{+}_{i}} u(t) = d = u(\hat{t}_{i}).
\end{equation*}
Observe that $u'(\hat{t}_{i}) = 0$, if $\sigma_{i} \leq \hat{t}_{i} < \tau_{i}$ (since $u'(\sigma_{i}) \geq 0$), while
$u'(\hat{t}_{i}) \geq 0$, if $\hat{t}_{i} = \tau_{i}$.
As a first instance, suppose that
\begin{equation*}
u'(\hat{t}_{i}) = 0.
\end{equation*}
Let $\mathopen{[}s_{1},s_{2}\mathclose{]} \subseteq I^{+}_{i}$ be the maximal closed interval containing $\hat{t}_{i}$ and such that
$u(t)\geq d/2$ for all $t\in\mathopen{[}s_{1},s_{2}\mathclose{]}$. We claim that $\mathopen{[}s_{1},s_{2}\mathclose{]} = I^{+}_{i}$.
From
\begin{equation*}
u''(t) = \vartheta \lambda a^{+}(t) g(u(t)), \quad t \in I^{+}_{i},
\end{equation*}
and
\begin{equation*}
u'(t) = u'(\hat{t}_{i}) + \int_{\hat{t}_{i}}^{t} u''(\xi) ~\!d\xi, \quad \forall \, t\in I^{+}_{i},
\end{equation*}
it follows that
\begin{equation*}
|u'(t)| \leq \vartheta \lambda \|a\|_{+,i} \zeta(d) d, \quad \forall \, t\in \mathopen{[}s_{1},s_{2}\mathclose{]}.
\end{equation*}
Then, in view of \eqref{cond-d},
\begin{equation*}
u(t) = u(\hat{t}_{i}) + \int_{\hat{t}_{i}}^{t} u'(\xi) ~\!d\xi
\geq d - \vartheta \lambda |I^{+}_{i}| \|a\|_{+,i} \zeta(d) d > \dfrac{d}{2}, \quad \forall \, t\in \mathopen{[}s_{1},s_{2}\mathclose{]}.
\end{equation*}
This inequality, together with the maximality of $\mathopen{[}s_{1},s_{2}\mathclose{]}$,
implies that $\mathopen{[}s_{1},s_{2}\mathclose{]} = I^{+}_{i}$. Hence
\begin{equation}\label{eq-4.1tau}
u'(t) \geq - \vartheta \lambda \|a\|_{+,i} \zeta(d) d, \quad \forall \, t\in I^{+}_{i},
\end{equation}
and, a fortiori,
\begin{equation}\label{eq-4.2tau}
u'(\tau_{i}) \geq - \vartheta \lambda \|a\|_{+,i} \zeta(d) d.
\end{equation}
Moreover, after an integration of \eqref{eq-4.1tau} on $\mathopen{[}\hat{t}_{i},\tau_{i}\mathclose{]}$, we obtain
\begin{equation}\label{eq-4.3tau}
u(\tau_{i}) \geq d \bigl{(} 1 - \vartheta \lambda |I^{+}_{i}| \|a\|_{+,i} \zeta(d) \bigr{)}.
\end{equation}
On the other hand, if we suppose that
$\hat{t}_{i} = \tau_{i}$ and $u'(\hat{t}_{i})> 0$, we immediately have
\begin{equation*}
u(\tau_{i}) =d \geq d \bigl{(} 1 - \vartheta \lambda |I^{+}_{i}| \|a\|_{+,i} \zeta(d) \bigr{)} \quad \text{ and } \quad
u'(\tau_{i}) >0 \geq - \vartheta \lambda \|a\|_{+,i} \zeta(d) d.
\end{equation*}
Thus, in any case, \eqref{eq-4.2tau} and \eqref{eq-4.3tau} hold.
Having produced some estimates on $u(\tau_{i})$ and $u'(\tau_{i})$ we are in position now to proceed with the second part of the proof.

We consider the subsequent (adjacent) interval $I^{-}_{i}=\mathopen{[}\tau_{i},\sigma_{i+1}\mathclose{]}$
where the weight is non-positive. Since $u'(t)$ is non-decreasing, from \eqref{eq-4.2tau} we get
\begin{equation*}
u'(t) \geq - \vartheta \lambda \|a\|_{+,i} \zeta(d) d, \quad \forall \, t\in I^{-}_{i}.
\end{equation*}
Therefore, integrating on $\mathopen{[}\tau_{i},t\mathclose{]}$ and using \eqref{eq-4.3tau}, we have
\begin{equation}\label{eq-4.9}
\begin{aligned}
u(t)
   &= u(\tau_{i}) + \int_{\tau_{i}}^{t} u'(\xi) ~\!d\xi
    \geq d \bigl{(} 1 - \vartheta \lambda |I^{+}_{i}| \|a\|_{+,i} \zeta(d) - \vartheta \lambda |I^{-}_{i}| \|a\|_{+,i} \zeta(d)\bigr{)}
\\ &\geq d \bigl{(} 1 - \lambda (|I^{+}_{i}|+|I^{-}_{i}|) \|a\|_{+,i} \zeta(d)\bigr{)} > \dfrac{d}{2}, \quad \forall \, t\in I^{-}_{i},
\end{aligned}
\end{equation}
where the last inequality follows from \eqref{cond-d}. On the other hand, integrating
\begin{equation*}
u''(t) = \vartheta \mu a^{-}(t) g(u(t)), \quad t \in I^{-}_{i},
\end{equation*}
on $\mathopen{[}\tau_{i},t\mathclose{]}$ and using \eqref{eq-4.2tau} and \eqref{eq-4.9}, we find
\begin{equation*}
\begin{aligned}
u'(t)
   &= u'(\tau_{i}) + \int_{\tau_{i}}^{t} \vartheta \mu a^{-}(\xi) g(u(\xi)) ~\!d\xi
\\ &\geq - \vartheta \lambda \|a\|_{+,i} \zeta(d) d + \vartheta \dfrac{d}{2} \mu \gamma(d) A_{i}(t), \quad \forall \, t \in I^{-}_{i}.
\end{aligned}
\end{equation*}
In particular,
\begin{equation*}
u'(\sigma_{i+1}) \geq \vartheta d \biggl{(}\mu \dfrac{\gamma(d)}{2} \|a\|_{-,i} - \lambda \|a\|_{+,i} \zeta(d) \biggr{)}.
\end{equation*}
Finally, a further integration and condition \eqref{cond-d} yield
\begin{equation*}
\begin{aligned}
u(\sigma_{i+1})
   &= u(\tau_{i}) + \int_{\tau_{i}}^{\sigma_{i+1}} u'(t) ~\!dt
\\ &\geq d - \vartheta \lambda (|I^{+}_{i}|+|I^{-}_{i}|) \|a\|_{+,i} \zeta(d)d + \vartheta \dfrac{d}{2} \mu \gamma(d) \|A_{i}\|
\\ &\geq d \, \biggl{[} 1 + \vartheta \biggl{(} \mu \dfrac{\gamma(d)}{2} \|A_{i}\| - \lambda (|I^{+}_{i}|+|I^{-}_{i}|) \|a\|_{+,i} \zeta(d)\biggr{)} \biggr{]}
\\ &\geq d \, \biggl{[} 1 + \dfrac{\vartheta}{2} \Bigl{(} \mu \gamma(d) \|A_{i}\| - 1 \Bigr{)} \biggr{]}.
\end{aligned}
\end{equation*}
This concludes the proof.
\end{proof}

Symmetrically, we have the following.

\begin{lemma}\label{lem-rR-back}
Let $\lambda, \mu >0$. Let $d>0$ be such that
\begin{equation*}
\zeta(d) < \dfrac{1}{2 \lambda \, \displaystyle{\max_{i=1,\ldots,m}}(|I^{+}_{i}|+|I^{-}_{i}|)\|a\|_{+,i}}.
\end{equation*}
Suppose that $u(t)$ is a non-negative solution of
\begin{equation*}
u'' + \vartheta \bigl{(}\lambda a^{+}(t) - \mu a^{-}(t) \bigr{)} g(u) = 0, \quad \vartheta\in\mathopen{]}0,1\mathclose{]},
\end{equation*}
defined on $I^{-}_{i-1} \cup I^{+}_{i}$ for some $i\in \{1,\ldots,m\}$ and such that
\begin{equation*}
\max_{t\in I^{+}_{i}} u(t) = d \quad \text{ and } \quad u'(\tau_{i}) \leq 0.
\end{equation*}
Then it holds that
\begin{equation*}
u(\tau_{i-1}) \geq d \, \biggl{[} 1 + \dfrac{\vartheta}{2} \Bigl{(} \mu \gamma(d)  \|A_{i-1}\| - 1 \Bigr{)} \biggr{]}
\end{equation*}
and
\begin{equation*}
u'(\tau_{i-1}) \geq \vartheta d \biggl{(} \mu \dfrac{\gamma(d)}{2} \|a\|_{-,i-1} - \lambda \|a\|_{+,i} \zeta(d) \biggr{)}.
\end{equation*}
\end{lemma}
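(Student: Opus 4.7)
The claim is the time-reversed counterpart of Lemma~\ref{lem-rR}: in that lemma, the peak of $u$ on $I^{+}_{i}$ is flanked on the right by the entry datum $u'(\sigma_{i})\ge 0$ and by the subsequent negative hump $I^{-}_{i}$; here it is flanked on the left by the exit datum $u'(\tau_{i})\le 0$ and by the preceding negative hump $I^{-}_{i-1}$. The cleanest way to prove it is therefore to reduce to Lemma~\ref{lem-rR} by a time reflection.

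Concretely, I propose the substitution $v(s) := u(\tau_{i-1}+\tau_{i}-s)$ on $[\tau_{i-1},\tau_{i}]$. Then $v$ satisfies the same type of ODE with the reflected weight $\tilde a(s):=a(\tau_{i-1}+\tau_{i}-s)$; the image of $I^{+}_{i}$ is a positive hump $[\tau_{i-1},\tilde\sigma]$ with $\tilde\sigma:=\tau_{i-1}+\tau_{i}-\sigma_{i}$, the image of $I^{-}_{i-1}$ is a negative hump $[\tilde\sigma,\tau_{i}]$, and $v'(\tau_{i-1})\ge 0$ follows from $u'(\tau_{i})\le 0$. Moreover $\max v=d$ on the new positive hump. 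So the hypotheses of Lemma~\ref{lem-rR} applied to $v$ (with the reflected intervals playing the roles of $I^{+}_{i}$ and $I^{-}_{i}$) are met.

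Invoking Lemma~\ref{lem-rR} on $v$ then delivers the two-sided conclusions of the announced shape $d\bigl[1+\tfrac{\vartheta}{2}(\mu\gamma(d)\|\cdot\|-1)\bigr]$ and $\vartheta d\bigl(\mu\tfrac{\gamma(d)}{2}\|\cdot\|-\lambda\|\cdot\|\zeta(d)\bigr)$. These estimates transfer back to $u$ through $v(\tau_{i})=u(\tau_{i-1})$ and the relation between $v'$ and $u'$ at the reflected endpoints. A change-of-variables computation using Fubini shows that the moments $\|\tilde a\|_{\pm}$ and the first moment of $\tilde a^{-}$ coincide with $\|a\|_{\pm,i-1}$, $\|a\|_{+,i}$, and $\|A_{i-1}\|$ in the statement, so the conclusion reads exactly as Lemma~\ref{lem-rR-back} claims. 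If one prefers a self-contained argument, the two steps of the proof of Lemma~\ref{lem-rR} transcribe line by line: first exploit the concavity of $u$ on $I^{+}_{i}$ together with $u'(\tau_{i})\le 0$, in a case split on the position of the peak $\hat t_{i}$, to bound $u(\sigma_{i})\ge d(1-\vartheta\lambda|I^{+}_{i}|\|a\|_{+,i}\zeta(d))$ and $|u'(\sigma_{i})|\le \vartheta\lambda\|a\|_{+,i}\zeta(d)\,d$ via $g(s)\le \zeta(d)s$ on $[d/2,d]$ and condition~\eqref{cond-d}; then exploit convexity of $u$ on $I^{-}_{i-1}$, establish the bootstrap $u>d/2$ on that interval, and integrate the equation twice to produce the two bounds at $\tau_{i-1}$.

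The main obstacle is organizational rather than analytic: ensuring that each integration orientation, each sign in the chain of inequalities, and each Fubini rearrangement of the first moments of $a^{-}$ line up with the printed conclusion. Once the reflection bookkeeping is in place, there is nothing essentially new beyond the computation already carried out in the proof of Lemma~\ref{lem-rR}.
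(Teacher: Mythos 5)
Your reduction-by-reflection is exactly the argument the paper has in mind: no proof of Lemma~\ref{lem-rR-back} is given beyond the word ``symmetrically'', and both your substitution $v(s)=u(\tau_{i-1}+\tau_{i}-s)$ and your fallback line-by-line transcription are faithful implementations of that symmetry, so the analytic content is sound. However, precisely at the point you yourself single out as the delicate one --- making ``each integration orientation, each sign, and each Fubini rearrangement line up with the printed conclusion'' --- two of your transfer claims are wrong.

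First, the derivative estimate. Since $v'(s)=-u'(\tau_{i-1}+\tau_{i}-s)$, Lemma~\ref{lem-rR} applied to $v$ bounds $v'(\tau_{i})=-u'(\tau_{i-1})$ from below, i.e.\ it yields
\begin{equation*}
u'(\tau_{i-1}) \,\leq\, -\,\vartheta d \Bigl( \mu \tfrac{\gamma(d)}{2}\|a\|_{-,i-1} - \lambda\|a\|_{+,i}\zeta(d)\Bigr),
\end{equation*}
an \emph{upper} bound by a large negative quantity, not the printed lower bound $u'(\tau_{i-1})\geq \vartheta d(\cdots)$. The printed inequality cannot hold for large $\mu$: convexity on $I^{-}_{i-1}$ and concavity on $I^{+}_{i}$ force $u'(\tau_{i-1})\leq u'(\sigma_{i})\leq \vartheta\lambda\|a\|_{+,i}\zeta(d)d$, a bound independent of $\mu$. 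The inequality your reflection actually produces is the one used in Section~\ref{section-4.4} to propagate $u'<0$ backward across $\mathopen{[}0,T\mathclose{]}$, so you have detected a sign misprint in the statement rather than reproduced it; asserting that the conclusion ``reads exactly as Lemma~\ref{lem-rR-back} claims'' papers over this.

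Second, the first moment. Reflection sends $A_{i-1}(t)=\int_{\tau_{i-1}}^{t}a^{-}$ to the primitive read from the opposite endpoint, and Fubini gives $\int_{I^{-}_{i-1}}\bigl(\int_{t}^{\sigma_{i}}a^{-}\bigr)\,dt=\int_{I^{-}_{i-1}}a^{-}(\xi)(\xi-\tau_{i-1})\,d\xi$, whereas $\|A_{i-1}\|=\int_{I^{-}_{i-1}}a^{-}(\xi)(\sigma_{i}-\xi)\,d\xi$; these differ in general, so your claim that the reflected moment ``coincides with $\|A_{i-1}\|$'' is false. (For the same reason, condition \eqref{cond-d} for the reflected problem pairs $|I^{+}_{i}|$ with $|I^{-}_{i-1}|$ rather than with $|I^{-}_{i}|$.) None of this damages the downstream use of the lemma --- all that matters there is that the constants are fixed and positive --- but the honest output of your argument is the mirror-image statement, with the reflected moment and with the derivative inequality reversed, not the lemma verbatim.
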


\begin{remark}\label{rem-4per}
In the sequel, when dealing with the periodic problem, we observe that
the solutions we consider are defined on $\mathopen{[}0,T\mathclose{]}$ and satisfy $T$-periodic boundary conditions
$u(T) - u(0) = u'(T) - u'(0) = 0$. Hence
it is convenient to count the intervals cyclically.
Accordingly, in the special case in which $i =1$, we apply Lemma~\ref{lem-rR-back} with the agreement $I^{-}_{0}= I^{-}_{m}$.
This makes sense because, if we extend the solution by $T$-periodicity on
the whole real line, we can consider the interval $I^{-}_{m} - T$ as adjacent on the left to $I^{+}_{1}$.
$\hfill\lhd$
\end{remark}

\begin{lemma}\label{lem-forward}
Let $\lambda>0$ and $0 < d < D$.
For any $i\in\{1,\ldots,m\}$ there exists a constant
\begin{equation*}
\mu^{*,+}_{i} = \mu^{*,+}_{i}(I^{-}_{i},I^{+}_{i+1}) > 0
\end{equation*}
such that for all $\mu >\mu^{*,+}_{i}$ any non-negative solution $u(t)$ of
\begin{equation*}
u'' + \vartheta \bigl{(}\lambda a^{+}(t) - \mu a^{-}(t) \bigr{)} g(u) = 0,\quad \vartheta\in\mathopen{]}0,1\mathclose{]},
\end{equation*}
defined on $I^{-}_{i} \cup I^{+}_{i+1}$ and such that
\begin{equation*}
\|u\|_{\infty} \leq D, \quad u(\tau_{i})>d \quad \text{ and } \quad u'(\tau_{i})>0,
\end{equation*}
satisfies
\begin{equation*}
u(t) > d, \quad u'(t)>0, \quad \forall \, t\in I^{-}_{i} \cup I^{+}_{i+1}.
\end{equation*}
\end{lemma}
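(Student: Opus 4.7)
The plan is to split the analysis into the two consecutive intervals $I^{-}_{i}$ and $I^{+}_{i+1}$, exploiting the opposite signs of the weight on each piece together with the initial positivity condition $u'(\tau_{i})>0$. On $I^{-}_{i}$ the equation reads $u'' = \vartheta \mu a^{-}(t) g(u) \geq 0$, so $u'$ is non-decreasing; since $u'(\tau_{i})>0$, this immediately yields $u'(t) \geq u'(\tau_{i}) > 0$ throughout $I^{-}_{i}$, and hence $u$ is increasing on $I^{-}_{i}$ with $u(t) \geq u(\tau_{i}) > d$.

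Using the hypothesis $\|u\|_{\infty}\leq D$, I then have $u(t) \in \mathopen{[}d,D\mathclose{]}$ on $I^{-}_{i}$, so $g(u(t)) \geq g_{*}(d,D)$, a quantity that is \emph{strictly positive} because $g$ is continuous and positive on $\mathopen{]}0,+\infty\mathclose{[}$ by $(g_{*})$. Integrating the differential equation on $I^{-}_{i}$ then gives the key lower bound
\begin{equation*}
u'(\sigma_{i+1}) \;\geq\; u'(\tau_{i}) + \vartheta \mu \, g_{*}(d,D) \, \|a\|_{-,i} \;>\; \vartheta \mu \, g_{*}(d,D) \, \|a\|_{-,i}.
\end{equation*}
On $I^{+}_{i+1}$ the sign flips: $u''=-\vartheta\lambda a^{+}(t)g(u) \leq 0$, so $u'$ is non-increasing. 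Bounding $g(u(t)) \leq g^{*}(D)$ on this interval and integrating yields the uniform estimate $u'(t) \geq u'(\sigma_{i+1}) - \vartheta \lambda g^{*}(D)\|a\|_{+,i+1}$ for every $t\in I^{+}_{i+1}$. Defining
\begin{equation*}
\mu^{*,+}_{i} := \dfrac{\lambda\, g^{*}(D)\, \|a\|_{+,i+1}}{g_{*}(d,D)\, \|a\|_{-,i}},
\end{equation*}
the two estimates combine (the factor $\vartheta\in\mathopen{]}0,1\mathclose{]}$ cancels cleanly) to show that $u'(t)>0$ on the whole of $I^{+}_{i+1}$ whenever $\mu>\mu^{*,+}_{i}$. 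Then $u$ is strictly increasing also on $I^{+}_{i+1}$, so $u(t) \geq u(\sigma_{i+1}) \geq u(\tau_{i}) > d$ there, which together with the step on $I^{-}_{i}$ gives the conclusion on the whole $I^{-}_{i}\cup I^{+}_{i+1}$.

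There is no real obstacle: the argument is a monotonicity/convexity chain combined with one integral estimate on each of the two adjacent intervals. The only point that deserves explicit attention is the positivity of $g_{*}(d,D)$, which is guaranteed by continuity of $g$ and $(g_{*})$ on the compact set $\mathopen{[}d,D\mathclose{]} \subset \mathopen{]}0,+\infty\mathclose{[}$; notably, neither $(g_{0})$ nor $(g_{\infty})$ is needed. The resulting threshold $\mu^{*,+}_{i}$ is independent of $\vartheta$ and depends only on $\lambda$, $g$, $a$, $d$, $D$ and on the two intervals $I^{-}_{i}$, $I^{+}_{i+1}$, as announced in the statement.
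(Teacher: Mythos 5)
Your proof is correct and follows essentially the same route as the paper's: convexity on $I^{-}_{i}$ to propagate $u>d$, $u'>0$, one integral estimate to bound $u'(\sigma_{i+1})$ from below by $\vartheta\mu\|a\|_{-,i}\,g_{*}(d,D)$, and one integral estimate on $I^{+}_{i+1}$ to absorb the loss $\vartheta\lambda\|a\|_{+,i+1}\,g^{*}(D)$, arriving at exactly the same threshold $\mu^{*,+}_{i}$. Your remarks on the strict positivity of $g_{*}(d,D)$ and the $\vartheta$-independence of the threshold are accurate and consistent with the paper.
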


\begin{proof}
Clearly, by the convexity of $u(t)$ on $I^{-}_{i}$, we have
\begin{equation*}
u(t) > d, \quad u'(t)>0, \quad \forall \, t\in I^{-}_{i}.
\end{equation*}
Integrating
\begin{equation*}
u''(t) = \vartheta \mu a^{-}(t) g(u(t)) \geq \vartheta \mu a^{-}(t) g_{*}(d,D), \quad t\in I^{-}_{i},
\end{equation*}
on $\mathopen{[}\tau_{i},t\mathclose{]} \subseteq I^{-}_{i}$ we find
\begin{equation*}
u'(t) = u'(\tau_{i}) + \int_{\tau_{i}}^{t} u''(\xi) ~\!d\xi > \vartheta \mu A_{i}(t) g_{*}(d,D), \quad \forall \, t\in I^{-}_{i},
\end{equation*}
so that
\begin{equation*}
u'(\sigma_{i+1}) > \vartheta \mu A_{i}(\sigma_{i+1}) g_{*}(d,D) = \vartheta \mu \|a\|_{-,i}\, g_{*}(d,D).
\end{equation*}
On the other hand, integrating
\begin{equation*}
u''(t) = - \vartheta \lambda a^{+}(t) g(u(t)) \geq - \vartheta \lambda a^{+}(t) g^{*}(D), \quad t\in I^{+}_{i+1},
\end{equation*}
on $\mathopen{[}\sigma_{i+1},t\mathclose{]}\subseteq I^{+}_{i+1}$ we find
\begin{equation*}
\begin{aligned}
u'(t)
   &= u'(\sigma_{i+1}) + \int_{\sigma_{i+1}}^{t} u''(\xi) ~\!d\xi
\\ &> \vartheta \Bigl{(} \mu \|a\|_{-,i} g_{*}(d,D) - \lambda \|a\|_{+,i+1} g^{*}(D) \Bigr{)}
> 0, \quad \forall \, t\in I^{+}_{i+1},
\end{aligned}
\end{equation*}
where the last inequality holds provided that
\begin{equation*}
\mu > \mu^{*,+}_{i} = \mu^{*,+}_{i}(I^{-}_{i},I^{+}_{i+1}) := \dfrac{\lambda \|a\|_{+,i+1} g^{*}(D)}{\|a\|_{-,i}g_{*}(d,D)}.
\end{equation*}
Then the solution $u(t)$ is increasing in $I^{+}_{i+1} = \mathopen{[}\sigma_{i+1},\tau_{i+1}\mathclose{]}$ and hence
\begin{equation*}
u(t) > u(\sigma_{i+1}) > d, \quad \forall \, t\in I^{+}_{i+1}.
\end{equation*}
The proof is thus completed.
\end{proof}

Symmetrically, we have the following.

\begin{lemma}\label{lem-back}
Let $\lambda>0$ and $0 < d < D$.
For any $i\in\{1,\ldots,m\}$ there exists a constant
\begin{equation*}
\mu^{*,-}_{i}= \mu^{*,-}_{i}(I^{+}_{i-1},I^{-}_{i-1}) > 0
\end{equation*}
such that for all $\mu >\mu^{*,-}_{i}$ any non-negative solution $u(t)$ of
\begin{equation*}
u'' + \vartheta \bigl{(}\lambda a^{+}(t) - \mu a^{-}(t) \bigr{)} g(u) = 0,\quad \vartheta\in\mathopen{]}0,1\mathclose{]},
\end{equation*}
defined on $I^{+}_{i-1} \cup I^{-}_{i-1}$ and such that
\begin{equation*}
\|u\|_{\infty} \leq D, \quad u(\sigma_{i})>d \quad \text{ and } \quad u'(\sigma_{i})<0,
\end{equation*}
satisfies
\begin{equation*}
u(t) > d, \quad u'(t) < 0, \quad \forall \, t\in I^{+}_{i-1} \cup I^{-}_{i-1}.
\end{equation*}
\end{lemma}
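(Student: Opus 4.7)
The plan is to mirror the proof of Lemma~\ref{lem-forward}, but running time backward from $\sigma_{i}$: first across the negative interval $I^{-}_{i-1} = \mathopen{[}\tau_{i-1},\sigma_{i}\mathclose{]}$, and then across the positive interval $I^{+}_{i-1} = \mathopen{[}\sigma_{i-1},\tau_{i-1}\mathclose{]}$. The sign of the weight dictates the convexity of $u$ on each piece exactly as in the forward case, so the only change is to reverse the directions of the inequalities attached to $u'$.

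First, on $I^{-}_{i-1}$ the equation gives $u'' = \vartheta \mu a^{-}(t)g(u) \geq 0$, so $u$ is convex and $u'$ is non-decreasing. Since $u'(\sigma_{i}) < 0$, this forces $u'(t) \leq u'(\sigma_{i}) < 0$ throughout the interval; hence $u$ is strictly decreasing on $I^{-}_{i-1}$, and going backward from $\sigma_{i}$ one obtains $u(t) \geq u(\sigma_{i}) > d$ for every $t \in I^{-}_{i-1}$. Integrating
\[
u''(t) = \vartheta \mu a^{-}(t)g(u(t)) \geq \vartheta \mu a^{-}(t) g_{*}(d,D), \quad t \in I^{-}_{i-1},
\]
on $\mathopen{[}t,\sigma_{i}\mathclose{]}$ and using $u'(\sigma_{i}) < 0$ then produces the estimate
\[
u'(\tau_{i-1}) < - \vartheta \mu \|a\|_{-,i-1} \, g_{*}(d,D).
\]

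Next, on $I^{+}_{i-1}$ the equation becomes $u'' = -\vartheta \lambda a^{+}(t)g(u) \leq 0$, so $u$ is concave and $u'$ is non-increasing. Integrating $u''(t) \geq -\vartheta \lambda a^{+}(t) g^{*}(D)$ on $\mathopen{[}t,\tau_{i-1}\mathclose{]}$ and exploiting $u \leq D$ gives
\[
u'(t) \leq u'(\tau_{i-1}) + \vartheta \lambda \|a\|_{+,i-1} \, g^{*}(D) < \vartheta \bigl( \lambda \|a\|_{+,i-1} g^{*}(D) - \mu \|a\|_{-,i-1} g_{*}(d,D) \bigr).
\]
Setting
\[
\mu^{*,-}_{i} = \mu^{*,-}_{i}(I^{+}_{i-1},I^{-}_{i-1}) := \dfrac{\lambda \|a\|_{+,i-1} \, g^{*}(D)}{\|a\|_{-,i-1} \, g_{*}(d,D)},
\]
the right-hand side is strictly negative as soon as $\mu > \mu^{*,-}_{i}$, so that $u'(t) < 0$ on all of $I^{+}_{i-1}$. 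Consequently $u$ is strictly decreasing on $I^{+}_{i-1}$, and therefore $u(t) > u(\tau_{i-1}) > u(\sigma_{i}) > d$ on this interval as well, which completes the argument.

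I do not expect a real obstacle: this is a routine sign-flipped repetition of Lemma~\ref{lem-forward} and the choice of $\mu^{*,-}_{i}$ has exactly the same structure as $\mu^{*,+}_{i}$, with the roles of the neighbouring intervals exchanged. The only delicate point is the bookkeeping of signs and directions of integration: one must exploit the convexity of $u$ on $I^{-}_{i-1}$ to propagate the negativity of $u'(\sigma_{i})$ backward to $\tau_{i-1}$ (instead of propagating the positivity of $u'(\tau_{i})$ forward as in Lemma~\ref{lem-forward}), and integrate each inequality on an interval that ends at the endpoint where the sign of $u'$ is already known.
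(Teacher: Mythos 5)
Your proof is correct and is precisely the symmetric, time-reversed version of the proof of Lemma~\ref{lem-forward} that the paper intends (the paper omits the proof, stating only ``Symmetrically, we have the following''), and your constant $\mu^{*,-}_{i}$ agrees with the one the authors later use in Section~\ref{section-4.4}. The sign bookkeeping (propagating $u'(\sigma_i)<0$ backward via convexity on $I^{-}_{i-1}$, then integrating on intervals ending at the point where the sign of $u'$ is known) is handled correctly.
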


\begin{remark}\label{rem-4.2}
Similarly as in Remark~\ref{rem-4per}, in order to make the statements of Lemma~\ref{lem-forward} and Lemma~\ref{lem-back}
meaningful for each possible choice of $i\in \{1,\ldots,m\}$, when dealing with the periodic problem we shall use the cyclic agreement
$I^{-}_{0}= I^{-}_{m}$ (as above) and, moreover, $I^{+}_{m+1} = I^{+}_{1}$, $I^{+}_{0}= I^{+}_{m}$.
$\hfill\lhd$
\end{remark}

\subsection{Fixing the constants $\rho$, $\lambda$, $r$ and $R$}\label{section-4.2}

First of all, we arbitrarily choose a constant $\rho > 0$. Then,
we determine the constant $\lambda^{*} = \lambda^{*}(\rho) > 0$ according to Lemma~\ref{lem-rho}
and we take an arbitrary $\lambda > \lambda^{*}$.
Next, we fix two positive constants $r,R$ with
\begin{equation*}
0 < r < \rho < R
\end{equation*}
and such that
\begin{equation}\label{cond-rR}
\zeta(s) < \dfrac{1}{2 \lambda \,
\displaystyle{\max_{i=1,\ldots,m}}(|I^{+}_{i}|+|I^{-}_{i}|)\|a\|_{+,i}}, \quad \forall \, 0<s\leq r , \; \forall \, s \geq R,
\end{equation}
where $\zeta(s)$ is defined in \eqref{zetagamma}.
The existence of $r$ and $R$ with the above property is guaranteed by the fact that $g(s)/s\to 0^{+}$ for $s\to 0^{+}$ and
for $s\to +\infty$, namely conditions $(g_{0})$ and $(g_{\infty})$.

With this choice of $r$, $\rho$ and $R$, we consider the sets $\Omega^{\mathcal{I},\mathcal{J}}_{(r,\rho,R)}$ defined in \eqref{eq-Omega}. We are ready now to
prove Theorem~\ref{deg-Omega}, by checking that Lemma~\ref{lem-deg0} and Lemma~\ref{lem-deg1} can be applied
for $\mu>0$ sufficiently large (say $\mu>\mu^{*}(\lambda,r,R)$).

In the proofs of the next two subsections we deal with solutions satisfying $T$-periodic boundary conditions.
Accordingly, we apply Lemma~\ref{lem-rR}, Lemma~\ref{lem-rR-back}, Lemma~\ref{lem-forward} and Lemma~\ref{lem-back} with the cyclic convention
about the labelling of the intervals described in Remark~\ref{rem-4per} and Remark~\ref{rem-4.2}.

\subsection{Checking the assumptions of Lemma~\ref{lem-deg0} for $\mu$ large}\label{section-4.3}

In this section we are going to prove the first part of Theorem~\ref{deg-Omega}, that is
\begin{equation}\label{eq-deg0}
D_{L}\bigl{(}L-N_{\lambda,\mu},\Omega^{\mathcal{I},\mathcal{J}}_{(r,\rho,R)} \bigr{)} = 0,
\quad \text{ if } \; \mathcal{I} \neq \emptyset.
\end{equation}
As usual, we implicitly suppose that $\mathcal{I},\mathcal{J}\subseteq\{1,\ldots,m\}$ with $\mathcal{I}\cap\mathcal{J}=\emptyset$.

\smallskip

Given $\mathcal{I},\mathcal{J}$ as above, with $\mathcal{I}\neq\emptyset$,
it is sufficient to check that the assumptions of Lemma~\ref{lem-deg0} are satisfied,
taking as $v(t)$ the indicator function of the set $\bigcup_{i\in\mathcal{I}}I^{+}_{i}$, that is
\begin{equation*}
v(t) =
\begin{cases}
\, 1, & \text{if } t \in \bigcup_{i\in\mathcal{I}}I^{+}_{i}; \\
\, 0, & \text{if } t \in \mathopen{[}0,T\mathclose{]} \setminus \bigcup_{i\in\mathcal{I}}I^{+}_{i}.
\end{cases}
\end{equation*}

\smallskip
\noindent
\textit{Verification of $(H_{1})$. }
Let $\alpha \geq 0$. By contradiction, suppose that there exists a non-negative $T$-periodic solution $u(t)$ of \eqref{eq-lem-deg0}
with $\|u\|_{\infty}\leq R$
such that at least one of the following conditions holds:
\begin{itemize}
\item[$(a_{1})$] there is an index $i \notin \mathcal{I}\cup\mathcal{J}$ such that $\max_{t\in I^{+}_{i}} u(t) = r$;
\item[$(a_{2})$] there is an index $i \in \mathcal{I}$ such that $\max_{t\in I^{+}_{i}} u(t) = \rho$;
\item[$(a_{3})$] there is an index $i \in \mathcal{J}$ such that $\max_{t\in I^{+}_{i}} u(t) = R$.
\end{itemize}

Suppose that $(a_{1})$ holds. On the interval $I^{+}_{i}\cup I^{-}_{i}$ (with $i \notin \mathcal{I}\cup\mathcal{J}$) equation \eqref{eq-lem-deg0} reads as
\begin{equation*}
u'' + \bigl{(} \lambda a^{+}(t) - \mu a^{-}(t) \bigr{)} g(u) = 0.
\end{equation*}
Consider at first the case $u'(\sigma_{i}) \geq 0$.
By Lemma~\ref{lem-rR} (with $\vartheta=1$ and $d=r$), we have that
\begin{equation*}
u(\sigma_{i+1}) \geq r \, \biggl{(} 1 + \mu \dfrac{\gamma(r)}{2} \|A_{i}\| - \dfrac{1}{2}\biggr{)} \geq \mu \, r\dfrac{\gamma(r)}{2} \|A_{i}\|.
\end{equation*}
Thus, taking
\begin{equation*}
\mu > \hat{\mu}_{i}:= \dfrac{2R}{r\gamma(r) \|A_{i}\|},
\end{equation*}
we obtain
\begin{equation*}
u(\sigma_{i+1}) > R,
\end{equation*}
a contradiction.
On the other hand, if $u'(\sigma_{i})< 0$, by the concavity of $u(t)$ in $I^{+}_{i}$ we have that
$u'(\tau_{i})< 0$. In this case we reach the contradiction
\begin{equation*}
u(\tau_{i-1}) > R
\end{equation*}
using Lemma~\ref{lem-rR-back} (with $\vartheta=1$ and $d=r$)
and taking
\begin{equation*}
\mu > \hat{\mu}_{i-1}= \dfrac{2R}{r\gamma(r) \|A_{i-1}\|}.
\end{equation*}

Suppose that $(a_{2})$ holds. This fact contradicts Lemma~\ref{lem-rho} in view of our choice of $\lambda > \lambda^{*}$.
In this case no assumption on $\mu > 0$ is needed.

Finally, if $(a_{3})$ holds,
we obtain again a contradiction arguing as in case $(a_{1})$ and using
Lemma~\ref{lem-rR} (with $\vartheta=1$ and $d=R$). Indeed, $u'(\sigma_{i})$ cannot be negative, otherwise $u(\sigma_{i})=R$ and we get a contradiction
with $\max_{t\in I^{+}_{i}} u(t) = R = \|u\|_{\infty}$. Hence, only the instance $u'(\sigma_{i})\geq 0$ may occur
and we have a contradiction for
\begin{equation}\label{eq-check}
\mu > \check{\mu}_{i}:= \dfrac{1}{\gamma(R)\|A_{i}\|}.
\end{equation}

We conclude that $(H_{1})$ holds true for
\begin{equation*}
\mu > \mu^{(H_{1})} := \max_{i=1,\ldots,m} \bigl{\{} \hat{\mu}_{i},\check{\mu}_{i} \bigr{\}}.
\end{equation*}

\smallskip

\noindent
\textit{Verification of $(H_{2})$. }
Let $u(t)$ be an arbitrary non-negative $T$-periodic solution of \eqref{eq-lem-deg0} (with $\alpha\geq0$) such
that $u(t) \leq \rho$ for every $t\in\bigcup_{i\in\mathcal{I}} I^{+}_{i}$.

We fix an index $j\in\mathcal{I}$ and observe that on the interval $I^{+}_{j}$ equation \eqref{eq-lem-deg0}
reads as
\begin{equation*}
u'' + \lambda a^{+}(t) g(u) + \alpha = 0.
\end{equation*}
Now, we choose a constant $\varepsilon \in \mathopen{]}0, (\tau_{j}- \sigma_{j})/2\mathclose{[}$ and we notice that the inequality
\begin{equation*}
|u'(t)| \leq \dfrac{|u(t)|}{\varepsilon}, \quad \forall \, t\in \mathopen{[}\sigma_{j}+\varepsilon,\tau_{j}-\varepsilon\mathclose{]},
\end{equation*}
used in the proof of Lemma~\ref{lem-rho} is still valid.
Integrating the differential equation on $\mathopen{[}\sigma_{j}+\varepsilon,\tau_{j}-\varepsilon\mathclose{]}$ and using
the above inequality, we obtain
\begin{equation*}
\alpha \, (\tau_{i} - \sigma_{i} - 2\varepsilon)
= u'(\sigma_{i}+\varepsilon) - u'(\tau_{i}-\varepsilon) -
\lambda \int_{\sigma_{i}+\varepsilon}^{\tau_{i}-\varepsilon} a^{+}(t)g(u(t))~\!dt \leq \dfrac{2\rho}{\varepsilon}.
\end{equation*}
This yields a contradiction if $\alpha > 0$ is sufficiently large.
Hence $(H_{2})$ is verified (with $\alpha^{*}> 2\rho / \varepsilon (\tau_{i} - \sigma_{i} - 2\varepsilon)$).
Notice that for the validity of $(H_{2})$ we do not impose any condition on $\mu>0$.

\smallskip

Summing up, we can apply Lemma~\ref{lem-deg0} for $\mu > \mu^{(H_{1})}$ and therefore formula \eqref{eq-deg0} is
verified.
\qed

\subsection{Checking the assumptions of Lemma~\ref{lem-deg1} for $\mu$ large}\label{section-4.4}

In this section we are going to prove the second part of Theorem~\ref{deg-Omega}, that is
\begin{equation}\label{eq-deg1}
D_{L}\bigl{(}L-N_{\lambda,\mu},\Omega^{\emptyset,\mathcal{J}}_{(r,\rho,R)} \bigr{)} = 1,
\end{equation}
where $\mathcal{J}\subseteq\{1,\ldots,m\}$.

\smallskip

Given an arbitrary $\mathcal{J}\subseteq\{1,\ldots,m\}$,
it is sufficient to check that the assumption of Lemma~\ref{lem-deg1} is satisfied.

\smallskip
\noindent
\textit{Verification of $(H_{3})$. }
Let $\vartheta\in \mathopen{]}0,1\mathclose{]}$. By contradiction, suppose that there exists a non-negative $T$-periodic solution $u(t)$ of
\eqref{eq-lem-deg1} with $\|u\|_{\infty} \leq R$ such that at least one of the following conditions holds:
\begin{itemize}
\item[$(b_{1})$] there is an index $i \notin \mathcal{J}$ such that $\max_{t\in I^{+}_{i}} u(t) = r$;
\item[$(b_{2})$] there is an index $i \in \mathcal{J}$ such that $\max_{t\in I^{+}_{i}} u(t) = R$.
\end{itemize}

Suppose that $(b_{1})$ holds.
Consider at first the case $u'(\sigma_{i})\geq0$.
Applying Lemma~\ref{lem-rR} (with $d=r$), we obtain
\begin{equation*}
u(\sigma_{i+1}) \geq r \, \biggl{[} 1 + \dfrac{\vartheta}{2} \Bigl{(} \mu \gamma(r)\|A_{i}\| - 1 \Bigr{)} \biggr{]}
\end{equation*}
and
\begin{equation*}
u'(\sigma_{i+1}) \geq \vartheta r \biggl{(}\mu \dfrac{\gamma(r)}{2}\|a\|_{-,i} - \lambda \|a\|_{+,i} \zeta(r) \biggr{)}.
\end{equation*}
On the interval $I^{+}_{i+1}$ equation \eqref{eq-lem-deg1} yields
\begin{equation*}
u''(t) = -\vartheta \lambda a^{+}(t) g(u(t)) \geq -\vartheta\lambda a^{+}(t) g^{*}(R).
\end{equation*}
Then, integrating on $\mathopen{[}\sigma_{i+1},t\mathclose{]} \subseteq I^{+}_{i+1}$
and using the above estimates on $u(\sigma_{i+1})$ and $u'(\sigma_{i+1})$, we obtain
\begin{equation*}
\begin{aligned}
u'(t) &= u'(\sigma_{i+1}) + \int_{\sigma_{i+1}}^{t} u''(\xi)~\!d\xi \geq u'(\sigma_{i+1}) - \vartheta\lambda \|a\|_{+,i+1}g^{*}(R)
\\ &\geq \vartheta r \biggl{(}\mu \dfrac{\gamma(r)}{2}\|a\|_{-,i} -
\lambda \|a\|_{+,i} \zeta(r) -\lambda \|a\|_{+,i+1}\dfrac{g^{*}(R)}{r} \biggr{)}, \quad \forall \, t \in I^{+}_{i+1},
\end{aligned}
\end{equation*}
and
\begin{equation*}
\begin{aligned}
u(t) &= u(\sigma_{i+1}) + \int_{\sigma_{i+1}}^{t} u'(\xi)~\!d\xi \geq u(\sigma_{i+1}) - \vartheta\lambda |I^{+}_{i+1}| \|a\|_{+,i+1}g^{*}(R)
\\ &\geq r \, \biggl{[} 1 + \vartheta \biggl{(} \mu \dfrac{\gamma(r)}{2} \|A_{i}\|
- \dfrac{1}{2} - \lambda |I^{+}_{i+1}| \|a\|_{+,i+1} \dfrac{g^{*}(R)}{r} \biggr{)}\biggr{]}, \quad \forall \, t \in I^{+}_{i+1}.
\end{aligned}
\end{equation*}
Taking $\mu$ sufficiently large, precisely
\begin{equation}\label{condfor1}
\mu > \tilde{\mu}_{i}:=
\dfrac{1 + 2 \lambda |I^{+}_{i+1}| \|a\|_{+,i+1} \dfrac{g^{*}(R)}{r}}{\gamma(r) \|A_{i}\|},
\end{equation}
we obtain that
\begin{equation*}
u(t) > r, \quad u'(t)>0, \quad \forall \, t\in I^{+}_{i+1},
\end{equation*}
and, in particular,
\begin{equation*}
u(\tau_{i+1}) > r \quad \text{ and } \quad u'(\tau_{i+1})>0.
\end{equation*}
Now we can apply Lemma~\ref{lem-forward} (with $d=r$ and $D=R$) on the interval $I^{-}_{i+1} \cup I^{+}_{i+2}$, which ensures that
\begin{equation*}
u(t) > r, \quad u'(t)>0, \quad \forall \, t\in I^{-}_{i+1} \cup I^{+}_{i+2},
\end{equation*}
provided that
\begin{equation}\label{condfor2}
\mu > \mu_{i+1}^{*,+}=\mu^{*,+}(I^{-}_{i+1},I^{+}_{i+2}) =
\dfrac{\lambda \|a\|_{+,i+2} g^{*}(R)}{\|a\|_{-,i+1} g_{*}(r,R)}.
\end{equation}
Repeating inductively the same argument $m-1$ times we cover a $T$-periodicity interval with
intervals (of the form $I^{-}_{j}\cup I^{+}_{j+1}$) where the function is strictly increasing, provided that $\mu$ is sufficiently large.
More precisely, for
\begin{equation*}
\mu > \max_{i=1,\ldots,m} \mu_{i}^{*,+}
\end{equation*}
it holds that
\begin{equation*}
u(t) > r, \quad u'(t)>0, \quad \forall \, t\in \mathopen{[}0,T\mathclose{]}.
\end{equation*}
This clearly contradicts the $T$-periodicity of $u(t)$.

Consider now the case $u'(\sigma_{i}) < 0$, which implies (by the concavity of $u(t)$ in $I^{+}_{i}$) that
$u'(\tau_{i})< 0$. The same proof as above leads to a contradiction, proceeding backward
and using at first Lemma~\ref{lem-rR-back} (with $d=r$) and then Lemma~\ref{lem-back} (with $d=r$ and $D=R$), inductively.
Conditions \eqref{condfor1} and \eqref{condfor2}
will be replaced by analogous inequalities of the form
\begin{equation*}
\mu > \bar{\mu}_{i}:=
\dfrac{1 + 2\lambda |I^{+}_{i-1}| \|a\|_{+,i-1} \dfrac{g^{*}(R)}{r}}{\gamma(r) \|A_{i-1}\|},
\end{equation*}
and
\begin{equation*}
\mu > \mu_{i-1}^{*,-}=\mu^{*,-}(I^{+}_{i-2},I^{-}_{i-2})=
\dfrac{\lambda \|a\|_{+,i-2} g^{*}(R)}{\|a\|_{-,i-2}g_{*}(r,R)},
\end{equation*}
so that a contradiction comes for
\begin{equation*}
\mu > \max_{i=1,\ldots,m} \mu_{i}^{*,-},
\end{equation*}
by showing that $u'(t) < 0$ for all $t \in \mathopen{[}0,T\mathclose{]}$.

Taking into account all the possible situations we conclude that the case $(b_{1})$ never occurs if
\begin{equation*}
\mu > \mu_{1}^{(H_{3})} : = \max_{i=1,\ldots,m} \bigl{\{} \tilde{\mu}_{i}, \bar{\mu}_{i}, \mu_{i}^{*,+}, \mu_{i}^{*,-} \bigr{\}}.
\end{equation*}

To conclude the proof, suppose now that $(b_{2})$ holds. As observed in the previous proof,
the fact that $\max_{t\in I^{+}_{i}} u(t) = R = \|u\|_{\infty}$ prevents the possibility that
$u'(\sigma_{i}) < 0$. Hence only the instance $u'(\sigma_{i})\geq 0$ may occur.
Applying Lemma~\ref{lem-rR} (with $d=R$), we obtain
\begin{equation*}
u(\sigma_{i+1}) \geq R \,
\biggl{[} 1 + \dfrac{\vartheta}{2} \Bigl{(} \mu \gamma(R) \|A_{i}\| - 1 \Bigr{)} \biggr{]}.
\end{equation*}
Hence, if
\begin{equation*}
\mu > \check{\mu}_{i} = \dfrac{1}{\gamma(R)\|A_{i}\|}
\end{equation*}
(already defined in \eqref{eq-check}) we get $u(\sigma_{i+1}) > R$ and thus a contradiction with $\|u\|_{\infty} \leq R$.
We conclude that the case $(b_{2})$ never occurs if
\begin{equation*}
\mu > \mu_{2}^{(H_{3})} : = \max_{i=1,\ldots,m} \check{\mu}_{i}.
\end{equation*}

\smallskip

Summing up, we can apply Lemma~\ref{lem-deg1} for
\begin{equation*}
\mu > \mu^{(H_{3})}:=\max \Bigl{\{} \mu_{1}^{(H_{3})},\mu_{2}^{(H_{3})}, \mu^{\#}(\lambda) \Bigr{\}}
\end{equation*}
and therefore formula \eqref{eq-deg1} is
verified.
\qed

\subsection{Completing the proof of Theorem~\ref{main-theorem}}\label{section-4.5}

With reference to Section~\ref{section-3} we summarize what we have proved until now and
we give the final details of the proof of our main theorem.

First, we have fixed an arbitrary constant $\rho > 0$ and determined a constant $\lambda^{*} = \lambda^{*}(\rho) > 0$
via Lemma~\ref{lem-rho}. We stress the fact that $\lambda^{*}$ depends only on $g(s)$ for $s\in \mathopen{[}0,\rho\mathclose{]}$
and on the behavior of $a(t)$ in each of the intervals $I^{+}_{i}$.

Next, for $\lambda > \lambda^{*}$, we have found two constants (a small one $r$ and a large one $R$)
with $0 < r < \rho < R$ such that condition \eqref{cond-rR} holds. To choose $r$ and $R$ we only require
conditions on the smallness of $g(s)/s$ for $s$ near zero and near infinity, which is an obvious consequence of
$(g_{0})$ and $(g_{\infty})$. We notice also that condition \eqref{cond-rR} depends
on the behavior of $a(t)$ in each of the intervals $I^{+}_{i}$ as well as on the lengths of
pairs of consecutive intervals.

As a further step, we have shown that both Lemma~\ref{lem-deg0} and Lemma~\ref{lem-deg1} can be applied
provided that
\begin{equation*}
\mu > \mu^{*}(\lambda) = \mu^{*}(\lambda,r,R):= \max \Bigl{\{}\mu^{(H_{1})},\mu^{(H_{3})} \Bigr{\}}.
\end{equation*}
Checking carefully the estimates leading to $\mu^{(H_{1})}$ and $\mu^{(H_{3})}$ one realizes that again only
local conditions about the behavior of $a(t)$ on the intervals $I^{\pm}_{i}$ are involved.

As a consequence, for all $\mu > \mu^{*}(\lambda)$, formula \eqref{eq-3.1} in Theorem~\ref{deg-Omega} holds. From this latter
result, via a purely combinatorial argument (independent on the particular equation under consideration), we achieve
formula \eqref{eq-3.2} in Theorem~\ref{deg-Lambda} and the existence of $3^{m}-1$ positive $T$-periodic solutions to
\eqref{eq-main} is guaranteed, as already explained at the end of Section~\ref{section-3.1}.
\qed

\section{General properties for globally defined solutions and some a posteriori bounds}\label{section-5}

In this section we focus our attention to non-negative solutions of \eqref{eq-main} which are defined for all $t \in \mathbb{R}$.
On one hand, we show how some computations in the proofs of the technical lemmas in Section~\ref{section-4} are still valid in this setting;
this will be useful in view of further applications of Theorem~\ref{main-theorem} described in Section~\ref{section-6}.
On the other hand, we provide some additional information for the solutions when $\mu\to+\infty$.

In order to avoid repetitions, throughout this section we assume that the constants $\rho > 0$, $\lambda > \lambda^{*}$,
$0 < r < \rho <R$ and $\mu > \mu^{*}(\lambda)$ are all fixed as in Section~\ref{section-4.2} and Section~\ref{section-4.5}.
We stress the fact that even if these constants have been determined with respect to the $T$-periodic problem, all the results below are
valid for arbitrary globally defined non-negative solutions.

The first result concerns the behavior of the solutions with respect to the constant $R$.

\begin{proposition}\label{prop-5.1}
Let $g \colon \mathbb{R}^{+} \to \mathbb{R}^{+}$ be a continuous function satisfying $(g_{*})$, $(g_{0})$ and $(g_{\infty})$.
Let $a \colon \mathbb{R} \to \mathbb{R}$ be a $T$-periodic locally integrable function satisfying $(a_{*})$.
If $w(t)$ is any non-negative solution of \eqref{eq-main} with $\sup_{t\in\mathbb{R}} w(t) \leq R$, then
$w(t) < R$ for all $t\in \mathbb{R}$.
\end{proposition}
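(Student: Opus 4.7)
The plan is to argue by contradiction: suppose that $w(t_{*}) = R$ at some $t_{*} \in \mathbb{R}$, so that $t_{*}$ realizes the global maximum of $w$. First, I would localize $t_{*}$. On every negative interval $I^{-}_{i} + kT$ we have $w''(t) = \mu a^{-}(t) g(w(t)) \geq 0$, so $w$ is convex there and its maximum over such an interval is attained only at its endpoints. By the $T$-periodicity of the partition provided by $(a_{*})$, I may therefore assume without loss of generality that $t_{*} \in I^{+}_{i}$ for some $i \in \{1,\ldots,m\}$, and in particular $\max_{t \in I^{+}_{i}} w(t) = R$.

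Next I would exploit concavity of $w$ on $I^{+}_{i}$: since $w''(t) = -\lambda a^{+}(t) g(w(t)) \leq 0$, the derivative $w'$ is non-increasing on $I^{+}_{i}$, which rules out the combination $w'(\sigma_{i}) < 0 < w'(\tau_{i})$. Hence at least one of the alternatives $w'(\sigma_{i}) \geq 0$ or $w'(\tau_{i}) \leq 0$ must occur. In the first alternative, I would apply Lemma~\ref{lem-rR} to $w$ restricted to $I^{+}_{i} \cup I^{-}_{i}$, with $d = R$: the smallness condition \eqref{cond-rR} at $s = R$ is exactly how $R$ was chosen in Section~\ref{section-4.2}, and the lemma yields
\begin{equation*}
w(\sigma_{i+1}) \geq R \biggl{[} 1 + \dfrac{1}{2}\bigl{(} \mu \gamma(R) \|A_{i}\| - 1 \bigr{)} \biggr{]}.
\end{equation*}
The constraint $\mu > \mu^{*}(\lambda)$ built in Section~\ref{section-4.5} in particular enforces $\mu > \check{\mu}_{i} = 1/(\gamma(R) \|A_{i}\|)$ (see formula \eqref{eq-check} in the verification of $(H_{1})$), hence the right-hand side strictly exceeds $R$, contradicting $\sup_{\mathbb{R}} w \leq R$. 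The second alternative, $w'(\tau_{i}) \leq 0$, is handled symmetrically through Lemma~\ref{lem-rR-back}, yielding $w(\tau_{i-1}) > R$ and again the same contradiction.

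The only delicate point is to verify that the technical lemmas of Section~\ref{section-4.1}, although naturally invoked within a $T$-periodic framework in Section~\ref{section-4}, are in reality local statements: their hypotheses only involve the equation on one or two adjacent intervals of constant sign of $a$, the non-negativity of the solution, and the sign of its derivative at a common endpoint. They therefore apply verbatim to any globally defined non-negative solution $w$, with no $T$-periodicity of $w$ itself required. A minor index issue arises when $i = 1$ in the backward alternative, because the preceding negative interval is $I^{-}_{m} - T$; this causes no difficulty thanks to the $T$-periodicity of $a$, so that the restriction of $w$ to $\mathopen{[}\tau_{m} - T, \tau_{1}\mathclose{]}$ satisfies the hypotheses of Lemma~\ref{lem-rR-back} after the cyclic relabelling of intervals described in Remark~\ref{rem-4per}.
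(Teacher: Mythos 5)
Your proof is correct and follows essentially the same route as the paper's, which reduces by $T$-periodicity and convexity to a positive interval and then invokes the case $(a_{3})$ argument of Section~\ref{section-4.3} (with $\alpha=0$), i.e.\ Lemma~\ref{lem-rR} with $d=R$ and the bound $\mu>\check{\mu}_{i}$. The only cosmetic difference is in the subcase $w'(\sigma_{i})<0$: the paper excludes it outright (it would force $w(\sigma_{i})=R$ with $w$ strictly exceeding $R$ just to the left of $\sigma_{i}$), whereas you route it through Lemma~\ref{lem-rR-back}, which works equally well.
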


\begin{proof}
Suppose by contradiction that there exists $t^{*}\in \mathbb{R}$ such that $w(t^{*}) = \max_{t\in\mathbb{R}} w(t) = R$.
Let also $\ell \in \mathbb{Z}$ be such that $t^{*}\in \mathopen{[}\ell T,(\ell+1)T\mathclose{]}$.
In this case, thanks to the $T$-periodicity of the weight coefficient $a_{\lambda,\mu}(t)$,
the function $u(t):= w(t+\ell T)$ is still a (non-negative) solution of \eqref{eq-main} with
$\max_{t\in \mathopen{[}0,T\mathclose{]}} u(t) = u(t^{*}-\ell T) = w(t^{*}) = R$. From now on,
the proof uses exactly the same argument as for the discussion of the case $(a_{3})$ in the verification of $(H_{1})$ in Section~\ref{section-4.3}
(for $\alpha = 0$) and the same contradiction can be achieved.
\end{proof}

A straightforward application of Lemma~\ref{lem-rho} gives the following result (the obvious proof is omitted).

\begin{proposition}\label{prop-5.2}
Let $g \colon \mathbb{R}^{+} \to \mathbb{R}^{+}$ be a continuous function satisfying $(g_{*})$, $(g_{0})$ and $(g_{\infty})$.
Let $a \colon \mathbb{R} \to \mathbb{R}$ be a $T$-periodic locally integrable function satisfying $(a_{*})$.
If $w(t)$ is any non-negative solution of \eqref{eq-main} and $I^{+}_{i,\ell}:= I^{+}_{i} + \ell T$ is any interval of the real line where $a(t)\succ 0$,
then $\max_{t\in I^{+}_{i,\ell}} w(t) \neq \rho$.
\end{proposition}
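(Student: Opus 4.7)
The plan is to deduce Proposition~\ref{prop-5.2} as an essentially immediate consequence of Lemma~\ref{lem-rho}, by exploiting the $T$-periodicity of $a(t)$ to transfer the situation from the generic interval $I^{+}_{i,\ell}$ onto the canonical interval $I^{+}_{i}$ where Lemma~\ref{lem-rho} is phrased.

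First I would reduce to $\ell = 0$. Given the globally defined non-negative solution $w(t)$ of \eqref{eq-main} and the interval $I^{+}_{i,\ell} = I^{+}_{i} + \ell T$, I set $u(t) := w(t + \ell T)$. Since $a(t)$ is $T$-periodic, so is the weight $a_{\lambda,\mu}(t)$, and hence $u(t)$ is again a non-negative solution of \eqref{eq-main} on $\mathbb{R}$; moreover $\max_{t\in I^{+}_{i}} u(t) = \max_{t\in I^{+}_{i,\ell}} w(t)$.

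Next I would restrict attention to the interval $I^{+}_{i}$. On $I^{+}_{i}$ we have $a^{-}(t) \equiv 0$ by assumption $(a_{*})$, so the equation \eqref{eq-main} satisfied by $u$ reduces, on this interval, to
\begin{equation*}
u'' + \lambda a^{+}(t) g(u) = 0.
\end{equation*}
In particular $u(t)$ is a non-negative solution on $I^{+}_{i}$ of the equation appearing in Lemma~\ref{lem-rho}, with the trivial choice $\alpha = 0$.

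Finally I would invoke Lemma~\ref{lem-rho}. Since the constant $\lambda > \lambda^{*}(\rho)$ has been fixed throughout the section precisely so that the lemma applies, we conclude that no such $u(t)$ can satisfy $\max_{t\in I^{+}_{i}} u(t) = \rho$. Translating back via $u(t) = w(t+\ell T)$ yields $\max_{t\in I^{+}_{i,\ell}} w(t) \neq \rho$, as desired. There is no real obstacle here: the proof is simply the observation that Lemma~\ref{lem-rho} is a statement about solutions defined only on $I^{+}_{i}$, without reference to any boundary or periodicity condition, so it applies verbatim to the restriction of any globally defined solution to any translate $I^{+}_{i,\ell}$.
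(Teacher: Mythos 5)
Your proof is correct and is exactly the argument the paper has in mind: the paper states Proposition~\ref{prop-5.2} as ``a straightforward application of Lemma~\ref{lem-rho}'' and omits the details, which are precisely your translation $u(t):=w(t+\ell T)$ (using the $T$-periodicity of $a$), the observation that \eqref{eq-main} reduces to $u''+\lambda a^{+}(t)g(u)=0$ on $I^{+}_{i}$, and the application of Lemma~\ref{lem-rho} with $\alpha=0$ and $\lambda>\lambda^{*}(\rho)$.
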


The next result concerns the behavior of the solutions with respect to the constant $r$.

\begin{proposition}\label{prop-5.3}
Let $g \colon \mathbb{R}^{+} \to \mathbb{R}^{+}$ be a continuous function satisfying $(g_{*})$, $(g_{0})$ and $(g_{\infty})$.
Let $a \colon \mathbb{R} \to \mathbb{R}$ be a $T$-periodic locally integrable function satisfying $(a_{*})$.
If $w(t)$ is any non-negative solution of \eqref{eq-main} with $\sup_{t\in\mathbb{R}} w(t) \leq R$
and $I^{+}_{i,\ell}:= I^{+}_{i} + \ell T$ is any interval of the real line where $a(t)\succ 0$,
then $\max_{t\in I^{+}_{i,\ell}} w(t) \neq r$.
\end{proposition}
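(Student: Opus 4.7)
The proof will be a direct analogue of Proposition~\ref{prop-5.1}, but using the case $(a_{1})$ argument from the verification of $(H_{1})$ in Section~\ref{section-4.3} in place of the case $(a_{3})$ argument. The key point is that, as already exploited in the proof of Proposition~\ref{prop-5.1}, a globally defined solution on $\mathbb{R}$ can be translated by a multiple of $T$ so as to move the interval of interest into the fundamental domain, and, crucially, adjacent intervals on either side are still available because $w$ is defined on all of $\mathbb{R}$.

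First I would argue by contradiction: suppose $\max_{t\in I^{+}_{i,\ell}} w(t) = r$ for some $\ell\in\mathbb{Z}$ and some $i\in\{1,\dots,m\}$. By $T$-periodicity of the weight $a_{\lambda,\mu}(t)$, the translate $u(t):=w(t+\ell T)$ is again a non-negative solution of \eqref{eq-main} on $\mathbb{R}$ with $\sup_{\mathbb{R}} u\leq R$ and $\max_{t\in I^{+}_{i}} u(t)=r$. I would then split into two cases according to the sign of $u'(\sigma_{i})$, exactly as in the treatment of case $(a_{1})$ in Section~\ref{section-4.3}. If $u'(\sigma_{i})\geq 0$, applying Lemma~\ref{lem-rR} with $\vartheta=1$ and $d=r$ on $I^{+}_{i}\cup I^{-}_{i}$ yields
\begin{equation*}
u(\sigma_{i+1}) \;\geq\; r\,\biggl(1 + \mu\,\frac{\gamma(r)}{2}\|A_{i}\| - \frac{1}{2}\biggr) \;\geq\; \mu\,r\,\frac{\gamma(r)}{2}\|A_{i}\|,
\end{equation*}
and since $\mu>\mu^{*}(\lambda)\geq \hat{\mu}_{i}=2R/(r\gamma(r)\|A_{i}\|)$ we would conclude $u(\sigma_{i+1})>R$, contradicting $\sup_{\mathbb{R}} u\leq R$. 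If instead $u'(\sigma_{i})<0$, concavity of $u$ on $I^{+}_{i}$ forces $u'(\tau_{i})<0$, and Lemma~\ref{lem-rR-back} with $\vartheta=1$ and $d=r$ on $I^{-}_{i-1}\cup I^{+}_{i}$ yields symmetrically $u(\tau_{i-1})>R$, again a contradiction.

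The only conceptual point to watch is the case $i=1$, where the interval adjacent on the left to $I^{+}_{1}$ is not strictly inside $\mathopen{[}0,T\mathclose{]}$. This however causes no difficulty: because $u$ is defined on all of $\mathbb{R}$ and $a$ is $T$-periodic, we may set $I^{-}_{0}:=I^{-}_{m}-T$ exactly as in the cyclic convention of Remark~\ref{rem-4per}, and Lemma~\ref{lem-rR-back} applies verbatim. No genuinely new estimate is needed; one just repeats the quantitative analysis of Section~\ref{section-4.3}, observing that the choice $\mu>\mu^{*}(\lambda)$ fixed in Section~\ref{section-4.5} was designed precisely to majorize $\hat{\mu}_{i}$ for every $i$. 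I expect no real obstacle beyond carefully tracking this cyclic labelling; the contradiction with the upper bound $\sup_{\mathbb{R}} u\leq R$ replaces, in a completely painless way, the periodicity argument used in Section~\ref{section-4.3}.
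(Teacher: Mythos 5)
Your proof is correct and follows exactly the route the paper takes: translate by $\ell T$ to reduce to the fundamental period, then run the case $(a_{1})$ argument of Section~\ref{section-4.3} with $\alpha=0$ (splitting on the sign of $u'(\sigma_{i})$ and invoking Lemma~\ref{lem-rR} or Lemma~\ref{lem-rR-back} with $d=r$), obtaining a point where $w>R$. The paper's proof merely cites that argument without repeating it, so your write-up is just a more explicit version of the same proof, including the correct observation that the cyclic convention $I^{-}_{0}=I^{-}_{m}-T$ is harmless for a globally defined solution.
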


\begin{proof}
We follow the same scheme as for Proposition~\ref{prop-5.1}.
Suppose by contradiction that there exists $t^{*}\in I^{+}_{i,\ell}$ such that $w(t^{*}) = \max_{t\in I^{+}_{i,\ell}} w(t) = r$.
The function $u(t):= w(t+\ell T)$ is a non-negative solution of \eqref{eq-main} with
$\max_{t\in I^{+}_{i}} u(t) = w(t^{*}) = r$. From now on,
the proof uses exactly the same argument as for the discussion of the case $(a_{1})$ in the verification of $(H_{1})$ in Section~\ref{section-4.3}
(for $\alpha = 0$) and the same contradiction can be achieved, in the sense that we find a point where $w(t) > R$.
\end{proof}

\medskip

We now focus on some properties of globally defined non-negative solutions of \eqref{eq-main} when $\mu\to+\infty$.
The first result in this direction concerns the behavior on the intervals where $a(t)\succ0$: roughly speaking, any ``very small'' solution
becomes arbitrarily small as $\mu\to+\infty$.

\begin{proposition}\label{prop-5.4}
Let $g \colon \mathbb{R}^{+} \to \mathbb{R}^{+}$ be a continuous function satisfying $(g_{*})$, $(g_{0})$ and $(g_{\infty})$.
Let $a \colon \mathbb{R} \to \mathbb{R}$ be a $T$-periodic locally integrable function satisfying $(a_{*})$.
Then for every $\varepsilon$ with $0 < \varepsilon \leq r$ there exists $\mu^{\star}_{\varepsilon} \geq \mu^{*}(\lambda)$
such that for any fixed $\mu > \mu^{\star}_{\varepsilon}$ the following holds:
if $w(t)$ is any non-negative solution of \eqref{eq-main} with $\sup_{t\in\mathbb{R}} w(t) \leq R$ and $\max_{t\in I^{+}_{i,\ell}} w(t) \leq r$,
where $I^{+}_{i,\ell}:= I^{+}_{i} + \ell T$ is any interval of the real line where $a(t)\succ 0$,
then $\max_{t\in I^{+}_{i,\ell}} w(t) < \varepsilon$.
\end{proposition}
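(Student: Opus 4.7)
The plan is to argue by direct contradiction, essentially re-running the case~$(a_{1})$ analysis of Section~\ref{section-4.3} but with $d := \max_{t \in I^{+}_{i,\ell}} w(t)$ allowed to vary in the whole range $\mathopen{[}\varepsilon,r\mathclose{]}$, and then choosing $\mu^{\star}_{\varepsilon}$ large enough to drive the resulting lower bound beyond $R$. Suppose for contradiction that $w(t)$ is a non-negative solution of \eqref{eq-main} with $\sup_{\mathbb{R}} w \leq R$ and with $d := \max_{t \in I^{+}_{i,\ell}} w(t) \in \mathopen{[}\varepsilon,r\mathclose{]}$. Translating by $\ell T$ (using the $T$-periodicity of $a_{\lambda,\mu}$), the function $u(t) := w(t+\ell T)$ is still a non-negative solution of \eqref{eq-main} with $\max_{t \in I^{+}_{i}} u(t) = d$ and $\|u\|_{\infty}\leq R$.

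Depending on the sign of $u'(\sigma_{i})$, two cases arise. If $u'(\sigma_{i}) \geq 0$, then since $d \leq r$ the smallness condition \eqref{cond-rR} gives $\zeta(d) < 1/(2\lambda \max_{i}(|I^{+}_{i}|+|I^{-}_{i}|)\|a\|_{+,i})$, so Lemma~\ref{lem-rR} (with $\vartheta = 1$) yields
\begin{equation*}
u(\sigma_{i+1}) \geq d \, \biggl{[} 1 + \dfrac{1}{2} \Bigl{(} \mu \gamma(d) \|A_{i}\| - 1\Bigr{)} \biggr{]} \geq \dfrac{d}{2}\, \mu \gamma(d) \|A_{i}\|.
\end{equation*}
If instead $u'(\sigma_{i}) < 0$, then the concavity of $u$ on $I^{+}_{i}$ forces $u'(\tau_{i}) \leq 0$, and Lemma~\ref{lem-rR-back} yields the symmetric estimate
\begin{equation*}
u(\tau_{i-1}) \geq \dfrac{d}{2}\, \mu \gamma(d) \|A_{i-1}\|.
\end{equation*}
(For $i=1$ we use the cyclic convention of Remark~\ref{rem-4per}.)

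The key technical point is to produce a uniform positive lower bound for $\gamma(d)$ as $d$ varies in $\mathopen{[}\varepsilon, r\mathclose{]}$. Since $g$ is continuous and strictly positive on $\mathopen{[}\varepsilon/2, r\mathclose{]}$, the quantity
\begin{equation*}
\gamma_{0} := \min_{\varepsilon/2 \leq s \leq r} \dfrac{g(s)}{s} > 0
\end{equation*}
satisfies $\gamma(d) \geq \gamma_{0}$ for every $d \in \mathopen{[}\varepsilon, r\mathclose{]}$. Combined with $d \geq \varepsilon$ and setting $A_{\min} := \min_{i=1,\ldots,m} \|A_{i}\| > 0$, both estimates above imply
\begin{equation*}
\max\bigl{\{}u(\sigma_{i+1}), u(\tau_{i-1}) \bigr{\}} \geq \dfrac{\varepsilon}{2}\, \mu \gamma_{0} A_{\min}.
\end{equation*}

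We now define
\begin{equation*}
\mu^{\star}_{\varepsilon} := \max \biggl{\{} \mu^{*}(\lambda), \, \dfrac{2R}{\varepsilon\, \gamma_{0} A_{\min}} \biggr{\}},
\end{equation*}
so that for $\mu > \mu^{\star}_{\varepsilon}$ the right-hand side exceeds $R$, contradicting $\|u\|_{\infty} \leq R$. The main (mild) obstacle is simply checking that the lower bound $\gamma_{0}$ on $\gamma(d)$ is indeed uniform over the full range $\mathopen{[}\varepsilon,r\mathclose{]}$, which relies on $(g_{*})$ and continuity of $g$ away from zero; note that $(g_{0})$ is \emph{not} used here, which is consistent with the fact that $\varepsilon > 0$ keeps us bounded away from the singular regime $s \to 0^{+}$.
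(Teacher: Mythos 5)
Your proposal is correct and follows essentially the same route as the paper's proof: contradiction with $d=\varepsilon_0\in\mathopen{[}\varepsilon,r\mathclose{]}$, the two cases on the sign of $u'(\sigma_{i})$ handled via Lemma~\ref{lem-rR} and Lemma~\ref{lem-rR-back}, and the uniform bound $\gamma(d)\geq\gamma_{0}=\min_{\varepsilon/2\leq s\leq r}g(s)/s>0$, which is exactly the paper's $\gamma^{*}(\varepsilon,r)$. The resulting threshold $\mu^{\star}_{\varepsilon}$ coincides with the paper's (your $A_{\min}$ formulation is just the $\max_{i}$ of the paper's interval-wise constants).
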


\begin{proof}
Repeating the same approach as in the proof of the previous propositions and using the $T$-periodicity of the weight,
without loss of generality, we can restrict ourselves to the analysis of the non-negative solution $w(t)$ on an interval
$I^{+}_{i}$, for $i=1,\ldots,m$.

The proof uses exactly the same argument as for the discussion of the case $(a_{1})$ in the verification of $(H_{1})$ in Section~\ref{section-4.3}
(for $\alpha = 0$). Let $\varepsilon\in\mathopen{]}0,r\mathclose{]}$. By contradiction, suppose that there exists a non-negative solution $w(t)$ of \eqref{eq-main}
such that $\sup_{t\in\mathbb{R}} w(t) \leq R$ and $\max_{t\in I^{+}_{i}} w(t) = \varepsilon_{0} \in \mathopen{[}\varepsilon,r\mathclose{]}$.
Consider at first the case $w'(\sigma_{i}) \geq 0$.
Recalling condition \eqref{cond-rR}, by Lemma~\ref{lem-rR} (with $\vartheta=1$ and $d=\varepsilon_{0}$), we have that
\begin{equation*}
w(\sigma_{i+1}) \geq \mu \, \varepsilon_{0} \dfrac{\gamma(\varepsilon_{0})}{2} \|A_{i}\|.
\end{equation*}
Observing that
\begin{equation*}
\gamma(\varepsilon_{0})=\min_{\frac{\varepsilon_{0}}{2}\leq s\leq\varepsilon_{0}}\dfrac{g(s)}{s}
\geq \min_{\frac{\varepsilon}{2}\leq s\leq r}\dfrac{g(s)}{s} =: \gamma^{*}(\varepsilon,r) >0
\end{equation*}
and thus taking
\begin{equation*}
\mu > \mu^{\star}_{i}(\varepsilon) := \dfrac{2R}{\varepsilon\gamma^{*}(\varepsilon,r) \|A_{i}\|},
\end{equation*}
we obtain $w(\sigma_{i+1}) > R$, a contradiction.
On the other hand, if $w'(\sigma_{i})< 0$, by the concavity of $w(t)$ in $I^{+}_{i}$ we have that
$w'(\tau_{i})< 0$. In this case we reach the contradiction $w(\tau_{i-1}) > R$
using Lemma~\ref{lem-rR-back} (with $\vartheta=1$ and $d=\varepsilon_{0}$)
and taking
\begin{equation*}
\mu > \mu^{\star}_{i-1}(\varepsilon) =\dfrac{2R}{\varepsilon\gamma^{*}(\varepsilon,r) \|A_{i-1}\|}
\end{equation*}
(if $i = 1$, we count cyclically and consider the interval $I^{-}_{0}$ as $I^{+}_{m}$).
In conclusion, taking
\begin{equation*}
\mu > \mu^{\star}_{\varepsilon} := \max_{i=1,\ldots,m} \bigl{\{} \mu^{\star}_{i}(\varepsilon),\mu^{*}(\lambda){\}},
\end{equation*}
the proposition follows.
\end{proof}

Our final result in this section concerns the behavior of non-negative solutions to \eqref{eq-main} on the intervals where
$a(t)\prec 0$. With reference to condition $(a_{*})$, for technical reasons we further suppose that \textit{$a(t)\not\equiv 0$ in each right neighborhood of $\tau_{i}$
and in each left neighborhood of $\sigma_{i+1}$}. Such an assumption
does not require any new constraint on the weight function, but just a more careful selection of the points $\tau_{i}$ and $\sigma_{i+1}$.
What we mean is that for a weight function $a(t)$ satisfying $(a_{*})$ the way to select the intervals $I^{+}_{i}$ and
$I^{-}_{i}$ may be not univocal. Indeed, we could have an interval $J$ where $a(t)\equiv 0$ between an interval of positivity and an interval
of negativity for the weight. Up to now the decision whether incorporate such an interval $J$ as a part of $I^{+}_{i}$ or
$I^{-}_{i}$ was completely arbitrary. On the contrary, for the next result, we prefer to consider an interval as
$J$ as a part of $I^{+}_{i}$. In any case, we can allow a closed interval where $a(t)\equiv 0$ to lie in the interior
of one of the $I^{-}_{i}$. With this in mind, we can now present our next result.

\begin{proposition}\label{prop-5.5}
Let $g \colon \mathbb{R}^{+} \to \mathbb{R}^{+}$ be a continuous function satisfying $(g_{*})$, $(g_{0})$ and $(g_{\infty})$.
Let $a \colon \mathbb{R} \to \mathbb{R}$ be a $T$-periodic locally integrable function satisfying $(a_{*})$.
Then for every $\varepsilon$ with $0 < \varepsilon \leq r$ there exists $\mu^{\star\star}_{\varepsilon} \geq \mu^{*}(\lambda)$
such that for any fixed $\mu > \mu^{\star\star}_{\varepsilon}$ the following holds:
if $w(t)$ is any non-negative solution of \eqref{eq-main} with $\sup_{t\in\mathbb{R}} w(t) \leq R$
and $I^{-}_{i,\ell}:= I^{-}_{i} + \ell T$ is any interval of the real line where $a(t)\prec 0$,
then $\max_{t\in I^{-}_{i,\ell}} w(t) < \varepsilon$.
\end{proposition}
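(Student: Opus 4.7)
The plan is to argue by contradiction: suppose there exist a globally defined non-negative solution $w$ of \eqref{eq-main} with $\sup_{\mathbb{R}} w\leq R$ together with $i\in\{1,\ldots,m\}$ and $\ell\in\mathbb{Z}$ satisfying $\max_{t\in I^-_{i,\ell}} w(t)\geq \varepsilon$. Using the $T$-periodicity of the weight $a$, I first translate $w$ by $\ell T$ and reduce to the case $\ell=0$. On the interval $I^-_i$ the equation reduces to $w''=\mu a^-(t)g(w)\geq 0$, so $w$ is convex there, and hence its maximum is attained at one of the endpoints $\tau_i$ or $\sigma_{i+1}$. These two possibilities are symmetric, so by a parallel argument based on Lemma~\ref{lem-back} instead of Lemma~\ref{lem-forward}, I can assume $w(\sigma_{i+1})\geq \varepsilon$. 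Convexity of $w$ on $I^-_i$ with maximum at the right endpoint then forces $w'(\sigma_{i+1})\geq 0$.

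Setting $M:=\max_{t\in I^+_{i+1}} w(t)\geq w(\sigma_{i+1})\geq \varepsilon$, I would split into two cases. If $M\leq r$, then Proposition~\ref{prop-5.4} (applied with the prescribed $\varepsilon$, for $\mu>\mu^\star_\varepsilon$) yields $M<\varepsilon$, an immediate contradiction with $w(\sigma_{i+1}) \geq \varepsilon$. Otherwise $M>r$, and Propositions~\ref{prop-5.1}--\ref{prop-5.3} pin down $M\in (r,\rho)\cup (\rho,R)$. In this remaining case I would show that $w'(\sigma_{i+1})$ must be of order $\mu$: the extra hypothesis that $a\not\equiv 0$ in a left neighborhood of $\sigma_{i+1}$, combined with convexity of $w$ on $I^-_i$ and a lower bound $w\geq \varepsilon/2$ on a suitable subinterval of $I^-_i$ accumulating at $\sigma_{i+1}$ (extracted from $w(\sigma_{i+1})\geq \varepsilon$ via convexity together with the upper bound $R$), yields, after integration of $w''=\mu a^-g(w)$, an estimate of the form $w'(\sigma_{i+1})\geq c\mu$ for some $c=c(\varepsilon)>0$ independent of $\mu$. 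Then on $I^+_{i+1}$, integrating the inequality $w''\geq -\lambda a^+ g^*(R)$ propagates this large positive slope: for $\mu$ sufficiently large, both $w(\tau_{i+1})>\varepsilon$ and $w'(\tau_{i+1})>0$ hold. I can then invoke Lemma~\ref{lem-forward} (with $d=\varepsilon$ and $D=R$) inductively on the successive intervals $I^-_{j}\cup I^+_{j+1}$, obtaining that $w$ is strictly increasing across each of them with gain per interval scaling linearly with $\mu$; after finitely many iterations $w$ exceeds $R$, contradicting $\sup_{\mathbb{R}} w\leq R$.

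The main obstacle is the case $M\in (r,\rho)\cup (\rho,R)$, where the control \eqref{cond-rR} on $\zeta$ is not available and Lemma~\ref{lem-rR} cannot be applied directly with $d=M$. The argument there rests crucially on the additional assumption introduced just before the statement of Proposition~\ref{prop-5.5}, namely that $a\not\equiv 0$ near the endpoints of each $I^-_i$; this is precisely what guarantees a strictly positive lower bound for $\int a^-$ over a small window adjacent to $\sigma_{i+1}$, and is therefore essential to obtain the key estimate $w'(\sigma_{i+1})\geq c\mu$ that triggers the forward iteration.
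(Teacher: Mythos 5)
Your core estimate is the right one and is essentially the paper's: everything hinges on showing that if $w\geq\varepsilon$ at an endpoint of $I^{-}_{i}$, then $w\geq\varepsilon/2$ on a window adjacent to that endpoint of $\mu$-independent length, so that one integration of $w''=\mu a^{-}(t)g(w)$ over that window (where $\int a^{-}>0$, by the convention on the choice of $\tau_{i},\sigma_{i+1}$ that you correctly identify as indispensable) produces a term of order $\mu$ incompatible with $0\leq w\leq R$. The one step you leave vague, however, is exactly the one that needs care: ``convexity together with the upper bound $R$'' does \emph{not} by itself yield $w\geq\varepsilon/2$ on a left-window of $\sigma_{i+1}$ of $\mu$-independent length --- a convex function on $I^{-}_{i}$ with values in $\mathopen{[}0,R\mathclose{]}$ may rise from near $0$ to $\varepsilon$ arbitrarily steeply at the right endpoint. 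What is needed is the $\mu$-independent bound $w'(\sigma_{i+1})\leq\kappa_{i+1}:=R/|I^{+}_{i+1}|+\lambda\|a\|_{+,i+1}g^{*}(R)$, obtained from the \emph{adjacent positivity interval} $I^{+}_{i+1}$ (a mean-value point where $|w'|\leq R/|I^{+}_{i+1}|$, plus one integration of $w''=-\lambda a^{+}(t)g(w)$); convexity on $I^{-}_{i}$ then propagates $w'\leq\kappa_{i+1}$ backward and produces the window. This is precisely the constant $\kappa_{i+1}$ in the paper's proof.

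Once that is in place, your argument terminates far earlier than you think and the entire second half of the proposal is redundant: the estimate $w'(\sigma_{i+1})\geq c\mu$ directly contradicts $w'(\sigma_{i+1})\leq\kappa_{i+1}$ for $\mu$ large, so there is nothing left to propagate. (Equivalently, the paper integrates $w'(t)\leq\kappa_{i+1}-\mu B_{i}(t)\,g_{*}(\varepsilon/2,R)$ once more and obtains $w(\sigma_{i+1}-\delta_{i+1})>R$, a contradiction reached entirely inside $I^{-}_{i}$.) Consequently the case distinction on $M=\max_{I^{+}_{i+1}}w$, the appeals to Propositions~\ref{prop-5.1}--\ref{prop-5.4}, and the forward iteration via Lemma~\ref{lem-forward} can all be deleted; they are not incorrect (and the iteration can be made to work, since the per-interval gain is of order $\mu$), but they import thresholds and hypotheses that the purely local argument does not need. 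With the $\kappa_{i+1}$ bound made explicit and the superfluous machinery stripped away, your proof coincides with the paper's.
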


\begin{proof}
Without loss of generality, we can restrict ourselves to the analysis of the non-negative solution $w(t)$ on an interval
$I^{-}_{i}$, for $i=1,\ldots,m$.

Given $\varepsilon \in \mathopen{]}0,r\mathclose{]}$, we consider the values of the solution $w(t)$ at the boundary of the interval $I^{-}_{i}$,
for an arbitrary but fixed index $i\in\{1,\ldots,m\}$.
If $w(\tau_{i}) < \varepsilon$ and $w(\sigma_{i+i}) < \varepsilon$, then, by convexity,
$w(t) < \varepsilon$ for all $t\in I^{-}_{i}$ and we have nothing to prove.
Therefore, we discuss only the cases when $w(\tau_{i}) \geq \varepsilon$ or $w(\sigma_{i+1}) \geq \varepsilon$. We are going to show that this cannot occur if $\mu$ is sufficiently large.
Accordingly, suppose that $w(\tau_{i}) \geq \varepsilon$.
Knowing that $w(t) \leq R$ on the whole real line, in particular in the interval $I^{+}_{i}$, we easily find that
there is at least a point $t_{0}\in I^{+}_{i}$ such that $|w'(t_{0})| \leq R/|I^{+}_{i}|$. On the other hand, equation \eqref{eq-main}
on $I^{+}_{i}$ reads as $w'' = - \lambda a^{+}(t) g(w)$, so that an integration on $\mathopen{[}t_{0},\tau_{i}\mathclose{]}$ yields
\begin{equation*}
w'(\tau_{i}) = w'(t_{0}) - \lambda \int_{t_{0}}^{\tau_{i}} a^{+}(t) g(w(t)) ~\!dt \geq - \dfrac{R}{|I^{+}_{i}|} - \lambda \|a\|_{+,i}g^{*}(R) =:- \kappa_{i},
\end{equation*}
where the constants $\|a\|_{+,i}$ and $g^{*}(R)$ are those defined at the beginning of Section~\ref{section-4}.
The convexity of $w(t)$
in $I^{-}_{i}$ guarantees that $w'(t) \geq - \kappa_{i}$ for all $t \in I^{-}_{i}$. Hence,
if we fix a constant $\delta_{i} > 0$ with $\tau_{i} + \delta_{i} < \sigma_{i+1}$ and such that
$\delta_{i} < \varepsilon/(2 \kappa_{i})$, it is clear that $w(t) \geq \varepsilon/2$ for all $t\in \mathopen{[}\tau_{i},\tau_{i} + \delta_{i}\mathclose{]}$.
On the interval $I^{-}_{i}$ equation \eqref{eq-main}
reads as $w'' =\mu a^{-}(t) g(w)$, so that an integration on $\mathopen{[}\tau_{i},t\mathclose{]} \subseteq \mathopen{[}\tau_{i},\tau_{i} + \delta_{i}\mathclose{]}$ yields
\begin{equation*}
w'(t) = w'(\tau_{i}) + \mu \int_{\tau_{i}}^{t} a^{-}(\xi) g(w(\xi)) ~\!d\xi \geq - \kappa_{i} + \mu A_{i}(t) \,g_{*}(\varepsilon/2,R),
\end{equation*}
where the function $A_{i}(t)$ and the constant $g_{*}(\varepsilon/2,R)$ are defined at the beginning of Section~\ref{section-4}.
Since we have supposed that $a^{-}(t)$ is not identically zero in each right neighborhood of $\tau_{i}$, we know that
the function $A_{i}(t):=\int_{\tau_{i}}^{t} a^{-}(\xi)~\!d\xi$ is strictly positive for each $t \in \mathopen{]}\tau_{i},\sigma_{i+1}\mathclose{]}$.
Then, integrating the above inequality on $\mathopen{[}\tau_{i},\tau_{i} + \delta_{i}\mathclose{]}$, we obtain
\begin{equation*}
w(\tau_{i} + \delta_{i}) = w(\tau_{i}) + \int_{\tau_{i}}^{\tau_{i} + \delta_{i}} w'(t)~\!dt \geq
\varepsilon - \kappa_{i} \delta_{i} + \mu \,g_{*}(\varepsilon/2,R) \int_{\tau_{i}}^{\tau_{i} + \delta_{i}}A_{i}(t)~\!dt.
\end{equation*}
This latter inequality implies $w(\tau_{i} + \delta_{i}) > R$ (and hence a contradiction) for
\begin{equation*}
\mu > \mu^{\rm{left}}_{i}(\varepsilon):=\dfrac{R + \kappa_{i} \delta_{i}}{g_{*}(\varepsilon/2,R)\int_{\tau_{i}}^{\tau_{i} + \delta_{i}}A_{i}(t)~\!dt}.
\end{equation*}
On the other hand, if we suppose that $w(\sigma_{i+1}) \geq \varepsilon$, then by the same argument we have
\begin{equation*}
w'(\sigma_{i+1}) \leq \kappa_{i+i} := \dfrac{R}{|I^{+}_{i+1}|} + \lambda \|a\|_{+,i+1}g^{*}(R)
\end{equation*}
(if $i = m$, we count cyclically and consider the interval $I^{+}_{m+1}$ as $I^{+}_{1}$). As before,
we fix a constant $\delta_{i+1} > 0$ with $\sigma_{i+1} - \delta_{i+1} > \tau_{i}$ and such that
$\delta_{i+1} < \varepsilon/(2\kappa_{i+1})$, so that $u(t) \geq \varepsilon /2$ for all
$t\in \mathopen{[}\sigma_{i+1} - \delta_{i+1},\sigma_{i+1}\mathclose{]}$.
An integration of the equation on $\mathopen{[}t,\sigma_{i+1}\mathclose{]}$ yields
\begin{equation*}
w'(t) \leq \kappa_{i+1} - \mu B_{i}(t)\,g_{*}(\varepsilon/2,R),
\end{equation*}
where we have set $B_{i}(t):= \int_{t}^{\sigma_{i+1}} a^{-}(\xi)~\!d\xi$.
Since we have supposed that $a^{-}(t)$ is not identically zero in each left neighborhood of $\sigma_{i+1}$, we know that
the function $B_{i}(t)$ is strictly positive for each $t \in \mathopen{[}\tau_{i},\sigma_{i+1}\mathclose{[}$.
Then, integrating the above inequality on $\mathopen{[}\sigma_{i+1} - \delta_{i+1},\sigma_{i+1}\mathclose{]}$, we obtain
\begin{equation*}
w(\sigma_{i+1} - \delta_{i+1}) \geq
\varepsilon - \kappa_{i+1} \delta_{i+1} + \mu g_{*}(\varepsilon/2,R) \int_{\sigma_{i+1} - \delta_{i+1}}^{\sigma_{i+1}}B_{i}(t)~\!dt.
\end{equation*}
This latter inequality implies $w(\sigma_{i+1} - \delta_{i+1}) > R$ (and hence a contradiction) for
\begin{equation*}
\mu > \mu^{\rm{right}}_{i}(\varepsilon) := \dfrac{R + \kappa_{i+1} \delta_{i+1}}{g_{*}(\varepsilon/2,R)\int_{\sigma_{i+1} - \delta_{i+1}}^{\sigma_{i+1}}B_{i}(t)~\!dt}.
\end{equation*}

In conclusion, for
\begin{equation}\label{eq-mu**}
\mu > \mu^{\star\star}_{\varepsilon} := \max_{i=1,\ldots,m}\bigl{\{}\mu^{\rm{left}}_{i}(\varepsilon),\mu^{\rm{right}}_{i}(\varepsilon),\mu^{*}(\lambda)\bigr{\}}
\end{equation}
our result is proved.
\end{proof}

We conclude this section by briefly describing, as typical in singular perturbation problems, the limit behavior of
positive solutions of \eqref{eq-main} for $\mu \to +\infty$ (compare with \cite{BaBoVe-15},
where a similar discussion was performed in the superlinear case).
We focus our attention to the solutions found in Theorem~\ref{main-theorem} for the $T$-periodic problem;
however, similar considerations are valid for Dirichlet and Neumann boundary conditions,
as well as for globally defined positive solutions.

Let us fix a non-null string $\mathcal{S} \in \{0,1,2\}^{m}$. Theorem~\ref{main-theorem} ensures the existence (in general, not the uniqueness)
of a positive $T$-periodic solution of \eqref{eq-main} associated with it, if $\lambda > \lambda^{*}$ and $\mu > \mu^{*}(\lambda)$;
in order to emphasize its dependence on the parameter $\mu$, we will denote it by $u_{\mu}(t)$.
Then, as a direct consequence of Proposition~\ref{prop-5.4} and Proposition~\ref{prop-5.5}, we have that $u_{\mu}(t)$ converges uniformly to zero
both in the intervals $I^{+}_{i}$ with $\mathcal{S}_{i}=0$ as well as in the intervals $I^{-}_{i}$, for $\mu \to +\infty$.
As for the behavior of $u_{\mu}(t)$ on the intervals $I^{+}_{i}$ such that $\mathcal{S}_{i}\in\{1,2\}$,
with a standard compactness argument (based on the facts that $0 \leq u_{\mu}(t) \leq R$ and that
equation \eqref{eq-main} is independent on the parameter $\mu$ in the intervals $I^{+}_{i}$), we can prove that
the family $\{u_{\mu}|_{I^{+}_{i}}\}_{\mu > \mu^{*}(\lambda)}$ is relatively compact in $\mathcal{C}(I^{+}_{i})$ and that each of its
cluster points $u_{\infty}(t)$ has to be a non-negative solution of $u'' + \lambda a^{+}(t)g(u) = 0$ on $I^{+}_{i}$.
We claim that $u_{\infty}(t)$ is actually a positive solution, satisfies Dirichlet boundary condition on $I^{+}_{i}$ and is
``small'' if $\mathcal{S}_{i} = 1$ and ``large'' if $\mathcal{S}_{i} = 2$. Indeed, the first assertion follows from the fact that, passing to the limit,
$r \leq \max_{t \in I^{+}_{i}} u_{\infty}(t) \leq \rho$ if $\mathcal{S}_{i} = 1$ and
$\rho \leq \max_{t \in I^{+}_{i}} u_{\infty}(t) \leq R$ if $\mathcal{S}_{i} = 2$. As for Dirichlet boundary condition on $I^{+}_{i}$, this is a consequence
of $u_{\mu}(t) \to 0$ on every interval of negativity. Finally, using Lemma~\ref{lem-rho}, we infer $r \leq \max_{t \in I^{+}_{i}} u_{\infty}(t) < \rho$
if $\mathcal{S}_{i} = 1$ (that is, $u_{\infty}(t)$ is ``small'')  and
$\rho < \max_{t \in I^{+}_{i}} u_{\infty}(t) \leq R$ if $\mathcal{S}_{i} = 2$ (that is, $u_{\infty}(t)$ is ``large'').

In conclusion, up to subsequences, $u_{\mu}(t) \to u_{\infty}(t)$ uniformly for $\mu \to +\infty$, with $u_{\infty}(t)$ a function
made up of ``null'', ``small'' and ``large'' solutions of Dirichlet problems in the intervals $I^{+}_{i}$
(depending on $\mathcal{S}_{i} = 0,1,2$ respectively) connected by null functions in $I^{-}_{i}$.
See Figure~\ref{fig-02} for a numerical simulation.
Notice that this discussion is simplified whenever we are able to prove that each Dirichlet problem associated with
$u'' + \lambda a^{+}(t) g(u) = 0$ on $I^{+}_{i}$ has \textit{exactly} two positive solutions;
indeed, in this case every string $\mathcal{S} \in \{0,1,2\}^{m}$ uniquely determines a limit profile
$u_{\infty}(t)$ and $u_{\mu}(t) \to u_{\infty}(t)$ uniformly, without the need of taking subsequences
(even if $u_{\mu}(t)$ could be not unique in the class of positive solutions to \eqref{eq-main} associated with $\mathcal{S}$).

\begin{figure}[h!]
\centering
\begin{tikzpicture}[x=60pt,y=15pt]
\node at (1.555,-7.778) {\includegraphics[width=0.684\textwidth]{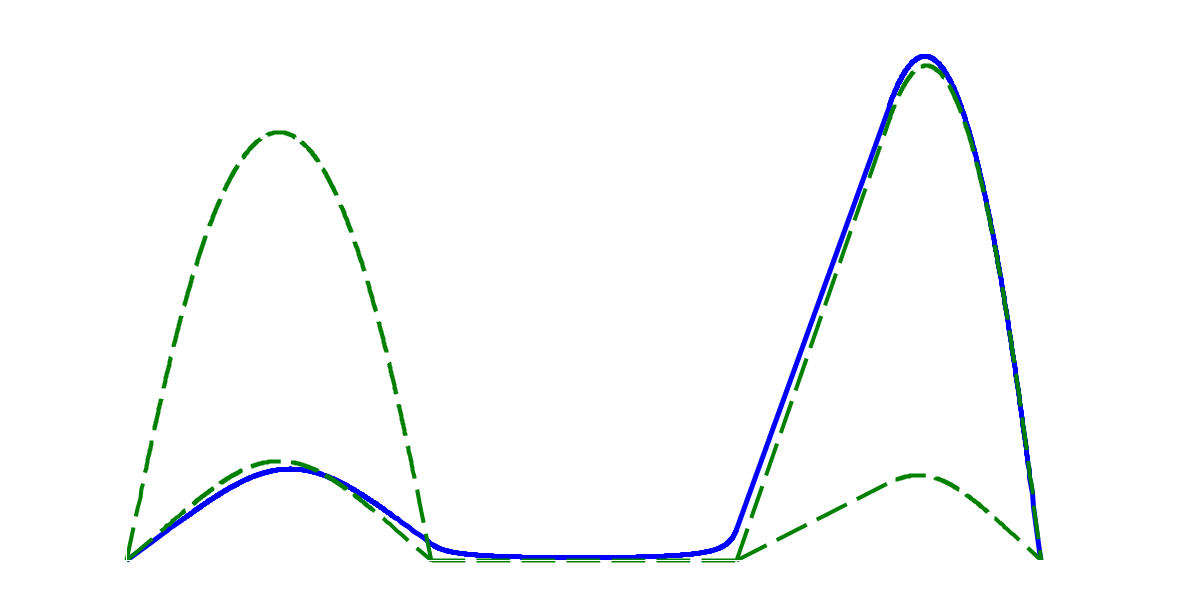}};
\draw (-0.4,0) -- (3.5,0);
\draw (0,-2.9) -- (0,4.2);
\draw (3.4,0) node[anchor=south] {$t$};
\draw (0,3.8) node[anchor=east] {$a(t)$};
\draw [line width=1pt, color=red] (0,1) -- (1,1);
\draw [line width=1pt, color=red] (1,-2) -- (2,-2);
\draw [line width=1pt, color=red] (2,0) -- (2.5,0);
\draw [line width=1pt, color=red] (2.5,2) -- (3,2);
\draw [dashed] (1,1) -- (1,-2);
\draw [dashed] (2,-2) -- (2,0);
\draw [dashed] (2.5,0) -- (2.5,2);
\draw [dashed] (3,2) -- (3,0);
\draw (0,0) node[anchor=north east] {$0$};
\draw (1,0) node[anchor=north east] {$1$};
\draw (1,-0.05) -- (1,0);
\draw (2,0) node[anchor=north east] {$2$};
\draw (2,-0.05) -- (2,0);
\draw (3,0) node[anchor=north] {$3$};
\draw (3,-0.1) -- (3,0);
\draw (0,2) node[anchor=east] {$2$};
\draw (-0.03,2) -- (0,2);
\draw (0,1) node[anchor=east] {$1$};
\draw (-0.03,1) -- (0,1);
\draw (0,-2) node[anchor=east] {$-2$};
\draw (-0.03,-2) -- (0,-2);
\draw (0,-4.4) node[anchor=east] {$u(t)$};
\draw (3.4,-11.2) node[anchor=south] {$t$};
\draw (-0.4,-11.2) -- (3.5,-11.2);
\draw (0,-11.7) -- (0,-4);
\draw (1,-11.3) -- (1,-11.2);
\draw (2,-11.3) -- (2,-11.2);
\draw (3,-11.3) -- (3,-11.2);
\end{tikzpicture}
\caption{\small{The lower part of the figure shows a positive solution of equation \eqref{eq-main} for the super-sublinear
nonlinearity $g(s) = \arctan(s^{3})$, for $s\geq0$, and Dirichlet boundary conditions.
For this simulation we have chosen the interval $\mathopen{[}0,T\mathclose{]}$ with $T=3$ and the weight function
$a_{\lambda,\mu}(t)$ with $a(t)$ having a stepwise graph as represented in the upper part of the figure.
First, with a dashed line we have drawn the Dirichlet solutions (``small'' and ``large'') on the intervals $\mathopen{[}0,1\mathclose{]}$
and $\mathopen{[}2,3\mathclose{]}$. Then, for $\lambda = 20$ and $\mu = 10000$, we have exhibited
a solution of the form ``small'' in the first interval of
positivity $\mathopen{[}0,1\mathclose{]}$ and ``large'' in the second interval of positivity $\mathopen{[}2,3\mathclose{]}$.
Such a solution is very close to the limit profile for the class of solutions  associated with the string $(1,2)$, which is made by a ``small'' solution of the
Dirichlet problem in $\mathopen{[}0,1\mathclose{]}$ and a ``large'' solution of the Dirichlet problem in $\mathopen{[}2,3\mathclose{]}$
connected by the null solution in $\mathopen{[}1,2\mathclose{]}$.
Notice that, for the given weight function
which is identically zero on the interval $\mathopen{[}2,2.5\mathclose{]}$ separating the negative and the positive hump, the solution is very small
(and the limit profile is zero) only in the interval $\mathopen{[}1,2\mathclose{]}$ where the weight is negative. This is in complete accordance with
Proposition~\ref{prop-5.5} and the choice of the endpoints of the intervals $I^{\pm}_{i}$.}}
\label{fig-02}
\end{figure}

\section{Subharmonics and symbolic dynamics}\label{section-6}

As remarked in Section~\ref{section-4.5}, the estimates which allow to determine the value $\lambda^{*}$,
$r$, $R$ and $\mu^{*}(\lambda)$ are of local nature. We exploit this fact by applying Theorem~\ref{main-theorem}
on intervals of the form $\mathopen{[}0,kT\mathclose{]}$, with $k\geq 2$ an integer, and thus proving the existence of subharmonic solutions.
Next, letting $k\to \infty$ and using a Krasnosel'ski\u{\i}-Mawhin lemma for bounded solutions, we
obtain positive bounded solutions which are not necessarily periodic and can reproduce an arbitrary coin-tossing sequence.
This is a hint of complex dynamics and indeed we conclude the section by describing some dynamical consequences of our results.

Throughout the section we suppose that $g \colon {\mathbb{R}}^{+} \to {\mathbb{R}}^{+}$ is a continuous function satisfying
$(g_{*})$ as well as $(g_{0})$ and $(g_{\infty})$ and $a \colon \mathbb{R} \to \mathbb{R}$ is a $T$-periodic locally
integrable function satisfying $(a_{*})$. For convenience in the next discussion, we also suppose that $T > 0$ is the
\textit{minimal period} of $a(t)$. Moreover, we recall the notation
\begin{equation}\label{eq-Ipm}
I^{\pm}_{i,\ell}:= I^{\pm}_{i} + \ell\, T, \quad \text{ for } \; i=1,\ldots,m \; \text{ and } \; \ell \in \mathbb{Z}.
\end{equation}

\subsection{Positive subharmonic solutions}\label{section-6.1}

In this subsection we investigate the existence and multiplicity of \textit{positive subharmonic solutions}
to equation \eqref{eq-main}. Let $k\geq 2$ be a fixed integer. Following a standard definition,
we recall that a \textit{subharmonic solution of order $k$} is a $kT$-periodic solution which is not $lT$-periodic
for any integer $l=1,\ldots,k-1$. As observed in \cite{FeZa-pp2015} (as a consequence of $(g_{*})$
and the fact that $T>0$ is the minimal period of $a(t)$) any positive subharmonic solution of order $k$ has actually $kT$ as
minimal period.

As a further remark, we observe that if $u(t)$ is a positive $kT$-periodic solution of \eqref{eq-main}
then, for any integer $\ell$ with $1 \leq \ell \leq k-1$, also its time-translated $v_{\ell}(t):= u(t + \ell T)$
is a positive $kT$-periodic solution of the same minimal period.
Therefore, if we find a subharmonic solution
of order $k$, we also obtain altogether a family of $k$ subharmonic solutions of the same order.
These solutions, even if formally distinct,
will be considered as belonging to the same periodicity class.

We split the search of subharmonic solutions to \eqref{eq-main} into two steps.
In the first one we present a theorem of existence and multiplicity of positive $kT$-periodic solutions which is
a direct application of Theorem~\ref{main-theorem} for the interval $\mathopen{[}0,kT\mathclose{]}$.
As a second step, we show how the code ``very small/small/large'' allows us to prove the minimality
of the period for some of such $kT$-periodic solutions and determine a lower bound for the number of
$k$-th order subharmonics.

First of all, in order to apply Theorem~\ref{main-theorem} to the interval $\mathopen{[}0,kT\mathclose{]}$, we need to observe that
now $a(t)$ is treated as a $kT$-periodic function (even if it has $T$ as minimal period). Recalling the notation in \eqref{eq-Ipm},
in the ``new'' periodicity interval $\mathopen{[}0,kT\mathclose{]}$ the weight $a(t)$ turns out to be
a function with $km$ positive humps $I^{+}_{i,\ell}$ separated by $km$ negative ones $I^{-}_{i,\ell}$
(for $i=1,\ldots, m$ and $\ell= 0,\ldots, k-1$).

\medskip

In this setting, Theorem~\ref{main-theorem} reads as follows.

\begin{theorem}\label{main-theorem-sub}
Let $g \colon \mathbb{R}^{+} \to \mathbb{R}^{+}$ be a continuous function satisfying $(g_{*})$, $(g_{0})$ and $(g_{\infty})$.
Let $a \colon \mathbb{R} \to \mathbb{R}$ be a locally integrable periodic function of minimal period
$T > 0$ satisfying $(a_{*})$.
Then there exists $\lambda^{*} > 0$ such that for each $\lambda > \lambda^{*}$ there exists $\mu^{*}(\lambda) > 0$ such that,
for each $\mu > \mu^{*}(\lambda)$ and each integer $k\geq2$,
equation \eqref{eq-main} has at least $3^{km}-1$ positive $kT$-periodic solutions.

More precisely, fixed an arbitrary constant $\rho > 0$ there exists $\lambda^{*} = \lambda^{*}(\rho) > 0$ such that
for each $\lambda > \lambda^{*}$ there exist two constants $r,R$ with $0 < r < \rho < R$ and $\mu^{*}(\lambda) =
\mu^{*}(\lambda,r,R)>0$ such that, for any $\mu > \mu^{*}(\lambda)$ and for any integer $k\geq2$, the following holds:
given any finite string $\mathcal{S} = (\mathcal{S}_{1},\ldots,\mathcal{S}_{km}) \in \{0,1,2\}^{km}$, with $\mathcal{S} \neq (0,\ldots,0)$,
there exists a positive $kT$-periodic solution $u(t)$ of \eqref{eq-main} such that
\begin{itemize}
\item $\max_{t \in I^{+}_{i,\ell}} u(t) < r$, if $\mathcal{S}_{j} = 0$ for $j= i + \ell m$;
\item $r < \max_{t \in I^{+}_{i,\ell}} u(t) < \rho$, if $\mathcal{S}_{j} = 1$ for $j= i + \ell m$;
\item $\rho < \max_{t \in I^{+}_{i,\ell}} u(t) < R$, if $\mathcal{S}_{j} = 2$ for $j= i + \ell m$.
\end{itemize}
\end{theorem}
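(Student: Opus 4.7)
The strategy is to regard $a(t)$, which has minimal period $T$, as a $kT$-periodic function and then apply Theorem~\ref{main-theorem} on the interval $\mathopen{[}0,kT\mathclose{]}$. With this reading, the $2m+1$ nodal points of $a(t)$ on one $T$-period give rise to $2km+1$ nodal points on $\mathopen{[}0,kT\mathclose{]}$, namely the points $\sigma_{i}+\ell T$ and $\tau_{i}+\ell T$ for $i=1,\ldots,m$ and $\ell=0,\ldots,k-1$, and condition $(a_{*})$ is satisfied in the new periodicity interval with $km$ positive humps $I^{+}_{i,\ell}$ separated by $km$ negative humps $I^{-}_{i,\ell}$. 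A direct application of Theorem~\ref{main-theorem} to this setting would then provide the desired $3^{km}-1$ positive $kT$-periodic solutions, coded as in the statement, provided that $\lambda$ and $\mu$ exceed some threshold constants (\textit{a priori} depending on $k$). The whole content of the theorem therefore lies in the fact that these thresholds can be chosen \emph{independently} of $k$.

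The plan is thus to revisit the constants $\lambda^{*}(\rho)$, $r$, $R$ and $\mu^{*}(\lambda,r,R)$ produced in Sections~\ref{section-4.2}--\ref{section-4.5} and to verify that, when applied to the $kT$-problem, none of them actually grows with $k$. I would first fix $\rho>0$ and, looking at the proof of Lemma~\ref{lem-rho}, observe that the quantities $\varepsilon$, $\nu_{\varepsilon}$ and $\eta_{\varepsilon,\rho}$ used there depend only on $g$ restricted to $\mathopen{[}0,\rho\mathclose{]}$ and on the shape of $a$ on the individual intervals $I^{+}_{i}$; by $T$-periodicity of $a(t)$, replicating the humps $k$ times leaves $\min_{i,\ell}\int_{\sigma_{i}+\ell T+\varepsilon}^{\tau_{i}+\ell T-\varepsilon}a^{+}(t)\,dt=\nu_{\varepsilon}$ and $\max_{i,\ell}|I^{+}_{i,\ell}|=\max_{i}|I^{+}_{i}|$ unchanged, so $\lambda^{*}=\lambda^{*}(\rho)$ is the very same constant as for the $T$-problem. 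With $\lambda>\lambda^{*}$ fixed, I would next choose $r$ and $R$ according to \eqref{cond-rR}: since $\max_{i,\ell}(|I^{+}_{i,\ell}|+|I^{-}_{i,\ell}|)\|a\|_{+,i,\ell}=\max_{i}(|I^{+}_{i}|+|I^{-}_{i}|)\|a\|_{+,i}$ by periodicity, the same pair $(r,R)$ used for $k=1$ continues to satisfy \eqref{cond-rR} for every $k\geq 2$.

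It remains to handle the threshold $\mu^{*}(\lambda,r,R)$. This amounts to re-examining the quantities $\hat{\mu}_{i}$, $\check{\mu}_{i}$, $\tilde{\mu}_{i}$, $\bar{\mu}_{i}$, $\mu_{i}^{*,+}$, $\mu_{i}^{*,-}$ produced in Sections~\ref{section-4.3}--\ref{section-4.4} and checking that each of them is built exclusively from $\gamma(r),\gamma(R),\zeta(r),g^{*}(R),g_{*}(r,R)$ and the data $\|a\|_{\pm,i}$, $\|A_{i}\|$, $|I^{\pm}_{i}|$ associated to a single positive hump and the negative hump(s) adjacent to it. By $T$-periodicity these data are replicated identically across the $k$ copies, so
\[
\max_{i=1,\ldots,km}\{\hat{\mu}_{i},\check{\mu}_{i},\tilde{\mu}_{i},\bar{\mu}_{i},\mu_{i}^{*,+},\mu_{i}^{*,-}\}
=\max_{i=1,\ldots,m}\{\hat{\mu}_{i},\check{\mu}_{i},\tilde{\mu}_{i},\bar{\mu}_{i},\mu_{i}^{*,+},\mu_{i}^{*,-}\}.
\]
The necessary condition $\mu>\mu^{\#}(\lambda)$ appearing in Lemma~\ref{lem-deg1} is likewise unaffected, since $\mu^{\#}(\lambda)$ is defined in \eqref{mudiesis} through the averages of $a^{\pm}$ on one period, which coincide with the averages on $kT$ periods. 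Hence the constant $\mu^{*}(\lambda,r,R)$ is $k$-independent, and I would fix it once and for all from the $k=1$ case.

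Once this uniformity is in place, the conclusion is immediate: for every $k\geq 2$, Theorem~\ref{deg-Omega} and Theorem~\ref{deg-Lambda} hold verbatim on $\mathopen{[}0,kT\mathclose{]}$ with the sets $\Omega^{\mathcal{I},\mathcal{J}}_{(r,\rho,R)}$ and $\Lambda^{\mathcal{I},\mathcal{J}}_{(r,\rho,R)}$ built from the $km$ intervals $I^{+}_{i,\ell}$, and for each pair of disjoint subsets $\mathcal{I},\mathcal{J}\subseteq\{1,\ldots,km\}$ with $\mathcal{I}\cup\mathcal{J}\neq\emptyset$ the coincidence degree on $\Lambda^{\mathcal{I},\mathcal{J}}_{(r,\rho,R)}$ equals $(-1)^{\#\mathcal{I}}\neq 0$, thus producing a positive $kT$-periodic solution with the prescribed code. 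Associating to each non-null string $\mathcal{S}\in\{0,1,2\}^{km}$ the sets $\mathcal{I}=\{j:\mathcal{S}_{j}=1\}$ and $\mathcal{J}=\{j:\mathcal{S}_{j}=2\}$ (with $j=i+\ell m$), we obtain the claimed $3^{km}-1$ positive $kT$-periodic solutions, with the localization properties stated. The only genuinely nontrivial point in the whole argument is therefore the $k$-uniform bookkeeping of the constants described above; everything else is a transcription of the proof of Theorem~\ref{main-theorem}.
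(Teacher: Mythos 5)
Your proposal is correct and follows essentially the same route as the paper: regard $a(t)$ as $kT$-periodic, apply Theorem~\ref{main-theorem} on $\mathopen{[}0,kT\mathclose{]}$, and observe that all the constants $\lambda^{*}$, $r$, $R$, $\mu^{*}(\lambda)$ depend on $a(t)$ only through the local quantities $\|a\|_{\pm,i}$, $\|A_{i}\|$, $|I^{\pm}_{i}|$, which are replicated unchanged under $\ell T$-translation. Your explicit bookkeeping of the individual thresholds (including the observation that $\mu^{\#}(\lambda)$ is invariant because it is a ratio of averages over one period) is a somewhat more detailed version of the argument the paper gives in compressed form.
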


\begin{proof}
This statement follows from Theorem~\ref{main-theorem} (for the search of positive $kT$-periodic solutions and
the weight $a(t)$ considered as a $kT$-periodic function), after having checked that the constants $\lambda^{*}$, $r$, $R$ and $\mu^{*}(\lambda)$
can be chosen independently on $k$. This is a consequence of the fact that, for the part in which they depend on $a(t)$, these constants involve either integrals of $a^{\pm}(t)$ on $I^{\pm}_{i}$
or interval lengths of the form $|I^{\pm}_{i}|$, with $i=1,\ldots,m$ (compare with the discussion in Section~\ref{section-4.5}), and of the fact that
the ``new'' intervals $I^{\pm}_{i,\ell}$ (for $i=1,\ldots,m$ and $\ell = 0,\ldots, k-1$)
are just $\ell T$-translations of the original $I^{\pm}_{i}$ (with $a(t)$ $T$-periodic).
\end{proof}

\begin{remark}\label{rem-6.1}
As a further information, up to selecting the intervals $I^{\pm}_{i}$
so that $a(t)\not\equiv 0$ on each right neighborhood of $\tau_{i}$ and on each left neighborhood of
$\sigma_{i+1}$, among the properties of the positive $kT$-periodic
solutions listed in Theorem~\ref{main-theorem-sub}, we can add the following one
(if $\mu$ is sufficiently large):
\begin{itemize}
\item \textit{$0 < u(t) < r$ on $I^{-}_{i,\ell}$, for all $i=1,\ldots, m$ and $\ell= 0,\ldots,k-1$.}
\end{itemize}
This assertion is justified by Proposition~\ref{prop-5.5} taking
$\mu > \mu^{\star\star}_{r}$ defined in \eqref{eq-mu**} for $\varepsilon = r$, and
observing also that the constants $\mu_{i}^{\rm left}(r)$ and
$\mu_{i}^{\rm right}(r)$ depend on $a(t)$ on a $T$-periodicity interval and do not depend on $k$.
$\hfill\lhd$
\end{remark}

From now on, we can use Theorem~\ref{main-theorem-sub} to produce subharmonics. The trick is that of selecting strings
which are minimal in some sense, in order to obtain the minimality of the period. On the other hand, in counting the
subharmonic solutions we wish to avoid duplications, in the sense that we count only once subharmonics belonging to the
same periodicity class. To this end, we can take advantage of some combinatorial results related to the concept of
Lyndon words. We recall that a \textit{$n$-ary Lyndon word of length $k$} is a string of $k$ digits of
an alphabet $\mathscr{B}$ with $n$ symbols
which is strictly smaller in the lexicographic ordering than all of its nontrivial rotations. It is possible
to see that there is a one-to-one correspondence between the $n$-ary Lyndon words of length $k$ and
the aperiodic necklaces made by arranging $k$ beads whose color is chosen from a list of $n$ colors
(see \cite[Remark~6.3]{FeZa-pp2015}).

We denote by $\mathcal{L}_{n}(k)$ the number of $n$-ary Lyndon words of length $k$.
According to \cite[\S~5.1]{Lo-97} we have that
\begin{equation*}
\mathcal{L}_{n}(k) = \dfrac{1}{k} \sum_{l|k} \mu(l) \, n^{\frac{k}{l}},
\end{equation*}
where $\mu(\cdot)$ is the M\"{o}bius function, defined on $\mathbb{N}\setminus\{0\}$ by $\mu(1) = 1$,
$\mu(l) = (-1)^{s}$ if $l$ is the
product of $s$ distinct primes and $\mu(l) = 0$ otherwise.
For instance, the values of $\mathcal{L}_{3}(k)$ (number of ternary Lyndon words of length $k$) for
$k=2,\ldots,10$ are $3$, $8$, $18$, $48$, $116$, $312$, $810$, $2184$, $5880$.

In this setting we can now provide the following consequence of Theorem~\ref{main-theorem-sub}.

\begin{theorem}\label{main-theorem-sub2}
Let $g \colon \mathbb{R}^{+} \to \mathbb{R}^{+}$ be a continuous function satisfying $(g_{*})$, $(g_{0})$ and $(g_{\infty})$.
Let $a \colon \mathbb{R} \to \mathbb{R}$ be a locally integrable periodic function of minimal period
$T > 0$ satisfying $(a_{*})$.
Then there exists $\lambda^{*} > 0$ such that for each $\lambda > \lambda^{*}$ there exists $\mu^{*}(\lambda) > 0$ such that
for each $\mu > \mu^{*}(\lambda)$ and each integer $k\geq2$,
equation \eqref{eq-main} has at least $\mathcal{L}_{3^{m}}(k)$ positive subharmonic solutions of order $k$.
\end{theorem}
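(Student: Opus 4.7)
\medskip

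\noindent\textit{Proof proposal.}
The plan is to deduce Theorem~\ref{main-theorem-sub2} from Theorem~\ref{main-theorem-sub} by a purely combinatorial counting argument based on Lyndon words. The idea is to identify a string $\mathcal{S}\in\{0,1,2\}^{km}$ of length $km$ with a word $\mathcal{W}=(\mathcal{B}_{0},\ldots,\mathcal{B}_{k-1})$ of length $k$ over the alphabet $\mathscr{B}:=\{0,1,2\}^{m}$, where the block $\mathcal{B}_{\ell}:=(\mathcal{S}_{1+\ell m},\ldots,\mathcal{S}_{m+\ell m})$ codes the behavior of the solution on the $m$ positive humps contained in the $\ell$-th $T$-periodicity interval $\mathopen{[}\ell T,(\ell+1)T\mathclose{]}$. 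Since $|\mathscr{B}|=3^{m}$, there are exactly $\mathcal{L}_{3^{m}}(k)$ Lyndon words in this encoding.

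First, for each Lyndon word $\mathcal{W}$ of length $k$ over $\mathscr{B}$ we select the corresponding (necessarily non-null, since $k\geq2$) string $\mathcal{S}\in\{0,1,2\}^{km}$ and apply Theorem~\ref{main-theorem-sub} to obtain a positive $kT$-periodic solution $u_{\mathcal{W}}(t)$ of \eqref{eq-main} whose code, in the sense of Theorem~\ref{main-theorem-sub}, is $\mathcal{S}$. The key observation is that the $T$-shift operator $u(\cdot)\mapsto u(\cdot+T)$ acts on codes as the cyclic rotation $(\mathcal{B}_{0},\ldots,\mathcal{B}_{k-1})\mapsto(\mathcal{B}_{1},\ldots,\mathcal{B}_{k-1},\mathcal{B}_{0})$; indeed, by $(a_{*})$ and $I^{+}_{i,\ell+1}=I^{+}_{i,\ell}+T$, the maximum of $u(\cdot+T)$ on $I^{+}_{i,\ell}$ coincides with the maximum of $u(\cdot)$ on $I^{+}_{i,\ell+1}$, and the thresholds $r<\rho<R$ used to define the code are the same.

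From this, two conclusions follow. On one hand, $u_{\mathcal{W}}(t)$ is genuinely a subharmonic solution of order $k$: if $u_{\mathcal{W}}$ were $lT$-periodic with $l\mid k$ and $l<k$, then the cyclic rotation of $\mathcal{W}$ by $l$ positions would equal $\mathcal{W}$, contradicting the aperiodicity of a Lyndon word. On the other hand, distinct Lyndon words $\mathcal{W}\neq\mathcal{W}'$ give solutions $u_{\mathcal{W}},u_{\mathcal{W}'}$ belonging to different periodicity classes: any two solutions of the same class have codes related by a cyclic rotation, but two distinct Lyndon words are representatives of two different aperiodic necklaces and hence are not rotations of one another.

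Counting one solution per periodicity class, we thus obtain at least $\mathcal{L}_{3^{m}}(k)$ positive subharmonic solutions of order $k$. No real obstacle is expected: the delicate input (uniformity of $\lambda^{*}$, $r$, $R$, $\mu^{*}(\lambda)$ in $k$) has already been absorbed into Theorem~\ref{main-theorem-sub}, and what remains is the bookkeeping via Lyndon words together with the elementary equivariance of the coding under the $T$-shift. The only point that deserves a brief verification is precisely this equivariance, which is immediate from the $T$-periodicity of $a(t)$ and the translational definition of the intervals $I^{\pm}_{i,\ell}$.
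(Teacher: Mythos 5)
Your proposal is correct and follows essentially the same route as the paper: apply Theorem~\ref{main-theorem-sub} to the $km$-string determined by a $3^{m}$-ary Lyndon word of length $k$, use the equivariance of the coding under the $T$-shift to get minimality of the period from aperiodicity, and use the fact that distinct Lyndon words represent distinct necklaces to avoid counting solutions in the same periodicity class twice. The only difference is cosmetic — you make the shift-equivariance of the code explicit, which the paper leaves implicit — so nothing further is needed.
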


\begin{proof}
We consider an alphabet $\mathscr{B}$ made by $3^{m}$ symbols and defined as
\begin{equation*}
\mathscr{B}:=\{0,1,2\}^{m}.
\end{equation*}
Let us fix a non-null $k$-tuple $\mathcal{T}^{[k]} := (\mathcal{T}_{\ell})_{\ell=0,\ldots,k-1}$ in the alphabet $\mathscr{B}$.
We have that for each $\ell=0,\ldots,k-1$, the element
$\mathcal{T}_{\ell}\in {\mathscr{B}}$ can be written as
$\mathcal{T}_{\ell}=(\mathcal{T}_{\ell}^{i})_{i=1,\ldots,m}$, where $\mathcal{T}_{\ell}^{i}\in\{0,1,2\}$ for $i=1,\ldots,m$
and $\ell=0,\ldots, k-1$. By Theorem~\ref{main-theorem-sub},
there exists at least one positive $kT$-periodic solution $u(t)$ of equation \eqref{eq-main} such that
\begin{itemize}
\item $\max_{t \in I^{+}_{i,\ell}} u(t) < r$, if $\mathcal{T}_{\ell}^{i} = 0$;
\item $r < \max_{t \in I^{+}_{i,\ell}} u(t) < \rho$, if $\mathcal{T}_{\ell}^{i} = 1$;
\item $\rho < \max_{t \in I^{+}_{i,\ell}} u(t) < R$, if $\mathcal{T}_{\ell}^{i} = 2$.
\end{itemize}
In fact, the $k$-tuple $\mathcal{T}^{[k]}$ determines the string $\mathcal{S}$ of length $km$ with
\begin{equation*}
\mathcal{S}_{j}:= \mathcal{T}_{\ell}^{i},\quad \text{for } j = i + \ell m.
\end{equation*}
It remains to see whether, on the basis of the information we have on $u(t)$, we are able
first to prove the minimality of the period and next to distinguish among solutions not belonging to
the same periodicity class. In view of the above listed properties of the solution $u(t)$,
the minimality of the period is guaranteed when the string $\mathcal{T}^{[k]}$ has
$k$ as a minimal period (when repeated cyclically).
For the second question, given any string of this kind, we count
as the same all those strings (of length $k$) which are equivalent by cyclic permutations.
To choose exactly one string in each of these equivalence classes,
we can take the minimal one in the lexicographic order, namely a Lyndon word.
As a consequence, we find that each $3^{m}$-ary Lyndon word of length $k$ determines at least
one $kT$-periodic solution which is not $pT$-periodic for every $p = 1,\ldots, k-1$.
This solution has indeed $kT$ as minimal period. Moreover, by definition, solutions associated with
different Lyndon words are not in the same periodicity class.
\end{proof}

\subsection{Positive solutions with complex behavior}\label{section-6.2}

Having shown the existence of a mechanism producing subharmonic solutions of arbitrary order,
letting $k\to\infty$ we can provide positive (not necessarily periodic) bounded solutions
coded by a non-null bi-infinite string of three symbols.
A similar procedure has been performed in \cite{BaBoVe-15} and \cite[\S~6]{FeZa-pp2015} for the superlinear case.

Our proof is based on the following diagonal lemma borrowed from \cite[Lemma~8.1]{Kr-68} and \cite[Lemma~4]{Ma-96}.

\begin{lemma}\label{lem-6.1}
Let $f\colon\mathbb{R}\times\mathbb{R}^{d}\to\mathbb{R}^{d}$ be an $L^{1}$-Carath\'{e}odory function.
Let $(t_{n})_{n\in\mathbb{N}}$ be an increasing sequence of positive numbers and $(x_{n})_{n\in\mathbb{N}}$ be a sequence
of functions from $\mathbb{R}$ to $\mathbb{R}^{d}$ with the following properties:
\begin{itemize}
\item[$(i)$] $t_{n}\to+\infty$ as $n\to\infty$;
\item[$(ii)$] for each $n\in\mathbb{N}$, $x_{n}(t)$ is a solution of
\begin{equation}\label{eq-lem-6.1}
x'=f(t,x)
\end{equation}
defined on $\mathopen{[}-t_{n},t_{n}\mathclose{]}$;
\item[$(iii)$] there exists a closed and bounded set $B\subseteq\mathbb{R}^{d}$ such that, for each $n\in\mathbb{N}$,
$x_{n}(t)\in B$ for every $t\in\mathopen{[}-t_{n},t_{n}\mathclose{]}$.
\end{itemize}
Then there exists a subsequence $(\tilde{x}_{n})_{n\in\mathbb{N}}$ of $(x_{n})_{n\in\mathbb{N}}$ which converges uniformly
on the compact subsets of $\mathbb{R}$ to a solution $\tilde{x}(t)$ of system \eqref{eq-lem-6.1};
in particular $\tilde{x}(t)$ is defined on $\mathbb{R}$ and
$\tilde{x}(t)\in B$ for all $t\in\mathbb{R}$.
\end{lemma}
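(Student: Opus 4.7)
The plan is a standard diagonal extraction combined with Ascoli--Arzelà, exploiting the Carathéodory structure of $f$ to promote the bound in $(iii)$ to equicontinuity.

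First, I would rewrite the differential equation in integral form. For each $n$, the function $x_{n}(t)$ satisfies
\begin{equation*}
x_{n}(t) = x_{n}(0) + \int_{0}^{t} f(s,x_{n}(s))~\!ds, \quad t\in\mathopen{[}-t_{n},t_{n}\mathclose{]}.
\end{equation*}
Since $f$ is $L^{1}$-Carathéodory and $B$ is bounded, for every compact interval $J\subseteq\mathbb{R}$ there exists $m_{J}\in L^{1}(J)$ such that $|f(s,x)|\leq m_{J}(s)$ for a.e.\ $s\in J$ and every $x\in B$. Combined with $(iii)$, this yields two crucial facts: the sequence $(x_{n})$ is uniformly bounded on every compact subinterval of $\mathbb{R}$ (for all $n$ large enough so that $J\subseteq \mathopen{[}-t_{n},t_{n}\mathclose{]}$), and it is equicontinuous there because
\begin{equation*}
|x_{n}(t)-x_{n}(s)| \leq \biggl| \int_{s}^{t} m_{J}(\xi)~\!d\xi \biggr|,
\end{equation*}
and the absolute continuity of the Lebesgue integral makes the right-hand side uniformly small when $|t-s|$ is small.

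Next, I would run the diagonal argument. Fix an exhaustion $J_{k}=\mathopen{[}-k,k\mathclose{]}$. Using $(i)$, for each $k$ all but finitely many of the $x_{n}$ are defined on $J_{k}$. By Ascoli--Arzelà applied to $J_{1}$ I extract a subsequence converging uniformly on $J_{1}$; from this one I extract a further subsequence converging uniformly on $J_{2}$; and so on. The diagonal subsequence $(\tilde{x}_{n})$ then converges uniformly on every $J_{k}$, hence on every compact subset of $\mathbb{R}$, to a continuous function $\tilde{x}\colon \mathbb{R}\to \mathbb{R}^{d}$ with $\tilde{x}(t)\in B$ for all $t$ (since $B$ is closed).

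Finally, I would verify that $\tilde{x}$ solves \eqref{eq-lem-6.1}. Fix $t\in\mathbb{R}$ and take $J$ compact containing $0$ and $t$. For $n$ large, $\tilde{x}_{n}(s)\to \tilde{x}(s)$ uniformly on $J$; by the Carathéodory property, $f(s,\tilde{x}_{n}(s))\to f(s,\tilde{x}(s))$ for a.e.\ $s\in J$, while $|f(s,\tilde{x}_{n}(s))|\leq m_{J}(s)$. Dominated convergence then lets me pass to the limit in
\begin{equation*}
\tilde{x}_{n}(t) = \tilde{x}_{n}(0) + \int_{0}^{t} f(s,\tilde{x}_{n}(s))~\!ds,
\end{equation*}
obtaining $\tilde{x}(t)=\tilde{x}(0)+\int_{0}^{t}f(s,\tilde{x}(s))~\!ds$, so $\tilde{x}$ is (absolutely continuous and) a Carathéodory solution of \eqref{eq-lem-6.1} on all of $\mathbb{R}$. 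The only delicate point is the passage under the integral sign: this is where the $L^{1}$-Carathéodory hypothesis is essential, since pointwise continuity of $f$ in $x$ is available only for a.e.\ fixed $s$, and the uniform local $L^{1}$ bound $m_{J}$ is what legitimates dominated convergence.
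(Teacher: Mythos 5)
Your proof is correct. The paper does not actually prove this lemma --- it is quoted verbatim from Krasnosel'ski\u{\i} \cite[Lemma~8.1]{Kr-68} and Mawhin \cite[Lemma~4]{Ma-96} --- but your argument (integral formulation, the local $L^{1}$ bound $m_{J}$ giving equicontinuity via absolute continuity of the integral, Ascoli--Arzel\`{a} plus diagonal extraction over $\mathopen{[}-k,k\mathclose{]}$, and dominated convergence to pass to the limit in the integral equation) is precisely the standard proof of that classical result, with the one delicate point (a.e.\ pointwise convergence of $f(s,\tilde{x}_{n}(s))$ dominated by $m_{J}$) correctly identified and handled.
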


In order to simplify the exposition, as in \cite{BaBoVe-15, FeZa-pp2015} we suppose
that the coefficient $a(t)$ has a positive hump followed by a negative one in a period interval (i.e.~$m=1$ in hypothesis $(a_{*})$).
In this framework, the next result follows.

\begin{theorem}\label{th-6.3}
Let $g \colon {\mathbb{R}}^{+} \to {\mathbb{R}}^{+}$ be a continuous function satisfying $(g_{*})$,
$(g_{0})$ and $(g_{\infty})$.
Let $a \colon \mathbb{R} \to \mathbb{R}$ be a $T$-periodic locally integrable function
such that there exist $\alpha < \beta$
so that $a(t) \succ 0$ on $\mathopen{[}\alpha,\beta\mathclose{]}$
and $a(t) \prec 0$ on $\mathopen{[}\beta,\alpha + T\mathclose{]}$.
Then, fixed an arbitrary constant $\rho > 0$ there exists $\lambda^{*} = \lambda^{*}(\rho) > 0$ such that
for each $\lambda > \lambda^{*}$ there exist two constants $r,R$ with $0 < r < \rho < R$ and $\mu^{*}(\lambda) =
\mu^{*}(\lambda,r,R)>0$ such that for any $\mu > \mu^{*}(\lambda)$ the following holds:
given any two-sided sequence
$\mathcal{S} = (\mathcal{S}_{j})_{j\in \mathbb{Z}}\in \{0,1,2\}^{\mathbb{Z}}$ which is not identically zero,
there exists at least one positive solution $u(t)$ of \eqref{eq-main} such that
\begin{itemize}
\item $\max_{t \in \mathopen{[}\alpha + jT,\beta+jT\mathclose{]}} u(t) < r$, if $\mathcal{S}_{j} = 0$;
\item $r <\max_{t \in \mathopen{[}\alpha + jT,\beta+jT\mathclose{]}} u(t) < \rho$, if $\mathcal{S}_{j} = 1$;
\item $\rho < \max_{t \in \mathopen{[}\alpha + jT,\beta+jT\mathclose{]}} u(t) < R$, if $\mathcal{S}_{j} = 2$.
\end{itemize}
\end{theorem}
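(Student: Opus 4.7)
The plan is to obtain $u(t)$ as a uniform-on-compact-sets limit of the positive $(2n+1)T$-periodic solutions produced by Theorem~\ref{main-theorem-sub}. First I would fix a non-null bi-infinite $\mathcal{S} \in \{0,1,2\}^{\mathbb{Z}}$ and choose $j_{0} \in \mathbb{Z}$ with $\mathcal{S}_{j_{0}} \neq 0$. For every integer $n \geq |j_{0}|$, I form the finite non-null string $\mathcal{T}^{(n)} \in \{0,1,2\}^{2n+1}$ by $\mathcal{T}^{(n)}_{\ell+n} := \mathcal{S}_{\ell}$ for $\ell = -n,\ldots,n$, and apply Theorem~\ref{main-theorem-sub} with $m = 1$ and $k = 2n+1$. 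This yields a positive $(2n+1)T$-periodic solution $u_{n}(t)$ of \eqref{eq-main} whose localisation on the intervals $I^{+}_{1,\ell}$ matches $\mathcal{T}^{(n)}$. Setting $v_{n}(t) := u_{n}(t + nT)$, which is still a positive $(2n+1)T$-periodic solution, the prescribed ``very small / small / large'' pattern on $\mathopen{[}\alpha + jT, \beta + jT\mathclose{]}$ now correctly matches $\mathcal{S}_{j}$ for every $j \in \{-n,\ldots,n\}$.

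Next I would apply Lemma~\ref{lem-6.1} to the first-order system equivalent to \eqref{eq-main}, with $t_{n} := (2n+1)T/2$, in order to extract a subsequential limit. This requires uniform $C^{1}$ bounds on the whole interval of definition: the bound $0 \leq v_{n} \leq R$ is built into the construction, while a bound on $\|v_{n}'\|_{\infty}$ independent of $n$ follows from the pointwise estimate $|v_{n}''(t)| \leq \bigl{(}\lambda a^{+}(t) + \mu a^{-}(t)\bigr{)}\, g^{*}(R)$ combined with the $T$-periodicity of $a(t)$ and the $(2n+1)T$-periodicity of $v_{n}$. Concretely, choosing a point $t_{n}^{*}$ where $|v_{n}'|$ attains its global maximum $M_{n}$ and comparing $v_{n}(t_{n}^{*})$ with $v_{n}(t_{n}^{*} \pm T)$ via a second-order Taylor expansion produces $M_{n} \leq R/T + C_{0}$, where $C_{0} := g^{*}(R)\bigl{(}\lambda \|a^{+}\|_{L^{1}(0,T)} + \mu \|a^{-}\|_{L^{1}(0,T)}\bigr{)}$. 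Thus the pairs $(v_{n}, v_{n}')$ all lie in a fixed closed bounded set $B \subset \mathbb{R}^{2}$ on their whole domain of definition, and Lemma~\ref{lem-6.1} delivers a subsequence (still written $(v_{n})$) converging uniformly on compact subsets of $\mathbb{R}$ to a globally defined Carath\'{e}odory solution $u \in C^{1}(\mathbb{R})$ of \eqref{eq-main} with $0 \leq u \leq R$.

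To conclude, I pass to the limit the inequalities characterising the range of $\max_{\mathopen{[}\alpha + jT, \beta + jT\mathclose{]}} v_{n}$ prescribed by $\mathcal{S}_{j}$. These survive only as \emph{non-strict} inequalities in the limit, and the a posteriori bounds of Section~\ref{section-5} are then used to upgrade each of them to a strict one: Proposition~\ref{prop-5.3} excludes $\max u = r$ on any interval of positivity, Proposition~\ref{prop-5.2} excludes $\max u = \rho$, and Proposition~\ref{prop-5.1} excludes $\sup u = R$. This recovers verbatim the three alternatives in the statement. Finally, because $\mathcal{S}_{j_{0}} \neq 0$ forces $\max u \geq r > 0$ on $\mathopen{[}\alpha + j_{0}T, \beta + j_{0}T\mathclose{]}$, $u$ is not identically zero, and the maximum-principle argument recalled at the end of Section~\ref{section-2} promotes non-negativity to strict positivity on $\mathbb{R}$.

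The chief obstacle in this scheme is the uniform $L^{\infty}$ bound on $v_{n}'$ over the entire interval of definition of $v_{n}$ (rather than just on fixed compacts), which is exactly what allows a direct application of Lemma~\ref{lem-6.1}; all remaining ingredients---the construction of the approximating strings, the resulting $C^{0}_{\mathrm{loc}}$ equi-continuity of $(v_{n})$, and the upgrade from non-strict to strict inequalities---are either standard or have been prepared precisely for this purpose by Propositions~\ref{prop-5.1}, \ref{prop-5.2} and \ref{prop-5.3}.
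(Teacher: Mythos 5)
Your proposal is correct and follows essentially the same route as the paper: truncate the bi-infinite string to non-null periodic strings, invoke Theorem~\ref{main-theorem-sub} with $m=1$ to get positive $(2n+1)T$-periodic approximants with the prescribed code, pass to a limit via Lemma~\ref{lem-6.1}, and upgrade the resulting non-strict inequalities to strict ones with Propositions~\ref{prop-5.1}, \ref{prop-5.2} and \ref{prop-5.3} together with the maximum principle. The only (immaterial) deviation is your uniform bound on $v_n'$, obtained by a Taylor/periodicity comparison over a window of length $T$ rather than the paper's interval-by-interval argument (a point in each $I^+_{i,\ell}$ where $|u_n'|\leq R/|I^+_i|$, then concavity/convexity); both yield a $k$-independent bound, so the application of Lemma~\ref{lem-6.1} goes through.
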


\begin{proof}
Without loss of generality, we suppose that $\alpha = 0$
and set $\tau:=\beta - \alpha$, so that $a(t) \succ 0$ on $\mathopen{[}0,\tau\mathclose{]}$ and
$a(t) \prec 0$ on $\mathopen{[}\tau,T\mathclose{]}$. We also introduce the intervals
\begin{equation}\label{eq-intJ}
J^{+}_{j}:= \mathopen{[}jT,\tau + jT\mathclose{]},
\quad J^{-}_{j}:= \mathopen{[}\tau + jT,(j+1)T\mathclose{]}, \quad j\in \mathbb{Z}.
\end{equation}
Let $\rho$, $\lambda > \lambda^{*}$, $r$, $R$ and $\mu^{*}(\lambda)$ be fixed as in Section~\ref{section-4.2}
and Section~\ref{section-4.5} for $m=1$. Once more, we emphasize that all our constants
can be chosen independently on $k$. Thus, having fixed all these constants and taken $\mu > \mu^{*}(\lambda)$,
we can produce $kT$-periodic solutions following any $k$-periodic two-sided sequence of three symbols,
as in Theorem~\ref{main-theorem-sub}.

Consider now an arbitrary sequence $\mathcal{S} = (\mathcal{S}_{j})_{j\in\mathbb{Z}}\in \{0,1,2\}^{\mathbb{Z}}$
which is not identically zero.
We fix a positive integer $n_{0}$ such that there is at least an index
$j\in\{-n_{0},\ldots,n_{0}\}$ such that $\mathcal{S}_{j}\neq 0$.
Then, for each $n \geq n_{0}$ we consider the $(2n+1)$-periodic sequence ${\mathcal{S}}^{n} =(\mathcal{S}'_{j})_{j}\in \{0,1,2\}^{\mathbb{Z}}$
which is obtained by truncating $\mathcal{S}$ between $-n$ and $n$,
and then repeating that string by periodicity.
We apply Theorem~\ref{main-theorem-sub}, with $m=1$,
on the periodicity interval $\mathopen{[}-nT,(n+1)T\mathclose{]}$ and find
a \textit{positive} periodic solution $u_{n}(t)$ such that $u_{n}(t + (2n+1)T) = u_{n}(t)$
for all $t\in\mathbb{R}$ and $\|u_{n}\|_{\infty} < R$
(by the concavity of the solutions in the intervals $J^{-}_{j}$ where
$a(t) \prec 0$). Moreover, we also know that
\begin{itemize}
\item $\max_{t\in J^{+}_{j}} u_{n}(t) < r$, if $\mathcal{S}'_{j} = 0$;
\item $r <\max_{t\in J^{+}_{j}} u_{n}(t) < \rho$, if $\mathcal{S}'_{j} = 1$;
\item $\rho < \max_{t\in J^{+}_{j}} u_{n}(t) < R$, if $\mathcal{S}'_{j} = 2$.
\end{itemize}

In each interval $J^{+}_{j}$ (of length $\tau$) the positive solution $u_{n}(t)$ is bounded by $R$ and therefore there exists at least a point
$t_{n,j}\in J^{+}_{j}$ such that $|u'_{n}(t_{n,j})|\leq R/\tau$. Hence, for each $t\in J^{+}_{j}$ and every $n \geq n_{0}$, it holds that
\begin{equation}\label{eq-6.5}
\begin{aligned}
|u'_{n}(t)| &= \biggl{|}u'_{n}(t_{n,j}) + \int_{t_{n,j}}^{t} u''_{n}(\xi)~\!d\xi \biggr{|}
          \leq \dfrac{R}{\tau} + \lambda \int_{J^{+}_{j}} a^{+}(\xi)g(u_{n}(\xi))~\!d\xi
\\ &\leq \dfrac{R}{\tau} + \lambda \|a\|_{+,1} g^{*}(R) =:K,
\end{aligned}
\end{equation}
where the constants $\|a\|_{+,1}$ and $g^{*}(R)$ are those defined at the beginning of Section~\ref{section-4}.
Notice that $K$ is independent on $j$ and this provides a uniform estimate for all the intervals where the weight is positive.
On the other hand, using the convexity of $u_{n}(t)$ in the intervals $J^{-}_{j}$, we know that
\begin{equation*}
|u'_{n}(t)|\leq \max_{\xi\in \partial J^{-}_{j}} |u'_{n}(\xi)| \leq \max_{\xi\in J^{+}_{j}\cup J^{+}_{j+1}} |u'_{n}(\xi)| \leq K,
\quad \forall \, t\in J^{-}_{j}, \; \forall \, n\geq n_{0},
\end{equation*}
and thus we are able to find the global uniform estimate
\begin{equation*}
|u'_{n}(t)|\leq K, \quad \forall \, t \in \mathbb{R}, \; \forall \, n\geq n_{0}.
\end{equation*}

Now we write equation \eqref{eq-main} as the planar system
\begin{equation*}
\begin{cases}
\, u' = y \\
\, y' = - \bigl{(}\lambda a^{+}(t)-\mu a^{-}(t)\bigr{)}g(u).
\end{cases}
\end{equation*}
From the above estimates, one can see that (up to a reparametrization of indices, counting from $n_{0}$)
assumptions $(i)$, $(ii)$ and $(iii)$ of Lemma~\ref{lem-6.1} are satisfied,
taking $t_{n}:=nT$, $f(t,x)=(y,-(\lambda a^{+}(t)-\mu a^{-}(t))g(u))$, with $x=(u,y)$, and
\begin{equation*}
B := \bigl{\{} x=(x_{1},x_{2})\in\mathbb{R}^{2} \colon 0 \leq x_{1} \leq R, \; |x_{2}| \leq K \bigr{\}},
\end{equation*}
as closed and bounded set in $\mathbb{R}^{2}$.
By Lemma~\ref{lem-6.1}, there is a solution $\tilde{u}(t)$ of equation \eqref{eq-2.3} which is defined on $\mathbb{R}$ and such that
$0 \leq \tilde{u}(t) \leq R$ for all $t\in \mathbb{R}$.
Moreover, such a solution $\tilde{u}(t)$ is the limit of a subsequence $(\tilde{u}_{n})_{n}$
of the sequence of the periodic solutions $u_{n}(t)$.

We claim that
\begin{itemize}
\item $\max_{t\in J^{+}_{j}} \tilde{u}(t) < r$, if $\mathcal{S}_{j} = 0$;
\item $r <\max_{t\in J^{+}_{j}} \tilde{u}(t) < \rho$, if $\mathcal{S}_{j} = 1$;
\item $\rho < \max_{t\in J^{+}_{j}} \tilde{u}(t) < R$, if $\mathcal{S}_{j} = 2$.
\end{itemize}
To prove our claim, let us fix $j\in \mathbb{Z}$ and consider the interval $J^{+}_{j}$ introduced in \eqref{eq-intJ}.
For each $n \geq |j|$ (and $n\geq n_{0}$) the periodic solution $u_{n}(t)$ is defined on $\mathbb{R}$ and
such that $\max_{J^{+}_{j}} u_{n} < r$ if $\mathcal{S}_{j} = 0$, $r <\max_{J^{+}_{j}} u_{n} < \rho$
if $\mathcal{S}_{j} = 1$, $\rho < \max_{J^{+}_{j}} u_{n} < R$ if $\mathcal{S}_{j} = 2$.
Passing to the limit on the subsequence $(\tilde{u}_{n})_{n}$, we obtain that
\begin{itemize}
\item $\max_{t\in J^{+}_{j}} \tilde{u}(t) \leq r$, if $\mathcal{S}_{j} = 0$;
\item $r \leq \max_{t\in J^{+}_{j}} \tilde{u}(t) \leq \rho$, if $\mathcal{S}_{j} = 1$;
\item $\rho \leq \max_{t\in J^{+}_{j}} \tilde{u}(t) \leq R$, if $\mathcal{S}_{j} = 2$.
\end{itemize}
By Proposition~\ref{prop-5.1} we get that $\tilde{u}(t) < R$, for all $t\in \mathbb{R}$. Moreover,
since there exists at least one index $j\in \mathbb{Z}$ such that $\mathcal{S}_{j} \neq 0$, we know that $\tilde{u}(t)$ is not identically zero.
Hence, a maximum principle argument shows that $\tilde{u}(t)$ never vanishes. In conclusion, we have proved that
\begin{equation*}
0 < \tilde{u}(t) < R, \quad \forall \, t\in \mathbb{R}.
\end{equation*}
Next, using this fact, by Proposition~\ref{prop-5.2} we observe that
\begin{equation*}
\max_{t\in J^{+}_{j}} \tilde{u}(t) \neq \rho, \quad \forall \, j\in \mathbb{Z},
\end{equation*}
and by Proposition~\ref{prop-5.3} we have
\begin{equation*}
\max_{t\in J^{+}_{j}} \tilde{u}(t) \neq r, \quad \forall \, j\in \mathbb{Z},
\end{equation*}
since, at the beginning, $\mu$ has been chosen large enough (note also that we apply those propositions in the case
$m =1$ and so the sets $I^{+}_{i,\ell}$ reduce to the intervals $\mathopen{[}0,\tau\mathclose{]} + \ell T$).
Our claim is thus verified and this completes the proof of the theorem.
\end{proof}

Theorem~\ref{th-6.3} can be compared with the main result in \cite{BoZa-12b},
providing (under a few technical conditions on $a(t)$ and $g(s)$) globally defined positive solutions to \eqref{eq-main}
according to a symbolic dynamics on \textit{two} symbols. More precisely, using a dynamical systems technique it was shown in \cite[Theorem~2.3]{BoZa-12b}
the existence of two disjoint compact sets $\mathcal{K}_{1},\mathcal{K}_{2} \subseteq \mathbb{R}^{2}$ such that for any two-sided sequence
$\mathcal{S} = (\mathcal{S}_{j})_{j\in \mathbb{Z}}\in \{1,2\}^{\mathbb{Z}}$ there is a positive solution $u(t)$ to \eqref{eq-main} satisfying
$(u(\alpha + jT),u'(\alpha+jT)) \in \mathcal{K}_{\mathcal{S}_{j}}$ for all $j \in \mathbb{Z}$.
Even if this conclusion is not directly comparable with the one of Theorem~\ref{th-6.3} (in which solutions are distinguished
in dependence of the value $\max_{t \in \mathopen{[}\alpha + jT,\beta+jT\mathclose{]}} u(t)$), a careful reading of the arguments in \cite{BoZa-12b}
should convince us that the solutions obtained therein correspond to solutions which are ``small'' or ``large'' according to the code of the present paper.
From this point of view, Theorem~\ref{th-6.3} can thus be seen as an improvement of \cite[Theorem~2.3]{BoZa-12b}, providing in addition solutions
which are ``very small'' on some intervals of positivity of the weight function and thus leading to a symbolic dynamics on \textit{three} symbols.
It has to be noticed, however, that in \cite{BoZa-12b} some further information for the Poincar\'{e} map associated with \eqref{eq-main} were obtained;
we will comment again on this point in Section~\ref{section-6.3}.

\medskip

Theorem~\ref{th-6.3} can be extended to the case of a weight function with more than one positive hump in the interval $\mathopen{[}0,T\mathclose{]}$,
as described in hypothesis $(a_{*})$. The corresponding more general result is given in the next theorem.

\begin{theorem}\label{th-6.4}
Let $g \colon {\mathbb{R}}^{+} \to {\mathbb{R}}^{+}$ be a continuous function satisfying $(g_{*})$,
$(g_{0})$ and $(g_{\infty})$.
Let $a \colon \mathbb{R} \to \mathbb{R}$ be a locally integrable periodic function of minimal period
$T > 0$ satisfying $(a_{*})$.
Then, fixed an arbitrary constant $\rho > 0$ there exists $\lambda^{*} = \lambda^{*}(\rho) > 0$ such that
for each $\lambda > \lambda^{*}$ there exist two constants $r,R$ with $0 < r < \rho < R$ and $\mu^{*}(\lambda) =
\mu^{*}(\lambda,r,R)>0$ such that for any $\mu > \mu^{*}(\lambda)$ the following holds:
given any two-sided sequence
$\mathcal{S} = (\mathcal{S}_{j})_{j\in\mathbb{Z}}$ in the alphabet $\mathscr{A}:=\{0,1,2\}$ which is not identically zero,
there exists at least one positive solution $u(t)$ of \eqref{eq-main} such that
\begin{itemize}
\item $\max_{t \in I^{+}_{i,\ell}} u(t) < r$, if $\mathcal{S}_{j} = 0$ for $j= i + \ell m$;
\item $r < \max_{t \in I^{+}_{i,\ell}} u(t) < \rho$, if $\mathcal{S}_{j} = 1$ for $j= i + \ell m$;
\item $\rho < \max_{t \in I^{+}_{i,\ell}} u(t) < R$, if $\mathcal{S}_{j} = 2$ for $j= i + \ell m$.
\end{itemize}
\end{theorem}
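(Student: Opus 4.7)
The plan is to mimic the argument given for Theorem~\ref{th-6.3}, taking care of the extra index arising from having $m$ positive humps per period. First I would fix, once and for all, an arbitrary $\rho>0$, a constant $\lambda>\lambda^{*}(\rho)$, constants $0<r<\rho<R$ satisfying \eqref{cond-rR}, and $\mu>\mu^{*}(\lambda,r,R)$; as already observed in Section~\ref{section-4.5} and in the proof of Theorem~\ref{main-theorem-sub}, these constants depend only on $g$ and on $a(t)$ restricted to a single $T$-periodicity interval, so they are independent of the length of any future periodicity window. Given a non-null sequence $\mathcal{S}=(\mathcal{S}_{j})_{j\in\mathbb{Z}}\in\mathscr{A}^{\mathbb{Z}}$, fix $n_{0}\geq 1$ such that there is at least one index $j_{0}=i_{0}+\ell_{0}m$ with $|\ell_{0}|\leq n_{0}$ and $\mathcal{S}_{j_{0}}\neq 0$.

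For every $n\geq n_{0}$ I would form the truncation $\mathcal{S}^{n}$ of length $(2n+1)m$ obtained by restricting $\mathcal{S}$ to the indices $j=i+\ell m$ with $\ell\in\{-n,\ldots,n\}$, and then extend it by $(2n+1)$-periodicity in $\ell$. Applying Theorem~\ref{main-theorem-sub} with $k=2n+1$ on the periodicity interval $\mathopen{[}-nT,(n+1)T\mathclose{]}$ yields a positive $(2n+1)T$-periodic solution $u_{n}(t)$ of \eqref{eq-main} whose behavior on each $I^{+}_{i,\ell}$ with $|\ell|\leq n$ is prescribed by $\mathcal{S}_{i+\ell m}$ according to the trichotomy (very small / small / large). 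By concavity on each $I^{+}_{i,\ell}$ and convexity on each $I^{-}_{i,\ell}$ one has $0\leq u_{n}(t)\leq R$ on $\mathbb{R}$. To obtain a uniform bound on $u_{n}'$, in every interval $I^{+}_{i,\ell}$ the mean value theorem provides a point $t_{n,i,\ell}$ with $|u_{n}'(t_{n,i,\ell})|\leq R/|I^{+}_{i}|$; integrating the equation exactly as in \eqref{eq-6.5} and using the $T$-periodicity of $a(t)$ yields
\begin{equation*}
|u_{n}'(t)|\leq \max_{i=1,\ldots,m}\biggl{(}\dfrac{R}{|I^{+}_{i}|}+\lambda\|a\|_{+,i}g^{*}(R)\biggr{)}=:K,\qquad t\in I^{+}_{i,\ell},
\end{equation*}
and the convexity of $u_{n}(t)$ on each $I^{-}_{i,\ell}$ propagates this bound to the entire real line. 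Crucially, $K$ depends neither on $\ell$ nor on $n$.

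Rewriting \eqref{eq-main} as a first-order planar system and invoking Lemma~\ref{lem-6.1} (with $t_{n}=nT$ and the compact box $B=\mathopen{[}0,R\mathclose{]}\times\mathopen{[}-K,K\mathclose{]}$) produces a subsequence $(\tilde{u}_{n})_{n}$ converging uniformly on compacta to a solution $\tilde{u}\colon\mathbb{R}\to\mathopen{[}0,R\mathclose{]}$ of \eqref{eq-main}. Passing to the limit in the inequalities satisfied by $u_{n}$ on each $I^{+}_{i,\ell}$ gives the weak versions of the conclusion: $\max_{I^{+}_{i,\ell}}\tilde{u}\in\mathopen{[}0,r\mathclose{]}$, $\mathopen{[}r,\rho\mathclose{]}$ or $\mathopen{[}\rho,R\mathclose{]}$ according to $\mathcal{S}_{i+\ell m}\in\{0,1,2\}$. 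Finally I would upgrade these to strict inequalities: since $\mathcal{S}\not\equiv 0$, $\tilde{u}$ is not identically zero, so the maximum principle yields $\tilde{u}(t)>0$ for all $t\in\mathbb{R}$; Proposition~\ref{prop-5.1} gives $\tilde{u}(t)<R$; Proposition~\ref{prop-5.2} rules out $\max_{I^{+}_{i,\ell}}\tilde{u}=\rho$; and Proposition~\ref{prop-5.3} (which applies because $\mu$ was chosen large) rules out $\max_{I^{+}_{i,\ell}}\tilde{u}=r$. Combining these with the weak inequalities delivers exactly the three cases in the statement.

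The only non-routine point, and hence the main thing to check carefully, is that all the constants ($\lambda^{*}$, $r$, $R$, $\mu^{*}(\lambda)$, and the derivative bound $K$) remain the same along the whole sequence of approximating problems as $n\to\infty$. This is guaranteed by the local-in-a-period nature of the estimates in Sections~\ref{section-4.2}--\ref{section-4.5} together with the $T$-periodicity of $a(t)$; once this uniformity is acknowledged, the rest of the argument is a direct adaptation of Theorem~\ref{th-6.3} with the double index $(i,\ell)$ in place of the single index $j$.
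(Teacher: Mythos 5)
Your proposal is correct and follows essentially the same route as the paper: the authors likewise reduce Theorem~\ref{th-6.4} to the argument of Theorem~\ref{th-6.3}, noting only that the uniform derivative bound must be obtained separately on each $I^{+}_{i,\ell}$ and then taking $K:=\max_{i=1,\ldots,m}K_{i}$, exactly as you do. The remaining steps (truncation, Theorem~\ref{main-theorem-sub}, Lemma~\ref{lem-6.1}, and the upgrade to strict inequalities via Propositions~\ref{prop-5.1}--\ref{prop-5.3} and the maximum principle) match the paper's intended proof.
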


\begin{proof}
The proof requires only minor modifications in the argument applied for Theorem~\ref{th-6.3} and thus
the details are omitted. We only observe that the uniform bound $K$ for $|u'_{n}(t)|$ is now achieved by working
separately on each interval $I^{+}_{i,\ell}$. When arguing like in \eqref{eq-6.5} one obtains
\begin{equation*}
|u'_{n}(t)| \leq \dfrac{R}{|I^{+}_{i}|} + \lambda \|a\|_{+,i} g^{*}(R) =:K_{i},\quad \forall \, t\in I^{+}_{i,\ell}, \; \forall \, n\geq n_{0}.
\end{equation*}
Now all the rest works fine for
\begin{equation*}
K:= \max_{i=1,\ldots,m} K_{i}.
\end{equation*}
The same final arguments allow us to obtain the theorem.
\end{proof}

\begin{remark}\label{rem-6.2}
As a further information, up to selecting the intervals $I^{\pm}_{i}$
so that $a(t)\not\equiv 0$ on each right neighborhood of $\tau_{i}$ and on each left neighborhood of
$\sigma_{i+1}$, among the properties of the positive
solutions listed in Theorem~\ref{th-6.3} and Theorem~\ref{th-6.4}, we can add the following one
(if $\mu$ is sufficiently large):
\begin{itemize}
\item \textit{$0<u(t)<r$ on $I^{-}_{i,\ell}$, for all $i\in\{1,\ldots,m\}$ and for all $\ell \in \mathbb{Z}$.}
\end{itemize}
This assertion is justified by Proposition~\ref{prop-5.5} taking
$\mu > \mu^{\star\star}_{r}$ defined in \eqref{eq-mu**} for $\varepsilon = r$, and
observing also that the constants $\mu_{i}^{\rm left}(r)$ and
$\mu_{i}^{\rm right}(r)$ depend on $a(t)$ on a $T$-periodicity interval.
$\hfill\lhd$
\end{remark}

\subsection{A dynamical systems perspective}\label{section-6.3}

In the last two previous sections we have proved the presence of chaotic-like dynamics
which is highlighted by the coexistence of infinitely many subharmonic solutions together with
non-periodic bounded solutions which can be coded by sequences of three symbols. Our next goal is to
show that our results allow us to enter a classical framework for complex
dynamical systems, namely the semiconjugation with the Bernoulli shift.

We start with some formal definitions.
Let $\mathscr{B}$ be a finite set of $n\geq 2$ elements (called \textit{symbols}), conventionally denoted as
$\mathscr{B} := \{b_{1},\ldots,b_{n}\}$, which is endowed with the discrete topology.
Let $\Sigma_{n}:= \mathscr{B}^{\mathbb{Z}}$ be the set of all two-sided
sequences $\mathcal{T} = (\mathcal{T}_{\ell})_{\ell\in \mathbb{Z}}$ where, for each $\ell\in \mathbb{Z}$, the element $\mathcal{T}_{\ell}$ is a symbol
of the alphabet $\mathscr{B}$. The set $\Sigma_{n}= \prod_{\ell\in \mathbb{Z}} \mathscr{B}$, endowed with the product topology,
turns out to be a compact metrizable space. As a suitable distance on $\Sigma_{n}$ we take
\begin{equation*}
d(\mathcal{T}',\mathcal{T}''):= \sum_{\ell\in \mathbb{Z}} \dfrac{\delta(\mathcal{T}'_{\ell},\mathcal{T}''_{\ell})}{2^{|\ell|}}, \quad \mathcal{T}',\mathcal{T}''\in\Sigma_{n},
\end{equation*}
where $\delta$ is the discrete distance on $\mathscr{B}$,
that is $\delta(s',s'') = 0$ if $s'=s''$ and $\delta(s',s'') = 1$ if $s'\neq s''$. We introduce a map $\sigma \colon \Sigma_{n} \to \Sigma_{n}$
called the \textit{shift automorphism} (cf.~\cite[p.~770]{Sm-67})
or \textit{Bernoulli shift} (cf.~\cite{Wa-82}) and defined as
\begin{equation*}
\sigma(\mathcal{T}) = \mathcal{T}', \quad \text{with } \, \mathcal{T}'_{\ell} := \mathcal{T}_{\ell+1}, \; \forall \, \ell\in \mathbb{Z}.
\end{equation*}
The map $\sigma$ is a bijective continuous map (a homeomorphism) of $\Sigma_{n}$ which possesses all
the features usually associated with the concept of chaos, such as transitivity, density of the set of periodic points
and positive topological entropy (which is $\log (n)$ for an alphabet of $n$ symbols).

Given a topological space $X$ and a continuous map $\psi \colon X \to X$, a typical way to prove that $\psi$ is ``chaotic''
consists into verifying that $\psi$ has the shift map as a \textit{factor}, namely that there exist a compact set
$Y \subseteq X$ which is invariant for $\psi$ (i.e.~$\psi(Y) = Y$) and a \textit{continuous and
surjective} map $\pi \colon Y \to \Sigma_{n}$ such that the diagram
\begin{equation*}
\xymatrix{
Y \ar[d]_{\mathlarger{\pi}} \ar[r]^{\mathlarger{\psi}} & Y \ar[d]^{\mathlarger{\pi}}\\
\Sigma_{n} \ar[r]_{\mathlarger{\sigma}} & \Sigma_{n}}
\end{equation*}
commutes, that is
\begin{equation}\label{eq-comm}
\pi \circ \psi = \sigma \circ \pi.
\end{equation}
If we are in this situation we say that the map $\psi|_{Y}$ is \textit{semiconjugate} with the shift on $n$ symbols.
Usually the best form of chaos occurs when the map $\pi \colon Y \to \Sigma_{n}$ is a homeomorphism. In this latter case the map
$\psi|_{Y}$ is said to be \textit{conjugate} with the shift $\sigma$. This, for instance, occurs for the classical
Smale horseshoe (see \cite{Mo-73, Sm-67}). In many concrete examples of differential equations, the conjugation with the
shift map is not feasible and many investigations have been addressed toward the proof of a
semiconjugation with the Bernoulli shift, possibly accompanied by some
further information, such as density of periodic points, in order to provide a
description of chaotic dynamics which is still interesting for the applications.
Quoting Block and Coppel from \cite[Introduction]{BlCo-92},
\begin{quote}
`` \dots there is no generally accepted definition of chaos.
It is our view that any definition for more general spaces should agree with ours in the case of an interval.
\dots we show that a map is chaotic if and only if some iterate has the shift map as a \textit{factor}, and
we propose this as a general definition.''
\end{quote}
Indeed, the semiconjugation of an iterate of a map $\psi$ with the
Bernoulli shift is defined as \textit{B/C-chaos} in \cite{AuKi-01}.

We plan to prove the existence of a strong form of B/C-chaos coming from Theorem~\ref{main-theorem-sub} and
Theorem~\ref{th-6.4}, namely the existence of a compact invariant set $Y$ for a continuous homeomorphism
$\psi$ such that $\psi|_{Y}$ satisfies \eqref{eq-comm} and such that to any periodic sequence
of symbols corresponds a periodic solution of \eqref{eq-main}. Such a stronger form of chaos has been
produced by several authors using dynamical systems techniques
(see, for instance, \cite{BoZa-12b, CaKwMi-00, MiMr-95a, MiMr-95b, SrWo-97, Zg-96, ZgGi-04}). The obtention of
this kind of results with the coincidence degree approach appears new in the literature.

\medskip

Let us start by defining a suitable metric space and a homeomorphism on it.
Let $X$ be the set of the continuous functions $z = (x,y) \colon \mathbb{R} \to \mathbb{R}^{2}$.
For each $z_{1} = (x_{1},y_{1}), z_{2} = (x_{2},y_{2})\in X$, we define
\begin{equation*}
\vartheta_{N}(z_{1},z_{2}) := \max\bigl{\{}|x_{1}(t)-x_{2}(t)| + |y_{1}(t)-y_{2}(t)|\colon t\in\mathopen{[}-N,N\mathclose{]}\bigr{\}},
\quad N\in\mathbb{N}\setminus\{0\},
\end{equation*}
and we set
\begin{equation*}
\text{\rm dist}\,(z_{1},z_{2}) := \sum_{N=1}^{\infty} \dfrac{1}{2^{N}}\dfrac{\vartheta_{N}(z_{1},z_{2})}{1+\vartheta_{N}(z_{1},z_{2})}.
\end{equation*}
It is a standard task to check that $(X,\text{\rm dist})$ is a complete metric space.
Moreover, given a sequence of functions $(z_{k})_{k}$ in $X$ and a function $\hat{z}\in X$, we have that
$z_{k} \to \hat{z}$ with respect to the distance of $X$ if and only if $z_{k}(t)$ converges uniformly to $\hat{z}(t)$
in each compact interval of $\mathbb{R}$ (cf.~\cite[ch.~1]{BhSz-70}, \cite[ch.~III]{Se-71} and \cite[\S~20]{Si-75}).
We also recall that a family of functions $\mathcal{M} \subseteq X$ is relatively compact
if and only if for every compact interval $J$ the set of restrictions to $J$ of the functions belonging to $\mathcal{M}$
is relatively compact in $\mathcal{C}(J,\mathbb{R}^2)$ (cf.~\cite[p.~2]{Co-73}).
Next, recalling that $T>0$ is the minimal period of the weight function $a(t)$,
we introduce the shift map $\psi \colon X \to X$ defined by
\begin{equation*}
(\psi u) (t) := u(t+T), \quad t\in\mathbb{R},
\end{equation*}
which is a homeomorphism of $X$ onto itself. The discrete dynamical system induced by $\psi$ is usually
referred to as a \textit{Bebutov dynamical system} on $X$.

\medskip

For the next results we assume the standard hypotheses on the nonlinearity $g(s)$ and on the
coefficient $a(t)$, that is, $g \colon {\mathbb{R}}^{+} \to {\mathbb{R}}^{+}$ is a continuous
function satisfying $(g_{*})$, $(g_{0})$, $(g_{\infty})$, $a \colon \mathbb{R} \to \mathbb{R}$ is
a $T$-periodic locally integrable function satisfying $(a_{*})$ with minimal period $T$.
We suppose also that all the positive constants $\rho$, $\lambda > \lambda^{*}$, $r$, $R$ and $\mu^{*}(\lambda)$
are fixed as in Section~\ref{section-4.2} and Section~\ref{section-4.5}. Let also $\mu > 0$.

We consider the first order differential system
\begin{equation}\label{eq-xy}
\begin{cases}
\, x' = y \\
\, y' = - \bigl{(}\lambda a^{+}(t)-\mu a^{-}(t)\bigr{)}g(x)
\end{cases}
\end{equation}
associated with \eqref{eq-main}. Even if all our results concern non-negative solutions of \eqref{eq-main}, in dealing with system \eqref{eq-xy}
it would be convenient to have the vector field (i.e.~the right hand side of the system) defined for
all $t\in \mathbb{R}$ and $(x,y)\in \mathbb{R}^{2}$.
For this reason, we extend $g(s)$ to the whole real line, for instance by setting $g(s) = 0$
for $s \leq 0$ (any extension we choose will have no effect in what follows).
As usual the solutions of \eqref{eq-xy} are meant in the Carath\'{e}odory sense.

Next, we denote by $Y_{0}$ the subset of $X$ made up of the globally defined solutions $(x(t),y(t))$ of \eqref{eq-xy}
such that $0 \leq x(t) \leq R$, for all $t\in \mathbb{R}$.
Observe that $(0,0) \in Y_{0}$ (as $u(t)\equiv 0$ is the trivial solution of \eqref{eq-main}).
On the other hand, if $(x,y) \in Y_{0}$ with $x\not\equiv 0$, then $x(t) > 0$ for all $t\in \mathbb{R}$.

\begin{lemma}\label{lem-6.2}
There exists a constant $K > 0$ such that for each $(x,y) \in Y_{0}$ it holds that
\begin{equation}\label{eq-6.8}
|y(t)| \leq K,\quad \forall \, t\in \mathbb{R}.
\end{equation}
Moreover, $Y_{0}$ is a compact subset of $X$ which is invariant for the map $\psi$.
\end{lemma}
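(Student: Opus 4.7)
The plan is to first establish the uniform derivative bound \eqref{eq-6.8}, following essentially the same argument already used in the proofs of Theorem~\ref{th-6.3} and Theorem~\ref{th-6.4}, and then deduce compactness via the characterization of relatively compact subsets of $X$ recalled above. Invariance under $\psi$ will be a direct consequence of the $T$-periodicity of $a(t)$.

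For the bound, fix $(x,y) \in Y_0$. On each interval $I^{+}_{i,\ell}$ (of length $|I^{+}_i|$) the mean value theorem applied to $x$, which is bounded in $\mathopen{[}0,R\mathclose{]}$, yields some $t_{i,\ell} \in I^{+}_{i,\ell}$ with $|y(t_{i,\ell})| = |x'(t_{i,\ell})| \leq R/|I^{+}_i|$. Integrating $y' = -\lambda a^{+}(t) g(x)$ on $I^{+}_{i,\ell}$ (where $a^{-} \equiv 0$) and using $0 \leq x \leq R$ gives
\begin{equation*}
|y(t)| \leq \frac{R}{|I^{+}_i|} + \lambda \|a\|_{+,i}\, g^{*}(R) =: K_i, \quad \forall\, t \in I^{+}_{i,\ell},
\end{equation*}
exactly as in \eqref{eq-6.5}. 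On each $I^{-}_{i,\ell}$ the equation reads $y' = \mu a^{-}(t) g(x) \geq 0$, so $y$ is nondecreasing there and $|y(t)|$ attains its maximum on $I^{-}_{i,\ell}$ at one of the endpoints, which lies in an adjacent positive interval already controlled. Taking $K := \max_{i=1,\ldots,m} K_i$ yields \eqref{eq-6.8} globally.

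For compactness, uniform boundedness of $Y_0$ in $X$ follows from $0 \leq x(t) \leq R$ and \eqref{eq-6.8}. Equicontinuity on every compact interval $J \subseteq \mathbb{R}$ of the $x$-component is immediate from $|x'| \leq K$; for the $y$-component, for $t_1 < t_2$ in $J$ we have
\begin{equation*}
|y(t_2) - y(t_1)| \leq g^{*}(R) \int_{t_1}^{t_2} \bigl(\lambda a^{+}(s) + \mu a^{-}(s)\bigr) ~\!ds,
\end{equation*}
which tends to zero uniformly as $t_2 - t_1 \to 0$, since the integrand is in $L^1_{\mathrm{loc}}(\mathbb{R})$. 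By the Arzelà--Ascoli characterization of relatively compact subsets of $X$, $Y_0$ is relatively compact. Closedness is a standard Carathéodory continuity fact: if $(x_n, y_n) \in Y_0$ converges in $X$ (that is, uniformly on compacts) to $(x,y)$, then passing to the limit in the Volterra integral formulation of \eqref{eq-xy} shows $(x,y)$ is itself a Carathéodory solution of \eqref{eq-xy}, and the bound $0 \leq x \leq R$ is preserved in the limit. Hence $Y_0$ is compact.

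Finally, the $T$-periodicity of $a(t)$ implies that if $(x,y)$ solves \eqref{eq-xy} on $\mathbb{R}$ then so does $\psi(x,y)(t) = (x(t+T), y(t+T))$, and the bound $0 \leq x \leq R$ is invariant under time-translation; thus $\psi(Y_0) \subseteq Y_0$. Applying the same argument to the inverse translation $(x(\cdot - T), y(\cdot - T))$ gives the reverse inclusion $Y_0 \subseteq \psi(Y_0)$, so $\psi(Y_0) = Y_0$. No step here is a serious obstacle; the only delicate point is to ensure that the derivative bound $K$ can be chosen independently of $(x,y) \in Y_0$ and of the indices $(i,\ell)$, which is handled by the uniform dependence of $K_i$ on the $T$-periodicity interval data alone, as emphasized in Section~\ref{section-4.5}.
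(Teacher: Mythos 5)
Your proof is correct and follows essentially the same route as the paper: a point of small derivative on each positivity interval via the bound $0\leq x\leq R$, integration of $y'=-\lambda a^{+}g(x)$ there, monotonicity of $y$ (equivalently, convexity of $x$) on the negativity intervals to propagate the bound, and then Ascoli--Arzel\`{a} on compact intervals plus closedness for compactness. The only cosmetic difference is that you spell out the closedness of $Y_{0}$ via the integral formulation, which the paper leaves implicit.
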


\begin{proof}
The estimates needed to prove this result have been already obtained along the proof of Theorem~\ref{th-6.3}.
We briefly repeat the argument since the context here is slightly different.
Let $(x,y) \in Y_{0}$. Since $0 \leq x(t) \leq R$ for all $t\in \mathbb{R}$, we have that,
for all $i\in\{1,\ldots,m\}$ and $\ell\in\mathbb{Z}$,
there exists at least a point $\hat{t}_{i,\ell}\in I^{+}_{i,\ell}$ such that $|y(\hat{t}_{i,\ell})|\leq R/|I^{+}_{i}|$
(recall the definition of $I^{+}_{i,\ell}$ in \eqref{eq-Ipm}). Hence, for each $t\in I^{+}_{i,\ell}$, it holds that
\begin{equation*}
\begin{aligned}
|y(t)| &= \biggl{|}y(\hat{t}_{i,\ell}) + \int_{\hat{t}_{i,\ell}}^{t} y'(\xi)~\!d\xi \biggr{|}
          \leq \dfrac{R}{|I^{+}_{i}|} + \lambda \int_{I^{+}_{i,\ell}} a^{+}(\xi)g(x(\xi))~\!d\xi
\\ &\leq \dfrac{R}{|I^{+}_{i}|} + \lambda \|a\|_{+,i}g^{*}(R) =: K_{i}.
\end{aligned}
\end{equation*}
Note that the constant $K_{i}$ does not depend on the index $\ell$. Therefore, setting
\begin{equation*}
K:= \max_{i=1,\ldots,m} K_{i},
\end{equation*}
we get
\begin{equation*}
|y(t)| \leq K, \quad \forall \, t \in I^{+}_{i,\ell}, \; \forall \, i=1,\ldots,m, \; \forall \, \ell\in\mathbb{Z}.
\end{equation*}
On the other hand, using the convexity of $x(t)$ in the intervals $I^{-}_{i,\ell}$ we know that
\begin{equation*}
|y(t)| = |x'(t) | \leq \max_{\xi \in \partial I^{-}_{i,\ell}} |x'(\xi)| \leq K,
\quad \forall \, t \in I^{+}_{i,\ell}, \; \forall \, i=1,\ldots,m, \; \forall \, \ell\in\mathbb{Z}.
\end{equation*}
This proves inequality \eqref{eq-6.8}.

From system \eqref{eq-xy}, we know that the absolutely continuous vector function $(x,y)\in Y_{0}$ satisfies
\begin{equation*}
|x'(t)| + |y'(t)| \leq K + \bigl{(} \lambda a^{+}(t) + \mu a^{-}(t) \bigr{)} g^{*}(R), \quad \text{for a.e. } t \in \mathbb{R}.
\end{equation*}
Therefore, Ascoli-Arzel\`{a} theorem implies that the set of restrictions of the functions in $Y_{0}$
to any compact interval is relatively compact in the uniform norm. Thus we conclude that the closed set $Y_{0}$ is a compact subset of $X$.

Finally, we observe that the invariance of $Y_{0}$ under the map $\psi$ follows from the $T$-periodicity of the coefficients
in system \eqref{eq-xy}, which in turn implies that $(x(t),y(t))$ is a solution of \eqref{eq-xy} if and only if
$(x(t+T),y(t+T))$ is a solution of the same system.
\end{proof}

The next result summarizes the properties obtained in Proposition~\ref{prop-5.1}, Proposition~\ref{prop-5.2} and Proposition~\ref{prop-5.3}.

\begin{lemma}\label{lem-6.3}
Suppose that $\mu > \mu^{*}(\lambda)$. Then, given any $(x,y)\in Y_{0}$, for each $i\in \{1,\ldots,m\}$
and $\ell\in \mathbb{Z}$ we have that one of the following alternatives holds:
$\max_{t \in I^{+}_{i,\ell}} x(t) < r$, $r < \max_{t \in I^{+}_{i,\ell}} x(t) < \rho$
or $\rho < \max_{t \in I^{+}_{i,\ell}} x(t) < R$.
\end{lemma}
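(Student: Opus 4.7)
My plan is to observe that Lemma~\ref{lem-6.3} is essentially a direct assembly of the three propositions already established in Section~\ref{section-5}, once one recognizes that any pair $(x,y)\in Y_{0}$ corresponds to a globally defined non-negative solution of~\eqref{eq-main}. Indeed, from the first equation of the system~\eqref{eq-xy} we have $y = x'$, so the second equation reads $x'' + (\lambda a^{+}(t) - \mu a^{-}(t))g(x) = 0$; since by definition of $Y_{0}$ the function $x(t)$ satisfies $0 \leq x(t) \leq R$ for all $t \in \mathbb{R}$, the function $w := x$ is precisely the kind of solution to which Propositions~\ref{prop-5.1}, \ref{prop-5.2} and~\ref{prop-5.3} apply (note in particular $\sup_{t\in\mathbb{R}} x(t) \leq R$, and the extension of $g$ to the negative reals plays no role because $x\geq 0$).

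The plan for the main body of the proof is then to fix an arbitrary pair $(i,\ell)$ with $i\in\{1,\ldots,m\}$ and $\ell\in\mathbb{Z}$, set $w(t):=x(t)$, and chain together the three exclusions:
\begin{itemize}
\item By Proposition~\ref{prop-5.1}, $w(t) < R$ for every $t\in\mathbb{R}$; in particular $\max_{t\in I^{+}_{i,\ell}} w(t) < R$.
\item By Proposition~\ref{prop-5.2}, $\max_{t\in I^{+}_{i,\ell}} w(t) \neq \rho$.
\item By Proposition~\ref{prop-5.3}, and because the constant $\mu > \mu^{*}(\lambda)$ was fixed large enough in Section~\ref{section-4.2} for this proposition to apply, $\max_{t\in I^{+}_{i,\ell}} w(t) \neq r$.
\end{itemize}

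Combining these three facts with $0 < r < \rho < R$, the value $M:=\max_{t\in I^{+}_{i,\ell}} x(t)$ lies in the set $\mathopen{[}0,R\mathclose{[}\setminus\{r,\rho\} = \mathopen{[}0,r\mathclose{[} \cup \mathopen{]}r,\rho\mathclose{[} \cup \mathopen{]}\rho,R\mathclose{[}$, and this is precisely the trichotomy asserted in the statement. There is no real obstacle here: the conceptual work has already been done in establishing the three propositions (which in turn relied on the local Lemmas~\ref{lem-rR}, \ref{lem-rR-back}, and on the constants $\lambda^{*}$ and $\mu^{*}(\lambda)$ having been chosen according to Section~\ref{section-4.2} and Section~\ref{section-4.5}); the only thing worth emphasizing is that those propositions were stated for non-negative solutions of the scalar second order equation~\eqref{eq-main}, and the translation to members of $Y_{0}\subseteq X$ is immediate via $w=x$, $w'=y$.
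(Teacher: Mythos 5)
Your proposal is correct and matches the paper's own (very brief) justification: the paper simply states that Lemma~\ref{lem-6.3} ``summarizes the properties obtained in Proposition~\ref{prop-5.1}, Proposition~\ref{prop-5.2} and Proposition~\ref{prop-5.3},'' which is exactly the chain of exclusions you assemble. Your additional remarks --- that $x$ is a globally defined non-negative solution of \eqref{eq-main} with $0\leq x\leq R$ by the definition of $Y_{0}$, and that the extension of $g$ to negative arguments is irrelevant --- are accurate and fill in the (trivial) details the paper leaves implicit.
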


Let
\begin{equation*}
\mathscr{B}:=\{0,1,2\}^{m}
\end{equation*}
be the alphabet of the $3^{m}$ elements
of the form $(\omega_{1},\ldots,\omega_{m})$, where $\omega_{i}\in \{0,1,2\}$ for each $i=1,\ldots,m$.

We define a semiconjugation $\pi$ between $Y_{0}$ and the set $\Sigma_{3^{m}}$ associated with $\mathscr{B}$ as follows.
Suppose that $\mu > \mu^{*}(\lambda)$. To each element $z=(x,y)\in Y_{0}$ the map $\pi$ associates a sequence
$\pi(z) = \mathcal{T} = (\mathcal{T}_{\ell})_{\ell\in\mathbb{Z}} \in \Sigma_{3^{m}}$ defined as
\begin{equation*}
\mathcal{T}_{\ell} = (\mathcal{T}_{\ell}^{1},\ldots,\mathcal{T}_{\ell}^{m})\in \mathscr{B}, \quad \ell\in\mathbb{Z},
\end{equation*}
where, for $i=1,\ldots,m$,
\begin{itemize}
\item  $\mathcal{T}_{\ell}^{i} = 0$, if  $\max_{t \in I^{+}_{i,\ell}} x(t) < r$;
\item  $\mathcal{T}_{\ell}^{i} = 1$, if $r < \max_{t \in I^{+}_{i,\ell}} x(t) < \rho$;
\item  $\mathcal{T}_{\ell}^{i} = 2$, if $\rho < \max_{t \in I^{+}_{i,\ell}} x(t) < R$.
\end{itemize}
Lemma~\ref{lem-6.3} guarantees that the above map is well-defined.

Now we are in position to state the main result of this section.

\begin{theorem}\label{th-6.5}
Suppose that $\mu > \mu^{*}(\lambda)$. Then the map $\pi \colon Y_{0} \to \Sigma_{3^{m}}$
is continuous, surjective and such that the diagram
\begin{equation*}
\xymatrix{
Y_{0} \ar[d]_{\mathlarger{\pi}} \ar[r]^{\mathlarger{\psi}} & Y_{0} \ar[d]^{\mathlarger{\pi}}\\
\Sigma_{3^{m}} \ar[r]_{\mathlarger{\sigma}} & \Sigma_{3^{m}}}
\end{equation*}
commutes. Furthermore, for every integer $k\geq1$, the counterimage of any $k$-periodic sequence in
$\Sigma_{3^{m}}$ contains at least a point $(u,y)\in Y_{0}$
such that $u(t)$ is a $kT$-periodic solution of \eqref{eq-main}.
\end{theorem}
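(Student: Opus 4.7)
The plan is to verify the three claims (semiconjugacy, continuity, surjectivity) in turn, then treat the periodic case separately. Throughout I would rely on the strict inequalities enforced by Lemma~\ref{lem-6.3} (i.e.\ that the symbol-defining thresholds $r,\rho,R$ are never attained on $Y_{0}$), which is the key structural feature making $\pi$ both well-defined and continuous.

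First I would establish the commutation relation $\pi\circ\psi=\sigma\circ\pi$ directly from the definitions. For $z=(x,y)\in Y_{0}$ and any indices $i\in\{1,\ldots,m\}$, $\ell\in\mathbb{Z}$, observe that
\begin{equation*}
\max_{t\in I^{+}_{i,\ell}}(\psi x)(t)=\max_{t\in I^{+}_{i,\ell}} x(t+T)=\max_{s\in I^{+}_{i,\ell+1}} x(s),
\end{equation*}
since $I^{+}_{i,\ell}+T=I^{+}_{i,\ell+1}$ by \eqref{eq-Ipm}. Hence the symbol assigned by $\pi(\psi z)$ at position $\ell$ coincides with the symbol assigned by $\pi(z)$ at position $\ell+1$, which is exactly $(\sigma\pi(z))_{\ell}$.

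Next I would prove continuity. Let $z_{n}=(x_{n},y_{n})\to z=(x,y)$ in $Y_{0}$; by the description of convergence in $X$, $x_{n}\to x$ uniformly on every compact interval, so in particular $\max_{I^{+}_{i,\ell}} x_{n}\to \max_{I^{+}_{i,\ell}} x$ for every fixed $(i,\ell)$. By Lemma~\ref{lem-6.3} applied to $z$, this limit lies in exactly one of the three open intervals $\mathopen{]}0,r\mathclose{[}$, $\mathopen{]}r,\rho\mathclose{[}$, $\mathopen{]}\rho,R\mathclose{[}$, so for $n$ large enough the corresponding $\max_{I^{+}_{i,\ell}} x_{n}$ lies in the same open interval, which forces $\pi(z_{n})_{\ell}^{i}=\pi(z)_{\ell}^{i}$ for every finite set of indices $(i,\ell)$ once $n$ is large. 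Because the topology on $\Sigma_{3^{m}}$ is the product topology, this gives $\pi(z_{n})\to\pi(z)$.

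For surjectivity I would split according to the sequence. The zero sequence is the image of the trivial solution $(0,0)\in Y_{0}$. For any non-null $\mathcal{T}\in\Sigma_{3^{m}}$, Theorem~\ref{th-6.4} provides a positive solution $u(t)$ of \eqref{eq-main} with $0\leq u(t)\leq R$ (note Proposition~\ref{prop-5.1} ensures the strict bound), and setting $z=(u,u')$ yields an element of $Y_{0}$ with $\pi(z)=\mathcal{T}$ by the very formulation of that theorem. Finally, for the periodic counterimage statement with $k=1$ I invoke Theorem~\ref{main-theorem} and for $k\geq 2$ Theorem~\ref{main-theorem-sub}: a $k$-periodic non-null sequence $\mathcal{T}\in\Sigma_{3^{m}}$ determines a non-null string $\mathcal{S}\in\{0,1,2\}^{km}$ of length $km$ via $\mathcal{S}_{j}=\mathcal{T}_{\ell}^{i}$ for $j=i+\ell m$, and the solution $u$ produced by Theorem~\ref{main-theorem-sub} is $kT$-periodic and satisfies $\pi(u,u')=\mathcal{T}$; the $1$-periodic zero sequence corresponds again to the trivial $T$-periodic solution.

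The main obstacle I anticipate is not the individual verifications (each reduces to material already assembled in Sections~\ref{section-5} and~\ref{section-6.1}--\ref{section-6.2}) but the conceptual point that continuity of $\pi$ depends crucially on the strict inequalities of Lemma~\ref{lem-6.3}: without the nonattainment of the thresholds $r,\rho,R$ by any element of $Y_{0}$, the preimages $\pi^{-1}(\{\mathcal{T}_{\ell}=s\})$ would fail to be open. So the real content is packaged inside Propositions~\ref{prop-5.1}--\ref{prop-5.3}, and the proof of Theorem~\ref{th-6.5} itself amounts to patching them together with the standard definitions.
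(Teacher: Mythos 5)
Your proposal is correct and follows essentially the same route as the paper: commutation by the time-translation identity $I^{+}_{i,\ell}+T=I^{+}_{i,\ell+1}$, continuity via uniform convergence on compacts combined with the strict inequalities of Lemma~\ref{lem-6.3} and the product topology on $\Sigma_{3^{m}}$, and surjectivity plus the periodic counterimage claim via Theorem~\ref{th-6.4} and Theorem~\ref{main-theorem-sub}. The only cosmetic slip is writing the first symbol's range as the open interval $\mathopen{]}0,r\mathclose{[}$ rather than $\max<r$ (the trivial solution has maximum $0$), which does not affect the argument.
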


\begin{proof}
Part of the statement follows immediately from our previous results. The surjectivity of the map
$\pi$ is a consequence of Theorem~\ref{th-6.4}. Indeed, if $\mathcal{T} \in \Sigma_{3^{m}}$ is the null sequence
then it is the image of the trivial solution $(0,0) \in Y_{0}$. On the other hand, given any non-null sequence
$\mathcal{T} = (\mathcal{T}_{\ell})_{\ell \in \mathbb{Z}}$, with $\mathcal{T}_{\ell} = (\mathcal{T}_{\ell}^{1},\ldots,\mathcal{T}_{\ell}^{m})$ for each $\ell\in\mathbb{Z}$,
there exists at least one globally defined positive solution $u(t)$ to equation \eqref{eq-main} such that
\begin{itemize}
\item $\max_{t \in I^{+}_{i,\ell}} u(t) < r$, if $\mathcal{T}_{\ell}^{i} = 0$;
\item $r < \max_{t \in I^{+}_{i,\ell}} u(t) < \rho$, if $\mathcal{T}_{\ell}^{i} = 1$;
\item $\rho < \max_{t \in I^{+}_{i,\ell}} u(t) < R$, if $\mathcal{T}_{\ell}^{i} = 2$.
\end{itemize}
Then $\pi$ maps $(u(t),u'(t)) = (x(t),y(t)) \in Y_{0}$ to $\mathcal{T}$. In a similar way, Theorem~\ref{main-theorem-sub}
ensures that, for any integer $k \geq 1$, the counterimage of a $k$-periodic sequence in
$\Sigma_{3^{m}}$ can be chosen as a $kT$-periodic solution of \eqref{eq-xy}.

The commutativity of the diagram follows from the fact that, if $(x(t),y(t))$ is a solution of \eqref{eq-xy}, then
$(x(t+T),y(t+T))$ is also a solution of the same system and, moreover,
if $(\mathcal{T}_{\ell})_{\ell\in\mathbb{Z}}$ is the sequence of symbols associated with $(x(t),y(t))$,
then the sequence corresponding to $(x(t+T),y(t+T))$ must be $(\mathcal{T}_{\ell + 1})_{\ell\in\mathbb{Z}}$.
This proves \eqref{eq-comm}.

Thus we have only to check the continuity of $\pi$. Let $\tilde{z}=(\tilde{x},\tilde{y}) \in Y_{0}$
and $\tilde{\mathcal{T}} = \pi(\tilde{z})$. Let $z_{n}=(x_{n},y_{n}) \in Y_{0}$ be a sequence such that
$z_{n} \to \tilde{z}$ in $Y_{0}$. This means that
$(x_{n}(t),y_{n}(t))$ converges uniformly to $(\tilde{x}(t),\tilde{y}(t))$ on any compact interval $\mathopen{[}-NT,NT\mathclose{]}$ of the real line.
For any interval $I^{+}_{i,\ell} \subseteq \mathopen{[}-NT,NT\mathclose{]}$,
we have that either $\max_{I^{+}_{i,\ell}}\tilde{x} < r$ or $r < \max_{I^{+}_{i,\ell}}\tilde{x} < \rho$ or
$\rho < \max_{I^{+}_{i,\ell}}\tilde{x} < R$. By the uniform convergence of the sequence of solutions
on $I^{+}_{i,\ell}$, there exists an index $n^{*}_{i,\ell}$ such that, for each $n \geq n^{*}_{i,\ell}$, the solution
$x_{n}(t)$ satisfies the same inequalities as $\tilde{x}(t)$ on the interval $I^{+}_{i,\ell}$. Hence, for any fixed $N$, there is an index
\begin{equation*}
n^{*}_{N} := \max \bigl{\{} n^{*}_{i,\ell} \colon i =1,\ldots,m, \, \ell=-N,\ldots,N-1 \bigr{\}}
\end{equation*}
such that, setting $\mathcal{T}^{n} = \pi(z_{n})$, it holds $\mathcal{T}^{n}_{\ell} = \tilde{\mathcal{T}}_{\ell}$ for all $n \geq n^{*}_{N}$ and
$\ell = -N,\ldots, N-1$. By the topology of $\Sigma_{3^{m}}$,
this means that $\mathcal{T}^{n}$ converges to $\tilde{\mathcal{T}}$. This concludes the proof.
\end{proof}

From Theorem~\ref{th-6.5} many consequences can be produced. For instance, we can
refine the set $Y_{0}$ in order to obtain an invariant set with dense periodic trajectories
of any period. This follows via a standard procedure that we describe below for the reader's convenience.

Let $Y_{\rm per}$ be the set of all the pairs $(x,y)\in Y_{0}$ which are $kT$-periodic solutions of \eqref{eq-xy}
for some integer $k\geq 1$ and let
\begin{equation*}
Y:= \text{cl}(Y_{\rm per}) \subseteq Y_{0},
\end{equation*}
where the closure is taken with respect to the distance in the space $X$.
Clearly, the set $Y$ is compact, invariant for the map $\psi$ and
$Y_{\rm per}$ is dense in $Y$. Then, from Theorem~\ref{th-6.5} we immediately have that
for  $\mu > \mu^{*}(\lambda)$ the map $\psi|_{Y} \colon Y \to Y$
is semiconjugate (via the surjection $\pi|_{Y}$) with the shift $\sigma$ on $\Sigma_{3^{m}}$
and, moreover, for every integer $k\geq1$, the counterimage by $\pi$ of any $k$-periodic sequence in
$\Sigma_{3^{m}}$ contains at least a point $(u,y)\in Y$
such that $u(t)$ is a $kT$-periodic solution of \eqref{eq-main}.

\medskip

As a last step, we want to express our results in terms of the Poincar\'{e} map associated with system \eqref{eq-xy}. To this end,
we further suppose that the nonlinearity $g(s)$ is locally Lipschitz continuous on $\mathbb{R}^{+}$. This, in turn,
implies the uniqueness of the solutions for the initial value problems associated with \eqref{eq-xy}.
We recall that the Poincar\'{e} map associated with system \eqref{eq-xy} is defined as
\begin{equation*}
\Psi_{T} \colon \text{\rm dom}\,\Psi_{T} (\subseteq \mathbb{R}^{2}) \to \mathbb{R}^{2},\quad
z_{0} = (x_{0},y_{0})\mapsto z(T,z_{0}),
\end{equation*}
where $z(t,z_{0}) = (x(t,z_{0}),y(t,z_{0}))$ is the solution of system \eqref{eq-xy} such that
$x(0) = x_{0}$ and $y(0) = y_{0}$. The map $\Psi_{T}$ is defined provided that the solutions can be
extended to the interval $\mathopen{[}0,T\mathclose{]}$. In general the domain of $\Psi_{T}$ is an open subset of $\mathbb{R}^{2}$
and $\Psi_{T}$ is a homeomorphism of $\text{\rm dom}\,\Psi_{T}$ onto its image.
In our case, due to the sublinear growth at infinity $(g_{\infty})$, we have that $\text{\rm dom}\,\Psi_{T}=\mathbb{R}^{2}$ and
$\Psi_T$ is a homeomorphism of $\mathbb{R}^2$ onto itself.

Let
\begin{equation*}
\mathcal{W}_{0}:= \bigl{\{} (x(0),y(0))\in \mathopen{[}0,R\mathclose{]}\times \mathopen{[}-K,K\mathclose{]}\colon (x,y)\in Y_{0} \bigr{\}}
\end{equation*}
and define $\Pi \colon \mathcal{W}_{0} \to \Sigma_{3^{m}}$ as
\begin{equation*}
\Pi(z_{0}) := \pi(z(\cdot,z_{0})), \quad z_{0} \in \mathcal{W}_{0}.
\end{equation*}
Notice that the map $\Pi$ is well defined; indeed, if $z_{0} \in \mathcal{W}_{0}$, then $z(\cdot,z_{0})\in Y_{0}$.

The next result is an equivalent version of Theorem~\ref{th-6.5} where \textit{chaotic dynamics} are described
in terms of the Poincar\'{e} map.

\begin{theorem}\label{th-6.6}
Suppose that $\mu > \mu^{*}(\lambda)$. Then the map $\Pi \colon \mathcal{W}_{0} \to \Sigma_{3^{m}}$
is continuous, surjective and such that the diagram
\begin{equation*}
\xymatrix{
\mathcal{W}_{0} \ar[d]_{\mathlarger{\Pi}} \ar[r]^{\mathlarger{\Psi_{T}}} & \mathcal{W}_{0} \ar[d]^{\mathlarger{\Pi}}\\
\Sigma_{3^{m}} \ar[r]_{\mathlarger{\sigma}} & \Sigma_{3^{m}}}
\end{equation*}
commutes. Furthermore, for every integer $k\geq1$, the counterimage of any $k$-periodic sequence in
$\Sigma_{3^{m}}$ contains at least a point $w\in \mathcal{W}_{0}$
which is a $k$-periodic point of the Poincar\'{e} map and so that the solution
$u(t)$ of \eqref{eq-main}, with $(u(0),u'(0))=w$, is a $kT$-periodic solution of \eqref{eq-main}.
\end{theorem}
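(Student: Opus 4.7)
The plan is to reduce Theorem~\ref{th-6.6} to Theorem~\ref{th-6.5} by exhibiting a natural homeomorphism between $\mathcal{W}_{0}$ and $Y_{0}$ induced by the local Lipschitz continuity of $g(s)$. Concretely, I define
\begin{equation*}
\mathcal{E} \colon Y_{0} \to \mathcal{W}_{0}, \quad (x,y) \mapsto (x(0),y(0)),
\end{equation*}
and
\begin{equation*}
\mathcal{F} \colon \mathcal{W}_{0} \to Y_{0}, \quad z_{0} \mapsto z(\cdot,z_{0}),
\end{equation*}
where $z(\cdot,z_{0})$ is the (unique, globally defined) solution of \eqref{eq-xy} with $z(0,z_{0})=z_{0}$. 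The first task is to check that $\mathcal{F}$ indeed takes values in $Y_{0}$: by definition of $\mathcal{W}_{0}$, every $z_{0}\in\mathcal{W}_{0}$ arises as $(x(0),y(0))$ for some $(x,y)\in Y_{0}$, and by uniqueness this solution coincides with $z(\cdot,z_{0})$; hence $\mathcal{F}(z_{0})\in Y_{0}$, and $\mathcal{E}\circ\mathcal{F}=\mathrm{Id}_{\mathcal{W}_{0}}$, $\mathcal{F}\circ\mathcal{E}=\mathrm{Id}_{Y_{0}}$.

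Next I verify that $\mathcal{E}$ and $\mathcal{F}$ are continuous. Continuity of $\mathcal{E}$ is immediate from the definition of the distance on $X$: uniform convergence on $\mathopen{[}-1,1\mathclose{]}$ dominates evaluation at $t=0$. For $\mathcal{F}$ I invoke continuous dependence of Carath\'{e}odory solutions on initial data: if $z_{0,n}\to z_{0}$ in $\mathcal{W}_{0}$, then $z(\cdot,z_{0,n})\to z(\cdot,z_{0})$ uniformly on every compact interval, which is precisely convergence in $X$. Combined with Lemma~\ref{lem-6.2} (compactness of $Y_{0}$), this shows $\mathcal{E}$ is a homeomorphism between the compact set $Y_{0}$ and $\mathcal{W}_{0}$; in particular $\mathcal{W}_{0}$ is compact.

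The conclusions of the theorem now follow by transport of structure. Setting $\Pi:=\pi\circ\mathcal{F}$, continuity and surjectivity of $\Pi$ come from continuity/surjectivity of $\pi$ established in Theorem~\ref{th-6.5}. For the commutative diagram, the key identity is
\begin{equation*}
\mathcal{F}\bigl(\Psi_{T}(z_{0})\bigr) = \psi\bigl(\mathcal{F}(z_{0})\bigr),
\end{equation*}
which holds because the $T$-periodicity of the coefficients together with uniqueness imply $z(t+T,z_{0})=z(t,z(T,z_{0}))=z(t,\Psi_{T}(z_{0}))$ for all $t\in\mathbb{R}$. Applying $\pi$ to both sides and using the commutativity $\pi\circ\psi=\sigma\circ\pi$ from Theorem~\ref{th-6.5} yields $\Pi\circ\Psi_{T}=\sigma\circ\Pi$.

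For the final assertion on $k$-periodic sequences, given $\mathcal{T}\in\Sigma_{3^{m}}$ of period $k$, Theorem~\ref{th-6.5} provides $(u,y)\in Y_{0}$ in $\pi^{-1}(\mathcal{T})$ with $u(t)$ a $kT$-periodic solution of \eqref{eq-main}; setting $w:=\mathcal{E}(u,y)=(u(0),u'(0))$ one has $w\in\Pi^{-1}(\mathcal{T})$ and $\Psi_{T}^{k}(w)=w$ by $kT$-periodicity, which is the desired $k$-periodic point of the Poincar\'{e} map. I expect no real obstacle beyond carefully invoking continuous dependence for $\mathcal{F}$; the only delicate point is ensuring $\mathrm{dom}\,\Psi_{T}=\mathbb{R}^{2}$ so that $\Psi_{T}^{k}$ is defined on $\mathcal{W}_{0}$, which is guaranteed by $(g_{\infty})$ as already remarked before the statement of the theorem.
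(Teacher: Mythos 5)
Your proposal is correct and follows essentially the same route as the paper: your map $\mathcal{F}$ is precisely the map $\zeta\colon \mathcal{W}_{0}\to Y_{0}$, $z_{0}\mapsto z(\cdot,z_{0})$, used in the paper's proof, which likewise establishes that $\zeta$ is a homeomorphism via continuous dependence on initial data, checks the commutativity $\zeta\circ\Psi_{T}=\psi\circ\zeta$ by uniqueness and $T$-periodicity of the coefficients, and then transports the conclusions of Theorem~\ref{th-6.5}. Your explicit treatment of the $k$-periodic case and of the fact that $\text{\rm dom}\,\Psi_{T}=\mathbb{R}^{2}$ only spells out details the paper leaves implicit.
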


\begin{proof}
Let $\zeta: \mathcal{W}_{0} \to Y_{0}$ be the map which associates to
any initial point $z_{0}$ the solution $z(\cdot,z_{0})$ of \eqref{eq-xy} with $(x(0),y(0)) = z_{0}$.
We consider the diagram
\begin{equation*}
\xymatrix{
\mathcal{W}_{0} \ar[d]_{\mathlarger{\zeta}} \ar[r]^{\mathlarger{\Psi_{T}}} & \mathcal{W}_{0} \ar[d]^{\mathlarger{\zeta}}\\
Y_{0} \ar[r]_{\mathlarger{\psi}} & Y_{0}}
\end{equation*}
and observe that the map $\zeta$ is bijective, continuous and with continuous inverse. Indeed, if $z_{n} \to z_{0}$
in $\mathbb{R}^2$, then $z(t,z_{n})$ converges uniformly to $z(t,z_{0})$ on the compact subsets of $\mathbb{R}$.
The above diagram is also commutative because (by the uniqueness of the solutions to the initial value problems)
the solution of \eqref{eq-xy} starting at the point $z(T,z_{0})$ coincides with $z(t+T,z_{0})$.
From these remarks and the commutativity of the diagram in Theorem~\ref{th-6.5} we easily conclude.
\end{proof}

We conclude this section with a final remark concerning a dynamical consequence of Theorem~\ref{th-6.6}.
Consider again the alphabet $\mathscr{B}$ of $3^{m}$ elements of the form
$\omega = (\omega_{1},\ldots,\omega_{m})$, where $\omega_{i} \in \{0,1,2\}$ for each $i=1,\ldots,m$.
To each element $\omega \in \mathcal{B}$ we associate the set
\begin{equation*}
\mathcal{K}_{\omega} := \left\{ w \in \mathcal{W}_{0} \colon
\begin{array}{l}
\max_{t \in I^{+}_{i}}x(t,w) <r,  \text{ if } \omega_{i} = 0
\\
r<\max_{t \in I^{+}_{i}}x(t,w)<\rho,  \text{ if } \omega_{i} = 1
\\
\rho < \max_{t \in I^{+}_{i}}x(t,w)<R,  \text{ if } \omega_{i} = 2
\end{array} \right\},
\end{equation*}
which is compact, as an easy consequence of Lemma~\ref{lem-6.3}. By definition, the sets
$\mathcal{K}_{\omega}$ for $\omega \in \mathscr{B}$ are pairwise disjoint subsets of $\mathopen{[}0,R\mathclose{]} \times \mathopen{[}-K,K\mathclose{]}$.
Hence, another way to describe our results is the following.
\begin{quote}
\textit{For each two-sided sequence $(\mathcal{T}_{\ell})_{\ell \in \mathbb{Z}}$
there exists a corresponding sequence $(w_{\ell})_{\ell \in \mathbb{Z}} \in (\mathcal{W}_{0})^{\mathbb{Z}}$
such that, for all $\ell \in \mathbb{Z}$,
\begin{equation}\label{chaos}
w_{\ell + 1} = \Psi_T(w_{\ell}) \quad \text{ and } \quad w_{\ell} \in \mathcal{K}_{\mathcal{T}_{\ell}};
\end{equation}
moreover,
whenever $(\mathcal{T}_{\ell})_{\ell \in \mathbb{Z}}$ is a $k$-periodic sequence for some integer $k \geq 1$, there exists a
$k$-periodic sequence $(w_{\ell})_{\ell \in \mathbb{Z}} \in (\mathcal{W}_{0})^{\mathbb{Z}}$ satisfying condition \eqref{chaos}.}
\end{quote}

In this manner, we enter a setting of coin-tossing type dynamics widely explored in the literature.
As a consequence, in the case $m=1$, we obtain a dynamics on three symbols, described as itineraries
for the Poincar\'{e} map jumping among three compact mutually disjoint sets $\mathcal{K}_{0},\mathcal{K}_{1},\mathcal{K}_{2}$.
A previous result in this direction, but involving only two symbols, was obtained in \cite{BoZa-12b} with a completely different approach.

\section{Related results}\label{section-7}

In this final section we briefly describe some results which can be obtained
by minor modifications of the arguments developed along this paper.

\subsection{The non-Hamiltonian case}\label{section-7.1}

One of the advantages in obtaining existence/multiplicity results with a topological degree technique
lies in the fact that the degree is stable with respect to small perturbations of the operator.
Such a remark, when applied to equation \eqref{eq-main}, allows us to establish the same result for the equation
\begin{equation}\label{eq-7.1}
u'' + cu' + \bigr{(} \lambda a^{+}(t) - \mu a^{-}(t) \bigr{)} g(u) = 0
\end{equation}
where $c\in\mathbb{R}$ and $c\neq 0$. More precisely, in the same setting of Theorem~\ref{main-theorem}, once that $\lambda > \lambda^{*}$ and
$\mu > \mu^{*}(\lambda)$ are fixed, there exists a constant $\varepsilon = \varepsilon(\lambda,\mu)>0$ such that the statement of the
theorem is still true for any $c\in\mathbb{R}$ with $|c| < \varepsilon$. The same remark applies
to the results in Section~\ref{section-6}, so that we can prove the existence of infinitely many positive subharmonic solutions as well as
the presence of chaotic dynamics on $3^{m}$ symbols also for equation \eqref{eq-7.1}.
Typically, results about multiplicity of subharmonic solutions are achieved by exploiting the Hamiltonian structure
of the equation and therefore using variational or symplectic techniques. Our approach shows that, for equations with a sign-indefinite weight,
we can achieve such results also in the non-Hamiltonian case.

A possibly interesting question which naturally arises is whether these multiplicity results are still valid for an arbitrary $c \in \mathbb{R}$.
In the superlinear indefinite case, Capietto, Dambrosio and Papini in
\cite{CaDaPa-02} produced such kind of results for sign-changing (oscillatory) solutions. More recently, in \cite{FeZa-pp2015} complex dynamics
for positive solutions has been obtained. Concerning our super-sublinear setting,
all the abstract approach and the strategy for the proof work exactly the same for the linear differential operator
$u \mapsto -u'' - cu'$ for an arbitrary $c \in \mathbb{R}$ (see Remark~\ref{rem-2.1} and Remark~\ref{rem-3.1}).
Thus, the only problem in extending all our results of the previous sections to equation \eqref{eq-7.1} comes from
some additional difficulties related to the technical estimates. In particular, we have often exploited the convexity of the solutions in the intervals $I^{-}_{i}$
and their concavity in the intervals $I^{+}_{i}$. In the recent paper
\cite{BoFeZa-15} we have proved the existence of two positive $T$-periodic solutions to equation \eqref{eq-7.1}
by effectively replacing the convexity/concavity properties with suitable monotonicity properties
for the map $t \mapsto e^{ct}u'(t)$. Similar tricks have been successfully applied in \cite{FeZa-pp2015} to obtain multiplicity results
for equation \eqref{eq-7.1} with a superlinear $g(s)$. It is therefore quite reasonable that
these arguments can be adapted to our case. However, due to the lengthy and complex technical details
required in Section~\ref{section-4}, we have preferred to skip further investigations in this direction.

\subsection{Neumann and Dirichlet boundary conditions}\label{section-7.2}

As anticipated, versions of Theorem~\ref{main-theorem} for both Neumann and Dirichlet boundary conditions can be given.
In these cases, we can consider a slightly more general sign condition for the measurable weight function
$a \colon \mathopen{[}0,T\mathclose{]} \to \mathbb{R}$, which reads as follows:
\begin{itemize}
\item [$(a_{**})$] \textit{there exist $2m + 2$ points (with $m \geq 1$)
\begin{equation*}
0 = \tau_{0} \leq \sigma_{1} < \tau_{1} < \ldots < \sigma_{i} < \tau_{i} < \ldots < \sigma_{m} < \tau_{m} \leq \sigma_{m+1} = T
\end{equation*}
such that $a(t) \succ 0$ on $\mathopen{[}\sigma_{i},\tau_{i}\mathclose{]}$, for $i=1,\ldots,m$, and $a(t) \prec 0$ on $\mathopen{[}\tau_{i},\sigma_{i+1}\mathclose{]}$, for $i=0,\ldots,m$.}
\end{itemize}
This means that $a(t)$ has $m$ positive humps $\mathopen{[}\sigma_{i},\tau_{i}\mathclose{]}$ ($i=1,\ldots,m$) separated by $m-1$ negative ones
$\mathopen{[}\tau_{i},\sigma_{i+1}\mathclose{]}$ ($i=1,\ldots,m-1$); in addition, $a(t)$ might have one/two further negativity intervals, precisely an initial one
$\mathopen{[}\tau_{0},\sigma_{1}\mathclose{]} = \mathopen{[}0,\sigma_{1}\mathclose{]}$
or/and a final one $\mathopen{[}\tau_{m},\sigma_{m+1}\mathclose{]} = \mathopen{[}\tau_{m},T\mathclose{]}$
(compare with Remark~\ref{rem-1.1}). In this setting, the following result holds true.

\begin{theorem}\label{th-7.1}
Let $g \colon \mathbb{R}^{+} \to \mathbb{R}^{+}$ be a continuous function satisfying $(g_{*})$, $(g_{0})$ and $(g_{\infty})$.
Let $a \colon \mathopen{[}0,T\mathclose{]} \to \mathbb{R}$ be an integrable function satisfying $(a_{**})$.
Then there exists $\lambda^{*} > 0$ such that for each $\lambda > \lambda^{*}$ there exists $\mu^{*}(\lambda) > 0$ such that
for each $\mu > \mu^{*}(\lambda)$ the Neumann problem
\begin{equation*}
\begin{cases}
\, u'' + \bigr{(} \lambda a^{+}(t) - \mu a^{-}(t) \bigr{)} g(u) = 0 \\
\, u'(0) = u'(T) = 0
\end{cases}
\end{equation*}
has at least $3^{m}-1$ positive solutions. The same result holds for the Dirichlet problem
\begin{equation*}
\begin{cases}
\, u'' + \bigr{(} \lambda a^{+}(t) - \mu a^{-}(t) \bigr{)} g(u) = 0 \\
\, u(0) = u(T) = 0.
\end{cases}
\end{equation*}
\end{theorem}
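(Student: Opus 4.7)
The plan is to repeat, almost verbatim, the scheme developed in Sections~\ref{section-2}, \ref{section-3} and \ref{section-4} for the periodic problem, using the modifications indicated in Remark~\ref{rem-2.1} and Remark~\ref{rem-3.1}. For the Neumann problem we keep $X = \mathcal{C}(\mathopen{[}0,T\mathclose{]})$, $Z = L^{1}(\mathopen{[}0,T\mathclose{]})$, and only replace $\text{\rm dom}\,L$ by $\{u \in W^{2,1}(\mathopen{[}0,T\mathclose{]}) \colon u'(0)=u'(T)=0\}$; the operator $L\colon u \mapsto -u''$ is again Fredholm of index zero with $\ker L \cong \mathbb{R}$, and $P$, $Q$, $K_{P}$, $N_{\lambda,\mu}$, $\Phi_{\lambda,\mu}$ are defined exactly as before. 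For the Dirichlet problem we instead take $\text{\rm dom}\,L = W^{2,1}_{0}(\mathopen{[}0,T\mathclose{]})$; here $L$ is invertible, so that $\Phi_{\lambda,\mu} = L^{-1}N_{\lambda,\mu}$ and the coincidence degree becomes the usual Leray--Schauder degree of $Id - L^{-1}N_{\lambda,\mu}$. In both cases the maximum principle still guarantees that a non-negative solution of $u'' + f_{\lambda,\mu}(t,u)=0$ satisfying the relevant boundary conditions is either identically zero or strictly positive.

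Next I would fix $\rho > 0$, choose $\lambda^{*}(\rho)$ via Lemma~\ref{lem-rho} (whose proof is local on a single interval $I^{+}_{i}$ and so needs no change), and select $0 < r < \rho < R$ by condition \eqref{cond-rR}, which involves only local data on the humps of $a(t)$. With these constants I define the open unbounded sets $\Omega^{\mathcal{I},\mathcal{J}}_{(r,\rho,R)}$ and $\Lambda^{\mathcal{I},\mathcal{J}}_{(r,\rho,R)}$ exactly as in \eqref{eq-Omega} and \eqref{eq-Lambda} (using the $m$ intervals $I^{+}_{i}$ given by $(a_{**})$), and restate Lemma~\ref{lem-deg0} and Lemma~\ref{lem-deg1} with the natural boundary conditions, as indicated in Remark~\ref{rem-3.1}. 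In the Neumann case the proofs are essentially identical to the periodic ones; in the Dirichlet case the computation in Lemma~\ref{lem-deg1} simplifies, since at $\vartheta = 0$ the homotopy $Lu = \vartheta N_{\lambda,\mu}u$ reduces to $u = 0$, so $\text{\rm deg}_{LS}(Id,\Omega^{\emptyset,\mathcal{J}}_{(r,\rho,R)},0) = 1$ without any restriction on $\mu$.

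The bulk of the work is then to verify assumptions $(H_{1})$, $(H_{2})$ of Lemma~\ref{lem-deg0} and $(H_{3})$ of Lemma~\ref{lem-deg1} for $\mu$ large. Here the arguments from Section~\ref{section-4.3} and Section~\ref{section-4.4} carry over, with the simplification that the cyclic conventions of Remark~\ref{rem-4per} and Remark~\ref{rem-4.2} are no longer needed: the propagation estimates of Lemma~\ref{lem-rR}, Lemma~\ref{lem-rR-back}, Lemma~\ref{lem-forward}, Lemma~\ref{lem-back} are purely local and can be applied on $\mathopen{[}0,T\mathclose{]}$ as they stand. In the verification of $(H_{3})$ one still propagates a solution with $\max_{I^{+}_{i}} u = r$ forward or backward through consecutive pairs $I^{-}_{j} \cup I^{+}_{j+1}$, showing that $u'$ keeps a fixed sign on the whole of $\mathopen{[}0,T\mathclose{]}$; this now contradicts directly either Neumann or Dirichlet boundary conditions at $t=0$ and $t=T$ (rather than $T$-periodicity), which is why only $m-1$ internal negative humps suffice. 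The outcome is again the analogue of Theorem~\ref{deg-Omega}, and, via the same combinatorial argument of Appendix~\ref{appendix-A}, the analogue of Theorem~\ref{deg-Lambda}.

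Finally, the existence theorem for the degree produces at least one solution in each of the $3^{m}$ sets $\Lambda^{\mathcal{I},\mathcal{J}}_{(r,\rho,R)}$; the maximum principle recalled above and the set-up of these sets show that, apart from the trivial solution sitting in $\Lambda^{\emptyset,\emptyset}_{(r,\rho,R)}$, all the others are positive solutions of \eqref{eq-main} satisfying the corresponding boundary conditions, yielding $3^{m}-1$ positive solutions. The main obstacle I foresee is not conceptual but bookkeeping: one has to re-examine each technical estimate of Section~\ref{section-4} in the possible presence of the extra boundary negativity intervals $\mathopen{[}0,\sigma_{1}\mathclose{]}$ and $\mathopen{[}\tau_{m},T\mathclose{]}$, and verify that the boundary data ($u'(0)=u'(T)=0$ in the Neumann case, $u(0)=u(T)=0$ in the Dirichlet case) are compatible with, and actually aid, the convexity/monotonicity arguments used to propagate a contradiction in the proofs of $(H_{1})$ and $(H_{3})$.
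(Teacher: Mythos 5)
Your proposal follows essentially the same route as the paper: the abstract setting modified as in Remark~\ref{rem-2.1}, the degree lemmas adjusted as in Remark~\ref{rem-3.1}, the technical estimates of Section~\ref{section-4.1} reused verbatim, and the verifications of $(H_{1})$--$(H_{3})$ adapted so that the propagation of a monotone solution now contradicts the boundary data at $t=0$ and $t=T$ instead of $T$-periodicity. Your observations that the cyclic index conventions become unnecessary, that the Dirichlet homotopy at $\vartheta=0$ trivializes so no condition on $\mu$ is needed in Lemma~\ref{lem-deg1}, and that this is precisely why $m-1$ internal negative humps suffice, all match the paper's intended argument.
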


Of course, such solutions can again be coded via a non-null string $\mathcal{S} \in \{0,1,2\}^{m}$ as described in Theorem~\ref{main-theorem}.
We also remark that, as usual, a positive solution of the Dirichlet problem is a function $u(t)$
solving the equation and such that $u(0) = u(T) = 0$ and $u(t) > 0$ for any $t \in \mathopen{]}0,T\mathclose{[}$.

For the proof of Theorem~\ref{th-7.1}, we rely on the abstract setting of Section~\ref{section-2} (with the changes underlined in Remark~\ref{rem-2.1})
and on the general strategy presented in Section~\ref{section-3.1}. The key point is then the verification of the assumptions of Lemma~\ref{lem-deg0}
and Lemma~\ref{lem-deg1} (in the slightly modified versions described in Remark~\ref{rem-3.1}).
To this end, we can take advantage of the technical estimates developed in Section~\ref{section-4.1} (which indeed are independent of the boundary conditions)
and we can prove the result with minor modifications of the arguments in the remaining part of Section~\ref{section-4}.

Finally, we observe that the same result can be obtained for positive solutions of equation \eqref{eq-main} satisfying
the mixed boundary conditions $u(0) = u'(T) = 0$ or $u'(0) = u(T) = 0$ (compare with \cite[\S~5.4]{FeZa-jde2015}).

\subsection{Radially symmetric positive solutions}\label{section-7.3}

As a standard consequence of Theorem~\ref{th-7.1}, we can produce multiplicity results for radially symmetric positive
solutions to elliptic BVPs on an annulus.

More precisely, let $\|\cdot\|$ be the Euclidean norm in $\mathbb{R}^{N}$ (for $N \geq 2$) and let
\begin{equation*}
\Omega := \bigl{\{}x\in {\mathbb{R}}^{N} \colon R_{1} < \|x\| < R_{2}\bigr{\}}
\end{equation*}
be an open annular domain, with $0 < R_{1} < R_{2}$. We deal with the elliptic partial differential equation
\begin{equation}\label{eq-pde-rad}
-\Delta \,u = \bigl{(} \lambda q^{+}(x) - \mu q^{-}(x)\bigr{)} g(u) \quad \text{ in } \Omega
\end{equation}
together with Neumann boundary conditions
\begin{equation}\label{neu}
\dfrac{\partial u}{\partial {\bf n}} = 0 \quad \text{ on } \partial\Omega
\end{equation}
or Dirichlet boundary conditions
\begin{equation}\label{dir}
u = 0 \quad \text{ on } \partial\Omega.
\end{equation}
For simplicity, we look for classical solutions to \eqref{eq-pde-rad} (namely, $u \in \mathcal{C}^2(\overline{\Omega})$) and, accordingly,
we assume that $q \colon \overline{\Omega} \to \mathbb{R}$ is a \textit{continuous} function.
Moreover, in order to transform the partial differential equation \eqref{eq-pde-rad} into a second order ordinary differential equation
of the form \eqref{eq-main} so as to apply Theorem~\ref{th-7.1}, we also require that $q(x)$ is a radially symmetric function,
i.e.~there exists a continuous function $\mathcal{Q} \colon \mathopen{[}R_{1},R_{2}\mathclose{]} \to \mathbb{R}$ such that
\begin{equation}\label{qQ}
q(x) = \mathcal{Q}(\|x\|), \quad \forall \, x \in \overline{\Omega}.
\end{equation}
We also set
\begin{equation*}
\mathcal{Q}_{\lambda,\mu}(r) := \lambda \mathcal{Q}^{+}(r) - \mu \mathcal{Q}^{-}(r), \quad r\in \mathopen{[}R_{1},R_{2}\mathclose{]},
\end{equation*}
where, as usual, $\lambda,\mu > 0$.

Looking for radially symmetric (classical) solutions to \eqref{eq-pde-rad}, i.e.~solutions of the
form $u(x) = \mathcal{U}(\|x\|)$ where $\mathcal{U}(r)$ is a scalar function defined on $\mathopen{[}R_{1},R_{2}\mathclose{]}$,
we transform equation \eqref{eq-pde-rad} into
\begin{equation}\label{eq-rad}
\bigl{(}r^{N-1}\, \mathcal{U}'\bigr{)}' + r^{N-1} \mathcal{Q}_{\lambda,\mu}(r) g(\mathcal{U}) = 0.
\end{equation}
Moreover, the boundary conditions \eqref{neu} and \eqref{dir}
become
\begin{equation*}
\mathcal{U}(R_{1}) = \mathcal{U}(R_{2}) = 0 \quad \text{ and } \quad \mathcal{U}'(R_{1}) = \mathcal{U}'(R_{2}) = 0,
\end{equation*}
respectively. Via the change of variable
\begin{equation*}
t = h(r):= \int_{R_{1}}^{r} \xi^{1-N} ~\!d\xi
\end{equation*}
and the positions
\begin{equation*}
T:= \int_{R_{1}}^{R_{2}} \xi^{1-N} ~\!d\xi, \quad r(t):= h^{-1}(t) \quad \text{and} \quad v(t)={\mathcal{U}}(r(t)),
\end{equation*}
we can further convert \eqref{eq-rad} and the corresponding boundary conditions into the Neumann and Dirichlet problems
\begin{equation*}
\begin{cases}
\, v'' + a_{\lambda,\mu}(t) g(v) = 0 \\
\, v'(0) = v'(T) = 0
\end{cases}
\quad \text{ and } \quad
\begin{cases}
\, v'' + a_{\lambda,\mu}(t) g(v) = 0 \\
\, v(0) = v(T) = 0,
\end{cases}
\end{equation*}
respectively, where
\begin{equation*}
a(t):= r(t)^{2(N-1)}{\mathcal{Q}}(r(t)), \quad t \in \mathopen{[}0,T\mathclose{]},
\end{equation*}
and $a_{\lambda,\mu}(t) := \lambda a^{+}(t) - \mu a^{-}(t)$, for $t \in \mathopen{[}0,T\mathclose{]}$.

In this setting, Theorem~\ref{th-7.1} gives the following result. The straightforward proof is omitted.

\begin{theorem}\label{th-7.2}
Let $g \colon \mathbb{R}^{+} \to \mathbb{R}^{+}$ be a continuous function satisfying $(g_{*})$, $(g_{0})$ and $(g_{\infty})$.
Let $\mathcal{Q} \colon \mathopen{[}R_{1},R_{2}\mathclose{]} \to \mathbb{R}$ be a continuous function satisfying
\begin{itemize}
\item [$(\mathcal{Q}_{**})$]
\textit{there exist $2m + 2$ points (with $m \geq 1$)
\begin{equation*}
R_{1} = \tau_{0} \leq \sigma_{1} < \tau_{1} < \ldots < \sigma_{i} < \tau_{i} < \ldots < \sigma_{m} < \tau_{m} \leq \sigma_{m+1} = R_{2}
\end{equation*}
such that $\mathcal{Q}(r) \succ 0$ on $\mathopen{[}\sigma_{i},\tau_{i}\mathclose{]}$, for $i=1,\ldots,m$,
and $\mathcal{Q}(r) \prec 0$ on $\mathopen{[}\tau_{i},\sigma_{i+1}\mathclose{]}$, for $i=0,\ldots,m$,}
\end{itemize}
and let $q \colon \overline{\Omega} \to \mathbb{R}$ be defined as in \eqref{qQ}.
Then there exists $\lambda^{*} > 0$ such that for each $\lambda > \lambda^{*}$ there exists $\mu^{*}(\lambda) > 0$ such that
for each $\mu > \mu^{*}(\lambda)$ the Neumann problem associated with \eqref{eq-pde-rad} has at least $3^{m}-1$ radially symmetric
positive (classical) solutions. The same result holds for the Dirichlet problem associated with \eqref{eq-pde-rad}.
\end{theorem}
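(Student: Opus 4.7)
The plan is to reduce Theorem~\ref{th-7.2} to Theorem~\ref{th-7.1} via the radial ansatz and the standard change of variable already sketched in the excerpt. First, I would observe that if $u \in \mathcal{C}^2(\overline{\Omega})$ is radially symmetric, $u(x) = \mathcal{U}(\|x\|)$, then \eqref{eq-pde-rad} is equivalent to \eqref{eq-rad} on $\mathopen{[}R_1,R_2\mathclose{]}$, with Neumann/Dirichlet conditions at the endpoints corresponding to their PDE counterparts. Since $N \geq 2$, the weight $r^{1-N}$ is continuous and positive on $\mathopen{[}R_1,R_2\mathclose{]}$, so the map $h(r) = \int_{R_1}^r \xi^{1-N}\,d\xi$ is a $\mathcal{C}^1$ diffeomorphism of $\mathopen{[}R_1,R_2\mathclose{]}$ onto $\mathopen{[}0,T\mathclose{]}$, and under $t = h(r)$, $v(t) = \mathcal{U}(r(t))$, a direct computation converts \eqref{eq-rad} into $v'' + a(t) g(v) = 0$ with $a(t) := r(t)^{2(N-1)}\mathcal{Q}(r(t))$.

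Next, I would verify that the transformed weight $a(t)$ inherits property $(a_{**})$ from $\mathcal{Q}$ with the same integer $m$. Indeed, since $r(t) \in \mathopen{[}R_1,R_2\mathclose{]}$ and $r(t)^{2(N-1)} > 0$, the sign of $a(t)$ at any $t$ coincides with the sign of $\mathcal{Q}(r(t))$; moreover $h$ is an increasing homeomorphism, so the partition of $\mathopen{[}R_1,R_2\mathclose{]}$ given by $(\mathcal{Q}_{**})$ is mapped to a corresponding partition $0 = h(\tau_0) \leq h(\sigma_1) < h(\tau_1) < \cdots < h(\tau_m) \leq h(\sigma_{m+1}) = T$ of $\mathopen{[}0,T\mathclose{]}$, and on each corresponding subinterval $a(t)$ has the required sign behaviour (positivity/negativity in the $\succ,\prec$ sense is preserved, since multiplication by a strictly positive continuous factor and composition with a diffeomorphism preserves zero and nonzero almost-everywhere conditions).

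With $(a_{**})$ verified, I would apply Theorem~\ref{th-7.1} (in the Neumann or the Dirichlet version, respectively) to produce a threshold $\lambda^* > 0$ and, for each $\lambda > \lambda^*$, a threshold $\mu^*(\lambda) > 0$ such that for $\mu > \mu^*(\lambda)$ the corresponding boundary value problem for $v'' + a_{\lambda,\mu}(t)g(v) = 0$ has at least $3^m - 1$ positive solutions, coded by non-null strings $\mathcal{S} \in \{0,1,2\}^m$. Undoing the change of variable, each $v(t)$ produces a positive solution $\mathcal{U}(r) = v(h(r))$ of \eqref{eq-rad} on $\mathopen{[}R_1,R_2\mathclose{]}$ satisfying the matching boundary conditions, and then $u(x) = \mathcal{U}(\|x\|)$ is a radially symmetric positive solution of \eqref{eq-pde-rad}. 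Finally, I would check that each $u$ is a classical solution: since $g$ and $q$ are continuous, $\mathcal{U} \in W^{2,1}$ is automatically in $\mathcal{C}^2$ by bootstrapping the equation, and the radial structure together with $N \geq 2$ ensures $\mathcal{C}^2$-regularity of $u$ on $\overline{\Omega}$ (no singularity arises because $R_1 > 0$). Distinct strings $\mathcal{S}$ yield distinct radial profiles $\mathcal{U}$ (by the code on the intervals $\mathopen{[}\sigma_i,\tau_i\mathclose{]}$, which is preserved by $h$), hence distinct $u$, and we obtain the desired $3^m - 1$ solutions.

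The only mildly delicate point is purely bookkeeping: ensuring that the sign partition $(\mathcal{Q}_{**})$ translates correctly to $(a_{**})$ under composition with $h$ and multiplication by $r(t)^{2(N-1)}$, and that a Carathéodory solution of the transformed ODE gives back a genuine $\mathcal{C}^2$ classical solution of the PDE; neither requires new analytical ideas beyond what is already developed in the paper. No substantive obstacle is expected — the heart of the argument has been done in Theorem~\ref{th-7.1}, and Theorem~\ref{th-7.2} is a standard reduction.
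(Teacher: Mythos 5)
Your proposal is correct and is exactly the argument the paper intends: the change of variable $t=h(r)$ and the definition $a(t)=r(t)^{2(N-1)}\mathcal{Q}(r(t))$ are set up in Section~\ref{section-7.3} immediately before the statement, the transformed weight inherits $(a_{**})$, and the result then follows from Theorem~\ref{th-7.1} (the paper omits this ``straightforward proof''). Your additional remarks on the preservation of the sign partition under $h$ and on recovering $\mathcal{C}^{2}$ regularity from the continuity of $q$ are the right bookkeeping points and raise no issues.
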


\appendix
\section{Combinatorial argument}\label{appendix-A}

In this appendix, we present the combinatorial argument needed in the proof of Theorem~\ref{deg-Lambda}.
In more detail, recalling the definitions of $\Omega^{\mathcal{I},\mathcal{J}}_{(r,\rho,R)}$ and $\Lambda^{\mathcal{I},\mathcal{J}}_{(r,\rho,R)}$ given in \eqref{eq-Omega}
and \eqref{eq-Lambda} respectively, from formula \eqref{eq-3.1} (concerning the degrees on $\Omega^{\mathcal{I},\mathcal{J}}_{(r,\rho,R)}$),
we prove that, for any pair of subset of indices $\mathcal{I},\mathcal{J}\subseteq \{1,\ldots,m\}$ with $\mathcal{I}\cap\mathcal{J}=\emptyset$, we have
\begin{equation*}
D_{L}\bigl{(}L-N_{\lambda,\mu},\Lambda^{\mathcal{I},\mathcal{J}}_{(r,\rho,R)}\bigr{)} =
(-1)^{\# \mathcal{I}}.
\end{equation*}

We offer two independent proofs since we believe that both possess some peculiar aspects which might be also
adapted to different situations.

\subsection{First argument}\label{section-A1}

In this first part we present a combinatorial argument which is related to the concept of \textit{valuation},
as introduced in \cite{KlRo-97}.

Let $m\in\mathbb{N}$ be a positive integer. We denote by
\begin{equation*}
\mathbb{A} := \bigl{\{} A_{1} \times A_{2} \times \ldots \times A_{m} \colon A_{i}\in\mathscr{P}(\{0,1,2\})\bigr{\}}
\end{equation*}
the set of the $8^{m}$ Cartesian products of $m$ subsets of $\{0,1,2\}$.

Let
\begin{equation}\label{eq-A.1}
\mathcal{A} := A_{1} \times A_{2} \times \ldots \times A_{m}
\end{equation}
be an element of $\mathbb{A}$, let $i\in\{1,\ldots,m\}$ be a fixed index and let also $B_{i}\in\mathscr{P}(\{0,1,2\})$.
We introduce the following notation
\begin{equation*}
\mathcal{A}[i:B_{i}] := A_{1} \times \ldots \times A_{i-1} \times B_{i} \times A_{i+1} \times \ldots \times A_{m}.
\end{equation*}
Note that for any fixed $\mathcal{A}$ as above and $i\in\{1,\ldots,m\}$ it holds that
$\mathcal{A} = \mathcal{A}[i:A_{i}]$.

We consider a function
\begin{equation*}
d \colon \mathbb{A} \to \mathbb{Z}
\end{equation*}
which satisfies the following property.
\begin{quote}
\textit{Additivity property. }
Let $i\in\{1,\ldots,m\}$ and $B_{i}\in \mathscr{P}(\{0,1,2\})$.
Suppose that $B'_{i},B''_{i}\subseteq B_{i}$ are disjoint (possibly empty) and such that
\begin{equation*}
B_{i} = B'_{i} \cup B''_{i}.
\end{equation*}
Then, for all $\mathcal{A}\in\mathbb{A}$, it holds that
\begin{equation*}
d(\mathcal{A}[i : B_{i}]) = d(\mathcal{A}[i : B'_{i}]) + d(\mathcal{A}[i : B''_{i}]).
\end{equation*}
\end{quote}
From the additivity property (applied in the case $B_{i} = B'_{i} = B''_{i} = \emptyset$)
we immediately obtain that, if there exists an index $i\in\{1,\ldots,m\}$ such that $A_{i}=\emptyset$, then
$d(A_{1} \times \ldots \times A_{m}) = 0$.

Moreover, we assume that $d$ satisfies the following rules.
\begin{itemize}
\item [$(R1)$]
If there exists an index $i\in\{1,\ldots,m\}$ such that $A_{i}=\{0,1\}$ and $A_{j} \in \{ \{0\}, \{0,1,2\}\}$ for all $j\in\{1,\ldots,m\}$
such that $A_{j}\neq\{0,1\}$, then
\begin{equation*}
d(A_{1} \times \ldots \times A_{m}) = 0.
\end{equation*}
\item [$(R2)$]
If $A_{i} \in \{ \{0\}, \{0,1,2\} \}$, for all $i=1,\ldots,m$,
then
\begin{equation*}
d(A_{1} \times \ldots \times A_{m}) = 1.
\end{equation*}
\end{itemize}

Our goal is to compute $d(A_{1} \times \ldots \times A_{m})$ when
$A_{i} \in \{ \{0\}, \{1\}, \{2\}, \{0,1,2\} \}$, for all $i=1,\ldots,m$.

As a first step we prove a generalization of rule $(R1)$.

\begin{lemma}\label{lem-A1}
If there exists an index $i\in\{1,\ldots,m\}$ such that $A_{i}=\{0,1\}$ and
$A_{j} \in \{ \{0\}, \{2\}, \{0,1,2\} \}$ for all $j\in\{1,\ldots,m\}$
such that $A_{j}\neq\{0,1\}$, then
\begin{equation*}
d(A_{1} \times \ldots \times A_{m}) = 0.
\end{equation*}
\end{lemma}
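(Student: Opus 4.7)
The plan is to reduce the present statement to rule $(R1)$ by means of the additivity property, using induction on the number of positions where the component equals $\{2\}$.

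More precisely, for $\mathcal{A}=A_{1}\times\dots\times A_{m}$ satisfying the hypotheses, let
\[
n(\mathcal{A}):=\#\bigl\{j\in\{1,\ldots,m\}\colon A_{j}=\{2\}\bigr\}.
\]
I will prove that $d(\mathcal{A})=0$ by induction on $n(\mathcal{A})$. The base case $n(\mathcal{A})=0$ is immediate: every component $A_{j}$ with $A_{j}\neq\{0,1\}$ then lies in $\{\{0\},\{0,1,2\}\}$, while by assumption at least one component equals $\{0,1\}$, so rule $(R1)$ applies and gives $d(\mathcal{A})=0$.

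For the inductive step, assume $n(\mathcal{A})\geq 1$ and pick some index $j_{0}$ with $A_{j_{0}}=\{2\}$. Since $\{0,1,2\}$ is the disjoint union of $\{0,1\}$ and $\{2\}$, the additivity property applied at the position $j_{0}$ yields
\begin{equation*}
d(\mathcal{A}[j_{0}:\{0,1,2\}])=d(\mathcal{A}[j_{0}:\{0,1\}])+d(\mathcal{A}[j_{0}:\{2\}]),
\end{equation*}
so that, recalling $\mathcal{A}=\mathcal{A}[j_{0}:\{2\}]$,
\begin{equation*}
d(\mathcal{A})=d(\mathcal{A}[j_{0}:\{0,1,2\}])-d(\mathcal{A}[j_{0}:\{0,1\}]).
\end{equation*}
Now each of the two families on the right-hand side still fulfils the hypotheses of the lemma: the index $i$ with $A_{i}=\{0,1\}$ is unchanged (and in the second term we have the additional $\{0,1\}$-position $j_{0}$, which only reinforces the assumption), and every other component remains in $\{\{0\},\{2\},\{0,1,2\}\}$. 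Moreover the value of $n$ has strictly decreased by one in both terms. The inductive hypothesis then forces each summand to vanish, and hence $d(\mathcal{A})=0$.

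The only delicate point is the bookkeeping in the inductive step, namely checking that both $\mathcal{A}[j_{0}:\{0,1,2\}]$ and $\mathcal{A}[j_{0}:\{0,1\}]$ still satisfy the hypotheses of the lemma with strictly smaller $n$. This is straightforward once one notes that replacing a $\{2\}$-entry by $\{0,1\}$ or $\{0,1,2\}$ never destroys the existence of a $\{0,1\}$-coordinate and keeps all other coordinates inside the admissible family $\{\{0\},\{2\},\{0,1,2\}\}$. I do not expect any serious obstacle beyond this verification.
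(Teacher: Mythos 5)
Your proof is correct and follows essentially the same route as the paper's: induction on the number of $\{2\}$-entries, using the additivity of $d$ with the decomposition $\{0,1,2\}=\{0,1\}\cup\{2\}$ to express $d(\mathcal{A})$ as $d(\mathcal{A}[j_{0}:\{0,1,2\}])-d(\mathcal{A}[j_{0}:\{0,1\}])$ and invoking rule $(R1)$ in the base case. The only (immaterial) difference is that the paper spells out the case $k=1$ separately before the general inductive step.
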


\begin{proof}
We prove the statement by induction on the non-negative integer
\begin{equation*}
k := \# \bigl{\{} j\in\{1,\ldots,m\} \colon A_{j}=\{2\} \bigr{\}}.
\end{equation*}

\smallskip
\noindent
\textit{Case $k=0$. } If there is no $j\in\{1,\ldots,m\}$ such that $A_{j}=\{2\}$, the thesis follows by rule $(R1)$.

\smallskip
\noindent
\textit{Case $k=1$. } Suppose that there is exactly one index $j\in\{1,\ldots,m\}$ such that $A_{j}=\{2\}$.
Recalling the definition of $\mathcal{A}$ in \eqref{eq-A.1}, it is easy to see that
\begin{equation*}
\mathcal{A}[j : \{0,1,2\}] = \mathcal{A} \cup \mathcal{A}[j : \{0,1\}].
\end{equation*}
Then, by the additivity property of $d$ and rule $(R1)$, we obtain
\begin{equation*}
d(\mathcal{A}) = d(\mathcal{A}[j : \{0,1,2\}]) - d(\mathcal{A}[j : \{0,1\}]) = 0 - 0 = 0.
\end{equation*}

\smallskip
\noindent
\textit{Inductive step. } Suppose that the statement holds for $k$. We prove it for $k+1$.
Let $j\in\{1,\ldots,m\}$ be such that $A_{j}=\{2\}$. As above, from
\begin{equation*}
\mathcal{A}[j : \{0,1,2\}] = \mathcal{A} \cup \mathcal{A}[j : \{0,1\}],
\end{equation*}
we obtain
\begin{equation*}
d(\mathcal{A}) = d(\mathcal{A}[j : \{0,1,2\}]) - d(\mathcal{A}[j : \{0,1\}]).
\end{equation*}
By the inductive hypothesis, we know that $d(\mathcal{A}[j : \{0,1,2\}])=0$ and $d(\mathcal{A}[j : \{0,1\}])=0$
(since $\mathcal{A}[j : \{0,1,2\}]$ and $\mathcal{A}[j : \{0,1\}]$ both have exactly $k$ indices $i$ such that $A_{i}=\{2\}$).
The thesis immediately follows.
\end{proof}

Now we provide a generalization of rule $(R2)$.

\begin{lemma}\label{lem-A2}
If $A_{i} \in \{ \{0\}, \{2\}, \{0,1,2\} \}$, for all $i=1,\ldots,m$, then
\begin{equation*}
d(A_{1} \times \ldots \times A_{m}) = 1.
\end{equation*}
\end{lemma}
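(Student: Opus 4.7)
The plan is to mimic exactly the inductive scheme used in the proof of Lemma~\ref{lem-A1}, performing induction on the number
\begin{equation*}
k := \# \bigl{\{} j\in\{1,\ldots,m\} \colon A_{j}=\{2\} \bigr{\}},
\end{equation*}
while combining the additivity property with the newly established Lemma~\ref{lem-A1} (which plays, in this argument, the role that rule $(R1)$ played in the proof of Lemma~\ref{lem-A1}). The base case $k = 0$ requires no work: it is precisely rule $(R2)$, giving $d(\mathcal{A}) = 1$.

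For the inductive step, I would pick some index $j\in\{1,\ldots,m\}$ with $A_{j}=\{2\}$ and split $\{0,1,2\}=\{2\}\cup\{0,1\}$, so that
\begin{equation*}
\mathcal{A}[j:\{0,1,2\}] = \mathcal{A}[j:\{2\}] \,\cup\, \mathcal{A}[j:\{0,1\}]
\end{equation*}
is a disjoint union. Since $\mathcal{A}[j:\{2\}] = \mathcal{A}$, the additivity property then yields
\begin{equation*}
d(\mathcal{A}) = d(\mathcal{A}[j:\{0,1,2\}]) - d(\mathcal{A}[j:\{0,1\}]).
\end{equation*}
Now $\mathcal{A}[j:\{0,1,2\}]$ has exactly $k$ indices $i$ with $A_i=\{2\}$ and all other entries in $\{\{0\},\{2\},\{0,1,2\}\}$, so the inductive hypothesis gives the value $1$; on the other hand $\mathcal{A}[j:\{0,1\}]$ has one coordinate equal to $\{0,1\}$ and all other entries still in $\{\{0\},\{2\},\{0,1,2\}\}$, which is exactly the hypothesis of Lemma~\ref{lem-A1} and yields the value $0$. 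Subtracting gives $d(\mathcal{A})=1$, closing the induction.

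There is no real obstacle: once Lemma~\ref{lem-A1} is available, the present statement is a one-line additivity splitting plus a routine induction, entirely parallel in structure to Lemma~\ref{lem-A1} itself. The only thing to double-check is the hypothesis of Lemma~\ref{lem-A1}: it asks that, aside from the coordinate equal to $\{0,1\}$, all the other entries belong to $\{\{0\},\{2\},\{0,1,2\}\}$, which is guaranteed here since we only modify the $j$-th coordinate and the starting tuple satisfies exactly this condition.
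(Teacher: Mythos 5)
Your proof is correct and follows essentially the same route as the paper: induction on the number of coordinates equal to $\{2\}$, the splitting $\mathcal{A}[j:\{0,1,2\}] = \mathcal{A} \cup \mathcal{A}[j:\{0,1\}]$ via additivity, rule $(R2)$ for the base case, the inductive hypothesis for the first term, and Lemma~\ref{lem-A1} for the second. The only (immaterial) difference is that the paper spells out the case $k=1$ separately before the general inductive step.
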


\begin{proof}
We prove the statement by induction on the non-negative integer
\begin{equation*}
k := \# \bigl{\{} j\in\{1,\ldots,m\} \colon A_{j}=\{2\} \bigr{\}}.
\end{equation*}

\smallskip
\noindent
\textit{Case $k=0$. } If there is no $j\in\{1,\ldots,m\}$ such that $A_{j}=\{2\}$, the thesis follows by rule $(R2)$.

\smallskip
\noindent
\textit{Case $k=1$. } Suppose that there is exactly one index $j\in\{1,\ldots,m\}$ such that $A_{j}=\{2\}$.
Recalling the definition of $\mathcal{A}$ in \eqref{eq-A.1}, it is easy to see that
\begin{equation*}
\mathcal{A}[j : \{0,1,2\}] = \mathcal{A} \cup \mathcal{A}[j : \{0,1\}].
\end{equation*}
Then, by the additivity property of $d$ and rules $(R1)$ and $(R2)$, we obtain
\begin{equation*}
d(\mathcal{A}) = d(\mathcal{A}[j : \{0,1,2\}]) - d(\mathcal{A}[j : \{0,1\}]) = 1 - 0 = 1.
\end{equation*}

\smallskip
\noindent
\textit{Inductive step. } Suppose that the statement holds for $k$. We prove it for $k+1$.
Let $j\in\{1,\ldots,m\}$ be such that $A_{j}=\{2\}$.
As above, from
\begin{equation*}
\mathcal{A}[j : \{0,1,2\}] = \mathcal{A} \cup \mathcal{A}[j : \{0,1\}],
\end{equation*}
we obtain
\begin{equation*}
d(\mathcal{A}) = d(\mathcal{A}[j : \{0,1,2\}]) - d(\mathcal{A}[j : \{0,1\}]).
\end{equation*}
By the inductive hypothesis, we obtain that $d(\mathcal{A}[j : \{0,1,2\}])=1$
(since $\mathcal{A}[j : \{0,1,2\}]$ has exactly $k$ indices $i$ such that $A_{i}=\{2\}$).
By Lemma~\ref{lem-A1}, we have that $d(\mathcal{A}[j : \{0,1\}])=0$.
The thesis immediately follows.
\end{proof}

Finally, using the rules presented above, we obtain the final lemma.

\begin{lemma}\label{lem-A3}
If $A_{i} \in \{ \{0\}, \{1\}, \{2\}, \{0,1,2\} \}$, for all $i=1,\ldots,m$, then
\begin{equation*}
d(A_{1} \times \ldots \times A_{m}) = (-1)^{\#\mathcal{I}},
\end{equation*}
where $\mathcal{I} := \bigl{\{} i\in\{1,\ldots,m\} \colon A_{i}=\{1\} \bigr{\}}$.
\end{lemma}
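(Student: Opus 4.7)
The plan is to prove Lemma~\ref{lem-A3} by induction on the integer $k := \#\mathcal{I} = \#\{i : A_i = \{1\}\}$. The key tool is the additivity property applied to the disjoint decomposition $\{0,1\} = \{0\} \sqcup \{1\}$, which yields
\begin{equation*}
d(\mathcal{A}[j:\{1\}]) = d(\mathcal{A}[j:\{0,1\}]) - d(\mathcal{A}[j:\{0\}])
\end{equation*}
for any index $j$. This identity lets us trade a $\{1\}$-component for a $\{0\}$-component at the cost of passing through a $\{0,1\}$-component, and everything follows from this single move combined with the two previous lemmas.

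The base case $k=0$ is immediate: every $A_i$ already lies in $\{\{0\},\{2\},\{0,1,2\}\}$, so Lemma~\ref{lem-A2} gives $d(\mathcal{A}) = 1 = (-1)^0$. For the inductive step, I find it cleanest to prove the slightly stronger statement
\begin{equation*}
(\star)\quad d(\mathcal{A}) = \begin{cases}\, 0, & \text{if some } A_i = \{0,1\},\\ \, (-1)^{k}, & \text{if no } A_i = \{0,1\},\end{cases}
\end{equation*}
valid whenever each $A_i \in \{\{0\},\{1\},\{2\},\{0,1\},\{0,1,2\}\}$. The base case $k=0$ of $(\star)$ follows from Lemma~\ref{lem-A1} (in the case where some $A_i = \{0,1\}$) and Lemma~\ref{lem-A2} (otherwise); notice that Lemma~\ref{lem-A1} already permits an arbitrary number of $\{0,1\}$-factors, which is crucial.

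For the inductive step of $(\star)$, fix $j$ with $A_j = \{1\}$ and apply the identity above to obtain $d(\mathcal{A}) = d(\mathcal{A}[j:\{0,1\}]) - d(\mathcal{A}[j:\{0\}])$. The tuple $\mathcal{A}[j:\{0,1\}]$ has $k-1$ components equal to $\{1\}$ and at least one component (the $j$-th) equal to $\{0,1\}$, so by the inductive hypothesis $d(\mathcal{A}[j:\{0,1\}]) = 0$. The tuple $\mathcal{A}[j:\{0\}]$ has $k-1$ components equal to $\{1\}$ and the same number of $\{0,1\}$-factors as $\mathcal{A}$; by induction, $d(\mathcal{A}[j:\{0\}]) = 0$ if $\mathcal{A}$ contains a $\{0,1\}$, and $d(\mathcal{A}[j:\{0\}]) = (-1)^{k-1}$ otherwise. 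Subtracting gives $d(\mathcal{A}) = 0$ or $d(\mathcal{A}) = (-1)^{k}$ respectively, closing the induction. Specializing $(\star)$ to the case where no $A_i = \{0,1\}$ (which is precisely the hypothesis of Lemma~\ref{lem-A3}) yields $d(\mathcal{A}) = (-1)^{\#\mathcal{I}}$, as required.

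The only bookkeeping to check is that each recursive call strictly decreases $k$, so the induction is well-founded; no additional estimates beyond the additivity property and the two prior lemmas are needed. The main conceptual point is the mildly sharpened claim $(\star)$, which is indispensable because the intermediate tuple $\mathcal{A}[j:\{0,1\}]$ produced during the induction may simultaneously contain both $\{0,1\}$-factors \emph{and} remaining $\{1\}$-factors, a configuration not covered by Lemma~\ref{lem-A1} on its own.
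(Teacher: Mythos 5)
Your proof is correct, and it takes a mildly but genuinely different route from the paper's. The paper also inducts on $k=\#\mathcal{I}$, but its key identity is the three-term decomposition $\mathcal{A}[i:\{0,1,2\}] = \mathcal{A}[i:\{0\}] \cup \mathcal{A} \cup \mathcal{A}[i:\{2\}]$, giving $d(\mathcal{A}) = d(\mathcal{A}[i:\{0,1,2\}]) - d(\mathcal{A}[i:\{0\}]) - d(\mathcal{A}[i:\{2\}])$; since all three intermediate tuples have exactly $k$ factors equal to $\{1\}$ and remain inside the class $\{\{0\},\{1\},\{2\},\{0,1,2\}\}$, the plain inductive hypothesis of the lemma applies and yields $(-1)^k - (-1)^k - (-1)^k = (-1)^{k+1}$ with no strengthening needed. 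Your two-term move $\{0,1\}=\{0\}\sqcup\{1\}$ instead produces the intermediate tuple $\mathcal{A}[j:\{0,1\}]$, which escapes that class (it can carry a $\{0,1\}$-factor alongside surviving $\{1\}$-factors), and you correctly diagnose that Lemma~\ref{lem-A1} alone does not cover this configuration, which is precisely why your strengthened claim $(\star)$ is indispensable. What each approach buys: the paper's is shorter because the induction closes on the stated lemma itself; yours isolates a cleaner structural fact, namely that any $\{0,1\}$-factor forces $d=0$ regardless of how many $\{1\}$-factors are present, which subsumes Lemma~\ref{lem-A1} as the $k=0$ case and makes the vanishing mechanism more transparent. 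Both verifications of the base case and the well-foundedness of the recursion are handled correctly.
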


\begin{proof}
We prove the statement by induction on the non-negative integer $k:=\# \mathcal{I}$.

\smallskip
\noindent
\textit{Case $k=0$. } If there is no $i\in\{1,\ldots,m\}$ such that $A_{i}=\{1\}$, the thesis follows by Lemma~\ref{lem-A2}.

\smallskip
\noindent
\textit{Case $k=1$. } Suppose that there is exactly one index $i\in\{1,\ldots,m\}$ such that $A_{i}=\{1\}$.
Recalling the definition of $\mathcal{A}$ in \eqref{eq-A.1}, it is easy to see that
\begin{equation*}
\mathcal{A}[i : \{0,1,2\}] = \mathcal{A}[i : \{0\}] \cup \mathcal{A} \cup \mathcal{A}[i : \{2\}].
\end{equation*}
Then, by the additivity property of $d$ and Lemma~\ref{lem-A2}, we obtain
\begin{equation*}
\begin{aligned}
d(\mathcal{A})
   &= d(\mathcal{A}[i : \{0,1,2\}]) - d(\mathcal{A}[i : \{0\}]) - d(\mathcal{A}[i : \{2\}])
\\ &= 1-1-1 = -1 = (-1)^{\#\mathcal{I}}.
\end{aligned}
\end{equation*}

\smallskip
\noindent
\textit{Inductive step. } Suppose that the statement holds when the set $\mathcal{I}$ has $k$ elements.
We prove it for $\# \mathcal{I}=k+1$.
Let $i\in\{1,\ldots,m\}$ be such that $A_{i}=\{1\}$. By assumption there are $k+1$ indices with such a property.
As above, from
\begin{equation*}
\mathcal{A}[i : \{0,1,2\}] = \mathcal{A}[i : \{0\}] \cup \mathcal{A} \cup \mathcal{A}[i : \{2\}],
\end{equation*}
we obtain
\begin{equation*}
d(\mathcal{A}) = d(\mathcal{A}[i : \{0,1,2\}]) - d(\mathcal{A}[i : \{0\}]) - d(\mathcal{A}[i : \{2\}]).
\end{equation*}
Now, all the sets $\mathcal{A}[i : \{0,1,2\}]$,
$\mathcal{A}[i : \{0\}]$ and $\mathcal{A}[i : \{2\}]$
have precisely $k$ indices $j$ such that $A_{j}=\{1\}$.
Then, by the inductive hypothesis, we obtain that
\begin{equation*}
d(\mathcal{A}[i : \{0,1,2\}]) = d(\mathcal{A}[i : \{0\}]) = d(\mathcal{A}[i : \{2\}]) = (-1)^{k}
\end{equation*}
and hence
\begin{equation*}
d(\mathcal{A}) = -(-1)^{k} = (-1)^{k+1} = (-1)^{\#\mathcal{I}}.
\end{equation*}
The thesis immediately follows.
\end{proof}

\medskip

We conclude this first part by showing how to apply this approach to obtain formula \eqref{eq-3.2}.

To any element $\mathcal{A}\in\mathbb{A}$ we associate an open set $\Omega_{\mathcal{A}}$
made up of the continuous functions $u \colon \mathopen{[}0,T\mathclose{]} \to \mathbb{R}$ which, for all $i=1,\ldots,m$, satisfy
\begin{itemize}
\item $\max_{t\in I^{+}_{i}}|u(t)|<r$, if $A_{i}=\{0\}$;
\item $r<\max_{t\in I^{+}_{i}}|u(t)|<\rho$, if $A_{i}=\{1\}$;
\item $\rho<\max_{t\in I^{+}_{i}}|u(t)|<R$, if $A_{i}=\{2\}$;
\item $\max_{t\in I^{+}_{i}}|u(t)|<\rho$, if $A_{i}=\{0,1\}$;
\item either $\max_{t\in I^{+}_{i}}|u(t)|<r$ or $\rho<\max_{t\in I^{+}_{i}}|u(t)|<R$, if $A_{i}=\{0,2\}$;
\item $r<\max_{t\in I^{+}_{i}}|u(t)|<R$, if $A_{i}=\{1,2\}$;
\item $\max_{t\in I^{+}_{i}}|u(t)|<R$, if $A_{i}=\{0,1,2\}$.
\end{itemize}
By convention, we also set $\Omega_{\mathcal{A}} = \emptyset$ if there is an index $i\in \{1,\ldots,m\}$
such that $A_{i} = \emptyset$. In this manner the set $\Omega_{\mathcal{A}}$ is well define for
every $\mathcal{A}\in\mathbb{A}$.

Having fixed $\rho$, $\lambda>\lambda^{*}$, $r < \rho < R$ and $\mu > \mu^{*}(\lambda)$ as in Section~\ref{section-4},
we have that the coincidence degree $D_{L}(L-N_{\lambda,\mu},\Omega_{\mathcal{A}})$ is well defined for
every $\mathcal{A}\in\mathbb{A}$.
Hence we set
\begin{equation*}
d(\mathcal{A}):=D_{L}\bigl{(}L-N_{\lambda,\mu},\Omega_{\mathcal{A}}\bigr{)}.
\end{equation*}
Notice that the sets $\Omega^{\mathcal{I},\mathcal{J}}_{(r,\rho,R)}$ introduced in \eqref{eq-Omega} are of the form
$\Omega_{\mathcal{A}}$ for $\mathcal{A}$ with $A_{i} = \{0\}$ for any $i\in \{1,\ldots,m\} \setminus(\mathcal{I}\cup\mathcal{J})$,
$A_{i} = \{0,1\}$ for any $i\in \mathcal{I}$ and $A_{i} = \{0,1,2\}$ for any $i\in \mathcal{J}$. Similarly, the sets
$\Lambda^{\mathcal{I},\mathcal{J}}_{(r,\rho,R)}$ introduced in \eqref{eq-Lambda} are of the form
$\Omega_{\mathcal{A}}$ for $\mathcal{A}$ with $A_{i} = \{0\}$ for any $i\in \{1,\ldots,m\} \setminus(\mathcal{I}\cup\mathcal{J})$,
$A_{i} = \{1\}$ for any $i\in \mathcal{I}$ and $A_{i} = \{2\}$ for any $i\in \mathcal{J}$.

With these positions, the \textit{additivity property} of the valuation $d$ follows from the additivity property of the coincidence degree.
Moreover, rules $(R1)$ and $(R2)$ are satisfied since they correspond to formula \eqref{eq-3.1}. Then, all the above lemmas on the valuation $d$
apply and, in particular, Lemma~\ref{lem-A3} gives precisely formula \eqref{eq-3.2}. This completes the proof of Theorem~\ref{deg-Lambda}.

\subsection{Second argument}\label{section-A2}

In this second part we present a different combinatorial argument, in the same spirit of the one adopted in
\cite[Lemma~4.1]{FeZa-jde2015}.

Let $r,\rho,R$ be three positive real numbers such that $0<r<\rho<R$ and let $m\geq1$ be an integer.
Recalling the definitions of $\Omega^{\mathcal{I},\mathcal{J}}_{(r,\rho,R)}$ and $\Lambda^{\mathcal{I},\mathcal{J}}_{(r,\rho,R)}$ given in \eqref{eq-Omega}
and \eqref{eq-Lambda} respectively, we note that, for any pair of subset of indices $\mathcal{I},\mathcal{J}\subseteq \{1,\ldots,m\}$
with $\mathcal{I}\cap\mathcal{J}=\emptyset$, we have
\begin{equation}\label{eq-union}
\Omega^{\mathcal{I},\mathcal{J}}_{(r,\rho,R)}
=\bigcup_{\substack{\mathcal{I}'\subseteq\mathcal{I}\cup\mathcal{J} \\ \mathcal{J}'\subseteq\mathcal{J} \\ \mathcal{I}'\cap\mathcal{J}'=\emptyset}}
\Lambda^{\mathcal{I}',\mathcal{J}'}_{(r,\rho,R)},
\end{equation}
and the union is disjoint, since $\Lambda^{\mathcal{I}',\mathcal{J}'}_{(r,\rho,R)} \cap \Lambda^{\mathcal{I}'',\mathcal{J}''}_{(r,\rho,R)} = \emptyset$,
for $\mathcal{I}'\neq\mathcal{I}''$ or for $\mathcal{J}'\neq\mathcal{J}''$.

Observe that the set of all the pairs $(\mathcal{I},\mathcal{J})$ with $\mathcal{I},\mathcal{J}\subseteq\{1,\ldots,m\}$ such that
$\mathcal{I}\cap\mathcal{J} = \emptyset$ has cardinality equal to $3^{m}$.

Now we are in position to present the following result.

\begin{lemma}\label{lemma-1}
Let $\mathcal{I},\mathcal{J}\subseteq\{1,\ldots,m\}$ be two subsets of indices (possibly empty) such that $\mathcal{I}\cap\mathcal{J}=\emptyset$.
Suppose that the coincidence degrees $D_{L}\bigl{(}L-N_{\lambda,\mu},\Omega^{\mathcal{I}',\mathcal{J}'}_{(r,\rho,R)})$
and $D_{L}\bigl{(}L-N_{\lambda,\mu},\Lambda^{\mathcal{I}',\mathcal{J}'}_{(r,\rho,R)})$ are well defined
for all ${\mathcal{I}'}\subseteq {\mathcal{I}}\cup\mathcal{J}$ and for all ${\mathcal{J}'}\subseteq \mathcal{J}$
with $\mathcal{I}'\cap\mathcal{J}'=\emptyset$. Assume also
\begin{equation}\label{deg=1}
D_{L}\bigl{(}L-N_{\lambda,\mu},\Omega^{\mathcal{I}',\mathcal{J}'}_{(r,\rho,R)})=1, \quad \text{if } \;\mathcal{I}' = \emptyset,
\end{equation}
and
\begin{equation}\label{deg=0}
D_{L}\bigl{(}L-N_{\lambda,\mu},\Omega^{\mathcal{I}',\mathcal{J}'}_{(r,\rho,R)})=0,
\quad \text{if } \; \mathcal{I}' \neq \emptyset.
\end{equation}
Then
\begin{equation}\label{deg=-1}
D_{L}\bigl{(}L-N_{\lambda,\mu},\Lambda^{\mathcal{I},\mathcal{J}}_{(r,\rho,R)})=(-1)^{\#\mathcal{I}}.
\end{equation}
\end{lemma}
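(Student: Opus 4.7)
The plan is to invert the linear system that the additivity property of the coincidence degree imposes on the values of $D_{L}$ on the families $\Omega^{\mathcal{I},\mathcal{J}}$ and $\Lambda^{\mathcal{I},\mathcal{J}}$.

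First I would observe that, from the disjoint decomposition \eqref{eq-union} and the additivity property of the coincidence degree recalled in Section~\ref{section-2}, for every admissible pair $(\mathcal{I},\mathcal{J})$ one has
\[
D_{L}\bigl(L-N_{\lambda,\mu},\Omega^{\mathcal{I},\mathcal{J}}_{(r,\rho,R)}\bigr) \;=\; \sum_{\substack{\mathcal{I}'\subseteq \mathcal{I}\cup\mathcal{J},\ \mathcal{J}'\subseteq \mathcal{J} \\ \mathcal{I}'\cap\mathcal{J}'=\emptyset}} D_{L}\bigl(L-N_{\lambda,\mu},\Lambda^{\mathcal{I}',\mathcal{J}'}_{(r,\rho,R)}\bigr).
\]
Together with \eqref{deg=1}--\eqref{deg=0}, this is a triangular system in the unknowns $D_{L}(L-N_{\lambda,\mu},\Lambda^{\mathcal{I}',\mathcal{J}'}_{(r,\rho,R)})$ and can be inverted.

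Second, I would reformulate the situation on a product poset to make the combinatorics transparent. To each admissible pair $(\mathcal{I},\mathcal{J})$ associate the function $\phi\colon\{1,\ldots,m\}\to\{0,1,2\}$ defined by $\phi(i)=0$ if $i\notin\mathcal{I}\cup\mathcal{J}$, $\phi(i)=1$ if $i\in\mathcal{I}$, $\phi(i)=2$ if $i\in\mathcal{J}$. A direct check shows that the constraints $\mathcal{I}'\subseteq\mathcal{I}\cup\mathcal{J}$, $\mathcal{J}'\subseteq\mathcal{J}$, $\mathcal{I}'\cap\mathcal{J}'=\emptyset$ translate exactly into the componentwise inequality $\phi'(i)\leq\phi(i)$ for every $i$, where $\{0,1,2\}$ carries the natural linear order $0<1<2$. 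Writing the $\Omega$- and $\Lambda$-degrees as functions $\omega,\delta$ on $\{0,1,2\}^{m}$, the identity above becomes $\omega(\phi)=\sum_{\phi'\leq\phi}\delta(\phi')$, i.e.~$\omega$ is the zeta-transform of $\delta$ on the product poset $\{0,1,2\}^{m}$, and \eqref{deg=1}--\eqref{deg=0} say that $\omega(\phi)=1$ precisely when $\phi$ takes values in $\{0,2\}$, and $\omega(\phi)=0$ otherwise.

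Third, I would apply Möbius inversion on $\{0,1,2\}^{m}$. Since the Möbius function of a product factorises, $\mu(\phi',\phi)=\prod_{i}\mu_1(\phi'(i),\phi(i))$, and since for the 3-chain $\mu_1(k,k)=1$, $\mu_1(0,1)=\mu_1(1,2)=-1$, $\mu_1(0,2)=0$, inversion together with the fact that $\omega$ is supported on $\{0,2\}^m$ yields
\[
\delta(\phi) \;=\; \sum_{\substack{\phi'\leq\phi \\ \phi'(i)\in\{0,2\}\ \forall i}}\prod_{i=1}^{m}\mu_1(\phi'(i),\phi(i)) \;=\; \prod_{i=1}^{m}c(\phi(i)),
\]
with $c(0)=\mu_1(0,0)=1$, $c(1)=\mu_1(0,1)=-1$ and $c(2)=\mu_1(0,2)+\mu_1(2,2)=1$. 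Consequently $\delta(\phi)=(-1)^{\#\{i\colon\phi(i)=1\}}=(-1)^{\#\mathcal{I}}$, which is exactly \eqref{deg=-1}.

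The only delicate point is the applicability of additivity to the decomposition \eqref{eq-union}: the strict inequalities defining each $\Lambda^{\mathcal{I}',\mathcal{J}'}_{(r,\rho,R)}$ make $\bigcup\Lambda$ a proper open subset of $\Omega^{\mathcal{I},\mathcal{J}}_{(r,\rho,R)}$, and one must rule out fixed points of $\Phi_{\lambda,\mu}$ on the slivers $\{\max_{I^+_i}|u|\in\{r,\rho,R\}\}$. This is precisely what was established in the verification of $(H_{1})$ and $(H_{3})$ in Section~\ref{section-4.3} and Section~\ref{section-4.4} (specialised to $\alpha=0$ and $\vartheta=1$), and it is what allows the additivity step, and thus the whole Möbius inversion above, to go through.
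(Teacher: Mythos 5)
Your argument is correct, and it inverts the key linear system by a genuinely different mechanism than the paper. You share the starting point of the paper's second argument in Appendix~\ref{section-A2}: the disjoint decomposition \eqref{eq-union} together with the additivity of the coincidence degree yields the triangular relation expressing $D_{L}(L-N_{\lambda,\mu},\Omega^{\mathcal{I},\mathcal{J}}_{(r,\rho,R)})$ as the sum of the degrees on the $\Lambda^{\mathcal{I}',\mathcal{J}'}_{(r,\rho,R)}$. The paper then inverts this system inductively --- in Appendix~\ref{section-A2} by a single induction on $\#\mathcal{I}'+(m+1)\#\mathcal{J}'$, using at each step the cancellation $\sum_{\mathcal{L}}(-1)^{\#\mathcal{L}}=0$, and in Appendix~\ref{section-A1} by an equivalent valuation-style induction in the spirit of Klain--Rota. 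You instead observe that your dictionary between admissible pairs $(\mathcal{I}',\mathcal{J}')$ and functions $\phi'\leq\phi$ in the product poset $\{0,1,2\}^{m}$ of $m$ copies of the three-element chain is exact (I checked all three cases of the translation: it is), so the relation is precisely a zeta transform, and M\"{o}bius inversion together with multiplicativity of the M\"{o}bius function over products and its values $-1$ on covering pairs and $0$ on the pair $(0,2)$ of the chain gives $(-1)^{\#\{i\,:\,\phi(i)=1\}}=(-1)^{\#\mathcal{I}}$ in closed form. What your route buys is a one-step, non-inductive computation that makes transparent why the digit $1$ contributes a factor $-1$ while $0$ and $2$ each contribute $+1$; what the paper's route buys is self-containedness, avoiding the M\"{o}bius machinery. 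You are also right --- and in fact more careful than the displayed formula \eqref{eq-union}, which is an equality only up to the slivers where some $\max_{I^{+}_{i}}|u|$ equals $r$ or $\rho$ --- to flag that additivity requires the absence of solutions on those slivers; as you note, this is exactly what the well-definedness hypotheses of the lemma encode and what the verifications of $(H_{1})$ and $(H_{3})$ (with $\alpha=0$, $\vartheta=1$) and Lemma~\ref{lem-rho} establish.
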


\begin{proof}
For simplicity of notation, in this proof we set
\begin{equation*}
\Omega^{\mathcal{I},\mathcal{J}}=\Omega^{\mathcal{I},\mathcal{J}}_{(r,\rho,R)}
\quad \text{ and } \quad
\Lambda^{\mathcal{I},\mathcal{J}}=\Lambda^{\mathcal{I},\mathcal{J}}_{(r,\rho,R)}.
\end{equation*}

First of all, we underline that $\Omega^{\emptyset,\emptyset}=\Lambda^{\emptyset,\emptyset}$ and, in view of \eqref{deg=1}, we have that
\begin{equation}\label{deg1}
D_{L}\bigl{(}L-N_{\lambda,\mu},\Omega^{\emptyset,\emptyset})=D_{L}\bigl{(}L-N_{\lambda,\mu},\Lambda^{\emptyset,\emptyset})=1.
\end{equation}
Hence the conclusion is
trivially satisfied when ${\mathcal{I}} = {\mathcal{J}} = \emptyset$.

Now we consider two arbitrary subsets of indices (possibly empty)
such that $\mathcal{I}\cup\mathcal{J}\neq\emptyset$ and $\mathcal{I}\cap\mathcal{J}=\emptyset$.
We are going to prove formula \eqref{deg=-1} by using an inductive argument.
Instead of a double induction on $\#{\mathcal{I}}$ and on $\#{\mathcal{J}}$, it seems more convenient
to introduce the bijection
\begin{equation*}
(i,j) \leftrightarrow i+(m+1)j
\end{equation*}
from the set of couples $(i,j)\in\{1,\ldots,m\}^{2}$ and the integers $0\leq n \leq m(m+2)$, in order to
reduce our argument to a single induction. More precisely, we define
\begin{equation*}
n:=\#{\mathcal{I}}+(m+1)\#{\mathcal{J}}\geq 1
\end{equation*}
and, for every integer $k$ with $0\leq k\leq n$, we introduce the property ${\mathscr{P}(k)}$
which reads as follows.
\begin{itemize}
\item[${\mathscr{P}(k)}$:]
\textit{The formula
\begin{equation*}
D_{L}(L-N_{\lambda,\mu},\Lambda^{\mathcal{I}',\mathcal{J}'})=(-1)^{\#\mathcal{I}'}
\end{equation*}
holds for each ${\mathcal{I}'}\subseteq\mathcal{I}\cup\mathcal{J}$ and for each ${\mathcal{J}'}\subseteq{\mathcal{J}}$ such that
$\mathcal{I}'\cap\mathcal{J}'=\emptyset$ and $\#\mathcal{I}'+(m+1)\#\mathcal{J}'\leq k$.}
\end{itemize}
In this manner, if we are able to prove ${\mathscr{P}}(n)$, then \eqref{deg=-1} immediately follows.

\smallskip
\noindent
\textit{Verification of $\mathscr{P}(0)$. } See \eqref{deg1}.

\smallskip
\noindent
\textit{Verification of $\mathscr{P}(1)$. }
For $\mathcal{I}' = \mathcal{J}' = \emptyset$ the result is already proved in \eqref{deg1}.
If $\mathcal{I}'=\{i\}$, with $i\in \mathcal{I}\cup\mathcal{J}$, and $\mathcal{J}' = \emptyset$,
by the additivity property of the coincidence degree and hypothesis \eqref{deg=0}, we have
\begin{equation*}
\begin{aligned}
        D_{L}(L-N_{\lambda,\mu},\Lambda^{\mathcal{I}',\mathcal{J}'})
      & = D_{L}(L-N_{\lambda,\mu},\Lambda^{\{i\},\emptyset})
\\    & = D_{L}(L-N_{\lambda,\mu},\Omega^{\{i\},\emptyset}\setminus \Lambda^{\emptyset,\emptyset})
\\    & = D_{L}(L-N_{\lambda,\mu},\Omega^{\{i\},\emptyset})
          - D_{L}(L-N_{\lambda,\mu},\Lambda^{\emptyset,\emptyset})
\\  & = 0-1= -1 =(-1)^{\#\mathcal{I}'}.
\end{aligned}
\end{equation*}
There are no other possible choices of $\mathcal{I}'$ and $\mathcal{J}'$
with $\#\mathcal{I}'+(m+1)\#\mathcal{J}'\leq 1$ (since $m \geq 1$).

\smallskip
\noindent
\textit{Verification of ${\mathscr{P}}(k-1)\Rightarrow{\mathscr{P}}(k)$, for $1\leq k\leq n$. }
Assuming the validity of ${\mathscr{P}}(k-1)$ we have that the formula is true
for every ${\mathcal{I}'}\subseteq\mathcal{I}\cup\mathcal{J}$ and for every ${\mathcal{J}'}\subseteq{\mathcal{J}}$ such that
$\mathcal{I}'\cap\mathcal{J}'=\emptyset$ and $\#\mathcal{I}'+(m+1)\#\mathcal{J}'\leq k-1$.
Therefore, in order to prove ${\mathscr{P}}(k)$, we have only to check that the formula is true
for any possible choice of ${\mathcal{I}'}\subseteq\mathcal{I}\cup\mathcal{J}$ and ${\mathcal{J}'}\subseteq{\mathcal{J}}$
with $\mathcal{I}'\cap\mathcal{J}'=\emptyset$ and such that
\begin{equation}\label{eek}
\#\mathcal{I}'+(m+1)\#\mathcal{J}'= k.
\end{equation}

We distinguish two cases: either $\mathcal{I}'=\emptyset$ or $\mathcal{I}'\neq\emptyset$.
As a first instance, let $\mathcal{I}'=\emptyset$ and, in view of \eqref{eek},
suppose $\mathcal{J}'\neq\emptyset$ and $\#\mathcal{J}'=k/(m+1)$.
By formula \eqref{eq-union}, $\Omega^{\emptyset,\mathcal{J}'}$ can be written as the disjoint union
\begin{equation*}
\Omega^{\emptyset,\mathcal{J}'}
= \bigcup_{\substack{\mathcal{L}\subseteq\mathcal{J}' \\ \mathcal{K}\subseteq\mathcal{J}' \\ \mathcal{L}\cap\mathcal{K}=\emptyset}}
\Lambda^{\mathcal{L},\mathcal{K}}
= \Lambda^{\emptyset,\mathcal{J}'} \cup
\bigcup_{\substack{\mathcal{L}\subseteq\mathcal{J}' \\ \mathcal{K}\subsetneq\mathcal{J}' \\ \mathcal{L}\cap\mathcal{K}=\emptyset}}
\Lambda^{\mathcal{L},\mathcal{K}}.
\end{equation*}
Since $\#\mathcal{L}+(m+1)\#\mathcal{K}\leq k-1$ if $\mathcal{K}\subsetneq\mathcal{J}'$, by \eqref{deg=1}
and by the inductive hypothesis,
we obtain
\begin{equation*}
\begin{aligned}
D_{L}(L-N_{\lambda,\mu},\Lambda^{\emptyset,\mathcal{J}'})
& = D_{L}(L-N_{\lambda,\mu},\Omega^{\emptyset,\mathcal{J}'})
        - \sum_{\substack{\mathcal{L}\subseteq\mathcal{J}' \\ \mathcal{K}\subsetneq\mathcal{J}' \\ \mathcal{L}\cap\mathcal{K}=\emptyset}}
        D_{L}(L-N_{\lambda,\mu},\Lambda^{\mathcal{L},\mathcal{K}})
\\  & = 1 - \sum_{\substack{\mathcal{L}\subseteq\mathcal{J}' \\ \mathcal{K}\subsetneq\mathcal{J}'
\\ \mathcal{L}\cap\mathcal{K}=\emptyset}} (-1)^{\#\mathcal{L}}.
\end{aligned}
\end{equation*}
Now we observe that
\begin{equation*}
\sum_{\substack{\mathcal{L}\subseteq\mathcal{J}' \\ \mathcal{K}\subsetneq\mathcal{J}' \\ \mathcal{L}\cap\mathcal{K}=\emptyset}} (-1)^{\#\mathcal{L}} =
\sum_{\mathcal{K}\subsetneq\mathcal{J}'} \sum_{\mathcal{L}\subseteq\mathcal{J}'\setminus\mathcal{K}} (-1)^{\#\mathcal{L}} = 0,
\end{equation*}
due to the fact that in a finite set there are so many subsets of even cardinality how many subsets of odd cardinality.
Thus we conclude that
\begin{equation*}
D_{L}(L-N_{\lambda,\mu},\Lambda^{\emptyset,\mathcal{J}'}) = 1 = (-1)^{\#\mathcal{I}'}.
\end{equation*}
As a second instance, let $\mathcal{I}'\neq\emptyset$. Using \eqref{eq-union}, we can write $\Omega^{\mathcal{I}',\mathcal{J}'}$ as the disjoint union
\begin{equation*}
\Omega^{\mathcal{I}',\mathcal{J}'}
=\bigcup_{\substack{\mathcal{L}\subseteq\mathcal{I}'\cup\mathcal{J}' \\ \mathcal{K}\subseteq\mathcal{J}' \\ \mathcal{L}\cap\mathcal{K}=\emptyset}}
\Lambda^{\mathcal{L},\mathcal{K}}
= \Lambda^{\mathcal{I}',\mathcal{J}'} \cup \bigcup_{\substack{\mathcal{L}\subseteq\mathcal{I}'\cup\mathcal{J}' \\ \mathcal{K}\subseteq\mathcal{J}'
        \\ \mathcal{L}\cap\mathcal{K}=\emptyset  \\ (\mathcal{L},\mathcal{K})\neq(\mathcal{I}',\mathcal{J}')}} \Lambda^{\mathcal{L},\mathcal{K}}.
\end{equation*}
Since $\#\mathcal{L}+(m+1)\#\mathcal{K}\leq k-1$, if $\mathcal{K}\subsetneq\mathcal{J}'$ or
if $\mathcal{K}=\mathcal{J}'$ and $\mathcal{L}\subsetneq\mathcal{I}'$, by \eqref{deg=0} and by the inductive hypothesis, we obtain
\begin{equation*}
\begin{aligned}
    & D_{L}(L-N_{\lambda,\mu},\Lambda^{\mathcal{I}',\mathcal{J}'}) =
\\  & = D_{L}(L-N_{\lambda,\mu},\Omega^{\mathcal{I}',\mathcal{J}'}) -
       \sum_{\substack{\mathcal{L}\subseteq\mathcal{I}'\cup\mathcal{J}' \\ \mathcal{K}\subseteq\mathcal{J}'
        \\ \mathcal{L}\cap\mathcal{K}=\emptyset  \\ (\mathcal{L},\mathcal{K})\neq(\mathcal{I}',\mathcal{J}')}}
       D_{L}(L-N_{\lambda,\mu},\Lambda^{\mathcal{L},\mathcal{K}})
\\  & = 0 -\sum_{\substack{\mathcal{L}\subseteq\mathcal{I}'\cup\mathcal{J}' \\ \mathcal{K}\subseteq\mathcal{J}'
           \\ \mathcal{L}\cap\mathcal{K}=\emptyset  \\ (\mathcal{L},\mathcal{K})\neq(\mathcal{I}',\mathcal{J}')}}
            (-1)^{\#\mathcal{L}}
      = (-1)^{\#\mathcal{I}'} - \sum_{\substack{\mathcal{L}\subseteq\mathcal{I}'\cup\mathcal{J}' \\ \mathcal{K}\subseteq\mathcal{J}' \\ \mathcal{L}\cap\mathcal{K}=\emptyset}}
            (-1)^{\#\mathcal{L}}
      = (-1)^{\#\mathcal{I}'},
\end{aligned}
\end{equation*}
observing, as above, that
\begin{equation*}
\sum_{\substack{\mathcal{L}\subseteq\mathcal{I}'\cup\mathcal{J}' \\ \mathcal{K}\subseteq\mathcal{J}' \\ \mathcal{L}\cap\mathcal{K}=\emptyset}} (-1)^{\#\mathcal{L}} =
\sum_{\mathcal{K}\subseteq\mathcal{J}'} \sum_{\mathcal{L}\subseteq\mathcal{I}'\cup (\mathcal{J}'\setminus\mathcal{K})} (-1)^{\#\mathcal{L}} = 0.
\end{equation*}
Then $\mathscr{P}(k)$ is proved and the lemma follows.
\end{proof}

Now, since \eqref{deg=-1} is exactly formula \eqref{eq-3.2}, in order
to complete the proof of Theorem~\ref{deg-Lambda} we have only to check that
the degrees are well defined and assumptions
\eqref{deg=1} and \eqref{deg=0} in the above combinatorial lemma are satisfied.
All these requests are obviously guaranteed by the discussion in Section~\ref{section-3.1} and by formula \eqref{eq-3.1}.
Then Lemma~\ref{lemma-1} applies and this completes the proof of Theorem~\ref{deg-Lambda}.

\section*{Acknowledgments}

Work performed under the auspicies of the Grup\-po Na\-zio\-na\-le per l'Anali\-si Ma\-te\-ma\-ti\-ca, la Pro\-ba\-bi\-li\-t\`{a} e le lo\-ro
Appli\-ca\-zio\-ni (GNAMPA) of the Isti\-tu\-to Na\-zio\-na\-le di Al\-ta Ma\-te\-ma\-ti\-ca (INdAM).
Guglielmo Feltrin and Fabio Zanolin are partially supported by the GNAMPA Project 2015
``Problemi al contorno associati ad alcune classi di equazioni differenziali non lineari''.
Alberto Boscaggin is partially supported by the GNAMPA Project 2015 ``Equazioni Differenziali Ordinarie sulla retta reale'' and by the
project ERC Advanced Grant 2013 n.~339958 ``Complex Patterns for Strongly Interacting Dynamical Systems - COMPAT''.

\bibliographystyle{elsart-num-sort}
\bibliography{BFZ_biblio}

\begin{thebibliography}{10}
\expandafter\ifx\csname url\endcsname\relax
  \def\url#1{\texttt{#1}}\fi
\expandafter\ifx\csname urlprefix\endcsname\relax\def\urlprefix{URL }\fi

\bibitem{Am-72}
H.~Amann, On the number of solutions of nonlinear equations in ordered {B}anach
  spaces, J. Functional Analysis 11 (1972) 346--384.

\bibitem{AtEvOn-74}
F.~V. Atkinson, W.~N. Everitt, K.~S. Ong, On the {$m$}-coefficient of {W}eyl
  for a differential equation with an indefinite weight function, Proc. London
  Math. Soc. (3) 29 (1974) 368--384.

\bibitem{AuKi-01}
B.~Aulbach, B.~Kieninger, On three definitions of chaos, Nonlinear Dyn. Syst.
  Theory 1 (2001) 23--37.

\bibitem{BaPoTe-88}
C.~Bandle, M.~A. Pozio, A.~Tesei, Existence and uniqueness of solutions of
  nonlinear {N}eumann problems, Math. Z. 199 (1988) 257--278.

\bibitem{BaBoVe-15}
V.~L. Barutello, A.~Boscaggin, G.~Verzini, Positive solutions with a complex
  behavior for superlinear indefinite {ODE}s on the real line, J. Differential
  Equations 259 (2015) 3448--3489.

\bibitem{BeCaDoNi-94}
H.~Berestycki, I.~Capuzzo-Dolcetta, L.~Nirenberg, Superlinear indefinite
  elliptic problems and nonlinear {L}iouville theorems, Topol. Methods
  Nonlinear Anal. 4 (1994) 59--78.

\bibitem{BhSz-70}
N.~P. Bhatia, G.~P. Szeg{\"o}, Stability theory of dynamical systems, Die
  Grundlehren der mathematischen Wissenschaften, Band 161, Springer-Verlag, New
  York-Berlin, 1970.

\bibitem{BlCo-92}
L.~S. Block, W.~A. Coppel, Dynamics in one dimension, vol. 1513 of Lecture
  Notes in Mathematics, Springer-Verlag, Berlin, 1992.

\bibitem{BoGoHa-05}
D.~Bonheure, J.~M. Gomes, P.~Habets, Multiple positive solutions of superlinear
  elliptic problems with sign-changing weight, J. Differential Equations 214
  (2005) 36--64.

\bibitem{Bo-11}
A.~Boscaggin, A note on a superlinear indefinite {N}eumann problem with
  multiple positive solutions, J. Math. Anal. Appl. 377 (2011) 259--268.

\bibitem{BoFeZa-15}
A.~Boscaggin, G.~Feltrin, F.~Zanolin, Pairs of positive periodic solutions of
  nonlinear {ODEs} with indefinite weight: a topological degree approach for
  the super-sublinear case, Proc. Roy. Soc. Edinburgh Sect. A{}, to appear.

\bibitem{BoZa-12}
A.~Boscaggin, F.~Zanolin, Pairs of positive periodic solutions of second order
  nonlinear equations with indefinite weight, J. Differential Equations 252
  (2012) 2900--2921.

\bibitem{BoZa-12b}
A.~Boscaggin, F.~Zanolin, Positive periodic solutions of second order nonlinear
  equations with indefinite weight: multiplicity results and complex dynamics,
  J. Differential Equations 252 (2012) 2922--2950.

\bibitem{BoZa-13}
A.~Boscaggin, F.~Zanolin, Pairs of nodal solutions for a class of nonlinear
  problems with one-sided growth conditions, Adv. Nonlinear Stud. 13 (2013)
  13--53.

\bibitem{CaDaPa-02}
A.~Capietto, W.~Dambrosio, D.~Papini, Superlinear indefinite equations on the
  real line and chaotic dynamics, J. Differential Equations 181 (2002)
  419--438.

\bibitem{CaKwMi-00}
M.~C. Carbinatto, J.~Kwapisz, K.~Mischaikow, Horseshoes and the {C}onley index
  spectrum, Ergodic Theory Dynam. Systems 20 (2000) 365--377.

\bibitem{Co-73}
C.~Corduneanu, Integral equations and stability of feedback systems, vol. 104
  of Mathematics in Science and Engineering, Academic Press, New York-London,
  1973.

\bibitem{Da-88}
E.~N. Dancer, The effect of domain shape on the number of positive solutions of
  certain nonlinear equations, J. Differential Equations 74 (1988) 120--156.

\bibitem{Da-90}
E.~N. Dancer, The effect of domain shape on the number of positive solutions of
  certain nonlinear equations. {II}, J. Differential Equations 87 (1990)
  316--339.

\bibitem{FeZa-ade2015}
G.~Feltrin, F.~Zanolin, Existence of positive solutions in the superlinear case
  via coincidence degree: the {N}eumann and the periodic boundary value
  problems, Adv. Differential Equations 20 (2015) 937--982.

\bibitem{FeZa-jde2015}
G.~Feltrin, F.~Zanolin, Multiple positive solutions for a superlinear problem:
  a topological approach, J. Differential Equations 259 (2015) 925--963.

\bibitem{FeZa-pp2015}
G.~Feltrin, F.~Zanolin, Multiplicity of positive periodic solutions in the
  superlinear indefinite case via coincidence degree, submitted (2015).

\bibitem{GaMa-77}
R.~E. Gaines, J.~L. Mawhin, Coincidence degree, and nonlinear differential
  equations, vol. 568 of Lecture Notes in Mathematics, Springer-Verlag,
  Berlin-New York, 1977.

\bibitem{GaHaZa-03}
M.~Gaudenzi, P.~Habets, F.~Zanolin, An example of a superlinear problem with
  multiple positive solutions, Atti Sem. Mat. Fis. Univ. Modena 51 (2003)
  259--272.

\bibitem{GaHaZa-04}
M.~Gaudenzi, P.~Habets, F.~Zanolin, A seven-positive-solutions theorem for a
  superlinear problem, Adv. Nonlinear Stud. 4 (2004) 149--164.

\bibitem{GiGo-09Ed}
P.~M. Gir{\~a}o, J.~M. Gomes, Multi-bump nodal solutions for an indefinite
  non-homogeneous elliptic problem, Proc. Roy. Soc. Edinburgh Sect. A 139
  (2009) 797--817.

\bibitem{GiGo-09}
P.~M. Gir{\~a}o, J.~M. Gomes, Multibump nodal solutions for an indefinite
  superlinear elliptic problem, J. Differential Equations 247 (2009)
  1001--1012.

\bibitem{GoReLoGo-00}
R.~G{\'o}mez-Re{\~n}asco, J.~L{\'o}pez-G{\'o}mez, The effect of varying
  coefficients on the dynamics of a class of superlinear indefinite
  reaction-diffusion equations, J. Differential Equations 167 (2000) 36--72.

\bibitem{Gr-72}
A.~Granas, The {L}eray-{S}chauder index and the fixed point theory for
  arbitrary {ANR}s, Bull. Soc. Math. France 100 (1972) 209--228.

\bibitem{Ha-80}
J.~K. Hale, Ordinary differential equations, 2nd ed., Robert E. Krieger
  Publishing Co., Inc., Huntington, N.Y., 1980.

\bibitem{HeKa-80}
P.~Hess, T.~Kato, On some linear and nonlinear eigenvalue problems with an
  indefinite weight function, Comm. Partial Differential Equations 5 (1980)
  999--1030.

\bibitem{KlRo-97}
D.~A. Klain, G.-C. Rota, Introduction to geometric probability, Lezioni Lincee
  [Lincei Lectures], Cambridge University Press, Cambridge, 1997.

\bibitem{Kr-68}
M.~A. Krasnosel'ski{\u\i}, The operator of translation along the trajectories
  of differential equations, vol.~19 of Translations of Mathematical
  Monographs, American Mathematical Society, Providence, R.I., 1968.

\bibitem{Li-82}
P.-L. Lions, On the existence of positive solutions of semilinear elliptic
  equations, SIAM Rev. 24 (1982) 441--467.

\bibitem{Lo-97}
M.~Lothaire, Combinatorics on words, Cambridge Mathematical Library, Cambridge
  University Press, Cambridge, 1997.

\bibitem{Ma-69}
J.~Mawhin, \'{E}quations int\'egrales et solutions p\'eriodiques des syst\`emes
  diff\'erentiels non lin\'eaires, Acad. Roy. Belg. Bull. Cl. Sci. (5) 55
  (1969) 934--947.

\bibitem{Ma-79}
J.~Mawhin, Topological degree methods in nonlinear boundary value problems,
  vol.~40 of CBMS Regional Conference Series in Mathematics, American
  Mathematical Society, Providence, R.I., 1979.

\bibitem{Ma-93}
J.~Mawhin, Topological degree and boundary value problems for nonlinear
  differential equations, in: Topological methods for ordinary differential
  equations ({M}ontecatini {T}erme, 1991), vol. 1537 of Lecture Notes in
  Mathematics, Springer, Berlin, 1993, pp. 74--142.

\bibitem{Ma-96}
J.~Mawhin, Bounded solutions of nonlinear ordinary differential equations, in:
  Non-linear analysis and boundary value problems for ordinary differential
  equations ({U}dine), vol. 371 of CISM Courses and Lectures, Springer, Vienna,
  1996, pp. 121--147.

\bibitem{Ma-99}
J.~Mawhin, Leray-{S}chauder degree: a half century of extensions and
  applications, Topol. Methods Nonlinear Anal. 14 (1999) 195--228.

\bibitem{MiMr-95a}
K.~Mischaikow, M.~Mrozek, Chaos in the {L}orenz equations: a computer-assisted
  proof, Bull. Amer. Math. Soc. (N.S.) 32 (1995) 66--72.

\bibitem{MiMr-95b}
K.~Mischaikow, M.~Mrozek, Isolating neighborhoods and chaos, Japan J. Indust.
  Appl. Math. 12 (1995) 205--236.

\bibitem{Mo-73}
J.~Moser, Stable and random motions in dynamical systems, With special emphasis
  on celestial mechanics, Hermann Weyl Lectures, the Institute for Advanced
  Study, Princeton, N.J., Annals of Mathematics Studies, No. 77, Princeton
  University Press, Princeton, N.J., 1973.

\bibitem{Nu-85}
R.~D. Nussbaum, The fixed point index and some applications, vol.~94 of
  S\'eminaire de Math\'ematiques Sup\'erieures [Seminar on Higher Mathematics],
  Presses de l'Universit\'e de Montr\'eal, Montreal, QC, 1985.

\bibitem{Nu-93}
R.~D. Nussbaum, The fixed point index and fixed point theorems, in: Topological
  methods for ordinary differential equations ({M}ontecatini {T}erme, 1991),
  vol. 1537 of Lecture Notes in Mathematics, Springer, Berlin, 1993, pp.
  143--205.

\bibitem{OuSh-98}
T.~Ouyang, J.~Shi, Exact multiplicity of positive solutions for a class of
  semilinear problems, J. Differential Equations 146 (1998) 121--156.

\bibitem{OuSh-99}
T.~Ouyang, J.~Shi, Exact multiplicity of positive solutions for a class of
  semilinear problem. {II}, J. Differential Equations 158 (1999) 94--151.

\bibitem{Ra-7374}
P.~H. Rabinowitz, Pairs of positive solutions of nonlinear elliptic partial
  differential equations, Indiana Univ. Math. J. 23 (1973/74) 173--186.

\bibitem{Se-71}
G.~R. Sell, Topological dynamics and ordinary differential equations, Van
  Nostrand Reinhold Mathematical Studies, No. 33, Van Nostrand Reinhold Co.,
  London, 1971.

\bibitem{Si-75}
K.~S. Sibirsky, Introduction to topological dynamics, Noordhoff International
  Publishing, Leiden, 1975.

\bibitem{Sm-67}
S.~Smale, Differentiable dynamical systems, Bull. Amer. Math. Soc. 73 (1967)
  747--817.

\bibitem{SrWo-97}
R.~Srzednicki, K.~W{\'o}jcik, A geometric method for detecting chaotic
  dynamics, J. Differential Equations 135 (1997) 66--82.

\bibitem{Wa-82}
P.~Walters, An introduction to ergodic theory, vol.~79 of Graduate Texts in
  Mathematics, Springer-Verlag, New York-Berlin, 1982.

\bibitem{Zg-96}
P.~Zgliczy{\'n}ski, Fixed point index for iterations of maps, topological
  horseshoe and chaos, Topol. Methods Nonlinear Anal. 8 (1996) 169--177.

\bibitem{ZgGi-04}
P.~Zgliczy{\'n}ski, M.~Gidea, Covering relations for multidimensional dynamical
  systems, J. Differential Equations 202 (2004) 32--58.

\end{thebibliography}

\bigskip
\begin{flushleft}

{\small{\it Preprint}}

{\small{\it December 2015}}

\end{flushleft}

\end{document}